\documentclass{article}
\usepackage {amsmath, amsfonts, amssymb, mathrsfs, amsthm,stmaryrd, sectsty,hyphenat,url,accents,graphicx,comment}
\usepackage{xcolor}
\usepackage{palatino}
\usepackage[all]{xy}
\usepackage[toc,page]{appendix}
\usepackage{tocloft}
\usepackage{enumitem}

\def\mf#1{\mathfrak{#1}}
\def\mc#1{\mathcal{#1}}
\def\mb#1{\mathbb{#1}}
\def\tx#1{\textrm{#1}}

\newcommand{\mrm}[1]{{\mathrm{#1}}}

\def\R{\mathbb{R}}

\def\C{\mathbb{C}}
\def\Q{\mathbb{Q}}

\def\Z{\mathbb{Z}}

\def\lmod{\backslash}

\def\hat{\widehat}

\def\from{\leftarrow}

\def\into{\hookrightarrow}

\def\<{\langle}
\def\>{\rangle}
\def\Wedge{\bigwedge}

\newenvironment{mytitle}
{\begin{center}\large\sc}
{\end{center}}

\renewcommand{\hom}{\mrm{Hom}}
\newcommand{\Mod}{\mrm{Mod}}

\DeclareMathOperator{\Gr}{Gr}
\DeclareMathOperator{\Rep}{Rep}
\DeclareMathOperator{\spec}{Spec}

\newtheorem{thm}{Theorem}[subsection]
\newtheorem{lem}[thm]{Lemma}
\newtheorem{pro}[thm]{Proposition}
\newtheorem{cor}[thm]{Corollary}
\newtheorem{fct}[thm]{Fact}

\theoremstyle{definition}
\newtheorem{rem}[thm]{Remark}
\newtheorem{dfn}[thm]{Definition}

\sectionfont{\center\sc\normalsize}
\subsectionfont{\bf\normalsize}

\numberwithin{equation}{subsection}

\begin{document}

\begin{mytitle}
  On the refined local Langlands conjecture for discrete $L$-parameters of inner forms of quasi-split disconnected real reductive groups
\end{mytitle}

\begin{center}
  Tasho Kaletha and Paul Mezo\\[12pt]
  with an appendix by Dougal Davis
\end{center}

\begin{abstract}
  Given a quasi-split connected reductive $\R$-group $G$ and a finite group $A$ acting on $G$ by $\R$-automorphisms that preserve an $\R$-pinning, we construct for each discrete $L$-parameter for $G$ a corresponding $L$-packet of irreducible discrete series representations on each inner forms $\tilde G_z(\R)$ of the disconnected group $\tilde G = G \rtimes A$. We prove that these $L$-packets satisfy the endoscopic character identities with respect to normalized transfer factors. This proves the conjectural refined local Langlands correspondence for inner forms of quasi-split disconnected real reductive groups, as recently formulated by the first author.
\end{abstract}

\tableofcontents

\section{Introduction}

\subsection{General discussion}

Let $F$ be a local field and $G$ a connected reductive $F$-group. The refined local Langlands correspondence is a conjecture consisting of two parts. The first part predicts an injective map from the set of isomorphism classes of irreducible admissible complex representations of $G(F)$ to the set of enhanced $L$-parameters. These are $\hat G$-conjugacy classes of tuples $\{(\varphi,\rho)\}$, where $\varphi : L_F \to {^LG}$ is an $L$-parameter, and $\rho$ is an irreducible representation of a group closely related to the centralizer of $\varphi$ in $\hat G$. Here $\hat G$ is the complex Langlands dual group of $G$, $^LG$ is the $L$-group, and $L_F$ is the Langlands group of $F$, thus equal to the Weil group $W_F$ if $F$ is archimedean, and equal to $W_F \times \tx{SL}_2(\C)$ if $F$ is non-archimedean. This injection depends on some choices, namely a Whittaker datum $\mf{w}$ on the quasi-split inner form $G$ of $G$, and a rigid inner twist $(\xi,z) : G \to G$. The injection can be made into a bijection by putting all inner forms of $G$ together. The second part of the conjecture predicts that this injection satisfies endoscopic character identities with a precise normalization of the transfer factors in terms of $\mf{w}$ and $(\xi,z)$. We refer the reader to \cite{KalSimons} for a discussion and precise formulation when $F$ has characteristic zero, and to \cite{Dillery23} when $F$ has positive characteristic.

For various applications it is desirable to extend this conjecture to disconnected groups $\tilde G$ whose identity component $G$ is reductive. Such an extension was proposed in \cite{KalLLCD} under the assumption that $F$ has characteristic zero and $\tilde G$ arises as an inner form $\tilde G_{\bar z}$ of a disconnected group $\tilde G = G \rtimes A$, where $G$ is a quasi-split connected reductive $F$-group, $A$ is a finite group of $F$-automorphisms (\emph{not} assumed abelian) whose action on $G$ stabilizes an $F$-pinning (the same pinning for all elements of $A$), and $\bar z \in Z^1(F,G/Z(G)^A)$. The conjecture, stated as \cite[Conjecture 7.2.1]{KalLLCD}, again consists of two parts. The first part stipulates an injection from the irreducible admissible representations of $\tilde G_{\bar z}(F)$ to the set of enhanced $L$-parameters, which are again tuples $\{(\varphi,\rho)\}$, with $\varphi : L_F \to {^LG}$ being an $L$-parameter as in the connected case, but $\rho$ now being an irreducible representation of a group closely related to the centralizer of $\varphi$ in $\hat G \rtimes A^{[\bar z]}$; the tuple is taken up to conjugation by $\hat G \rtimes A^{[\bar z]}$. This injection again depends on the choices of a Whittaker datum $\mf{w}$ for $G$, now assumed $A$-admissible, and a rigid inner twist $(\xi,z) : \tilde G \to \tilde G_{\bar z}$. The second part of the conjecture predicts that this injection satisfies endoscopic character identities with a precise normalization of the transfer factors in terms of $\mf{w}$ and $(\xi,z)$. 

The formulation of the conjecture in the disconnected setting is strikingly analogous to that in the connected setting. Indeed, if one deletes all the ``tildes'' from the statement, it would be almost impossible to tell if one is talking about connected or disconnected groups. As such, it provides a clean and uniform language for much (but not quite all) of the theory of twisted endoscopy put forth in \cite{KS99}, which can be seen as restricting attention to individual $G_{\bar z}$-cosets in $\tilde G_{\bar z}$. However, considering the full disconnected group goes beyond the classical picture of twisted endoscopy, because it also contains the way individual cosets interact with each other. Questions about how representations of $G_z(F)$ extend to $\tilde G_z(F)$, or various individual cosets, are seamlessly handled by the framework. Note that while the identity $\tilde G(F) = G(F) \rtimes A$ holds, the topological group $\tilde G_{\bar z}(F)$ is a-priori just an extension of $A^{[\bar z]}$ by $G_{\bar z}(F)$, and this extension does not come equipped with the structure of a semi-direct product, and we have no reason to believe that such structure on $G_{\bar z}(F)$ exists in general.

\subsection{Main results and methods}

The main result of this paper is the verification of this conjecture in the setting of $F=\R$ and $\varphi$ being a discrete $L$-parameter. To state the results more precisely, let $G$ be a connected quasi-split $\R$-group, $A$ be a finite (possibly non-abelian) group acting on $G$ by $\R$-automorphisms stabilizing an $\R$-pinning, $\bar z \in Z^1(\R,G/Z(G)^A)$, and $\varphi : W_\R \to {^LG}$ be a discrete $L$-parameter. Let $\tilde G_{\bar z}$ be the inner form of $\tilde G=G \rtimes A$ determined by $\bar z$ as in \cite[\S3.2]{KalLLCD}. Choose an $A$-admissible Whittaker datum for $G$ in the sense of \cite[\S3.5]{KalLLCD} and a cocycle $z \in Z^1(\mc{P}^\tx{rig}_\R \to \mc{E}^\tx{rig}_\R,Z(G)^A \to G)$ lifting $\bar z$, where the cohomology used here is the one described in \cite{KalRI} but we are using the notation of \cite{TaibiIHES2022}; the reader unfamiliar with this cohomology is encouraged to imagine $z \in Z^1(\R,G)$ and assume that such a $z$ exists. Write $\Gamma$ for the Galois group of $\C/\R$.
\begin{enumerate}
  \item We construct an $L$-packet $\Pi_\varphi(\tilde G_{\bar z})$ consisting of isomorphism classes of irreducible essentially discrete series representations of the real Lie group $\tilde G_{\bar z}(\R)$.
  \item We construct an injection $\Pi_\varphi(\tilde G_{\bar z}) \to \tx{Irr}(\pi_0(\tilde S_\varphi^{+,[z]}))$, where $\tilde S_\varphi^{[z]}$ is the centralizer of $\varphi$ in $\hat G \rtimes A^{[z]}$, $A^{[z]}$ is the stabilizer in $A$ of the cohomology class of $z$, and $\tilde S_\varphi^{+,[z]}$ is the preimage in $\tilde S_{\varphi}^{[z]}$ in $\hat{\bar G} \rtimes A$, with $\hat{\bar G}$ being the projective limit of all finite covers of $\hat G$. The image of this injection consists of (possibly not all) irreducible representations that transform under $\pi_0(Z(\hat {\bar G})^+)$ by the character corresponding to $[z]$, where $Z(\hat {\bar G})^+$ is the preimage of $Z(\hat G)^\Gamma$.
  \item Writing $\tilde\pi \mapsto \rho_{\tilde\pi}$ for the above injection, we can form for any semi-simple $\tilde s \in \tilde S_\varphi^{+,[z]}$ the refined endoscopic datum $(H,\mc{H},\tilde s,\eta)$, choose arbitrarily a $z$-pair $(H_1,{^L\eta_1})$, and consider the virtual characters
  \[ \Theta_{\varphi,z}^{\tilde s,\mf{w}} = e(G_z)\sum_{\tilde\pi \in \Pi_\varphi(\tilde G_z)} \tx{tr}(\rho_{\tilde\pi}(\tilde s)) \Theta_{\tilde\pi} \quad \tx{and} \quad
   S\Theta_{\varphi_1} = \sum_{\pi_1 \in \Pi_{\varphi_1}(H_1)} \Theta_{\pi_1} \]
   on $\tilde G_{\bar z}(\R)$ and $H_1(\R)$ respectively, where $\varphi_1 : W_\R \to {^LH_1}$ is the composition of $\varphi : W_\R \to \mc{H}$ with $^L\eta_1 : \mc{H} \to {^LH_1}$. Here $e(G_z)$ is the Kottwitz sign of $G_z$, see \cite{Kot83}. We prove
   \[ \Theta_{\varphi,z}^{\tilde s,\mf{w}}(\tilde f) = S\Theta_{\varphi_1}(f_1), \]
   where $\tilde f \in \mc{C}^\infty_c(\tilde G_z(\R))$ and $f_1 \in \mc{C}^\infty_c(H_1(\R))$ are matching functions with respect to the transfer factor normalized using $\mf{w}$ and $z$, in the sense of \cite[Lemma 4.12.1]{KalLLCD}.
\end{enumerate}

We emphasize that the real Lie group $\tilde G_{\bar z}(\R)$ is generally not in the Harish-Chandra class, so Harish-Chandra's original construction of discrete series representations doesn't apply. Instead, for the construction of the $L$-packets and their internal structure we use Duflo's construction \cite{Duflo82} of irreducible representations of arbitrary Lie groups, specialized to the case of a disconnected reductive group $\tilde G$ and essentially discrete series representations thereof. According to this construction such a representation arises from a pair $(f,\tau)$, where $f$ is a regular elliptic $\R$-point of the dual Lie algebra of $\tilde G$, $\tilde S$ is the centralizer of $f$ in $\tilde G$, and $\tau$ is an irreducible genuine representation of a certain metaplectic double cover of $\tilde S(\R)$ with differential $f$; here $f$ will become the infinitesimal character of the resulting representation. In order to relate such pairs to members of the set $\tx{Irr}(\pi_0(\tilde S_\varphi^{+,[z]}))$ we use the special case of the conjectures of \cite{KalLLCD} for disconnected groups whose identity component is a torus (which is proved in loc. cit.), and we apply these results to $\tilde S$. For this, the version of these results that appear in the thesis \cite{YiLuoThesis} of Yi Luo is quite important, because this version is more clear and explicit than the original version and allows us to connect these results better to our setting here.

Connecting these two inputs we are able to obtain an injection 
\[ \Pi_\varphi(\tilde G_{\bar z}) \to \tx{Irr}(\pi_0(\tilde S_\varphi^{+,[z]})) \] 
as required by 2. above. To our great surprise this turns out to \emph{not} be the correct injection. A first indication is given by the fact that part of the statement of \cite[Conjecture 7.2.1]{KalLLCD} is that the trivial representation of $\pi_0(\tilde S_\varphi^{+,[z]})$ ought to correspond to the unique representation $\tilde\pi_\mf{w}$ in $\Pi_\varphi(\tilde G)$ whose restriction $\pi_\mf{w}$ to $G(\R)$ is irreducible and $\mf{w}$-generic and such that $\tilde\pi_\mf{w}$ is the extension of $\pi_\mf{w}$ to $\tilde G(\R) = G(\R) \rtimes A$ obtained by Whittaker normalization. Under the above injection the representation $\tilde\pi$ of $\tilde G(\R)$ that corresponds to the trivial representation of $\pi_0(\tilde S_\varphi^{+,[z]})$ does have the property that its restriction to $G(\R)$ is $\pi_\mf{w}$, but in general it is not equal to the Whittaker extension of $\pi_\mf{w}$. In other words, $\tilde\pi$ and $\tilde\pi_\mf{w}$ are generally two distinct extensions of $\pi_\mf{w}$ to $\tilde G(\R)$. A key result of this paper, Theorem \ref{thm:rtci-sign} proved by Dougal Davis in Appendix \ref{app:a}, is that 
\[ \tilde\pi = \tilde\pi_\mf{w} \otimes \epsilon_{\tilde G}, \]
for a certain explicitly defined character 
\[ \epsilon_{\tilde G} : \tilde G(\R)/G(\R) \to \{\pm 1\}. \]
This character depends only on the group $\tilde G$, and not on the parameter $\varphi$. The definition of this character is elementary and is given in \S\ref{sub:signchar}. The proof of the above identity is however not elementary, and was a major obstacle to the completion of this paper. The proof given by Davis uses Beilinson--Bernstein localization, the theory of associated cycles of Harish-Chandra modules, and Ginzburg's description of the characteristic cycles of direct images of D-modules under open immersions.

The proof of the endoscopic character identities proceeds very roughly as in the outline of \cite{DPR}. In fact, \cite{DPR} was initially conceived as a warm-up in the process of writing this paper. Thus, the first step is as follows. First, reduce the proof to the setting of regular elliptic elements by a version of Harish--Chandra's uniqueness theorem for invariant eigendistributions. Second, convert the desired identity from the language of distributions to the language of functions using the Weyl integration formula. Third, prove that identity by analyzing the character of essentially discrete series representations and the structure of the transfer factor, and combining these with the combinatorics of transfer of conjugacy classes in endoscopy.

The details of this proof are however quite different and more involved than in the case of connected groups. One of the difficulties centers around the abstract notion of norms in twisted endoscopy. This notion is more subtle than the transfer of conjugacy classes. For example, a regular semi-simple element of $\tilde G_z(\R)$ need not have a stable conjugate lying in $\tilde G(\R)$, while the relationship between elements of $H(\R)$ and $\tilde G_z(\R)$ involves passing through $\tilde G$. To deal with such difficulties, we first have to develop some foundational results about the abstract notion of norms in twisted endoscopy, and relate this notion to that of Cartan subspaces of twisted spaces that is developed in \cite{HL17}. These results work over an arbitrary local field. We then specialize to $F=\R$ and elliptic elements and prove more specific results in this setting. The reduction of the character identity to elliptic elements uses a result \cite{Ren97} of Renard that is a twisted analog of the Harish-Chandra uniqueness theorem, but must be supplemented with a careful subdivision of the space of conjugacy classes in $\tilde G_z(\R)$. Switching from distributions to functions involves the twisted form of the Weyl integration formula proved by Henniart--Lemaire in \cite{HL17}. The character of essentially discrete series representations of disconnected groups has been described by Bouaziz in \cite{Bouaziz87}. The structure of the transfer factor is described partly in \cite{KS99}, with some corrections coming from \cite{KS12}, and partly in \cite{KalLLCD}, where the cohomology of the Galois gerbe defined in \cite{KalRI} is extended to the setting of complexes of tori. These are the main inputs of the proof of the character identity.

\subsection{Outline of the material}

In \S\ref{sec:dbl} we describe two constructions of double covers of real Lie groups arising as $\R$-points of certain affine algebraic groups, the main application in our case being maximal tori in connected reductive groups and their generalizations to disconnected reductive groups. The first construction is elementary and is a minor variation of the $\rho$-cover construction used for example in \cite{AV16}, while the second construction uses the metaplectic group and is an important ingredient in Duflo's construction \cite{Duflo82} of irreducible representations of real Lie groups. We show that the two constructions produce the same outcome. We also review some basic facts about genuine characters of double covers of disconnected tori.

In \S\ref{sec:dsaut} we discuss the interaction between irreducible essentially discrete series representations of $G(\R)$, where $G$ is a quasi-split connected reductive $\R$-group, and automorphisms of $G$ preserving a pinning. We recall here the concept of a Harish-Chandra parameter of such a representation, which is a $G(\R)$-conjugacy class of pairs $(S,\tau)$, with $S$ an elliptic maximal torus of $G$ and $\tau$ a genuine character of the double cover of $S(\R)$ with regular differential. We then define the concepts of an $A$-stable parameter (one that is stable under $A$) and an $A$-admissible parameter (one that contains an $A$-stable representative), and show that the latter notion is related to genericity with respect to an $A$-admissible Whittaker datum (a $G(\R)$-conjugacy class of pairs $(B,\psi)$ consisting of an $\R$-Borel subgroup and a generic character of its unipotent radical, that is not only $A$-stable, but contains an $A$-stable representative).

In \S\ref{sec:dsdisc} we discuss irreducible essentially discrete series representations of a disconnected reductive group $\tilde G(\R)$. In that section work in the generality of an affine algebraic $\R$-group $\tilde G$ whose identity component $G$ is reductive and pose no further conditions. We review Duflo's construction \cite{Duflo82} of irreducible representations of $\tilde G(\R)$ in the special case of essentially discrete series that is relevant to us. We also review Bouaziz's work \cite{Bouaziz87} on the character formula for such representations, which we will use in the proof of the endoscopic character identities. Finally, we define in \S\ref{sub:signchar} the sign character $\epsilon_{\tilde G}$ that is needed to obtain the correct internal structure of the $L$-packets.

In \S\ref{sec:sqlp} we accomplish the first two goals from the above list, namely the construction of the $L$-packet $\Pi_\varphi(\tilde G_z)$ and its parameterization in terms of $\tx{Irr}(\tilde S_\varphi^{+,[z]})$. In fact, the first goal alone is trivial to achieve, as one can define $\Pi_\varphi(\tilde G_z)$ to consist of all those irreducible representations $\tilde\pi$ of $\tilde G_z(\R)$ whose restriction to $G_z(\R)$ has an irreducible constituent belonging to the $L$-packet $\Pi_\varphi(G_z)$ that is already available from the classical theory developed by Langlands. It is the second goal that is the subtle point, especially because the third goal, namely the character identities, can hold for at most one injection $\Pi_\varphi(\tilde G_z) \to \tx{Irr}(\tilde S_\varphi^{+,[z]})$, as one sees using the linear independence of characters. To obtain this injection, we review in \S\ref{sub:luo} the results on the refined local Langlands conjecture for disconnected tori, originally obtained in \cite[\S8]{KalLLCD} and refined in Yi Luo's thesis \cite{YiLuoThesis}. We combine Luo's refinement with Duflo's construction, and apply twisting by the sign character $\epsilon_{\tilde G}$, in order to obtain the desired injection $\Pi_\varphi(\tilde G_z) \to \tx{Irr}(\tilde S_\varphi^{+,[z]})$.

In \S\ref{sec:charid} we prove the endoscopic character identity. After first proving general results about endoscopic norms and conjugacy classes, we reduce the desired identity of distributions to an identity of functions restricted to the set of regular elliptic elements of $[G \rtimes a]_z(\R)$, where $a \in A$ is the inverse of the image of the endoscopic element $\tilde s$ under the map $\tilde S_\varphi \to A$. To prove this identity we study the $G$-side using Bouaziz's character formula and the results of Luo's thesis, and we study the $H$-side by analyzing the structure of the transfer factors.

The appendix contains the proof of Theorem \ref{thm:rtci-sign}, on which the internal structure of the $L$-packet, and hence the validity of the endoscopic character identities, hinges, in the core case when $G$ is a semi-simple simply connected group.

\subsection{Acknowledgements}

This paper has benefited from discussions with Jeffrey Adams, Bill Casselman, and David Vogan, whom we thank heartily. We are especially grateful to Dougal Davis for providing the appendix which proves Theorem \ref{thm:rtci-sign}.

\section{Double covers of tori} \label{sec:dbl}

\subsection{Explicit double covers} \label{sub:edc}

Let $S$ be an affine algebraic $\R$-group equipped with a finite set $X \subset \tfrac{1}{2}X^*(S)$  such that $0 \notin X$ and for $x_1,x_2 \in X$ we have $x_1-x_2 \in X^*(S)$. For $y \in X^*(S)$ and $s \in S$ we write $s^y := y(s) \in \C^\times$. Define 
\[ S(\R)_X = \{(s,(s_x)_{x\in X})|s \in S(\R),s_x \in \C^\times, s_x^2=s^{(2x)}, s_{x_1}/s_{x_2}=s^{(x_1-x_2)}\}. \]
The map $p_X : S(\R)_X \to S(\R)$ sending $(s,(s_x))$ to $s$ produces the exact sequence
\[ 1 \to \{\pm 1\} \to S(\R)_X \to S(\R) \to 1, \]
with $-1$ being the element $(1,(-1)_{x \in X})$.

For each $x \in X$ we denote by $(\cdot)^x : S(\R)_X \to S(\R)$ the map sending $(s,(s_y)_{y\in X})$ to $s_x$. This map is a genuine character. Its square sends $(s,(s_y)_{y\in X})$ to $s^{(2x)}$. Given $\dot s \in S(\R)_X$ write $s=p_X(\dot s)$ and record the identities
\[ ((\dot s)^x)^2=s^{2x},\qquad \dot s^x/\dot s^y = s^{x-y}.\]

For each $x \in X$ the diagram
\[ \xymatrix{
  S(\R)_X\ar[r]^{(-)^x}\ar[d]_{p_X}&\C^\times\ar[d]^{(-)^2}\\
  S(\R)\ar[r]^{(-)^{2x}}&\C^\times
}\]
is Cartesian. This identifies $S(\R)_X$ with $S(\R)_{\{x\}}$ for any $x \in X$. The advantage of using $S(\R)_X$ is to emphasize that this construction does not depend on the choice of $x \in X$.

The above construction arises often as follows. Let $G$ be a connected reductive $\R$-group and let $S \subset G$ be a maximal $\R$-torus. For each Borel $\C$-subgroup of $G$ containing $S$ let $\rho_B=\tfrac{1}{2}\sum_{\alpha>0}\alpha$, where the sum runs over the $B$-positive absolute roots of $S$ in $G$. The cover $S(\R)_X$ associated to $X=\{\rho_B|B\}$ coincides with the $\rho$-double cover of Adams-Vogan \cite{AV16}. When $S$ is elliptic, which will be the main case of interest in this paper, it also coincides with the double cover constructed in \cite[\S3.1]{KalDC}, as explained in \cite[\S5.2]{KalDC}. For any choice of $B$ the function
\[ D_B(\dot s) = \prod_{\alpha>0} (\dot s^{\alpha/2} - \dot s^{-\alpha/2}) \]
is a well-defined genuine function on $S(\R)_X$ and we have
\[ D_B(\dot s) \cdot \dot s^{-\rho_B} = \prod_{\alpha>0} (1 - s^{-\alpha}). \]
For any $w \in W_G(S)$ we have $wD_B=D_{wB}=\epsilon(w)D_B$, where $\epsilon : W_G(S) \to \{\pm1\}$ is the sign character of the Weyl group.

In this paper, we will use a slight generalization of the preceding set-up, where $\tilde G$ is an affine algebraic $\R$-group whose identity component $G$ is reductive, $S \subset G$ is a maximal $\R$-torus, and $S \subset \tilde S \subset N_{\tilde G}(S)$. We will apply the above construction to $\tilde S$ and the set $X = \{\rho_B|B\}$, where now $B$ runs only over those Borel $\C$-subgroups of $G$ containing $S$ that are normalized by $\tilde S$; this set will be non-empty in our applications. Note that then $2\rho_B$ has a natural extension to $\tilde S$, namely as $\det(-|\mf{u}_B)$, where $\mf{u}_B$ is the Lie algebra of the unipotent radical of $B$. We will denote the double cover $\tilde S(\R)_X$ arising from this particular choice of $X$ by $\tilde S(\R)_G$.

\subsection{The symplectic form $B_f$} \label{sub:bf}

A second construction of covers will come from the metaplectic group. We review here the relevant symplectic structure.

Let $\mf{g}$ be a complex reductive Lie algebra and let $f \in \mf{g}^*$. Then $B_f(X,Y) = f([X,Y])$ defines an alternating form on $\mf{g}$. 
\begin{lem} \label{lem:radbf}
The radical of $B_f$ is the Lie algebra of the centralizer $Z_G(f)$ of $f$ in $G$.   
\end{lem}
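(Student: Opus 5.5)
The plan is to compute the radical of $B_f$ directly from its definition, identify it with the infinitesimal stabilizer of $f$ under the coadjoint action, and then appeal to the standard comparison between infinitesimal stabilizers and Lie algebras of stabilizers in characteristic zero.

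First, we unwind the definition. An element $X \in \mf{g}$ lies in the radical of $B_f$ precisely when $f([X,Y])=0$ for all $Y \in \mf{g}$. The coadjoint action of $\mf{g}$ on $\mf{g}^*$ is given by $(\tx{ad}^*(X)f)(Y) = -f([X,Y])$. Hence the radical of $B_f$ is exactly the kernel of the linear map $\mf{g} \to \mf{g}^*$, $X \mapsto \tx{ad}^*(X)f$. In other words, the radical is the stabilizer $\mf{g}^f$ of $f$ in $\mf{g}$ under the coadjoint representation. Here $G$ denotes a connected (or any) complex algebraic group with Lie algebra $\mf{g}$, acting on $\mf{g}^*$ by the coadjoint action.

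Second, we identify $\mf{g}^f$ with $\tx{Lie}(Z_G(f))$. The orbit map $G \to \mf{g}^*$, $g \mapsto \tx{Ad}^*(g)f$, has differential at the identity equal to $X \mapsto \tx{ad}^*(X)f$. The inclusion $\tx{Lie}(Z_G(f)) \subset \mf{g}^f$ is immediate: differentiating $\tx{Ad}^*(g)f=f$ along curves in $Z_G(f)$ gives it. For the reverse inclusion we use characteristic zero. Over $\C$, the scheme-theoretic stabilizer of $f$ is smooth by Cartier's theorem, and its Lie algebra is by definition the kernel of the differential of the orbit map. Hence $\tx{Lie}(Z_G(f)) = \mf{g}^f$.

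Alternatively, one can argue by dimension count. The orbit $G\cdot f \cong G/Z_G(f)$ is smooth of dimension $\dim G - \dim Z_G(f)$. The orbit map $G \to G\cdot f$ is a separable (char $0$) dominant morphism of homogeneous spaces, so its differential at $1$ surjects onto $T_f(G\cdot f)$. Therefore $\dim \mf{g}^f = \dim Z_G(f)$, and combined with the inclusion above this gives equality.

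The main point requiring care is this second step: the equality of the Lie algebra of the group centralizer with the infinitesimal centralizer. This is where characteristic zero (smoothness of stabilizers, or generic smoothness of the orbit map) is used. We will simply cite it, for instance from Borel's \emph{Linear Algebraic Groups} or Humphreys, as the standard fact that stabilizers in characteristic zero are smooth with Lie algebra equal to the infinitesimal stabilizer.

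Reductivity of $\mf{g}$ is not needed for the argument. It only allows one, if desired, to transport everything to $\mf{g}$ via an invariant form, under which the statement becomes $\ker \tx{ad}(X_f) = \tx{Lie}(Z_G(X_f))$.
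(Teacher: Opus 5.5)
Your proof is correct, but you obtain the key reverse inclusion differently from the paper. The paper stays inside the complex Lie group. If $f([Y,X])=0$ for all $X$, then every term $f(\tx{ad}(Y)^nX)$ with $n\geq 1$ vanishes, so expanding $e^{t\,\tx{ad}(Y)}$ gives $f\circ\tx{Ad}(\exp(tY))=f$. Hence the one-parameter subgroup $\exp(tY)$ lies in $Z_G(f)$, and $Y\in\tx{Lie}(Z_G(f))$.

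Your route instead identifies the radical with the tangent space at $1$ of the scheme-theoretic stabilizer, i.e. the fiber of the orbit map. You then invoke Cartier's theorem, or generic smoothness of the orbit map plus a dimension count, to see that this stabilizer is reduced.

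The paper's argument is short and explicit and needs no group-scheme theory. It does tacitly use that the Lie algebra of the algebraic group $Z_G(f)$ is detected by one-parameter subgroups of $Z_G(f)(\C)$, which is itself a characteristic-zero fact. Your argument makes the characteristic-zero input explicit and works verbatim over any algebraically closed field of characteristic zero. It also pinpoints what fails in positive characteristic, namely that stabilizers can be non-reduced. Neither argument uses reductivity, as you correctly observe.
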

\begin{proof}
For any $g \in Z_G(f)$ we have $f(\tx{Ad}(g)^{-1}X)=f(X)$, and differentiating at $g=1$ in direction $Y$ we obtain $f([Y,X])=0$ for all $Y \in \tx{Lie}(Z_G(f))$. Conversely, if $Y \in \mf{g}$ such that $f([Y,X])=0$ for all $X \in \mf{g}$, then $f(\tx{Ad}(\exp(Y))(X)) = f(e^{\tx{ad}(Y)}(X))=\sum_{n=0}^\infty \frac{f(\tx{ad}(Y)^n(X))}{n!}=f(X)$, and we conclude $\exp(Y) \in Z_G(f)$, therefore $Y \in \tx{Lie}(Z_G(f))$.
\end{proof}

Assume now that $f$ is regular semi-simple. This means that $Z_\mf{g}(f)=:\mf{s}$ is a Cartan subalgebra. 

\begin{cor}
The form $B_f$ descends to a non-degenerate alternating form on $\mf{g}/\mf{s}$.
\end{cor}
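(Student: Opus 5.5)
The plan is to derive this directly from Lemma \ref{lem:radbf}, using the standard fact that an alternating form descends to a non-degenerate form on the quotient by its radical. Write $\mathrm{rad}(B_f)=\{X \in \mf{g} \mid B_f(X,Y)=0 \ \forall Y\in\mf{g}\}$. In the notation of that lemma, $G$ is a connected complex reductive group with Lie algebra $\mf{g}$, acting on $\mf{g}^*$ by the coadjoint action. The lemma gives $\mathrm{rad}(B_f)=\mathrm{Lie}(Z_G(f))$.

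First I would identify $\mathrm{Lie}(Z_G(f))$ with $Z_\mf{g}(f)=\{Y\in\mf{g} \mid f([Y,X])=0 \ \forall X\}$. This is exactly the infinitesimal stabilizer, and the proof of Lemma \ref{lem:radbf} already shows that the two coincide. The hypothesis that $f$ is regular semi-simple says this subalgebra is the Cartan subalgebra $\mf{s}$. Hence $\mathrm{rad}(B_f)=\mf{s}$.

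Next I would check that $B_f$ descends to $\mf{g}/\mf{s}$. Put $\bar B_f(X+\mf{s},Y+\mf{s}):=B_f(X,Y)$. This is well defined because changing $X$ or $Y$ by an element of $\mf{s}=\mathrm{rad}(B_f)$ does not change the value, using bilinearity together with antisymmetry for the second argument. The form $\bar B_f$ is bilinear and alternating because $B_f$ is.

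Finally, for non-degeneracy: suppose $\bar B_f(X+\mf{s},\cdot)=0$. Then $B_f(X,Y)=0$ for all $Y\in\mf{g}$, so $X\in\mathrm{rad}(B_f)=\mf{s}$, and therefore $X+\mf{s}=0$. As a consistency check, $\dim\mf{g}/\mf{s}$ is the number of roots, which is even, as it must be for a non-degenerate alternating form.

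The only real obstacle is the identification $\mathrm{rad}(B_f)=\mf{s}$. This rests on reading "regular semi-simple $f$" as meaning that the stabilizer $Z_\mf{g}(f)$ is a Cartan subalgebra, which is how the paper phrases the definition. If instead one transports $f$ to $\mf{g}$ via an invariant form, the same identification follows because the form is $\mathrm{ad}$-invariant.
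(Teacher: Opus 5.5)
Your proposal is correct and is essentially the argument the paper intends. The paper gives no separate proof: the corollary is read off from Lemma \ref{lem:radbf}, since the radical of $B_f$ is $Z_\mf{g}(f)=\mf{s}$ for regular semi-simple $f$, and an alternating form always descends to a non-degenerate one on the quotient by its radical.
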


\begin{lem} \label{lem:killf}
\begin{enumerate}
  \item The linear form $f$ kills all root spaces for $\mf{s}$.
  \item The linear form is non-zero on each coroot.
\end{enumerate}
\end{lem}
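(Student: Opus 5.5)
Both parts follow from the invariance of $f$ under $Z_G(f)$, together with the fact (Lemma \ref{lem:radbf}) that $\mf{s}=\tx{Lie}(Z_G(f))$ is a Cartan subalgebra. Let $S=Z_G(f)^\circ$, the corresponding maximal torus. Since $\mf{s}$ is the radical of $B_f$, we have $f([\mf{s},\mf{g}])=0$. We use this throughout.

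For (1), let $\alpha$ be a root of $\mf{s}$ in $\mf{g}$ and let $X_\alpha$ be a nonzero root vector. Since $\alpha\neq 0$, there is $H\in\mf{s}$ with $\alpha(H)\neq 0$. Then $X_\alpha=\alpha(H)^{-1}[H,X_\alpha]\in[\mf{s},\mf{g}]$, so $f(X_\alpha)=0$. Equivalently, $f$ is fixed by the coadjoint action of $S$, and the root space $\mf{g}_\alpha$ is an isotypic component of $S$ for a nontrivial character. Hence $f$, viewed as an $S$-invariant functional, factors through the trivial isotypic component $\mf{s}$ of $\mf{g}=\mf{s}\oplus\bigoplus_\alpha\mf{g}_\alpha$.

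For (2), I argue by contradiction. Suppose $f(H_\alpha)=0$ for the coroot $H_\alpha=[X_\alpha,X_{-\alpha}]$, suitably normalized. Then $B_f(X_\alpha,X_{-\alpha})=f([X_\alpha,X_{-\alpha}])=c\,f(H_\alpha)=0$ for a nonzero constant $c$. Now test $X_\alpha$ against an arbitrary $Y\in\mf{g}$, decomposed along the root decomposition. For $Y\in\mf{s}$ we get $B_f(X_\alpha,Y)=0$, since $\mf{s}$ is the radical. For $Y\in\mf{g}_\beta$, the bracket $[X_\alpha,Y]$ lies in $\mf{g}_{\alpha+\beta}$. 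When $\beta\neq-\alpha$, this is a root space or zero, since $\alpha+\beta\ne 0$ and so the bracket does not land in $\mf{s}$; part (1) then gives $f([X_\alpha,Y])=0$. Since $\mf{g}_{-\alpha}$ is one-dimensional, the remaining case is $Y$ a multiple of $X_{-\alpha}$, which gives zero by assumption.

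Hence $X_\alpha$ lies in the radical of $B_f$, which by Lemma \ref{lem:radbf} equals $\mf{s}$. This contradicts $X_\alpha\notin\mf{s}$.

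The only point needing care is that root spaces are one-dimensional and that $[\mf{g}_\alpha,\mf{g}_{-\alpha}]$ is spanned by the coroot. Both are standard for complex reductive Lie algebras, since the derived algebra is semisimple and the centre lies in $\mf{s}$. As an alternative for (2), one can identify $f|_{\mf{s}}$ with an element $\lambda\in\mf{s}^*$. Regularity of $f$ then means precisely that no coroot is killed: if $\lambda(H_\alpha)=0$, then $f$ is fixed by the reflection $s_\alpha$, or more directly by the root subgroup $\exp(\mf{g}_\alpha)$, contradicting the fact that the centralizer is only a torus.
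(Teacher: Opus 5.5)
Your proof is correct and is essentially the paper's argument: (1) follows from $f([\mf{s},\mf{g}])=0$, the radical property of Lemma~\ref{lem:radbf}, together with $\alpha(H)\neq 0$ for some $H\in\mf{s}$. For (2), both you and the paper show that $f(H_\alpha)=0$ would force $B_f(\mf{g}_\alpha,-)$ to kill $\mf{s}$, every $\mf{g}_\beta$ with $\beta\neq-\alpha$, and $\mf{g}_{-\alpha}$, which places $\mf{g}_\alpha$ in the radical $\mf{s}$, a contradiction; your version of (1) via $X_\alpha=\alpha(H)^{-1}[H,X_\alpha]$ is, if anything, a cleaner rendering of the paper's one-line computation, which should read $0=f(\tx{ad}(s)X)=\alpha(s)f(X)$.
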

\begin{proof}
(1) If $X \in \mf{g}_\alpha$ then for all $s \in \mf{s}$ we have $f(X)=f(\tx{ad}(s)X)=\alpha(s)f(X)$, forcing $f(X)=0$.

(2) For a root $\alpha$ consider the linear form $B_f(\mf{g}_\alpha,-)$. According to (1), it kills all $\mf{g}_\beta$ for $\beta \neq -\alpha$, as well as $\mf{s}$. If $f(H_\alpha)=0$, then this linear form also kills $\mf{g}_{-\alpha}$, hence all of $\mf{g}$, showing that $\mf{g}_\alpha$ is in the radical of $B_f$, which by Lemma 
\ref{lem:radbf} means $\mf{g}_\alpha \subset Z_\mf{g}(f)$, contradicting the regularity of $f$.
\end{proof}

\begin{cor} \label{cor:lagf}
The nilpotent radical of a Borel subalgebra containing $\mf{s}$ is a Lagrangian subspace for $B_f$ (when projected to $\mf{g}/\mf{s}$).
\end{cor}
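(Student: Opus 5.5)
Let $\mf{b} = \mf{s} \oplus \mf{n}$ be a Borel subalgebra containing $\mf{s}$, with nilpotent radical $\mf{n} = \bigoplus_{\alpha>0} \mf{g}_\alpha$. By the preceding Corollary, $B_f$ is a non-degenerate alternating form on $\mf{g}/\mf{s}$, which has dimension $2|\Phi^+|$. Since $\mf{n} \cap \mf{s} = 0$, the image of $\mf{n}$ in $\mf{g}/\mf{s}$ has dimension $|\Phi^+|$, which is exactly half of $\dim \mf{g}/\mf{s}$. So it suffices to show that this image is isotropic for $B_f$; the maximality is then automatic by the dimension count.

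For isotropy, take $X \in \mf{g}_\alpha$ and $Y \in \mf{g}_\beta$ with $\alpha,\beta$ positive. Then $[X,Y] \in \mf{g}_{\alpha+\beta}$. If $\alpha+\beta$ is a root, it is positive and hence nonzero, so $f([X,Y]) = 0$ by Lemma \ref{lem:killf}(1). If $\alpha+\beta$ is not a root, then $[X,Y] = 0$. Note that $\alpha+\beta$ is never $0$, since both roots are positive. In every case $B_f(X,Y) = 0$, and by bilinearity $B_f$ vanishes identically on $\mf{n} \times \mf{n}$.

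It remains to check that $B_f$ on $\mf{n}$ agrees with the descended form on the image of $\mf{n}$. This is true because the descended form is defined by $B_f$ on representatives, and $\mf{s}$ is the radical of $B_f$ by Lemma \ref{lem:radbf}. The argument has no real obstacle. The only points needing care are the dimension count and the observation that $\alpha+\beta \neq 0$. One could also confirm non-degeneracy directly from the pairing of $\mf{g}_\alpha$ with $\mf{g}_{-\alpha}$ via $f(H_\alpha) \neq 0$ in Lemma \ref{lem:killf}(2), but this is not needed, since non-degeneracy is already given by the preceding Corollary.
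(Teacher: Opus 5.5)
Your proof is correct, and the isotropy step is the same as the paper's, including the point that $\alpha+\beta\neq 0$, so $[X,Y]$ never lands in $\mf{s}$.

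The only difference is how maximality is shown. The paper argues directly. Any subspace of $\mf{g}/\mf{s}$ strictly containing the image of $\mf{n}$ contains a vector with a nonzero component $X_{-\alpha}\in\mf{g}_{-\alpha}$. That vector pairs nontrivially with some $X_\alpha\in\mf{n}$, because $B_f(X_\alpha,X_{-\alpha})=f(H_\alpha)\neq 0$ by Lemma \ref{lem:killf}(2), and the other components pair trivially by Lemma \ref{lem:killf}(1).

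You instead use the non-degeneracy of the descended form from the preceding corollary, together with $\dim\mf{n}=\tfrac12\dim(\mf{g}/\mf{s})$. Both inputs come down to the regularity of $f$. Your route is the standard linear-algebra shortcut and needs no bookkeeping of components. The paper's route exhibits explicitly which vector of $\mf{n}$ obstructs enlarging the subspace.
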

\begin{proof}
Fix a Borel subalgebra $\mf{b}$ containing $\mf{s}$ and write $\alpha>0$ when $\alpha$ is an absolute root for $\mf{s}$ such that $\mf{g}_\alpha \subset \mf{b}$. To see that the unipotent radical of $\mf{b}$ is isotropic it is enough to show that for $\alpha,\beta>0$ and $X \in \mf{g}_\alpha$, $Y \in \mf{g}_\beta$, we have $B_f(X,Y)=0$. But $[X,Y]$ lies in $\mf{g}_{\alpha+\beta}$, which may or may not be zero, but either way is killed by $f$ by Lemma \ref{lem:killf}(1).

To see that it is maximally isotropic note that any larger subspace of $\mf{g}/\mf{s}$ will contain a vector $X$ lying in the opposite unipotent radical. This vector will be a sum of root vectors, and let $X_{-\alpha} \in \mf{g}_{-\alpha}$ be a summand of this vector. Taking $X_{\alpha} \in \mf{g}_\alpha$ so that $[X_\alpha,X_{-\alpha}]=H_\alpha$ we see $B_f(X_\alpha,X_{-\alpha}) \neq 0$ by Lemma \ref{lem:killf}(2).
\end{proof}

\subsection{The metaplectic double cover} \label{sub:mdc}

Let $\tilde G$ be an affine algebraic $\R$-group whose identity component $G$ is reductive. Let $f \in \mf{g}^*(\R)$ be a regular semi-simple element and let $\tilde S \subset \tilde G$ be its centralizer. Then $S=\tilde S \cap G$ is a maximal $\R$-torus of $G$. The vector space $\mf{g}/\mf{s}$ carries the symplectic form $B_f$ reviewed in \S\ref{sub:bf}. We denote by $\tx{Sp}_f$ the symplectic group $\tx{Sp}(\mf{g}/\mf{s},B_f)(\R)$, and by $\tx{Mp}_f$ its two-fold metaplectic cover. The action of $\tilde S(\R)$ on $\mf{g}$ produces a homomorphism $\tilde S(\R) \to \tx{Sp}_f$ and we define, following \cite{Duflo82}, $\tilde S(\R)_f$ to be the pull-back of
\[ \tilde S(\R) \to \tx{Sp}_f \from \tx{Mp}_f. \]
Then $\tilde S(\R)_f$ is a double cover of $\tilde S(\R)$, and comes equipped with a collection of genuine characters $\rho_{f,\mf{u}}$, indexed by the set of unipotent radicals $\mf{u}$ of Borel $\C$-subalgebras containing $\mf{s}$ and stable under $\tilde S$ (provided such $\mf{u}$ exists). Indeed, the image of such a $\mf{u}$ in $\mf{g}/\mf{s}$ is a maximal Lagrangian subspace for the symplectic form $B_f$, by Corollary \ref{cor:lagf}. Let $\tx{Mp}(\mf{g}/\mf{s})_{f,\mf{u}}$ be the stabilizer of $\mf{u}$ in $\tx{Mp}(\mf{g}/\mf{s})_f$. In \cite[I.6]{Duflo82}, using the theory of the Weil representation, Duflo defines a genuine character $\rho_{f,\mf{u}} : \tx{Mp}(\mf{g}/\mf{s})_{f,\mf{u}} \to \C^\times$, and shows in \cite[\S I.10(28)]{Duflo82} the property
\begin{equation} \label{eq:duflo1}
  \rho_{f,\mf{u}}(\dot x)^2 = \det(x|\mf{u}),\qquad \dot x \in \tx{Mp}(\mf{g}/\mf{s})_{f,\mf{u}}.
  \end{equation}
In this paper we will be interested in the case when $f$ is elliptic, equivalently $S$ is an elliptic maximal torus of $G$, which is to say that $S/Z_G$ is anisotropic. For each absolute root $\alpha \in R(S,G)$, the value $f(H_\alpha)$ is then imaginary. Following the convention used by \cite{Bouaziz87} we define a set of positive roots in $R(S,G)$ by
\[ R_f^+=\{\alpha \in R(S,G)|if(H_\alpha)<0\}. \]
This set of positive roots determines a Borel $\C$-subgroup containing $S$ and we write $\mf{u}_f$ for the Lie algebra of its unipotent radical. This $\mf{u}_f$ is stable under $\tilde S$ by construction, so the set of $\mf{u}$ that are stable under $\tilde S$ is non-empty. Moreover, the Lagrangian subspace of $\mf{g}/\mf{s}$ defined as the image of such $\mf{u}$ is ``totally complex'', i.e. it is transverse to its complex conjugate, and therefore \eqref{eq:duflo1} uniquely characterizes the genuine character $\rho_{f,\mf{u}}$ of $\tx{Mp}(\mf{g}/\mf{s})_{f,\mf{u}}$, see \cite[\S I.8]{Duflo82}.

The character $\rho_{f,\mf{u}}$ changes with respect to $\mf{u}$ by the following formula \cite[I.10 Case (ii)]{Duflo82}
\begin{equation} \label{eq:duflo2}
\rho_{f,\mf{u}_1}(\dot x)\rho_{f,\mf{u}_2}(\dot x)^{-1}=\det(x|\mf{u}_1/(\mf{u}_1 \cap \mf{u}_2)),\quad \dot x \in \tx{Mp}(\mf{g}/\mf{s})_{f,\mf{u}_1} \cap \tx{Mp}(\mf{g}/\mf{s})_{f,\mf{u}_2}.
\end{equation}
We pull back this character to $\tilde S(\R)_f$ and denote it still by $\rho_{f,\mf{u}}(x)$. Note that, while \eqref{eq:duflo1} still holds for this pull back, it no longer determines a unique character of $\tilde S(\R)_f$, because we can always multiply such a character by a sign character of $\tilde S(\R)$. Equation \eqref{eq:duflo2} holds for all $\dot x \in \tilde S(\R)_f$. The differential of the character $\rho_{f,\mf{u}} : \tilde S(\R)_f \to \C^\times$ is equal to $\rho_{\mf{u}}=\tfrac{1}{2}\sum_{\alpha>0}\alpha$.

\subsection{Comparing the metaplectic cover with the explicit cover}

In order to compare the constructions of \S\ref{sub:edc} and \S\ref{sub:mdc}, we will use the following abstract lemma.

Let $A$ be an abelian group and $A_1,A_2$ two extensions of $A$ by $\{\pm1\}$. Let $X$ be a finite set and for each $x \in X$ assume given a genuine character $\rho_{x,i} : A_i \to \C^\times$ for $i=1,2$. Assume that the following identities hold between characters of $X$:

\begin{eqnarray}
\label{eq:id1} \forall x \in X:&& \rho_{x,1}^2=\rho_{x,2}^2\\
\label{eq:id2} \forall x,x' \in X: &&\rho_{x,1}/\rho_{x',1}=\rho_{x,2}/\rho_{x',2}
\end{eqnarray}

\begin{lem} \label{lem:cover_id}
There exists a unique isomorphism $i : A_1 \to A_2$ of extensions such that $\rho_{1,x}=\rho_{2,x} \circ i$ for all $x \in A$.
\end{lem}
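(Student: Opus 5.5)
The plan is to identify each $A_i$ with a fibre product determined by a single character, via the same Cartesian-square trick used in \S\ref{sub:edc} to identify $S(\R)_X$ with $S(\R)_{\{x\}}$. We assume $X$ is non-empty; otherwise uniqueness fails. We read the condition in the statement as being for all $x \in X$.

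\emph{Existence.} Fix $x_0 \in X$ and write $p_i : A_i \to A$ for the projections. Since $\rho_{x_0,i}$ is genuine, $\rho_{x_0,i}^2$ is trivial on $\{\pm1\}$. Hence it descends to a character $\chi_i$ of $A$, and by \eqref{eq:id1} we have $\chi_1=\chi_2=:\chi$. Consider the map
\[ \phi_i : A_i \to A \times_{\chi,\C^\times,(-)^2} \C^\times,\qquad a \mapsto (p_i(a),\rho_{x_0,i}(a)). \]
It is a homomorphism.

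\begin{enumerate}
\item $\phi_i$ is injective. If $p_i(a)=1$ then $a=\pm1$, and genuineness gives $\rho_{x_0,i}(-1)=-1$, so $a=1$.
\item $\phi_i$ is surjective. Over each $b \in A$ both sides have exactly two points: the fibre of $p_i$, and the two square roots of $\chi(b)$. Injectivity then forces a bijection on each fibre.
\item $\phi_i$ is compatible with the extension structures. It commutes with the projections to $A$ and sends $-1$ to $(1,-1)$.
\end{enumerate}

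Set $i=\phi_2^{-1}\circ\phi_1$. By construction $i$ is an isomorphism of extensions and $\rho_{x_0,2}\circ i=\rho_{x_0,1}$.

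For an arbitrary $x\in X$, write $\rho_{x,j}=\rho_{x_0,j}\cdot(\rho_{x,j}/\rho_{x_0,j})$. The quotient $\rho_{x,j}/\rho_{x_0,j}$ is a ratio of two genuine characters, hence trivial on $\{\pm1\}$, so it is the pullback of a character $\psi_j$ of $A$. By \eqref{eq:id2}, $\psi_1=\psi_2$. Since $i$ lies over the identity of $A$, we have $\psi_2\circ p_2\circ i=\psi_1\circ p_1$. Combining this with the identity for $x_0$ gives $\rho_{x,2}\circ i=\rho_{x,1}$.

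\emph{Uniqueness.} Suppose $i,i'$ are two isomorphisms of extensions with the required property. Then $i'\circ i^{-1}$ is an automorphism of the extension $A_2$, so it has the form $a\mapsto a\cdot\delta(p_2(a))$ for some homomorphism $\delta:A\to\{\pm1\}$. Evaluating $\rho_{x_0,2}$ gives
\[ \rho_{x_0,2}(a)\,\delta(p_2(a))=\rho_{x_0,2}(a)\quad\text{for all } a, \]
using genuineness once more. Hence $\delta=1$ and $i'=i$.

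There is no real obstacle in this argument. The only points needing care are the fibre-counting in step 2 and the observation that a ratio of genuine characters descends to $A$.
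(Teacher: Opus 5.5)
Your proof is correct and follows the paper's argument. The paper likewise fixes one $x\in X$, notes that $(p_i,\rho_{x,i})$ exhibits $A_i$ as the pull-back of the squaring map along $\rho_{x,i}^2$ (a Cartesian property you verify explicitly by counting fibres), gets $i$ and its uniqueness from the universal property of the pull-back, and then uses \eqref{eq:id2} to pass to all $x\in X$; your uniqueness argument via automorphisms $a\mapsto a\,\delta(p_2(a))$ of the extension simply unwinds that universal property.
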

\begin{proof}
Fix $x \in X$. It is immediate that the right square in the commutative diagram
\[ \xymatrix{
1\ar[r]&\{\pm1\}\ar@{=}[d]\ar[r]&A_i\ar[d]^{\rho_{x,i}}\ar[r]&A\ar[d]^{\rho_{x,i}^2}\ar[r]&1\\
1\ar[r]&\{\pm1\}\ar[r]&\C^\times\ar[r]^{(-)^2}&\C^\times\ar[r]&1
}\]
is Cartesian. The uniqueness of the pull-back up to unique isomorphism implies the existence of a unique isomorphism $i : A_1 \to A_2$ satisfying $\rho_{x,1} = \rho_{x,2} \circ i$, for the fixed $x \in X$. It follows from \eqref{eq:id2} that this identity holds for all $x \in X$.
\end{proof}

Let $\tilde G$ be an affine algebraic $\R$-group whose identity component $G$ is reductive. Let $f \in \mf{g}^*(\R)$ be a regular elliptic semi-simple element and let $\tilde S \subset \tilde G$ be its centralizer, so that $S=\tilde S \cap G$ is a maximal $\R$-torus of $G$. We have the metaplectic double cover $\tilde S(\R)_f$ and the explicit double cover $S(\R)_G$.

\begin{cor} \label{cor:covers}
There exist a unique isomorphism $\tilde S(\R)_f \to \tilde S(\R)_G$ of double covers of $S(\R)$, which identifies the genuine character $\rho_{f,\mf{u}}$ of $\tilde S(\R)_f$ with the genuine character $\rho_B$ of $\tilde S(\R)_G$, where $B$ is any Borel $\C$-subgroup normalized by $\tilde S$ and $\mf{u}$ is its unipotent radical.
\end{cor}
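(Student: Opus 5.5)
We will deduce the corollary from Lemma \ref{lem:cover_id}, applied with $A=\tilde S(\R)$, $A_1=\tilde S(\R)_f$, $A_2=\tilde S(\R)_G$, and $X$ the set of unipotent radicals $\mf{u}$ of Borel $\C$-subalgebras containing $\mf{s}$ and stable under $\tilde S$. Equivalently, $X$ is the set of Borel $\C$-subgroups $B\supset S$ normalized by $\tilde S$; this set is non-empty since it contains the one attached to $\mf{u}_f$. For $x=B$ with unipotent radical $\mf{u}$ we take $\rho_{x,1}=\rho_{f,\mf{u}}$ and $\rho_{x,2}=\rho_B$, the latter being the genuine character $(-)^{\rho_B}$ of $\tilde S(\R)_X=\tilde S(\R)_G$ from \S\ref{sub:edc}. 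The lemma is stated for abelian $A$, but its proof only uses the pull-back property of the Cartesian square and never commutativity, so it applies verbatim to the possibly non-abelian group $\tilde S(\R)$. It then remains to check the two hypotheses \eqref{eq:id1} and \eqref{eq:id2}.

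For \eqref{eq:id1}, both squares are characters of $\tilde S(\R)$. By \eqref{eq:duflo1}, pulled back to $\tilde S(\R)_f$, we have $\rho_{f,\mf{u}}^2=\det(-|\mf{u})$. On the other side, $(\dot s^{\rho_B})^2=s^{2\rho_B}$, and by definition $2\rho_B$ is extended to $\tilde S$ as $\det(-|\mf{u}_B)$. So the two squares agree.

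For \eqref{eq:id2}, take $B_1,B_2$ in $X$ with radicals $\mf{u}_1,\mf{u}_2$. By \eqref{eq:duflo2}, $\rho_{f,\mf{u}_1}/\rho_{f,\mf{u}_2}=\det(-|\mf{u}_1/(\mf{u}_1\cap\mf{u}_2))$. On the explicit side, the defining relation of $\tilde S(\R)_X$ gives $\dot s^{\rho_{B_1}}/\dot s^{\rho_{B_2}}=s^{\rho_{B_1}-\rho_{B_2}}$, where $\rho_{B_1}-\rho_{B_2}$ is regarded as a character of $\tilde S$ through the extension used to build the cover. The main point is to identify this character. On $S$ we have $\rho_{B_1}-\rho_{B_2}=\sum_{\alpha\in R(\mf{u}_1)\setminus R(\mf{u}_2)}\alpha$, since the roots common to both cancel and each $\alpha\in R(\mf{u}_1)\setminus R(\mf{u}_2)$ contributes $\tfrac12\alpha-\tfrac12(-\alpha)=\alpha$. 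This sum is exactly the weight of $\det(-|\mf{u}_1/(\mf{u}_1\cap\mf{u}_2))$.

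To pin the identity down on all of $\tilde S$, and not just on $S$, we use $2(\rho_{B_1}-\rho_{B_2})=\det(-|\mf{u}_1)/\det(-|\mf{u}_2)$. The $\tilde S$-stable decompositions $\mf{u}_1=(\mf{u}_1\cap\mf{u}_2)\oplus V_1$ and $\mf{u}_2=(\mf{u}_1\cap\mf{u}_2)\oplus V_2$ hold, with $V_2$ dual to $V_1$ under the Killing form. Hence this quotient equals $\det(-|V_1)^2$, because the Killing form is $\tilde S$-invariant so $\det(-|V_2)=\det(-|V_1)^{-1}$. Thus $\rho_{B_1}-\rho_{B_2}$ and $\det(-|V_1)$ have the same square and the same restriction to $S$. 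Where the extension to $\tilde S$ is not forced by these two facts, we interpret the convention of \S\ref{sub:edc} as taking $x_1-x_2$ to be $\det(-|\mf{u}_1/(\mf{u}_1\cap\mf{u}_2))$ on $\tilde S$. This matching of the difference characters on the disconnected part $\tilde S$, rather than on $S$, is the step where the argument is least automatic.

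With both hypotheses in place, Lemma \ref{lem:cover_id} gives a unique isomorphism $i:\tilde S(\R)_f\to\tilde S(\R)_G$ of extensions with $\rho_{f,\mf{u}}=\rho_B\circ i$ for every $B\in X$. Uniqueness holds already for a single $x$, because of the Cartesian square in the proof of the lemma. In particular the isomorphism is independent of the choice of $B$.
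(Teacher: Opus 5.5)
Your proposal is correct and is the argument the paper intends: the corollary is stated without a separate proof, as a direct application of Lemma \ref{lem:cover_id}, with \eqref{eq:id1} coming from \eqref{eq:duflo1} together with the extension $\det(-|\mf{u}_B)$ of $2\rho_B$, and \eqref{eq:id2} coming from \eqref{eq:duflo2}. You also make explicit two points the paper leaves implicit: commutativity of $A$ is never used, and the extension of $\rho_{B_1}-\rho_{B_2}$ to $\tilde S$ must be taken to be $\det(-|\mf{u}_1/(\mf{u}_1\cap\mf{u}_2))$. For that convention to define a double cover you also need the transitivity $\det(-|\mf{u}_1/(\mf{u}_1\cap\mf{u}_2))\det(-|\mf{u}_2/(\mf{u}_2\cap\mf{u}_3))=\det(-|\mf{u}_1/(\mf{u}_1\cap\mf{u}_3))$, which holds by the same Killing-duality argument you used for the squares (the discrepancy is $\det$ of $W\oplus W'$, with $W$ spanned by the root spaces in $\mf{u}_1\cap\mf{u}_3$ not in $\mf{u}_2$ and $W'$ its dual), or, on $\tilde S(\R)$, by telescoping \eqref{eq:duflo2}.
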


\subsection{Regular genuine characters} \label{sub:genchar}

We continue with the set-up of \S\ref{sub:mdc}. Thus let $\tilde G$ be an affine algebraic $\R$-group whose identity component $G$ is reductive. Let $f \in \mf{g}^*(\R)$ be a regular semi-simple elliptic element and let $\tilde S \subset \tilde G$ be its centralizer. Then $S=\tilde S \cap G$ is an elliptic maximal $\R$-torus of $G$. We identify $f$ with its image under the projection $\mf{g}^* \to \mf{s}^*$.

If $\tau : S(\R) \to \C^\times$ is a continuous character, then its pull-back $\tau_\tx{sc}$ to the compact group $S_\tx{sc}(\R)$ is unitary, so the differential $d\tau_\tx{sc}$ is an $\R$-linear map $\tx{Lie}(S_\tx{sc})(\R) \to \tx{Lie}(\mb{S}^1)$. Using the identifications $\tx{Lie}(S_\tx{sc})(\R)=[X_*(S_\tx{sc}) \otimes_\Z \C]^\Gamma = X_*(S_\tx{sc}) \otimes_\Z i\R$ and $\tx{Lie}(\mb{S}^1)=i\R$ we can identify $d\tau_\tx{sc}$ with an element of $X^*(S_\tx{sc})\otimes_\Z\R = i\cdot \tx{Lie}^*(S_\tx{sc})(\R)$. In fact, this element lies in $X^*(S_\tx{sc})$, as one sees by fixing an arbitrary isomorphism $S_\tx{sc}(\R) \cong (\mb{S}^1)^a$, and hence provides an algebraic character of $S_\tx{sc}(\C)$, whose restriction to $S_\tx{sc}(\R)$ equals $\tau_\tx{sc}$. 

We say that $\tau$ is \emph{regular} if $d\tau_\tx{sc}$ is regular, in which case we can define a set of positive roots by
\[ R_\tau^+=\{\alpha \in R(S,G)|d\tau_\tx{sc}(H_\alpha)>0\}. \]
This set of positive roots determines a Borel $\C$-subgroup containing $S$ and we write $\mf{u}_\tau$ for the Lie algebra of its unipotent radical.

The differential $d\tau$ of the full character $\tau$ is an $\R$-linear form $\mf{s}(\R) \to \C$. If $\tau$ is unitary, then this differential becomes an $\R$-linear form $\mf{s}(\R) \to i\R$, hence an element of $i\mf{s}^*(\R)$. We have $d\tau=if$ if and only if the composition of $\tau$ with the exponential map $\mf{s}(\R) \to S(\R)$ equals the character $e^{if}$. In that case
\[ R_f^+=R_\tau^- \quad \tx{and} \quad \mf{u}_f = \mf{\bar u}_\tau. \]
Note that, if $S(\R)$ is connected (for example if $S$ is anisotropic), then $\tau$ is in fact determined by $d\tau$, hence by $f$, and we can write $\tau_f$.

Consider now instead a continuous genuine character $\tau : S(\R)_f \to \C^\times$. Since the cover $S(\R)_f$ splits over $S_\tx{sc}(\R)$, the pull-back of $\tau_\tx{sc}$ provides a character of $S_\tx{sc}(\R)$ and we can define what it means for $\tau$ to be regular, as well as $R_\tau^+$, in the same way as above. We may again require $d\tau=if$ and have the systems of positive roots $R_f^+=R_\tau^-$. Now, in addition, we have the genuine characters $\rho_{f,\mf{u}_f}=\rho_{f,\mf{u}_\tau}^{-1}$. Then
\begin{equation} \label{eq:taulhd}
\tau_\lhd := \tau \otimes \rho_{f,\mf{u}_f} = \tau \otimes \rho_{f,\mf{u}_\tau}^{-1}  
\end{equation}
is a character of $S(\R)$. Its differential is
\[ d\tau_\lhd = d\tau+\rho_f = d\tau-\rho_\tau, \qquad \rho_f = \frac{1}{2}\sum_{\alpha \in R_f^+}\alpha, \quad \rho_\tau = \frac{1}{2}\sum_{\alpha \in R_\tau^+}\alpha. \]
Note that, for any $w \in \Omega(S,G)(\R)$ we have
\[ w(\tau_\lhd) = (w\tau)_\lhd. \]
We can perform the same construction in the disconnected setting. Let $X(f)$ be the set of finite-dimensional genuine representations of $\tilde S(\R)_f$ whose whose composition with the exponential map $\mf{s}(\R) \to S(\R)_f \subset \tilde S(\R)_f$ is isotypic for the character $e^{if}$. Note that, if $\tilde\tau \in X(f)$ is itself a character, then $d\tilde\tau=if \in i\mf{s}^*(\R)$. Using the genuine character $\rho_{f,\mf{u}_f}$ of $\tilde S(\R)_f$ we can form for any $\tilde\tau \in X(f)$ 
\begin{equation} \label{eq:ttaulhd}
  \tilde\tau_\lhd := \tilde\tau \otimes \rho_{f,\mf{u}_f},
  \end{equation}
which is a finite-dimensional representation of $\tilde S(\R)$ and is isotypic for the character $e^{if+\rho_f}$ of $\mf{s}(\R)$.

\section{Square-integrable representations and automorphisms} \label{sec:dsaut}

Let $G$ be a quasi-split connected reductive $\R$-group, $(T,B,\{X_\alpha\})$ an $\R$-pinning, and $A$ a finite group of automorphisms of $G$ preserving $(T,B,\{X_\alpha\})$.

\subsection{Square-integrable representations}

Let $G'$ be an inner form of $G$. Harish-Chandra's work establishes a bijection between the set of irreducible square-integrable representations of $G'(\R)$ and the set of $G'(\R)$-conjugacy classes of pairs $(S',\tau)$, where $S' \subset G'$ is an elliptic maximal torus, and $\tau : S'(\R)_\rho \to \C^\times$ is a genuine character, regular in the sense of \S\ref{sub:genchar}; see \cite[Theorem 2.7.7]{DPR} for a review. We shall call such a conjugacy class a \emph{Harish-Chandra parameter}.

Let $f=d\tau/i \in \tx{Lie}^*(S)(\R)$. The composition of $\tau$ with $\exp : \tx{Lie}(S)(\R) \to S(\R)$ equals $e^{if}$. If $S(\R)$ is connected (for example, when $S$ is anisotropic), then the exponential map is surjective and $\tau$ is uniquely determined by $f$. Since $\tau$ is regular, $S=\tx{Cent}(f,G)$. Therefore, when $S(\R)$ is connected, we may refer to the $G(\R)$-conjugacy class of $f$ also as a Harish-Chandra parameter.

\subsection{$A$-admissible elliptic maximal tori}

Recall (\cite[Definition 3.6.1]{KalLLCD}) that a maximal $\C$-torus $S \subset G$ is called $A$-admissible, if it is $A$-stable and contained in an $A$-stable Borel $\C$-subgroup of $G$, and a maximal $\R$-torus $S$ is called $A$-admissible, if $S_\C$ is an $A$-admissible maximal $\C$-torus.

\begin{pro} \label{pro:aa}
  Assume that $G$ contains an elliptic maximal $\R$-torus. Then $G^1:=G^{A,\circ}$ contains an elliptic maximal $\R$-torus, $G$ contains an $A$-admissible elliptic maximal $\R$-torus, and the map $(S,C) \mapsto (S^1,C^1)$, where $S^1 = S^{A,\circ}$, $C^1=C^{A,\circ}$, is a bijection between the set of $A$-stable pairs $(S,C)$ consisting of an elliptic maximal $\R$-torus $S \subset G$ and a Borel $\C$-subgroup $C$ of $G$ containing $S$, and the set of pairs $(S^1,C^1)$ consisting of an elliptic maximal $\R$-torus $S^1$ of $G^1$ and a Borel $\C$-subgroup $C^1$ of $G^1$ containing $S^1$.
\end{pro}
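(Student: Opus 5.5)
We rely on the standard structure theory of quasi-semisimple automorphism groups (Steinberg; see also \cite[\S3]{KalLLCD}). Since $A$ preserves the pinning $(T,B,\{X_\alpha\})$, the group $G^1=G^{A,\circ}$ is a connected reductive $\R$-group. Moreover, for every $A$-stable pair $(S,C)$ consisting of a maximal torus $S\subset G$ and a Borel subgroup $C\supset S$, the group $S^{A,\circ}$ is a maximal torus of $G^1$, $C^{A,\circ}$ is a Borel subgroup of $G^1$, and $Z_G(S^{A,\circ})=S$. Conversely, if $C^1\supset S^1$ is a Borel pair of $G^1$, put $S=Z_G(S^1)$. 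This is a maximal torus of $G$ (it contains a regular element of $G^1$ which is regular in $G$), and it is $A$-stable because $S^1$ is $A$-fixed. The $C^1$-positive roots of $S^1$ single out a unique Borel subgroup $C\supset S$ of $G$: its positive roots are those $\alpha\in R(S,G)$ whose restriction to $S^1$ is $C^1$-positive, and this is well defined since no root of $G$ restricts trivially to $S^1$. The subgroup $C$ is $A$-stable by uniqueness. Together with the normalization above, these give mutually inverse maps on the level of $\C$-pairs. It then only remains to compare rationality and ellipticity.

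For rationality, observe that $A$ acts by $\R$-automorphisms, so $S^{A,\circ}$ is defined over $\R$ whenever $S$ is. Conversely, $Z_G(S^1)$ is defined over $\R$ whenever $S^1$ is. The Borel subgroups are only required to be $\C$-subgroups, so nothing further needs to be checked for them.

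For ellipticity, recall that $S$ is elliptic iff complex conjugation $\sigma$ acts on $X_*(S/Z_G)$ by $-1$, equivalently iff $\sigma$ acts by $-1$ on the roots. I plan to show that $S$ is elliptic iff $S^1$ is elliptic in $G^1$.
\begin{itemize}
\item If $S$ is elliptic, then $S^1\subset S$ and $Z_{G^1}\supset (Z_G^A)^\circ$. Hence $S^1/(Z_G^{A})^\circ$ is anisotropic, and we must compare $Z_{G^1}^\circ$ with $(Z_G^A)^\circ$. I would argue at the level of roots instead: $\sigma=-1$ on $R(S,G)$ forces $\sigma=-1$ on the restricted roots, which span $X^*(S^1/Z_{G^1})_\Q$.
\item Conversely, if $\sigma=-1$ on $R(S^1,G^1)$, it is $-1$ on the set of restrictions of roots of $G$. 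That set of restrictions is the root system of $G^1$ up to multiples, because the roots of $G^1$ are restrictions or twice restrictions. Since restriction $R(S,G)\to X^*(S^1)$ is injective modulo $A$-orbits and $\sigma$ commutes with $A$, $\sigma$ maps each root $\alpha$ to an element of $-A\alpha$. To conclude $\sigma\alpha=-\alpha$ exactly, use that $\sigma$ preserves $C\cap\sigma(C)$ information: $\sigma$ sends $C$-positive roots to $C$-negative ones (since $\sigma(C^1)$ is opposite to $C^1$), and hence $\sigma$ acts on $X^*(S)$ as $-w$ for some $w$ in the Weyl group $W(S,G)^A$ fixing $C$ up to sign. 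Since $-\sigma$ preserves $C$, the element $w$ must be trivial.
\end{itemize}
This last point is where I expect the main obstacle. A cleaner route would be: $\sigma\circ(-1)$ preserves $C$ and acts on $X^*(S)$ by an automorphism commuting with $A$ and trivial on $X^*(S^1)_\Q$. The natural map $X^*(S)_\Q\to X^*(S^1)_\Q$ is the $A$-coinvariants. The question is then whether an automorphism of a based root datum that commutes with $A$ and is trivial on $A$-coinvariants is trivial on roots. It is not in general, since it could be an element of $A$'s centralizer permuting within $A$-orbits. So instead I would compare $\sigma$ with the quasi-split twisted action.

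Finally, the existence statements. The hypothesis that $G$ has an elliptic maximal torus means that $\sigma$ acts as $-1$ on $X^*(T/Z)$ composed with an inner element, i.e.\ the $L$-action on the based root datum is $-w_0$-type. This passes to $G^1$ because the Galois action on the based root datum of $G$ commutes with $A$ and induces that of $G^1$. Therefore $G^1$ has an elliptic maximal torus $S^1$, and $S=Z_G(S^1)$ together with any Borel $C^1\supset S^1$ yields an $A$-admissible elliptic maximal torus via the bijection.
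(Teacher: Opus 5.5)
Most of your argument is sound and parallels the paper: the bijection on $\C$-Borel pairs, rationality, the implication ``$S$ elliptic $\Rightarrow$ $S^1$ elliptic'', and the existence of an elliptic maximal torus in $G^1$. The gap is the converse, $S^1$ elliptic in $G^1$ $\Rightarrow$ $S$ elliptic. Both the existence of an $A$-admissible elliptic torus and the surjectivity of the bijection depend on it.

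You correctly obtain that $\sigma$ reverses the positive system of $C$, i.e.\ $\sigma(C)$ is the $S$-opposite of $C$. But your ``hence $\sigma$ acts on $X^*(S)$ as $-w$ with $w$ in the Weyl group'' does not follow. Positivity reversal only says that $-\sigma$ is an automorphism of the based root datum of $(S,C)$. As you observe yourself, such an automorphism can commute with $A$, be trivial on $A$-coinvariants, and still permute roots within $A$-orbits.

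This failure really occurs when the standing hypothesis is dropped. Take $G=\tx{Res}_{\C/\R}\tx{SL}_2$ with $A$ swapping the factors (the Remark following Lemma \ref{lem:ell}). Then $S=\tx{Res}_{\C/\R}S'$ with $S'$ anisotropic and $C=B\times B$ form an $A$-stable pair whose image $(S',B)$ is elliptic in $G^1=\tx{SL}_2$. Yet $S$ is not elliptic, and $-\sigma$ swaps the two factors. So the hypothesis that $G$ has an elliptic maximal torus must be used at exactly this step. Your proposal does not use it there, and the suggested comparison ``with the quasi-split twisted action'' is never carried out.

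The missing step is the paper's argument. Choose $g\in G(\C)$ with $\tx{Ad}(g)(T,B)=(S,C)$. Transported to $X^*(T)$, the action of $\sigma$ on $X^*(S)$ becomes $\omega\rtimes\sigma$, where $\omega\in\Omega$ is the image of $g^{-1}\sigma(g)$ and $\sigma$ acts on $X^*(T)$ through the quasi-split structure, which preserves $B$. Since $\sigma(C)$ is opposite to $C$, $\omega\rtimes\sigma$ sends $B$-positive roots to $B$-negative ones, which forces $\omega=w_0$.

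The hypothesis that $G$ has an elliptic maximal torus gives some $\omega'\in\Omega$ with $\omega'\rtimes\sigma=-1$ on the roots. The same reasoning forces $\omega'=w_0$. Hence $\sigma$ acts by $-1$ on $R(S,G)$, so $S$ is elliptic.

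In your own terms: the hypothesis implies that for \emph{every} maximal $\R$-torus $S$, the action of $\sigma$ on $R(S,G)$ agrees with that of $-w$ for some $w\in W(S,G)$. Then $-\sigma$ acts as a Weyl group element stabilizing the chamber of $C$, hence trivially. With this step inserted, your argument goes through.
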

\begin{proof}
The problem is insensitive to passing to the simply connected cover of the derived subgroup of $G$, so we assume that $G$ is semi-simple and simply connected. Then $G$ is the product of its $\R$-simple factors, which are permuted by $A$. We may further assume the action of $A$ on these factors is transitive, because the problem is compatible with products. 

Suppose $G_1$ is an $\R$-simple factor, $A_1$ is its stabilizer in $A$, and $S_1$ is an $A_1$-stable elliptic maximal $\R$-torus of $G_1$ contained in an $A$-stable $\C$-Borel subgroup $C_1$ of $G_1$. The $A$-orbit of $(S_1,C_1)$ is a pair $(S,C)$ consisting of an $A$-stable elliptic maximal $\R$-torus $S \subset G$ and $C$ is an $A$-stable $\C$-Borel subgroup of $G$ containing $S$. All $A$-stable pairs $(S,C)$ arise this way, and we have $X^{A,\circ}=X_1^{A_1,\circ}$ where $X$ is any of $G,S,C$.
  
We may therefore assume that $G$ is $\R$-simple. Then $G$ is either absolutely simple, or $G=\tx{Res}_{\C/\R}H$ for a simple $C$-group $H$. In the latter case, maximal $\R$-tori in $G$ are, up to $G(\R)$-conjugation, of the form $S=\tx{Res}_{\C/\R}S_H$ for maximal $\C$-tori $S_H \subset H$. But such a torus $S$ is never anisotropic, so the assumption that $G$ contains an anisotropic maximal torus implies that $G$ is absolutely simple.

We may replace $A$ by the quotient that acts effectively on $G$. Then $A$ is cyclic, except when $G$ is of type $D_4$, in which case $A$ could be the symmetric group $S_3$ on $3$ letters. Let $a$ be a generator of $A$ when $A$ is cyclic, and a generator of the unique subgroup of index $2$ in $A$ when $A$ is $S_3$. Then we have $G^A=G^a$: this is a tautology when $A$ is cyclic, and when $A$ is the symmetric group on 3 letters we have that $A/\<a\>$ is a group of order $2$ acting by automorphisms on $G^a$ that preserve the pinning of $G^a$ inherited from that of $G$ as in \cite[Proposition 3.4.2(9)]{KalLLCD}, but $G^a$ is of type $G_2$ and has no non-trivial pinned automorphisms, so $A/\<a\>$ acts trivially on $G^a$.

We will now show that $G^a$ contains an anisotropic torus. To that end, we will use the $a$-stable $\R$-pinning $(T,B,\{X_\alpha\})$ and we let $\Omega=N_G(T)/T$. The assumption that $G$ contains an anisotropic torus is equivalent to the existence of an element $\omega\in Z^1(\Gamma,\Omega)$ such that $\omega(\sigma)\rtimes\sigma$ acts on $X^*(T)$ by $-1$. It is then clear that $\omega(\sigma)\rtimes\sigma$ commutes with $a$ in the automorphism group of $X^*(T)$, hence in the automorphism group of $T$. But by assumption $a$ commutes with $\sigma$, and we conclude that $\omega(\sigma) \in \Omega^a$, i.e. $\omega \in Z^1(\Gamma,\Omega^a)$. But $\Omega^a$ is the Weyl group of the maximal torus $T^a$ of $G^a$ by \cite[Proposition 3.4.2(5)]{KalLLCD}, and we conclude that $G^a$ does indeed have an anisotropic maximal $\R$-torus.

From \cite[Proposition 3.4.2(7)]{KalLLCD} we know that $(S,C) \mapsto (S^1,C^1)$ is a bijection between the set of $A$-stable Borel pairs of $G_\C$ and the set of Borel pairs of $G^a_\C$. This bijection is obviously $\Gamma$-equivariant, so $S$ is defined over $\R$ if and only if $S^1$ is. We claim that $S$ is anisotropic if and only if $S^1$ is. Indeed, if $S$ is anisotropic, then $S^1$ must also be, since $S^1 \subset S$. Conversely, if $S^1$ is anisotropic, then $\sigma(C^1)=\bar C^1$ is the $\C$-Borel subgroup of $G^1$ that is $S^1$-opposite to $C^1$. If $\bar C$ is the Borel $\C$-subgroup of $G$ that is $S$-opposite to $C$, then $\bar C$ is $a$-stable and $\bar C \cap G^1=\bar C^1$. At the same time $\sigma(C)$ is also $a$-stable and $\sigma(C) \cap G^1=\sigma(C^1)$. From the bijectivity statement we conclude $\sigma(C)=\bar C$. Therefore, if $g \in G(\C)$ is any element that carries $(T,B)$ to $(S,C)$, then $g^{-1}\sigma(g)$ projects to the longest element $\omega \in \Omega$. But the assumption that $G$ has an anisotropic torus implies that $\omega \rtimes \sigma$ acts as $-1$ on $X^*(T)$, and we conclude that $\sigma$ acts as $-1$ on $X^*(S)$, and hence $S$ is anisotropic.

Finally, taking any pair $(S^1,C^1)$ in $G^1$ with $S^1$ anisotropic we conclude the existence of an $A$-stable pair $(S,C)$ with $S$ anisotropic.
\end{proof}

\begin{cor} \label{cor:aeonj}
  Any two $A$-admissible elliptic maximal tori of $G$ are conjugate under $G^1(\R)$
\end{cor}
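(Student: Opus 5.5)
The plan is to reduce the statement to the classical fact that any two elliptic maximal $\R$-tori of a connected reductive $\R$-group are conjugate under its group of real points, and to apply that fact to $G^1=G^{A,\circ}$. The bijection of Proposition \ref{pro:aa} will be the bridge between tori of $G$ and tori of $G^1$.

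\textbf{Step 1: pass to $G^1$.} Let $S$ and $S'$ be two $A$-admissible elliptic maximal tori of $G$. By definition each is $A$-stable and lies in an $A$-stable Borel $\C$-subgroup, say $C \supset S$ and $C' \supset S'$. So $(S,C)$ and $(S',C')$ are $A$-stable pairs of the kind appearing in Proposition \ref{pro:aa}. That proposition shows that $S^1=S^{A,\circ}$ and $S'^1=S'^{A,\circ}$ are elliptic maximal $\R$-tori of $G^1$.

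\textbf{Step 2: conjugate inside $G^1$.} Here I will use the classical theorem that elliptic maximal $\R$-tori of a connected reductive $\R$-group are all conjugate under its real points. One way to see it: after dividing by the center, they are the maximal tori of a maximal compact subgroup. Alternatively, the relevant class in $\ker(H^1(\R,N(S^1))\to H^1(\R,G^1))$ is trivial because the Weyl group contains $-1$ compatibly with $\sigma$. Applying this to $G^1$ gives $g \in G^1(\R)$ with $\tx{Ad}(g)S^1=S'^1$.

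\textbf{Step 3: lift back to $G$.} This is the step needing the most care. Since $g \in G^{A}$, conjugation by $g$ commutes with the $A$-action. Hence $(\tx{Ad}(g)S,\tx{Ad}(g)C)$ is again an $A$-stable pair, with $S$ still elliptic and now defined over $\R$ because $g$ is real. Its image under the map of Proposition \ref{pro:aa} is $(S'^1,\tx{Ad}(g)C^1)$.

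We cannot conclude directly from the bijection, because the Borel subgroups $\tx{Ad}(g)C^1$ and $C'^1$ need not agree. I see two ways around this.
\begin{enumerate}
\item Show that $S$ is recovered from $S^1$ alone as $Z_G(S^1)$. This holds because $S^1$ contains regular elements of $G$ (cf. \cite[Proposition 3.4.2]{KalLLCD}).
\item Modify $g$ by an element $n \in N_{G^1}(S'^1)(\C)$ carrying $\tx{Ad}(g)C^1$ to $C'^1$, and then use the bijection. In this version $n$ need not be real, so one uses that the result only concerns tori.
\end{enumerate}
I prefer the first route. It gives $\tx{Ad}(g)S = Z_G(\tx{Ad}(g)S^1) = Z_G(S'^1) = S'$, which is the desired conjugacy under $G^1(\R)$.

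The main obstacle is therefore justifying $Z_G(S^1)=S$. I expect to prove it from the bijection itself. $Z_G(S^1)$ is a connected reductive $A$-stable subgroup containing $S$. If it were larger than $S$, it would contain two distinct $A$-stable Borel pairs with the same torus $S^1$ and the same Borel subgroup $C^1 \cap Z_G(S^1)$ in the $G^1$-part. This contradicts the injectivity in \cite[Proposition 3.4.2(7)]{KalLLCD}.
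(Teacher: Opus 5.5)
Your proof is correct and is essentially the paper's argument: Proposition~\ref{pro:aa} passes you to elliptic maximal tori of $G^1$, and then you use the classical fact that elliptic maximal tori of $G^1$ are conjugate under $G^1(\R)$. The step you make explicit is exactly the one the paper's one-line proof leaves implicit when passing from pairs back to tori, namely $Z_G(S^1)=S$. It holds because $S^1$ contains $G$-regular elements: since $A$ preserves the positive system of $C$, no root restricts trivially to $S^1$. Your second route would also need this fact, since $n$ must normalize $S'$, so you are right to prefer the first.
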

\begin{proof}
  This follows from Proposition \ref{pro:aa} and the fact that any two elliptic maximal tori of $G^1$ are conjugate under $G^1(\R)$.
\end{proof}

\begin{lem} \label{lem:cov1}
  Let $S$ be an $A$-admissible maximal torus of $G$ and let $S^1 = S \cap G^1$. The inclusion $S^1 \to S$ lifts canonically to an inclusion $S^1(\R)_{\rho^1} \to S(\R)_\rho$ of covers.
\end{lem}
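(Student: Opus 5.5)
The plan is to use the explicit description of the covers from \S\ref{sub:edc}. Here $S(\R)_\rho$ is $S(\R)_X$ with $X=\{\rho_C \mid C\}$, where $C$ runs over Borel $\C$-subgroups of $G$ containing $S$. Likewise $S^1(\R)_{\rho^1}$ is $S^1(\R)_{X^1}$ with $X^1=\{\rho^1_{C^1}\mid C^1\}$, where $C^1$ runs over Borel $\C$-subgroups of $G^1$ containing $S^1$. By the Cartesian description, giving a map of covers over $S^1(\R)\to S(\R)$ amounts to giving, for each $\dot s\in S^1(\R)_{\rho^1}$, a compatible family $(s_x)_{x\in X}$ with $s_x^2=s^{2x}$ and $s_{x_1}/s_{x_2}=s^{x_1-x_2}$. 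It is enough to specify one coordinate $s_x$ correctly and let the others be forced.

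The key input is the bijection of Proposition \ref{pro:aa}, or rather its $\C$-version \cite[Proposition 3.4.2(7)]{KalLLCD}: $A$-stable Borels $C\supset S$ correspond to Borels $C^1\supset S^1$ of $G^1$. Fix an $A$-stable $C$, which exists because $S$ is $A$-admissible, and let $C^1=C^{A,\circ}$. I will compare $\rho_C|_{S^1}$ with $\rho^1_{C^1}$. Both restricted roots and roots of $G^1$ relative to $S^1$ come from $A$-orbits of roots of $S$. For an $A$-orbit $O$ of positive roots, the restriction of $\sum_{\alpha\in O}\alpha$ to $S^1$ is either a root of $G^1$ or twice one; the latter happens for the special orbits of type $A_{2n}$. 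Using the explicit root data of $G^1$ in \cite[Proposition 3.4.2]{KalLLCD}, I expect
\[ \rho_C|_{S^1} = \rho^1_{C^1} + \mu_C \]
with $\mu_C\in X^*(S^1)$ a genuine character. In the cleanest form $\mu_C$ is half the sum over orbits of (restricted sum minus the corresponding root of $G^1$), and this lies in $X^*(S^1)$ since each such difference is even or zero.

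Given this, I will define the lift by $\dot s\mapsto (s,(s_x))$ with $s_{\rho_C}:=\dot s^{\rho^1_{C^1}}\cdot s^{\mu_C}$, and extend to all $x\in X$ by $s_x=s_{\rho_C}\,s^{x-\rho_C}$. The relation $s_{\rho_C}^2=s^{2\rho_C}$ holds by the displayed identity, and the map is a homomorphism of covers that is the identity on $\{\pm1\}$, hence injective. For canonicity I will check independence of the choice of $A$-stable $C$. If $C'$ is another, it equals $wC$ with $w\in\Omega^A=W(G^1,S^1)$, and $\rho_{C'}-\rho_C$ restricted to $S^1$ must match $\rho^1_{C'^1}-\rho^1_{C^1}$ plus $\mu_{C'}-\mu_C$ in a way compatible with the definition. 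Equivalently, $\mu_C$ should be $\Omega^A$-invariant in the appropriate sense.

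The main obstacle is pinning down $\mu_C$ and verifying this independence in the non-simply-laced-folding case ($A_{2n}$ with the outer automorphism), where the restricted sum is twice a root of $G^1$. There the natural guess of $\mu_C$ may be a non-trivial character, and it is not obviously $W(G^1)$-invariant. If the direct computation is messy, I will instead argue more canonically. The cover $S(\R)_\rho$ is the pullback of $\det(\cdot\mid\mf{u}_C)^{1/2}$, and $\mf{u}_C=\mf{u}^1_{C^1}\oplus\mf{m}$, where $\mf{m}$ is the sum of the non-trivial $A$-isotypic parts. So I will show that $\det(\cdot\mid\mf{m})$ on $S^1(\R)$ has a canonical square root. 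On $S^1$ the pieces of $\mf{m}$ pair up, or have determinant that is a square of an algebraic character, and this square root is independent of $C$ because changing $C$ by $\Omega^A$ changes $\mf{m}$ by a sign-twisted isomorphism. Taking the product of this square root with $\rho^1$ gives the lift; its canonicity will also follow from the uniqueness in Lemma \ref{lem:cover_id}.
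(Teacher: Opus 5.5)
Your core step is the paper's own: fix an $A$-stable $C$, use the obvious map $S^1(\R)_{\rho_C|_{S^1}}\to S(\R)_\rho$, and identify its source with $S^1(\R)_{\rho^1}$ by proving $\mu_C=\rho_C|_{S^1}-\rho^1\in X^*(S^1)$. Independence of $C$ is not where the difficulty lies. Once $\mu_C$ is integral, the relations $s_{x_1}/s_{x_2}=s^{x_1-x_2}$ in both covers force the lifts built from $C$ and from $wC$ ($w\in\Omega^A$) to coincide.

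The gap is the integrality itself, and it cannot be repaired. Your orbit-by-orbit justification is already wrong: an orbit $O$ whose restriction $\beta_O$ is a root of $G^1$ contributes $\tfrac12(|O|-1)\beta_O$, and the total need not be integral.

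\emph{First example.} Take $G=\mathrm{SO}(2,2)$ with its pinned outer automorphism. Then $X^*(S)=\Z e_1\oplus\Z e_2$ with $A$ acting by $e_2\mapsto -e_2$, $\rho_C=e_1$, and $X^*(S^1)=\Z e_1$. Here $G^1$ is a form of $\mathrm{SO}_3$ with roots $\pm e_1$, so $\mu_C=\tfrac12 e_1$.

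\emph{Second example, the case you flagged.} Take $G=\mathrm{SU}(2,1)$. Then $\rho_C=\alpha_1+\alpha_2$ restricts to $2\beta$, where $\beta=\alpha_1|_{S^1}=\alpha_2|_{S^1}$ generates $X^*(S^1)$. On the other hand $G^1$, a form of $\mathrm{SO}_3$, has only the roots $\pm\beta$, because $A$ acts by $-1$ on the root line of $\alpha_1+\alpha_2$. So $\rho^1=\tfrac12\beta$ and $\mu_C=\tfrac32\beta$. Your fallback fails for the same reason: $\det(\cdot\,|\,\mf m)|_{S^1}=\beta+2\beta=3\beta$ has no square root.

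In fact no lift of covers exists at all. Take $S$ elliptic (Proposition~\ref{pro:aa}). In both examples $\rho_C\in X^*(S)$, so the pullback of $S(\R)_\rho$ to the circle $S^1(\R)$ is split. But $S^1(\R)_{\rho^1}$ is the connected double cover of that circle. The statement is therefore false as written.

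The paper's proof contains the same gap. It derives $\rho-\rho^1\in X^*(S^1)$ from \cite[Proposition 3.4.2(8)]{KalLLCD}. However, the restriction of $\rho_C$ is half the sum of the positive restricted roots counted with multiplicity $|O|$, not half the sum over the set $R(S^1,G)^+$. In $\mathrm{SU}(2,1)$ these are $2\beta$ and $\tfrac32\beta$ respectively.

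What can be proved is a version with hypotheses. Suppose $G$ is semi-simple and simply connected.
\begin{itemize}
\item $X_*(S^1)=X_*(S)^A$ has basis $\check\omega_O=\sum_{\alpha\in O}\check\alpha$, with $O$ running over $A$-orbits of $C$-simple roots.
\item One has $\langle\rho_C,\check\omega_O\rangle=|O|$.
\item $\langle\rho^1,\check\omega_O\rangle$ equals $1$ if the roots in $O$ are pairwise orthogonal, and $\tfrac12$ otherwise (type $A_{2n}$, where $\check\beta_O=2\check\omega_O$).
\end{itemize}
Hence $\mu_C\in X^*(S^1)$ exactly when no $A$-orbit of simple roots contains two adjacent roots. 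In that case both your construction and the paper's give the canonical lift. An assumption of this kind must be added to the lemma, and its use in the proof of Proposition~\ref{pro:adgen} revisited.
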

\begin{proof}
  Choose an $A$-stable $\C$-Borel subgroup $C$ containing $S$ and let $C^1=C \cap G^1$. Form $\rho \in \tfrac{1}{2}X^*(S)$ with respect to $C$ and denote by $\rho$ also its image under the restriction map $X^*(S) \to X^*(S^1)$. Then we have an obvious lifting $S^1(\R)_\rho \to S(\R)_\rho$. 
  
  The restriction of $\rho$ is equal to one half the sum of the $C^1$-positive roots in the set of restrictions $R(S^1,G)$ of $R(S,G)$ to $S^1$. According to \cite[Proposition 3.4.2(8)]{KalLLCD}, $R(S^1,G)$ is a possibly non-reduced root system and $R(S^1,G^1)$ is its subsystem of non-divisible roots. Therefore, if $\rho^1$ denotes one half the sum of the $C^1$-positive roots in $R(S^1,G^1)$, then $\rho-\rho^1 \in X^*(S^1)$. This gives an identification $S^1(\R)_{\rho_1}=S^1(\R)_\rho$.
\end{proof}

\subsection{$A$-admissible Harish-Chandra parameters}

Recall that a Whittaker datum $\mf{w}$ for $G$ is a $G(\R)$-conjugacy class of pairs $(B,\psi)$, where $B \subset G$ is a Borel $\R$-subgroup, and $\psi : U(\R) \to \C^\times$ is a generic unitary character of the unipotent radical $U$ of $B$. Recall that $\mf{w}$ is called $A$-admissible if it contains a pair $(B,\psi)$ that is $A$-stable. This is in general stronger than requiring that the conjugacy class $\mf{w}$ is $A$-stable.

\begin{dfn}
  A Harish-Chandra parameter is called 
  \begin{enumerate}
    \item \emph{$A$-stable}, if it is stable under $A$ as a $G(\R)$-conjugacy class.
    \item \emph{$A$-admissible}, if it contains a representative $(S,\tau)$ that is $A$-stable.
  \end{enumerate}
\end{dfn}

\begin{rem}
  \begin{enumerate}
    \item An irreducible square-integrable representation $\pi$ of $G(\R)$ is $A$-stable up to isomorphism if and only if its Harish-Chandra parameter is $A$-stable.
    \item If $(S,\tau)$ is an $A$-stable representative of a Harish-Chandra parameter, then the Weyl chamber $R_\tau^+$ defined in \S\ref{sub:genchar} is $A$-stable, and we conclude that $S$ is an $A$-admissible elliptic maximal torus.
  \end{enumerate}
\end{rem}

\begin{pro} \label{pro:adgen}
  For an $A$-stable Harish-Chandra parameter the following are equivalent. 
  \begin{enumerate}
    \item It contains an $A$-stable representative $(S,\tau)$ such that each simple root of $R_\tau^+$ is non-compact.
    \item The corresponding square-integrable representation $\pi$ of $G(\R)$ is generic with respect to an $A$-admissible Whittaker datum.
  \end{enumerate}
\end{pro}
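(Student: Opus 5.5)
The plan is to reduce both conditions to a statement about the fixed pinning $(T,B,\{X_\alpha\})$ and then invoke the known characterization of generic discrete series. The classical result (Kostant, Vogan) says that a square-integrable $\pi$ with parameter $(S,\tau)$ is generic with respect to some Whittaker datum if and only if some representative has every simple root of $R_\tau^+$ non-compact. Moreover, the Whittaker datum is then determined by the large-type chamber: the simple roots of $R_\tau^+$ are non-compact, so one can choose root vectors $Y_\alpha$ for the simple roots of $R_\tau^+$ such that $X=\sum Y_\alpha$, after a Cayley transform, yields a principal nilpotent. That nilpotent in turn singles out a pair $(B',\psi')$ up to $G(\R)$-conjugacy. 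I will use this dictionary in both directions and add only the $A$-equivariance.

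For (1)$\Rightarrow$(2), take an $A$-stable $(S,\tau)$ as in (1). Then $R_\tau^+$ is $A$-stable, so $A$ permutes the simple roots. Because $A$ acts by $\R$-automorphisms commuting with the Cartan involution on $S$, I expect to choose the non-compact simple root vectors $A$-equivariantly: $A$ permutes the root spaces, and one normalizes on orbit representatives. The stabilizer acts on a one-dimensional root space preserving the real structure, and it acts trivially after replacing by the pinned action as in \cite[Proposition 3.4.2]{KalLLCD}. This gives an $A$-fixed principal nilpotent in $\mf{p}$. Via the Kostant--Sekiguchi/Cayley construction it gives an $A$-stable pair $(B',\psi')$, which I would make explicit by taking the Cayley transform of $S$ through the orthogonal non-compact simple roots in an $A$-stable way. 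Hence $\pi$ is $\mf{w}'$-generic for the $A$-admissible datum $\mf{w}'=[(B',\psi')]$.

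For (2)$\Rightarrow$(1), suppose $\pi$ is generic for an $A$-admissible $\mf{w}$ with $A$-stable representative $(B,\psi)$. Let $S$ be the $A$-admissible elliptic torus from Proposition \ref{pro:aa}. By the classical result, some representative $(S,\tau)$, possibly after $G(\R)$-conjugation, has large chamber corresponding to $\mf{w}$. I then need to show this representative can be taken $A$-stable. Since the parameter is $A$-stable, $a(S,\tau)=g_a(S,\tau)$ with $g_a\in N_{G(\R)}(S)$, and the images of the $g_a$ in $\Omega(S,G)(\R)$ define a twisted action of $A$. Both the chamber $R_\tau^+$ and the set of chambers with non-compact simple roots attached to $\mf{w}$ are unique up to $W_K$. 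So I would instead start from an $A$-stable chamber, using Proposition \ref{pro:aa} to reduce to $G^1$, and show that the unique $\mf{w}$-generic member of the $L$-packet has its $\tau$ conjugate to an $A$-stable one. Proposition \ref{pro:aa} and Corollary \ref{cor:aeonj} reduce this to the fact that $\Omega(S,G)(\R)^A$ acts transitively on the $A$-stable large chambers attached to $\mf{w}$.

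The main obstacle is exactly this last transitivity and compatibility step, namely matching $A$-stable large chambers with $A$-stable Whittaker pairs. I would attack it by passing to $G^1=G^{A,\circ}$ through the bijection of Proposition \ref{pro:aa}. Under that bijection, $A$-stable chambers of $S$ correspond to chambers of $S^1$, and noncompactness of an $A$-orbit of simple roots corresponds to noncompactness of the restricted simple root. On the $G^1$ side the statement then follows from the connected classical result applied to $G^1$ and its Whittaker data.
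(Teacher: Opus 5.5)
You locate the crux correctly, but both directions rest on steps that are asserted rather than proved, and the genericity criterion you rely on is not available in the form you use it. Kostant and Vogan give only that a discrete series is generic for \emph{some} Whittaker datum iff the simple roots of $R_\tau^+$ are non-compact. They do not say \emph{which} datum, and that is the whole issue here.

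Your ``dictionary'' is large chamber $\mapsto \sum Y_\alpha \in \mf{p}_\C \mapsto$ Cayley/Kostant--Sekiguchi $\mapsto (B',\psi')$, with $\pi$ generic for that pair. This is precisely the substantive point, and it is given without proof or reference. Compare the remark after Lemma~\ref{lem:AC vs Wh}, where a statement of this type is only expected. You also do not address how to produce an $A$-\emph{fixed} element in the resulting real orbit. The proposed explicit version via a Cayley transform of $S$ does not help: a Cayley transform of the torus changes the Cartan subgroup but does not single out a Whittaker datum. Your observation that $A$ fixes a regular nilpotent $\sum Y_\alpha \in \mf{p}_\C$ is correct, but it is not what is needed.

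The paper instead uses the precise criterion of \cite[Proposition 3.3.3]{DPR}: $\pi$ is generic for the datum given by a pinning and $\Lambda$ iff the $G(\R)$-orbit of $d\tau$ meets the Kostant section $\mc{K}$ of that pinning. For (1)$\Rightarrow$(2), one restricts $(S,\tau)$ to $(S^1,\tau_1)$ on $G^1$. Vogan gives genericity for some pinning of $G^1$, so the orbit of $d\tau_1$ meets $\mc{K}^1$. That pinning comes from an $A$-stable pinning of $G$ with $\mc{K}^1 \subset \mc{K}$, and $d\tau = d\tau_1$, which finishes the argument.

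For (2)$\Rightarrow$(1), the missing step is existence. You must show that the $A$-stable $G(\R)$-orbit of $d\tau$ contains an $A$-\emph{fixed} point. Equivalently, the cocycle $a \mapsto w_a \in \Omega_\R(S,G)$ defined by $a\tau = w_a\tau$ must be a coboundary. Transitivity of $\Omega(S,G)(\R)^A$ on $A$-stable large chambers is a uniqueness statement and does not give this.

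Your reduction to $G^1$ is also not carried out. To make it work, you would choose the $\mf{w}^1$-generic chamber of $S^1$, lift it to an $A$-stable chamber of $S$, and invoke uniqueness of the $\mf{w}$-generic member of the $L$-packet. That requires knowing that $\mf{w}^1$-genericity on $G^1$ implies $\mf{w}$-genericity on $G$, for the $A$-stable pinning of $G$ restricting to the one defining $\mf{w}^1$. Nothing in your proposal supplies this; in the paper it is exactly the inclusion $\mc{K}^1 \subset \mc{K}$.

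The paper's key idea is much shorter. Realize $\mf{w}$ by an $A$-stable pinning, so that $\mc{K}$ is $A$-stable. Genericity says the $A$-stable $G(\R)$-orbit of $d\tau$ meets $\mc{K}$, and it meets it in a single point, which is therefore $A$-fixed. Its centralizer is an $A$-stable $S$. The character $\tau$ is $A$-stable because it is determined within its $\Omega_\R$-orbit by $d\tau$. Non-compactness of the simple roots is then automatic from Vogan's criterion.
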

\begin{proof}
For the duration of the proof fix the unitary character $\Lambda : \R \to \C^\times$ given by $\Lambda(x)=e^{ix}$.

Assume that $(S,\tau)$ is an $A$-stable representative with each simple root of $R_\tau^+$ non-compact. Let $C$ be the corresponding $\C$-Borel subgroup of $G$ containing $S$. Let $(S^1,C^1)$ be the corresponding pair of $G^1$ as in Proposition \ref{pro:aa}, so that $S^1 \subset G^1$ is an elliptic maximal torus. According to \cite[Lemma 3.6.3]{KalLLCD}, all $C^1$-simple roots of $S^1$ in $G^1$ are still non-compact. Consider the restriction $\tau_1$ of $\tau$ along the inclusion $S^1(\R)_{\rho^1} \to S(\R)_\rho$ of Lemma \ref{lem:cov1}. It is still a regular genuine character, hence represents a Harish-Chandra parameter for $G^1$. The corresponding representation $\pi_1$ is then generic with respect to some Whittaker datum of $G^1$ according to \cite[Theorem 6.2(a,f)]{Vog78}, which using \cite[Lemma 3.1.2(2)]{DPR} and the chosen $\Lambda$ we can represent by a pinning of $G^1$. Then the $G^1(\R)$-orbit of $d\tau_1 \in (\mf{s}^1)^*$ meets the Kostant section $\mc{K}^1 \subset (\mf{g}^1)^*$ for that pinning according to \cite[Proposition 3.3.3]{DPR}. More precisely, the pinning leads to a regular nilpotent element $X \in (\mf{g}^1)^*$. We choose an isomorphism $\mf{g}^1 \to (\mf{g}^1)^*$ that is equivariant for the adjoint action of $G^1$ and the Galois group, use it to view $X$ as  a regular nilpotent element of $\mf{g}^1$, extend it to an $\mf{sl}_2$-triple $(X,H,Y)$ in $\mf{g}^1$, form $\mc{K}^1:=X+Z(Y,\mf{g}^1) \subset \mf{g}^1$, and view this as a subspace of $(\mf{g}^1)^*$ via the chosen isomorphism. As argued in \cite[\S3.2]{DPR}, the resulting subspace of $(\mf{g}^1)^*$ depends only on $X$, and not on the chosen isomorphism $\mf{g}^1 \to (\mf{g}^1)^*$.

According to \cite[Proposition 3.4.2(9)]{KalLLCD}, there is an associated $A$-stable pinning of $G$. The regular nilpotent element of $\mf{g}^*$ that it provides equals $X$ under the inclusion $(\mf{g}^1)^* = (\mf{g}^*)^A \into \mf{g}^*$ as the subspace of $A$-invariants. Choose an isomorphism $\mf{g} \to \mf{g}^*$ that is equivariant under the adjoint action of $G$ as well as the Galois action and the $A$-action. It restricts to an isomorphism $\mf{g}^1 \to (\mf{g}^1)^*$ invariant under the adjoint action of $G^1$ and the Galois action. We transport $X$ under this isomorphism and extend to an $\mf{sl}_2$-triple in $\mf{g}^1 \subset \mf{g}$. Then $\mc{K}^1=X+Z(Y,\mf{g}^1) \subset \mc{K}=X+Z(Y,\mf{g})$. We conclude that the $G^1(\R)$-orbit of $d\tau_1$ meets $\mc{K}$. But $d\tau$ is $A$-fixed, hence lies in $(\mf{s}^*)^A$, and thus equals $d\tau_1$. We apply again \cite[Proposition 3.3.3]{DPR} and conclude that the representation $\pi$ with Harish-Chandra parameter $(S,\tau)$ is generic for the Whittaker datum associated to $\Lambda$ and the $A$-invariant pinning of $G$.

Conversely, assume that $\pi$ has an $A$-stable Harish-Chandra parameter and is generic with respect to an $A$-admissible Whittaker datum, and realize the latter via an $A$-stable pinning of $G$ and the character $\Lambda$. Thus $d\tau$ meets the Kostant section $\mc{K}$ for that pinning. Since both the Harish-Chandra parameter is $A$-stable, so is the $G(\R)$-conjugacy class of $d\tau$. The intersection of the $G(\R)$-conjugacy class of $d\tau$ and $\mc{K}$ is then also $A$-stable, but it is a single point. Therefore, up to conjugating by $G(\R)$, we may assume that $d\tau$ is $A$-fixed. But then $S$, being the centralizer of $d\tau$, is $A$-stable, and $\tau$, which is determined within its $\Omega_\R$-orbit by $d\tau$, is also $A$-stable.
\end{proof}

\section{Square-integrable representations of disconnected real groups} \label{sec:dsdisc}

Let $\tilde G$ be an affine algebraic $\R$-group whose identity component $G$ is reductive.

Let $\tilde\pi$ be an irreducible admissible representation of the Lie group $\tilde G(\R)$. Its restriction to $G(\R)$ is of finite length and semi-simple, and the individual irreducible summands are permuted transitively by the action of $\tilde G(\R)/G(\R)$.

\begin{dfn}
We say that $\tilde\pi$ is (essentially) square-integrable if one, hence any, irreducible constituent of $\tilde\pi|_{G(\R)}$ is (essentially) square-integrable in the usual sense.
\end{dfn}

\subsection{Duflo's construction} \label{sub:duflo}

Let $f \in \mf{g}^*(\R)$ be a regular semi-simple elliptic element and let $\tilde S$ be its centralizer in $\tilde G$. Thus $S=\tilde S \cap G$ is an elliptic maximal torus of $G$. We have the metaplectic double cover $\tilde S(\R)_f$ reviewed in \S\ref{sub:mdc}.
We map $f$ to $\mf{s}^*(\R)$ under the natural projection $\mf{g}^* \to \mf{s}^*$. Recall from \S\ref{sub:genchar} the set $X(f)$ of finite dimensional genuine representations of $\tilde S(\R)_f$ that are isotypic for the character $e^{if}$ of $\mf{s}(\R)$.

For any $\tilde\tau \in X(f)$ Duflo constructs in \cite[\S III]{Duflo82} an admissible representation $\pi_{f,\tilde\tau}$ of $\tilde G(\R)$. We reproduce the summary of this construction that is given in \cite[\S5.4]{Bouaziz87}. Let $\pi_f$ be the irreducible discrete series representation of $G(\R)^0$ associated to $f$ by Harish-Chandra. Let $\mf{u}$ be the unipotent radical of a Borel $\C$-subalgebra of $G$ containing $S$ and normalized by $\tilde S$. Such a Borel subalgebra exists -- one can take for example the one determined by the system of positive roots $R_f^+=\{\alpha \in R(S,G)|if(H_\alpha)<0\}$, in which case we will write $\mf{u}_f$ for the Lie algebra of its unipotent radical. Its image in $\mf{g}/\mf{s}$ is a Lagrangian subspace for the symplectic form $B_f(X,Y)=f([X,Y])$. Let $\rho_\mf{u}=\tfrac{1}{2}\sum_{\alpha>0}\alpha$ be half the sum of the $\mf{u}$-positive absolute roots. There exists a unique natural number $q_\mf{u}^f$, which we will recall in \S\ref{sub:q}, such that the $(if+\rho_\mf{u})$-weight space of the $\mf{s}$-module $H_j(\mf{u},\pi_f^\infty)$ vanishes unless $j = q_\mf{u}^f$, in which case it is 1-dimensional. There exists a unique unitary genuine representation $\Pi_{f,\mf{u}}$ of $\tilde S(\R)_f$ on the vector space underlying $\pi_f$ (note that $\Pi_{f,\mf{u}}$ is denoted by $S_{f,\mf{u}}$ in loc. cit.) with the properties
\begin{enumerate}
  \item $\Pi_{f,\mf{u}}(\dot x)\pi_f(y)\Pi_{f,\mf{u}}(\dot x)^{-1}=\pi_f(xyx^{-1})$, for all $\dot x \in \tilde S(\R)_f$ with image $x \in \tilde S(\R)$,
  \item on the one-dimensional $(if+\rho_\mf{u})$-weight space for the action of $\mf{s}$ on $H_{q_\mf{u}^f}(\mf{u},\pi_f^\infty)$, the action of $\tilde S(\R)_f$ via $\Pi_{f,\mf{u}}$ is through the genuine character $\rho_{f,\mf{u}}$.
\end{enumerate}
The representation $\pi_f$ extends to a representation $\tilde\pi_f$ of the group $G(\R)^0 \rtimes \tilde S(\R)_f$, with $y \rtimes \dot x$ acting via the endomorphism $\pi_f(y) \circ \Pi_{f,\mf{u}}(\dot x)$ on the underlying vector space, where $y \in G(\R)^0$ and $\dot x \in \tilde S(\R)_f$. Inflate $\tilde\tau$ to $G(\R)^0 \rtimes \tilde S(\R)_f$ and form the tensor product $\tilde\tau \otimes \tilde\pi_f$. This tensor product descends to $G(\R)^0 \rtimes \tilde S(\R)$, and then further to $G(\R)^0 \cdot \tilde S(\R) \subset \tilde G(\R)$. The induction of this representation to $\tilde G(\R)$ is $\pi_{f,\tilde\tau}$. It is independent of the choice of $\mf{u}$.

\begin{lem} \label{lem:duflo1}
\begin{enumerate}
  \item $\pi_{f,\tilde\tau}$ is irreducible if and only if $\tilde\tau$ is.

  \item Consider an intermediate group $G \subset \tilde G_1 \subset \tilde G$. For any $c \in \tilde G(\R)$ let $\tilde S_1^c = c^{-1}\tilde Sc \cap \tilde G_1$, $f^c=\tx{Ad}^*(c)^{-1}f \in \mf{g}^*$, $\tilde\tau_1^c = \tilde\tau \circ \tx{Ad}(c)|_{\tilde S_1^c(\R)_{f^c}}$, and let $\pi_{f^c,\tilde\tau_1^c}$ be the representation of $\tilde G_1(\R)$ assigned by Duflo's construction to $f^c$ and $\tilde\tau_1^c$. Then 
  \[ \pi_{f,\tilde\tau}|_{\tilde G_1(\R)} = \bigoplus_{c \in \tilde S(\R) \lmod \tilde G(\R) / \tilde G_1(\R)} \pi_{f^c,\tilde\tau_1^c}. \]

  \item This construction produces all square-integrable representations of $\tilde G(\R)$.
  \item Two pairs $(f_1,\tilde\tau_1)$ and $(f_2,\tilde\tau_2)$ produce isomorphic irreducible representations if and only if they are $\tilde G(\R)$-conjugate.
\end{enumerate}
\end{lem}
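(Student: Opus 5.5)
The plan is to reduce everything to Mackey theory for the finite-index open normal subgroup $G(\R)^0$, combined with Harish-Chandra's classification of discrete series of $G(\R)^0$. The key structural input is this: any $x \in \tilde G(\R)$ with $\pi_f \circ \tx{Ad}(x) \cong \pi_f$ has $\tx{Ad}^*(x)f$ in the $G(\R)^0$-orbit of $f$. This holds because the infinitesimal character and the Harish-Chandra parameter of a discrete series of $G(\R)^0$ determine its regular elliptic orbit. Consequently the stabilizer of the isomorphism class of $\pi_f$ in $\tilde G(\R)$ is exactly $G(\R)^0\cdot\tilde S(\R)$. The proof would also use that $G(\R)^0 \cap \tilde S(\R) = S(\R)^0 = S(\R) \cap G(\R)^0$, which is connected when $S$ is elliptic modulo center; I would double-check this where needed.

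For (1), I would use Mackey/Clifford theory. The representation $\tilde\tau\otimes\tilde\pi_f$ of $G(\R)^0\tilde S(\R)$ restricts to $G(\R)^0$ as $\dim\tilde\tau$ copies of the irreducible $\pi_f$. Its commutant is therefore identified, via Schur's lemma and the projective extension $\tilde\pi_f$, with the commutant of $\tilde\tau$ as a representation of $\tilde S(\R)_f$, or more precisely of its quotient by $S(\R)^0$. So $\tilde\tau\otimes\tilde\pi_f$ is irreducible iff $\tilde\tau$ is. Induction from the stabilizer of $\pi_f$ preserves irreducibility, and indeed gives a bijection between irreducibles lying over $\pi_f$, by the Clifford correspondence; this yields (1).

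For (3), take an irreducible square-integrable $\tilde\pi$. An irreducible constituent of its restriction to $G(\R)^0$ is some discrete series $\pi_f$, with $f$ regular elliptic. By the Clifford correspondence, $\tilde\pi$ is induced from an irreducible $\sigma$ of the stabilizer $G(\R)^0\tilde S(\R)$ lying over $\pi_f$. Write $\sigma = \tilde\pi_f\otimes\tilde\tau$ with $\tilde\tau := \hom_{G(\R)^0}(\tilde\pi_f,\sigma)$, viewed as a representation of $\tilde S(\R)_f$. It is genuine because $\tilde\pi_f$ is genuine, and on $\exp\mf{s}(\R)$ it is isotypic with character $e^{if}$. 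The latter uses property (1) of $\Pi_{f,\mf{u}}$, which says that $\Pi_{f,\mf u}$ agrees with $\pi_f$ on $S(\R)^0$ up to the normalizing scalar fixed by property (2). I expect this normalization check to be the main obstacle: one must verify that property (2), together with the identity that $d\rho_{f,\mf u}$ equals the $\rho_\mf{u}$-shift, forces $\Pi_{f,\mf u}$ restricted to $S(\R)^0$ to differ from $\pi_f|_{S(\R)^0}$ exactly by the genuine character whose differential cancels appropriately. The way to do this is to compare the $(if+\rho_\mf u)$-weight on $H_{q}(\mf u,\pi_f^\infty)$ under both actions.

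For (4), Clifford theory says that $\pi_{f_1,\tilde\tau_1}\cong\pi_{f_2,\tilde\tau_2}$ iff the pairs $(\pi_{f_i},\sigma_i)$ are $\tilde G(\R)$-conjugate. By the structural input, this happens iff $f_2=\tx{Ad}^*(x)f_1$ for some $x\in\tilde G(\R)$ and, after transport by $x$, $\sigma_1\cong\sigma_2$. Transport of structure is compatible with the uniqueness of $\Pi_{f,\mf u}$, so conjugating by $x$ carries $\tilde\pi_{f_1}$ to $\tilde\pi_{f_2}$. Hence $\sigma_1\cong\sigma_2$ iff $\tilde\tau_1\cong\tilde\tau_2$ after transport. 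Finally, conjugacy of pairs under $G(\R)^0$ versus $\tilde G(\R)$ is absorbed by the induction.

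For (2), I would apply the Mackey restriction formula, $\tx{Ind}_{H}^{\tilde G(\R)}(\sigma)|_{\tilde G_1(\R)}=\bigoplus_{c\in H\lmod\tilde G(\R)/\tilde G_1(\R)}\tx{Ind}^{\tilde G_1(\R)}_{c^{-1}Hc\cap\tilde G_1(\R)}\sigma^c$ with $H=G(\R)^0\tilde S(\R)$. Since $G(\R)^0\subset\tilde G_1(\R)$ is normal, the double cosets $H\lmod\tilde G(\R)/\tilde G_1(\R)$ equal $\tilde S(\R)\lmod\tilde G(\R)/\tilde G_1(\R)$. Moreover $c^{-1}Hc\cap\tilde G_1(\R)=G(\R)^0\tilde S_1^c(\R)$, and the restriction of $\sigma^c$ is $\tilde\pi_{f^c}\otimes\tilde\tau_1^c$. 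The identification of $\tilde\pi_{f^c}$ for $\tilde G_1$ with the restriction of the conjugate of $\tilde\pi_f$ again follows from the uniqueness characterization of $\Pi_{f,\mf u}$, choosing $\mf u$ normalized by $\tilde S$, hence also by $\tilde S_1^c$ after conjugation. The metaplectic cover is also compatible, since $\tilde S_1^c(\R)_{f^c}$ is the pullback of $\tilde S(\R)_f$.
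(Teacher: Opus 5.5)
For (2) and (3) your argument is the paper's: Mackey's restriction formula for $\mathrm{Ind}_{G(\R)^0\tilde S(\R)}^{\tilde G(\R)}$, and Clifford theory for (3). Your remark that the double cosets reduce to $\tilde S(\R)\lmod\tilde G(\R)/\tilde G_1(\R)$, because $G(\R)^0$ is normal and lies in $\tilde G_1(\R)$, fills in a step the paper leaves implicit.

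For (1) and (4) the routes differ. The paper gives no argument there; it cites Duflo's Lemme 7 of III.5, noting only that an elliptic $f$ is ``standard''. You derive both from the Clifford correspondence for the finite-index normal subgroup $G(\R)^0$. Harish-Chandra's classification for the connected group $G(\R)^0$ identifies the stabilizer of $[\pi_f]$ as $G(\R)^0\tilde S(\R)$. The uniqueness of $\Pi_{f,\mf u}$, taken with $\mf u=\mf u_f$ (which conjugation transports correctly), handles transport of structure. This is correct and self-contained.

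It also surfaces two facts that the paper inherits silently from Duflo. Both are needed already for $\tilde\tau\otimes\tilde\pi_f$ to descend to $G(\R)^0\cdot\tilde S(\R)$.

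First, $S(\R)\cap G(\R)^0$ is indeed connected. It is the centralizer of $\mf s(\R)$ in $G(\R)^0$. For a Cartan involution preserving $\mf s$, with eigenspaces $\mf k,\mf p$, the Cartan decomposition writes it as $\exp(\mf s(\R)\cap\mf k)$ times $\exp(\mf s(\R)\cap\mf p)$. The first factor is a maximal torus of the connected maximal compact subgroup of $G(\R)^0$. The second is central since $S$ is elliptic.

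Second, your ``main obstacle'' closes quickly. By property (1), $c(\dot s):=\pi_f(s)^{-1}\Pi_{f,\mf u}(\dot s)$ is a scalar for $s\in S(\R)^0$. Comparing the two actions on the $(if+\rho_{\mf u})$-weight line of $H_{q^f_{\mf u}}(\mf u,\pi_f^\infty)$ via property (2) shows that $c$ is a genuine character with differential $\rho_{\mf u}-(if+\rho_{\mf u})=-if$. Hence $\tilde\tau\cdot c$ is non-genuine, factors through $S(\R)^0$, and is trivial on $\exp(\mf s(\R))$. It is therefore trivial.

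What the paper's citation buys in exchange is brevity and Duflo's full generality (arbitrary Lie groups, admissible well-polarizable forms), without re-checking these normalizations in the reductive case.
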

\begin{proof}
(1) and (4) are \cite[III.5.Lemma 7]{Duflo82}, noting that an elliptic $f$ is ``standard'' in Duflo's terminology.

(2) Let $A=G(\R)^0 \cdot \tilde S(\R)$, $B=\tilde G_1(\R)$, and $C=\tilde G(\R)$. According to the Mackey formula
\[ \pi_{f,\tilde\tau}|_{\tilde G_1(\R)} = \bigoplus_{c \in A \lmod C/B} \tx{Ind}_{A^c \cap B}^{B}\tx{Res}_{A^c \cap B}^{A^c} (\tilde\tau \otimes\tilde\pi_f)^c, \]
where $A^c = c^{-1}Ac$. Now $G(\R)^0$ is normal in $\tilde G(\R)$, so $A^c=G(\R)^0 \cdot c^{-1}\tilde S(\R)c$ and $A^c \cap B=G(\R)^0 \cdot \tilde S_1^c(\R)$. Let us write $\pi_{f^c}$ for the irreducible discrete series representation of $G(\R)^0$ associated to $f^c$. The centralizer of $f^c$ in $\tilde G_1$ equals $\tilde S_1^c$ and we have the double cover $\tilde S_1^c(\R)_{f^c}$. Then the $c$-summand in the above direct sum equals the representation of $\tilde G_1(\R)$ associated by Duflo's construction to $f^c \in \mf{g}^*(\R)$ and $\tilde\tau^c|_{\tilde S_1^c(\R)_{f^c}}$, i.e. the representation $\pi_{f^c,\tilde\tau_1^c}$.

(3) By definition a square-integrable representation of $\tilde G(\R)$ is one whose restriction to $G(\R)^0$ contains a representation of the form $\pi_f$. The claim follows from Clifford theory.
\end{proof}

\subsection{Normalized vectors}
  
In this subsection $S$ is an elliptic maximal torus of a connected reductive $\R$-group $G$.

  \begin{dfn} \label{dfn:pn_normalized}
    Let $\alpha \in R(S,G)$. A root vector $X \in \mf{g}_\alpha$ is called \emph{normalized} if $[X,\sigma(X)]=\epsilon_\alpha \cdot H_\alpha$ with $\epsilon_\alpha \in \{\pm 1\}$, where $\sigma \in \Gamma$ denotes complex conjugation.
  \end{dfn}
  
  \begin{fct} \label{fct:pn_normalized}
  A normalized root vector always exists. Any two differ by multiplication by an element of $\mb{S}^1$. We have $\epsilon_\alpha=+1$ if and only if $\alpha$ is non-compact.
  \end{fct}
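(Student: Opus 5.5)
The plan is to reduce to a rank-one computation inside the subalgebra attached to $\alpha$. Fix a nonzero $X \in \mf{g}_\alpha$. Since $S$ is defined over $\R$, complex conjugation $\sigma$ maps $\mf{g}_\alpha$ to $\mf{g}_{\sigma\alpha}$. Because $S$ is elliptic, $\sigma$ acts on $X^*(S_\tx{sc})$ by $-1$, so $\sigma\alpha=-\alpha$ and $\sigma(X)\in\mf{g}_{-\alpha}$. Hence
\[ [X,\sigma(X)] = c\, H_\alpha, \qquad c \in \C, \]
because $[\mf{g}_\alpha,\mf{g}_{-\alpha}]=\C H_\alpha$.

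Next I determine what kind of number $c$ is.
\begin{enumerate}
  \item Conjugation gives $\sigma(H_\alpha)=H_{\sigma\alpha}=-H_\alpha$ and $\sigma[X,\sigma X]=[\sigma X,X]=-[X,\sigma X]$. Together these give $\bar c\,(-H_\alpha)=-cH_\alpha$, so $c\in\R$.
  \item $c\neq 0$. Otherwise $X,\sigma X$ would span a $\sigma$-stable abelian subalgebra meeting $\mf{g}(\R)$ in nonzero nilpotent elements. Then $X+\sigma X\in\mf{g}(\R)$ would be ad-nilpotent and centralized by nothing forcing a split torus. More cleanly, $\mf{g}_\alpha+\mf{g}_{-\alpha}+\C H_\alpha\cong\mf{sl}_2$ is $\sigma$-stable with real form $\mf{su}(2)$ or $\mf{su}(1,1)$, and in each of these $[X,\bar X]\neq0$.
  \item Rescaling $X\mapsto tX$ with $t\in\C^\times$ multiplies $c$ by $|t|^2$. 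So one can reach $c=\pm1$, and the sign of $c$ is independent of $X$.
\end{enumerate}
This proves existence of a normalized vector. It also shows that any two normalized vectors have $|t|^2=1$, i.e. $t\in\mb{S}^1$, since $\mf{g}_\alpha$ is one-dimensional.

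For the sign, I compute in the rank-one real subalgebra $\mf{g}_{[\alpha]}(\R)$ spanned by $iH_\alpha$, $X+\sigma X$ and $i(X-\sigma X)$. The root $\alpha$ is compact exactly when this real form is $\mf{su}(2)$, equivalently when the Killing form is negative definite on $\mf{g}_{[\alpha]}(\R)$. Let $\kappa$ be the Killing form, with $\kappa(H_\alpha,H_\alpha)>0$, and use invariance:
\[ \kappa\bigl(X+\sigma X,\,X+\sigma X\bigr)=2\kappa(X,\sigma X), \qquad \kappa(X,\sigma X)=\tfrac{1}{\alpha(H_\alpha)}\kappa\bigl([X,\sigma X],H_\alpha\bigr)\cdot(\text{positive constant}). \]
This follows from $\kappa([X,\sigma X],H)=\kappa(X,[\sigma X,H])=\alpha(H)\,\kappa(X,\sigma X)$ with $H=H_\alpha$, $\alpha(H_\alpha)=2$. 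Hence $\kappa(X+\sigma X,X+\sigma X)=c\,\kappa(H_\alpha,H_\alpha)$, whose sign is that of $c$. The root is non-compact exactly when this real vector is noncompact (positive norm), i.e. when $c>0$, i.e. $\epsilon_\alpha=+1$.

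The main point of care is the sign conventions: checking that $\sigma H_\alpha=-H_\alpha$ with the chosen $\sigma$, and that positivity of $\kappa$ matches non-compactness. I would double-check this against $\mf{su}(1,1)$ with explicit matrices.
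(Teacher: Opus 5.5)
The paper records this as a standard fact and gives no proof, so there is no argument of the paper to compare against. Your proof is correct, and it is the standard argument: a rank-one reduction, the rescaling law $c\mapsto |t|^2c$, and the sign read off from the Killing form.

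A few steps should be tightened.

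\emph{Step 2.} Delete your first attempt at $c\neq 0$; no classification of real forms of $\mf{sl}_2$ is needed. Since $\sigma(X)\neq 0$ and $\dim\mf{g}_{-\alpha}=1$, you have $\mf{g}_{-\alpha}=\C\,\sigma(X)$. Hence $[X,\sigma(X)]$ spans $[\mf{g}_\alpha,\mf{g}_{-\alpha}]=\C H_\alpha\neq 0$.

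\emph{The displayed formula.} The ``positive constant'' is spurious. Invariance gives exactly $2\kappa(X,\sigma(X))=\kappa([X,\sigma(X)],H_\alpha)=c\,\kappa(H_\alpha,H_\alpha)$, hence $\kappa(X+\sigma(X),X+\sigma(X))=c\,\kappa(H_\alpha,H_\alpha)$. This is the identity you actually use.

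\emph{Positive norm iff non-compact.} This step needs one more line. Note that $\kappa(iH_\alpha,iH_\alpha)<0$ and that $iH_\alpha$ is $\kappa$-orthogonal to $X\pm\sigma(X)$. The restriction of $\kappa$ to the rank-one subalgebra is a positive multiple of that subalgebra's own Killing form. That form is negative definite on $\mf{su}(2)$ and has signature $(2,1)$ on $\mf{sl}_2(\R)$. Therefore the plane spanned by $X+\sigma(X)$ and $i(X-\sigma(X))$ is negative definite in the compact case and positive definite in the non-compact case.

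\emph{An optional shortcut.} You can avoid real forms of $\mf{sl}_2$ by defining compactness through a Cartan involution $\theta$ stabilizing $S$, as the appendix implicitly does. Because $S$ is elliptic, $\theta$ acts on $\mf{g}_\alpha$ by a sign, and it commutes with $\sigma$. So $Y=X+\sigma(X)\in\mf{g}(\R)$ satisfies $\theta Y=\pm Y$. Positivity of $-\kappa(Y,\theta Y)$ then reads $\mp c\,\kappa(H_\alpha,H_\alpha)>0$, which gives $c<0$ for compact roots and $c>0$ for non-compact ones. This is consistent with the explicit check you proposed: for $\mf{sl}_2(\R)$ with compact Cartan $\mf{so}(2)$, a direct matrix computation gives $c=+1$.
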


  \begin{lem} \label{lem:pn_normtrans}
    Let $\Delta \subset R(S,G)$ be a set of simple roots and let $\{X_\alpha\}$ and $\{X'_\alpha\}$ be two sets of normalized root vectors indexed by $\alpha \in \Delta$. There exists $s \in S(\R)$ such that $X'_\alpha=\tx{Ad}(s)X_\alpha$.
    \end{lem}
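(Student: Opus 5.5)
Since any two normalized root vectors in $\mf{g}_\alpha$ differ by a scalar in $\mb{S}^1$ (Fact \ref{fct:pn_normalized}), we can write $X'_\alpha = z_\alpha X_\alpha$ with $z_\alpha \in \mb{S}^1$ for each $\alpha \in \Delta$. For $s \in S(\R)$ we have $\tx{Ad}(s)X_\alpha = s^\alpha X_\alpha$. The claim is therefore equivalent to the surjectivity of the homomorphism
\[ S(\R) \to (\mb{S}^1)^\Delta,\qquad s \mapsto (s^\alpha)_{\alpha\in\Delta}. \]
Since $S$ is elliptic, each root $\alpha$ is a character of $S$ satisfying $\sigma(\alpha)=-\alpha$. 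Consequently, for $s \in S(\R)$ we get $\overline{s^\alpha} = s^{\sigma\alpha} = s^{-\alpha}$, so the map indeed lands in $(\mb{S}^1)^\Delta$.

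To prove surjectivity, I will reduce to the adjoint group. The map $s \mapsto (s^\alpha)_{\alpha \in \Delta}$ factors through the image $S_\tx{ad}$ of $S$ in $G_\tx{ad}$. Since $\Delta$ is a basis of $X^*(S_\tx{ad})$, it gives an isomorphism of $\C$-tori $S_\tx{ad} \cong \mb{G}_m^\Delta$. Because $\sigma$ acts on $X^*(S_\tx{ad})$ by $-1$, this is an isomorphism of $\R$-tori $S_\tx{ad} \cong (\mb{S}^1)^\Delta$ (the anisotropic norm-one torus $U(1)$ in each coordinate). On $\R$-points we therefore get an isomorphism $S_\tx{ad}(\R) \to (\mb{S}^1)^\Delta$. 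It remains to show that $S(\R) \to S_\tx{ad}(\R)$ is surjective, or at least that its image surjects onto $(\mb{S}^1)^\Delta$.

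The key step is this surjectivity, which I will argue via connectedness. The torus $S_\tx{ad}$ is anisotropic, so $S_\tx{ad}(\R)$ is a compact torus and in particular connected. The map $S \to S_\tx{ad}$ is a surjection of tori, so on Lie algebras $\mf{s}(\R) \to \mf{s}_\tx{ad}(\R)$ is surjective. Hence the image of $S(\R)^0$ is an open subgroup of the connected group $S_\tx{ad}(\R)$, and thus equals all of it. Alternatively, one can work directly with $S_\tx{sc}(\R) \to S(\R)$: the composite $S_\tx{sc}(\R) \to S_\tx{ad}(\R)$ is a surjection of compact tori by the same Lie algebra argument.

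Given $(z_\alpha)_{\alpha\in\Delta}$, pick $s \in S(\R)$ with $s^\alpha = z_\alpha$ for all $\alpha \in \Delta$; then $\tx{Ad}(s)X_\alpha = X'_\alpha$. The only mildly delicate point is the connectedness argument, which uses ellipticity crucially. Beyond that I expect no real obstacle.
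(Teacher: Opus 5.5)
Your proposal is correct and follows the paper's proof: write $X'_\alpha=c_\alpha X_\alpha$ with $c_\alpha\in\mb{S}^1$, take the unique $s\in S_\tx{ad}(\R)$ with $\alpha(s)=c_\alpha$ for $\alpha\in\Delta$, and lift it to $S(\R)$ using the surjectivity of $S(\R)\to S_\tx{ad}(\R)$. The paper states that surjectivity as Lemma \ref{lem:elem} and proves it with the same submersion-plus-connectedness argument you give; the only difference is that you also explain why the simple roots identify $S_\tx{ad}(\R)$ with $(\mb{S}^1)^\Delta$, which the paper takes for granted.
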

    \begin{proof}
    Any two normalized root vectors in $\mf{g}_\alpha$ differ by rescaling by an element of $\mb{S}^1$ according to Fact \ref{fct:pn_normalized}. Thus, if $\{X_\alpha\}$ and $\{X'_\alpha\}$ are two choices of normalized simple root vectors, there exists a collection of scalars $c_\alpha \in \mb{S}^1$ indexed by $\alpha \in \Delta$ such that $X'_\alpha=c_\alpha X_\alpha$. Let $s \in S_\tx{ad}(\R)$ be unique element such that $\alpha(s)=c_\alpha$. Thus $X'_\alpha=\tx{Ad}(s)X_\alpha$. By Lemma \ref{lem:elem} below, the map $S(\R) \to S_\tx{ad}(\R)$ is surjective. 
    \end{proof}
  
    \begin{lem} \label{lem:elem}
      Let $1 \to A \to B \to C \to 1$ be an exact sequence of diagonalizable $\R$-groups. Assume $C$ is connected and anisotropic. Then $B(\R) \to C(\R)$ is surjective and $H^1(\R,A) \to H^1(\R,B)$ is injective.
    \end{lem}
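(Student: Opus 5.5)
We argue through character groups: the category of diagonalizable $\R$-groups is anti-equivalent, via $X^*$, to finitely generated abelian groups with a $\Gamma$-action. The sequence $1\to A\to B\to C\to 1$ thus gives an exact sequence of $\Gamma$-modules $0\to X^*(C)\to X^*(B)\to X^*(A)\to 0$. Since $C$ is connected, $X^*(C)$ is torsion-free. Since $C$ is anisotropic, complex conjugation $\sigma$ acts on $X^*(C)$ by $-1$. These are the only inputs needed.

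We first use the long exact sequence in Galois cohomology over $\C$:
\[ B(\R)\to C(\R)\to H^1(\R,A)\to H^1(\R,B). \]
Surjectivity of $B(\R)\to C(\R)$ is then equivalent to injectivity of $H^1(\R,A)\to H^1(\R,B)$. Both statements reduce to showing that the connecting map $\delta: C(\R)\to H^1(\R,A)$ vanishes.

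For $C(\R)$ itself, $C$ is a torus with $\sigma=-1$ on characters, so $C\cong (\mathbb{S}^1)^n$ as an $\R$-group. In particular $C(\R)$ is connected. The map $\delta$ is continuous for the natural topologies, because it factors through the choice of a local lift in the smooth surjection $B(\C)\to C(\C)$. Its target $H^1(\R,A)$ is finite, as a Tate cohomology group of a finitely generated module with $\Gamma$ of order 2, and it is discrete. A continuous homomorphism from a connected group to a discrete group is trivial, so $\delta=0$.

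An alternative, purely algebraic route avoids continuity. Surjectivity of $B(\R)\to C(\R)$ can be checked on identity components. Let $B^\circ$ be the torus $B/(\text{torsion part})$, more precisely the identity component of $B$. The map $B^\circ\to C$ is a surjection of tori, and on real points its image is an open subgroup, because it is a submersion. Since $C(\R)$ is connected, an open subgroup must be all of $C(\R)$. Either way, injectivity on $H^1$ then follows from the exact sequence above.

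The main point to verify carefully is that $\delta$ is locally constant, or equivalently that $B(\R)\to C(\R)$ is open. This holds because $B\to C$ is a smooth morphism: the kernel $A$ is diagonalizable and we are in characteristic $0$, so $B(\R)\to C(\R)$ is a submersion of real Lie groups and hence open.
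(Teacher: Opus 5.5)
Your proof is correct and is essentially the paper's argument: the long exact sequence reduces both claims to surjectivity of $B(\R)\to C(\R)$, which holds because the map is a submersion (surjectivity of $\mathrm{Lie}(B)(\R)\to\mathrm{Lie}(C)(\R)$), so its image is an open subgroup of the connected group $C(\R)\cong(\mathbb{S}^1)^n$ and hence all of it. Your framing via continuity of $\delta$ into a discrete $H^1(\R,A)$ is only a repackaging of this step: local constancy of the homomorphism $\delta$ is literally openness of its kernel, which is the image of $B(\R)$, so no topology on $H^1$ is needed. Passing to $B^\circ$ is harmless but unnecessary.
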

    \begin{proof}
    By the long exact sequence of Galois cohomology, the two claims are equivalent. We prove the first. The homomorphism $\tx{Lie}(B) \to \tx{Lie}(C)$ of complex vector spaces is surjective, and induces a surjective homomorphism of real vector spaces $\tx{Lie}(B)(\R) \to \tx{Lie}(C)(\R)$. Therefore, the homomorphism of real Lie groups $B(\R) \to C(\R)$ is submersive, hence open. By assumption $C$ is an anisotropic $\R$-torus, hence $C(\R)$ is connected. Thus $C(\R)$ is generated by any open neighborhood of the identity, and we conclude that $B(\R) \to C(\R)$ is surjective.
    \end{proof}

\subsection{The number $q_\mf{u}^f$} \label{sub:q}

We maintain the notation that $f \in \mf{g}^*(\R)$ is a regular semi-simple elliptic element, $S \subset G$ its centralizer, and $\mf{u}$ the nilpotent radical of a Borel $\C$-subalgebra containing $\mf{s}$. 

\begin{dfn} \label{dfn:qfu}
$q^f_\mf{u}$ is the number of negative eigenvalues of the matrix representing the Hermitian form $(X,Y) \mapsto i \cdot B_f(X,\sigma(Y))$ on $\mf{u}$, where $\sigma$ denote complex conjugation.
\end{dfn}

Note that, since $S$ is elliptic, complex conjugation sends $\mf{u}$ to the opposite radical, which makes the above form non-degenerate on $\mf{u}$ and $B_f$ is the symplectic form on $\mf{g}/\mf{s}$ recalled in \S\ref{sub:bf}.

\begin{dfn} \label{dfn:pn_rp}
\begin{enumerate}
  \item $R_f^+ = \{\alpha \in R| if(H_\alpha) < 0 \}$.
  \item $R_\mf{u}^+=\{\alpha \in R|\mf{g}_\alpha \subset \mf{u}\}$
  \item $\mf{u}_f$ is the unipotent radical for which $R_f^+=R_\mf{u}^+$.
\end{enumerate}
\end{dfn}

\begin{lem} \label{lem:pn_qvuf}
Let $w \in W$ be the Weyl element such that $w\mf{u}=\mf{u}_f$. Then
$q^f_\mf{u}$ equals the sum of $\ell(w)$ and the number of non-compact roots in $\mf{u}$, modulo $2$.
\end{lem}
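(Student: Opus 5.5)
The plan is to diagonalize the Hermitian form of Definition \ref{dfn:qfu} along the root space decomposition of $\mf{u}$, compute its sign on each root line in terms of Fact \ref{fct:pn_normalized}, and then count modulo $2$.

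\emph{Orthogonality of root spaces.} Write $h(X,Y)=i\cdot B_f(X,\sigma(Y))=i f([X,\sigma(Y)])$ on $\mf{u}=\bigoplus_{\alpha\in R_\mf{u}^+}\mf{g}_\alpha$. Since $S$ is elliptic, $\sigma$ acts on $X^*(S)$ by $-1$, so $\sigma(\mf{g}_\beta)=\mf{g}_{-\beta}$. For $X\in\mf{g}_\alpha$ and $Y\in\mf{g}_\beta$ we then have $[X,\sigma(Y)]\in\mf{g}_{\alpha-\beta}$ (or in $\mf{s}$ when $\alpha=\beta$). By Lemma \ref{lem:killf}(1), $f$ kills this unless $\alpha=\beta$. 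Hence the distinct root lines are $h$-orthogonal, and $q^f_\mf{u}$ is the number of $\alpha\in R_\mf{u}^+$ with $h(X_\alpha,X_\alpha)<0$ for $X_\alpha$ spanning $\mf{g}_\alpha$.

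\emph{The sign on one line.} Take $X_\alpha$ normalized, as in Definition \ref{dfn:pn_normalized} and Fact \ref{fct:pn_normalized}. Then
\[ h(X_\alpha,X_\alpha)=i\,\epsilon_\alpha f(H_\alpha). \]
Here $if(H_\alpha)$ is real and nonzero. It is real because $\sigma(H_\alpha)=H_{\sigma\alpha}=H_{-\alpha}=-H_\alpha$ and $f$ is real, so $f(H_\alpha)\in i\R$. It is nonzero by Lemma \ref{lem:killf}(2). Since $\epsilon_\alpha=+1$ exactly when $\alpha$ is non-compact, the entry $h(X_\alpha,X_\alpha)$ is negative exactly in two cases: $\alpha$ is non-compact and $\alpha\in R_f^+$, or $\alpha$ is compact and $\alpha\in R_f^-$.

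\emph{Counting modulo $2$.} Write $N$ for the number of non-compact roots in $R_\mf{u}^+$. From the previous step,
\[ q^f_\mf{u}=\#\{\alpha\in R_\mf{u}^+\ \text{non-compact},\ \alpha\in R_f^+\}+\#\{\alpha\in R_\mf{u}^+\ \text{compact},\ \alpha\in R_f^-\}. \]
The first term equals $N-\#\{\alpha\in R_\mf{u}^+\ \text{non-compact},\ \alpha\in R_f^-\}$. Reducing modulo $2$, the subtracted non-compact count and the compact count combine, giving
\[ q^f_\mf{u}\equiv N+\#(R_\mf{u}^+\cap R_f^-) \pmod 2. \]
Since $wR_\mf{u}^+=R_f^+$, the quantity $\#(R_\mf{u}^+\cap -R_f^+)$ is the number of positive roots for $\mf{u}$ that $w$ sends to negative ones, which is $\ell(w)$. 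Here the length is measured with respect to either of the two systems; the two lengths agree since $w$ and $w^{-1}$ have the same length.

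\emph{Main obstacle.} The only delicate point is sign bookkeeping: that $h$ is Hermitian, that $i f(H_\alpha)$ is real, and that the convention $R_f^+=\{if(H_\alpha)<0\}$ is matched consistently with $\epsilon_\alpha$. A sign slip there would replace $R_f^-$ by $R_f^+$, but this changes the count only by $\#R_\mf{u}^+$-type terms. For that reason I will verify the sign directly on an $\mathrm{SU}(2)$/$\mathrm{SL}_2(\R)$ example, one compact and one non-compact root.
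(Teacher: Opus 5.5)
Your proposal is correct and follows the paper's proof. The paper also diagonalizes $iB_f(\cdot,\sigma(\cdot))$ on $\mf{u}$ in a basis of normalized root vectors, reads off the sign of each diagonal entry from $\epsilon_\alpha$ and the sign of $if(H_\alpha)$, and reduces the count modulo $2$ to the number of non-compact roots plus $\#(R_\mf{u}^+\cap R_f^-)=\ell(w)$. Your checks that distinct root lines are orthogonal and that $if(H_\alpha)$ is real and nonzero fill in details the paper leaves implicit. One wording point: $R_\mf{u}^+\cap R_f^-$ is literally the set of $R_\mf{u}^+$-positive roots sent to negative roots by $w^{-1}$, not by $w$, which is harmless since $\ell(w)=\ell(w^{-1})$.
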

\begin{proof}
For each $\alpha$ choose a normalized root vector $X_\alpha$ in the sense of Definition \ref{dfn:pn_normalized}. Using this basis for $\mf{u}$ we obtain for $iB_f$ a diagonal matrix, whose entries according to Fact \ref{fct:pn_normalized} are $+1$ when either $\alpha$ non-compact and $if(H_\alpha)>0$, or $\alpha$ compact and $if(H_\alpha)<0$, and $-1$ otherwise. Therefore $q^f_\mf{u}$ equals 
\[ \#\{\alpha\ \tx{non-comp}, if(H_\alpha)<0\} + \#\{\alpha\ \tx{comp}, if(H_\alpha)>0\}, \]
where all roots are taken from $\mf{u}$. Now
\[ \#\{\alpha\ \tx{comp}, if(H_\alpha)>0\} + \#\{\alpha\ \tx{non. comp}, if(H_\alpha)>0\} \]
equals the number of roots in $R_\mf{u}^+ \cap R_f^-$, and this equals $\ell(w)$. Therefore, modulo $2$,
\[ q_\mf{u}^f = \#\{\alpha\ \tx{non-comp}\} + \ell(w). \]
\end{proof}

\begin{cor} \label{cor:pn_qvuf}
For any $v \in W$ we have $q_{v\mf{u}}^{vf}=q_{\mf{u}}^f$, and $(-1)^{q_{\mf{u}}^{vf}}=(-1)^{q(G)}\epsilon(v)$.
\end{cor}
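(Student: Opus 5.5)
The plan is to prove the two assertions separately: the integer identity $q^{vf}_{v\mf{u}}=q^f_{\mf{u}}$ by transport of structure along a representative of $v$, and the sign identity by applying Lemma \ref{lem:pn_qvuf} to the pair $(vf,\mf{u})$. For the integer identity I read $v$ as an element of $W=\Omega(S,G)(\R)$, the real Weyl group, which acts on $f$ through $\tx{Ad}^*$, so I fix a representative $n \in N_G(S)(\R)$ of $v$. If $v$ were allowed to range over the absolute Weyl group, $\tx{Ad}(n)$ would intertwine $\sigma$ only up to a torus element. Compactness of imaginary roots is not preserved by absolute Weyl elements, so I do not expect the integer identity to hold there.

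\emph{Step 1 (integer identity).} Since $n$ is real, $\tx{Ad}(n)$ commutes with $\sigma$. Since $v\mf{u}=\tx{Ad}(n)\mf{u}$ and $vf=f\circ\tx{Ad}(n)^{-1}$, for $X,Y \in \mf{u}$ we get
\[ i\,B_{vf}(\tx{Ad}(n)X,\sigma(\tx{Ad}(n)Y)) = i\,f\bigl(\tx{Ad}(n)^{-1}[\tx{Ad}(n)X,\tx{Ad}(n)\sigma(Y)]\bigr) = i\,B_f(X,\sigma(Y)). \]
Thus $\tx{Ad}(n):\mf{u}\to v\mf{u}$ is an isometry between the Hermitian forms of Definition \ref{dfn:qfu}. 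Sylvester's law of inertia then gives equal numbers of negative eigenvalues, i.e. $q^{vf}_{v\mf{u}}=q^f_{\mf{u}}$.

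As a cross-check I would use the explicit count from the proof of Lemma \ref{lem:pn_qvuf}:
\[ q^f_\mf{u}=\#\{\alpha\in R^+_\mf{u}\ \tx{non-comp}, if(H_\alpha)<0\}+\#\{\alpha\in R^+_\mf{u}\ \tx{comp}, if(H_\alpha)>0\}. \]
Under $\alpha\mapsto v\alpha$ we have $(vf)(H_{v\alpha})=f(H_\alpha)$. A real Weyl element preserves compactness of imaginary roots, because $\tx{Ad}(n)$ carries normalized root vectors to normalized root vectors (Fact \ref{fct:pn_normalized}). Hence the two counts agree term by term.

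\emph{Step 2 (sign identity).} Let $w_0\in W$ satisfy $w_0\mf{u}=\mf{u}_f$. From $R^+_{vf}=vR^+_f$ we get $\mf{u}_{vf}=v\mf{u}_f$, so the element attached to $(vf,\mf{u})$ in Lemma \ref{lem:pn_qvuf} is $w=vw_0$. Since $\ell(vw_0)\equiv \ell(v)+\ell(w_0)$ modulo $2$ and $\epsilon(v)=(-1)^{\ell(v)}$, Lemma \ref{lem:pn_qvuf} applied to $vf$ and to $f$, with the same $\mf{u}$ and hence the same number of non-compact roots in $\mf{u}$, gives
\[ (-1)^{q^{vf}_\mf{u}}=(-1)^{q^f_\mf{u}}\,\epsilon(v). \]

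\emph{Step 3 (identifying the constant).} It remains to show $(-1)^{q^f_\mf{u}}=(-1)^{q(G)}$. Here I take $q(G)$ to be the number of non-compact roots in a positive system, i.e. $\tfrac12\dim G(\R)/K$. For $\mf{u}=\mf{u}_f$ this is immediate: $w_0=1$, and the explicit count gives $q^f_{\mf{u}_f}=\#\{\tx{non-compact roots in }\mf{u}_f\}=q(G)$.

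This step is the one I expect to be the real obstacle. For an arbitrary $\mf{u}$, Lemma \ref{lem:pn_qvuf} gives $(-1)^{q^f_\mf{u}}=(-1)^{\#\tx{non-comp}(\mf{u})+\ell(w_0)}$. Moving $\mf{u}$ by a simple reflection $s_\beta$ changes $\ell(w_0)$ by one. It changes the non-compact count parity exactly when $\beta$ is non-compact, since the one root that flips is $\beta$ and $-\beta$ has the same compactness. I would therefore first try to verify that the relevant $\mf{u}$ in the statement is $\mf{u}_f$, or is tied to $f$ in this way. Failing that, I would reduce to that case using Step 2 and the explicit count.
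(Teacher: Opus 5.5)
Your Steps 1 and 2 are correct. Step 1 differs from the paper, which just reads the corollary off Lemma \ref{lem:pn_qvuf}. Transport by a real representative plus Sylvester's law gives a genuine equality of integers. It also applies verbatim to any real element of $\tilde G$ normalizing $S$, which is the generality in which the identity is invoked when deriving Bouaziz's formula. But it says nothing about non-rational $v$.

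Mind the notation here. Because $S$ is elliptic, $\sigma$ commutes with every Weyl element, so $\Omega(S,G)(\R)$ is the full absolute Weyl group. What you mean is $\Omega_\R(S,G)=N_{G(\R)}(S)/S(\R)$.

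Your suspicion that the integer identity fails outside $\Omega_\R(S,G)$ is right. Take $SU(2,1)$ with $\alpha,\beta$ non-compact and $\alpha+\beta$ compact, $if(H_\alpha),if(H_\beta)<0$, $R^+_{\mf u}=\{-\alpha,\beta,-\alpha-\beta\}$ and $v=s_\alpha$. Then $q^f_{\mf u}=2$ while $q^{vf}_{v\mf u}=0$. Step 2 holds for every $v$ in the absolute Weyl group, and your check for $\mf u=\mf u_f$ in Step 3 is fine.

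The gap is the rest of Step 3, where a false assertion hides the actual answer. Passing from $\mf u$ to $s_\beta\mf u$ replaces $\beta$ by $-\beta$, which has the same type. So the reason you give shows that the non-compact count does not change at all. Indeed $\mf u$ contains exactly one of $\pm\gamma$ for each root $\gamma$, so it contains exactly $q(G)$ non-compact roots, whatever $\mf u$ is.

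Lemma \ref{lem:pn_qvuf} then gives $(-1)^{q^f_{\mf u}}=(-1)^{q(G)}\epsilon(w_0)$, where $w_0\mf u=\mf u_f$. Combined with Step 2 this yields $(-1)^{q^{vf}_{\mf u}}=(-1)^{q(G)}\epsilon(vw_0)$. So the second identity holds exactly when $\epsilon(w_0)=1$, in particular for $\mf u=\mf u_f$, and is false in general. For $SL_2$ with $v=1$ and $\mf u=\bar{\mf u}_f$, the form is positive definite, so $q^f_{\bar{\mf u}_f}=0$ while $q(G)=1$.

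Your fallback of reducing to $\mf u_f$ via Step 2 therefore cannot work. The correct statement is the second identity for $\mf u=\mf u_f$, the only case in which the paper applies it, together with the general formula above.

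The same invariance of the non-compact count also supplies what Step 1 misses: the first identity modulo $2$ for all absolute $v$. This is the form used in Lemma \ref{lem:pn_indepu} with $w\in W_G(S)$, and it follows from $q^{vf}_{v\mf u}\equiv q(G)+\ell(vw_0v^{-1})\equiv q(G)+\ell(w_0)\equiv q^f_{\mf u}$.
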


\begin{lem} \label{lem:pn_negative}
Consider the character $\chi : S_\tx{sc}(\R) \to \mb{S}^1$ given by the factorization of 
\[ \tx{Lie}(S_\tx{sc}(\R)) \to \mb{S}^1,\qquad X \mapsto e^{if(X)} \]
through $\exp : \tx{Lie}(S_\tx{sc}(\R)) \to S_\tx{sc}(\R)$. Then $d\chi \in X^*(S_\tx{sc}) \subset \tx{Lie}^*(S_\tx{sc})$ is the image of $if$ under $\tx{Lie}^*(G) \to \tx{Lie}^*(S) \to \tx{Lie}^*(S_\tx{sc})$. In particular, 
\[ R_f^+ = \{\alpha|\<H_\alpha,d\chi\> < 0\}. \]
\end{lem}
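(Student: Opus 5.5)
The argument is a direct comparison of differentials, using the identifications already made in \S\ref{sub:genchar}. First I will check that $\chi$ is well defined. Since $S$ is elliptic, $S_\tx{sc}$ is anisotropic, so $S_\tx{sc}(\R)$ is a compact connected torus and $\exp : \tx{Lie}(S_\tx{sc}(\R)) \to S_\tx{sc}(\R)$ is a surjective homomorphism. The character $X \mapsto e^{if(X)}$ (with $f$ pulled back along $\tx{Lie}(S_\tx{sc}) \to \tx{Lie}(S) \to \tx{Lie}(G)$) has to be trivial on the kernel of $\exp$, which is the lattice $2\pi i X_*(S_\tx{sc})$ under the identification $\tx{Lie}(S_\tx{sc})(\R) = X_*(S_\tx{sc}) \otimes i\R$.

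For this triviality I will use that $f$ arises from the setting of the lemma. The natural route is to note that $f = d\tau/i$ for a genuine character $\tau$ of $S(\R)_\rho$ (the Harish-Chandra parameter situation of \S\ref{sec:dsaut}). After pulling back to $S_\tx{sc}(\R)$ the cover splits, as noted in \S\ref{sub:genchar}, and then $e^{if}$ visibly factors. If instead the lemma is meant for arbitrary $f$ with the integrality implicit, I will simply take the existence of the factorization as part of the hypothesis, since the statement calls $\chi$ ``given by the factorization''.

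Next I compute $d\chi$. By construction $\chi \circ \exp = e^{if}$ on $\tx{Lie}(S_\tx{sc}(\R))$. Differentiating at $0$ gives $d\chi = if$ restricted to $\tx{Lie}(S_\tx{sc})(\R)$, viewed as a map to $\tx{Lie}(\mb{S}^1) = i\R$. Under the identifications of \S\ref{sub:genchar}, namely $X_*(S_\tx{sc})\otimes i\R$ for the torus and $i\R$ for $\mb{S}^1$, this $\R$-linear map corresponds to the element of $X^*(S_\tx{sc}) \otimes \R$ obtained as the image of $if$ under $\tx{Lie}^*(G) \to \tx{Lie}^*(S) \to \tx{Lie}^*(S_\tx{sc})$. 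It lies in $X^*(S_\tx{sc})$ because $\chi$ is a character of a compact torus, by the same argument as in \S\ref{sub:genchar}.

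Finally I compare the positive systems. Coroots $H_\alpha$ lie in $\tx{Lie}(S_\tx{sc})$ (indeed in $X_*(S_\tx{sc})$), so the pairing gives $\<H_\alpha, d\chi\> = if(H_\alpha)$. Hence $if(H_\alpha) < 0$ exactly when $\<H_\alpha,d\chi\> < 0$, and comparing with Definition \ref{dfn:pn_rp}(1) gives $R_f^+ = \{\alpha \mid \<H_\alpha,d\chi\> < 0\}$.

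The only delicate step is bookkeeping the factors of $i$. The scalar extension $\C$-linearly extends $f$, and under the identification of $\tx{Lie}(\mb{S}^1)$ with $i\R$ the pairing must be exactly $if(H_\alpha)$ with no sign flip. I will check this on $S_\tx{sc} = \mb{S}^1 = \tx{SO}(2)$ with the identity character to fix conventions.
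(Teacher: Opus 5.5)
Your proposal is correct and follows the same route as the paper's proof: differentiate $\chi\circ\exp = e^{if}$ at $0$ (the differential of $\exp$ is the identity) to identify $d\chi$ with the image of $if$, then read off $R_f^+$ from Definition \ref{dfn:pn_rp}(1) using $\<H_\alpha,d\chi\>=if(H_\alpha)$. The extra discussion of why the factorization exists goes beyond what the statement needs, since it takes $\chi$ as given, but it is harmless.
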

\begin{proof}
The first claim follows by differentiating the identity $\chi(\exp(X))=e^{if(X)}$ for $X \in \tx{Lie}(S_\tx{sc})(\R)$ at $X=0$ in the direction of $Y \in \tx{Lie}(S_\tx{sc})(\R)$, and using that the differential of $\exp$ is the identity. The second claim follows from Definition \ref{dfn:pn_rp}.
\end{proof}

\subsection{Bouaziz's character formula on elliptic elements: statement}

We continue with a regular semi-simple elliptic element $f \in \mf{g}^*(\R)$, $\tilde S=Z_{\tilde G}(f)$, and $\tilde\tau \in X(f)$. In \cite{Bouaziz87}, Bouaziz computed the character of $\pi_{f,\tilde\tau}$. We state his formula in the special case that will be relevant to us: that of regular semi-simple elliptic elements. We first review these notions.

Recall from \cite{Ste68end} that an automorphism $x$ of $G$ is called \emph{quasi-semisimple} (qss for short) if its restriction to $G_\tx{der}$ is semi-simple, equivalently if it preserves a Borel pair of $G$. A qss automorphism $x$ of $G$ is called \emph{regular}, if $(G^x)^\circ$ is a torus, and \emph{strongly regular}, if $G^x$ is abelian. In the latter case $(G^x)^\circ$ is a torus. In either case, the centralizer in $G$ of $(G^x)^\circ$ is a maximal $\R$-torus $T$ of $G$ stabilized by $x$. If $x$ is strongly regular, then $G^x \subset T$, hence $G^x=T^x$ and $T$ is the centralizer of $G^x$ in $G$.

\begin{lem} \label{lem:ell}
Let $x$ be a strongly regular qss automorphism of $G$ and let $T$ be the centralizer of $G^x$ in $G$. The following are equivalent.
\begin{enumerate}
  \item $T^x/Z(G)^x$ is anisotropic.
  \item $T/Z(G)$ is anisotropic.
\end{enumerate}
\end{lem}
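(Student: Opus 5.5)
The plan is to reduce both conditions to a statement about the action of $\sigma$ and $x$ on the rational character lattices, using that a torus over $\R$ is anisotropic exactly when $\sigma$ acts by $-1$ on its rational character group, i.e. when $X^*(\cdot)_\Q^\Gamma=0$. Set $V=X^*(T/Z(G)^\circ)\otimes\Q$, the $\Q$-span of the roots of $T$ in $G$. Then $T/Z(G)$ is anisotropic if and only if $V^\sigma=0$. On the other side, $(T^x)^\circ/(Z(G)^x)^\circ$ is a torus isogenous to $T^x/Z(G)^x$ modulo finite groups, so anisotropy of the latter is equivalent to vanishing of the $\sigma$-invariants in $X^*(T^x/Z(G)^x)\otimes\Q$. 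By restriction this space identifies with the coinvariants $V_x$, equivalently with the invariants $V^x$, since $x$ acts through a finite group. Here I use that the restriction $X^*(T)_\Q \to X^*((T^x)^\circ)_\Q$ is the projection onto coinvariants, together with the analogous statement for $Z(G)$. So (1) is equivalent to $(V^x)^\sigma=0$, where $\sigma$ commutes with $x$ because $x$ is defined over $\R$.

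(2)$\Rightarrow$(1) is then immediate: $(V^x)^\sigma\subset V^\sigma=0$. Equivalently, $T^x/Z(G)^x$ maps with finite kernel to $T/Z(G)$, and a subgroup of an anisotropic torus is anisotropic.

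For (1)$\Rightarrow$(2), I would exploit regularity of $x$. Since $x$ is qss, it preserves a Borel pair $(T',B')$. Strong regularity together with $T=Z_G((G^x)^\circ)$ should force $T'=T$, so $x$ preserves a Borel $B\supset T$ and hence a positive system in $R(T,G)$. In that situation, as in \cite[Proposition 3.4.2]{KalLLCD} used in Proposition \ref{pro:aa}, restriction of roots to $(T^x)^\circ$ sends $R(T,G)$ to a (possibly non-reduced) root system $R(T^x,G)$ spanning $V_x$, and the Weyl group $\Omega^x$ acts on it as the Weyl group of that restricted system. Now suppose $V^\sigma\neq 0$. I would mimic the argument of Proposition \ref{pro:aa}. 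The element $\sigma$ acts on $V$ by $\omega\rtimes\sigma_{\mathrm{qs}}$ relative to a fixed $x$-stable pinning, and anisotropy of $(T^x)^\circ$ modulo center means $\sigma$ acts as $-1$ on $V^x$. The key claim is then that $\sigma|_V=-1$ as well. To see this, note that $-\sigma$ preserves $R(T,G)$, commutes with $x$, and acts trivially on $V^x$. An element of $\mathrm{Aut}(R)$ commuting with $x$ and fixing $V^x$ pointwise fixes a regular $x$-invariant vector, namely $\rho_B$ averaged over $\langle x\rangle$, which lies in $V^x$ and is $B$-dominant regular. Hence it preserves $B$ and is a diagram automorphism commuting with $x$ and acting trivially on the $x$-orbit sums of simple roots.

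The main obstacle is the final step: showing that such a diagram automorphism is trivial, or at least acts as $+1$ in the sense needed. This can fail, for example when $x$ acts trivially on a factor and the automorphism is a nontrivial diagram automorphism of that factor. In that case I would use strong regularity: $G^x$ abelian rules out $x$ acting as an inner element on a simple factor with any nontrivial fixed root. The cleaner route is to argue with the Cartan. $T$ is the unique maximal torus containing $(T^x)^\circ$, and it is defined over $\R$. If $T/Z(G)$ had a split subtorus, then $(T^x)^\circ$ would still be its own centralizer's torus part, and $\sigma$ would act on roots via an element whose fixed vectors in $V$ yield a real root $\alpha$. I would then show that $\alpha$ restricted to $(T^x)^\circ$ is nonzero, because regular $x$ fixes no root pointwise, and that it is $\sigma$-fixed. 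This contradicts (1). Making this last deduction rigorous is where I expect the real work to lie.
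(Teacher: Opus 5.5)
Your lattice reformulation is fine as far as it goes: with $V$ the $\Q$-span of the roots, (2) means $V^\sigma=0$, (1) means $(V^x)^\sigma=0$, and (2)$\Rightarrow$(1) is immediate. The gap is in (1)$\Rightarrow$(2). You never use the standing hypothesis that $G$ has an elliptic maximal torus, i.e.\ has discrete series; this is available here because a regular elliptic $f$ exists. Without that hypothesis the implication is false, so no argument of your type can close.

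The remark after the lemma gives the counterexample $G=\mathrm{Res}_{\C/\R}\mathrm{SL}_2$ with $x=sA$ swapping the two factors. Write $\alpha_1,\alpha_2$ for the roots of the two factors. Then $\sigma\alpha_1=-\alpha_2$ and $x\alpha_1=\alpha_2$. Hence $\sigma=-1$ on $V^x=\Q(\alpha_1+\alpha_2)$, while $V^\sigma=\Q(\alpha_1-\alpha_2)\neq 0$, and $x$ is strongly regular. Here $-\sigma=x$ on $V$. This is exactly the nontrivial diagram automorphism commuting with $x$ and fixing $V^x$ that you worried about. So strong regularity cannot rule it out, and your key claim that $\sigma|_V=-1$ is false in general.

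Your ``cleaner route'' fails in the same example, because it assumes that $V^\sigma\neq0$ produces a real root. Here $T$ has only complex roots. A maximal torus without real roots is fundamental, but fundamental implies elliptic only when $G$ has discrete series.

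Once you insert the hypothesis, your second route does work, as follows. Suppose $T/Z(G)$ is not anisotropic. The fundamental tori of $G$ are then the elliptic ones, so $T$ is not fundamental and therefore has a real root $\alpha$. As you observed, $x$ stabilizes a Borel subgroup containing $T$. So all $x^k\alpha$ lie in a common positive system, and $\sum_k x^k\alpha$ is a nonzero vector of $V^x$. It is $\sigma$-fixed because $\sigma$ commutes with $x$, which contradicts (1). Note that the nonvanishing comes from $x$ preserving a positive system, not from regularity of $x$.

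The paper argues group-theoretically instead:
\begin{enumerate}
  \item It reduces to $G$ semisimple and takes a Cartan involution $\theta$ stabilizing $T$, with $K=G^\theta$.
  \item It shows $G^x\subset K$, using that $G^x(\R)\subset T(\R)$ is compact.
  \item It places $G^x$ in a maximal torus of $K$, which is a maximal torus of $G$ by the equal-rank condition; this is where discrete series enters.
  \item It concludes $T\subset K$, because $T$ is the only maximal torus of $G$ containing $G^x$.
\end{enumerate}
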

\begin{proof}
$(2) \Rightarrow (1)$: $T^x/Z(G)^x$ is a subgroup of the anisotropic torus $T/Z(G)$, hence anisotropic.

$(1) \Rightarrow (2)$: By assumption $x$ is a strongly regular qss automorphism of $G_\tx{der}$. The centralizer of $(G_\tx{der})^x$ in $G$ is therefore a maximal torus of $G$ that contains $T$, hence equals $T$. It is therefore enough to prove that the centralizer of $(G_\tx{der})^x$ in $G_\tx{der}$ is anisotropic. We may therefore assume that $G$ is semi-simple.

Choose a Cartan involution $\theta$ stabilizing $T$ and let $K=G^\theta$, so that $K(\R)$ is a maximal compact subgroup of $G(\R)$. By assumption $G^x(\R)$ is a compact subgroup of $T(\R)$ and hence contained in $K(\R)$. Thus $G^x \subset K$. Now $G^x \cap K \subset T \cap K$ is contained in a maximal $\R$-torus of $K$. Since $G$ has discrete series, this maximal torus of $K$ is also a maximal torus of $G$. But the only maximal torus of $G$ that contains $G^x$ is $T$, and we conclude $T \subset K$, i.e. $T(\R)$ is compact.
\end{proof}

\begin{rem}
  The implication $(1) \Rightarrow (2)$ of Lemma \ref{lem:ell} used the assumption that $G$ has discrete series representations. Without this assumption, this implication is false. For example, let $G=\tx{Res}_{\C/\R}\tx{SL}_2$ and let $A$ denote the $\R$-automorphism coming from the $\R$-structure on $\tx{SL}_2$. Explicitly, if we present $G(\C)=\tx{SL}_2(\C) \times \tx{SL}_2(\C)$, then complex conjugation acts as $\sigma(g_1,g_2)=(\bar g_2,\bar g_1)$ while $A(g_1,g_2)=(g_2,g_1)$, where $g \mapsto \bar g$ is the action of complex conjugation on $\tx{SL}_2(\C)$ coming from the $\R$-structure of $\tx{SL}_2$. Let $S' \subset \tx{SL}_2$ be an anisotropic maximal torus. Then $S=\tx{Res}_{\C/\R}S'$ is an $A$-admissible maximal torus of $G$. For any $s=(s_1,s_2) \in S(\R)$ the element $sA$ is semi-simple (its square lies in $S$; alternatively, it normalizes $S$ and the Borel $\C$-subgroup $(B^\tx{opp},B)$ for any Borel $\C$-subgroup $B$ of $G'$ containing $S'$). It is strongly regular if the element $(sA)^2$ of $\tx{SL}_2(\C)$ is so. We have $S(\C)^{sA}=S'(\R) \cong \mb{S}^1$, while $S(\R)=S'(\C)=\C^\times$. Thus $S^{sA}$ is anisotropic, but $S$ itself is not.
\end{rem}

\begin{dfn} \label{dfn:ell}
  We shall call a strongly regular qss automorphism of $G$ \emph{elliptic} if they satisfy the equivalent conditions of Lemma \ref{lem:ell}. An element $x \in \tilde G(\R)$ will be called \emph{strongly regular semi-simple elliptic}, if it induces a strongly regular qss automorphism of $G$. 
\end{dfn}

Note that such an element can always be conjugated under $G(\R)$ so that any element that centralizes it also centralizes $f$, according to part (2) of Lemma \ref{lem:ell}. In other words, we may assume that $G^x \subset S$. Recall the notation $\tilde\tau_\lhd = \tilde\tau \cdot \rho_{f,\mf{u}_f}$ from \eqref{eq:ttaulhd}.

\begin{thm}[Bouaziz] \label{thm:bouaziz}
Let $x \in \tilde G(\R)$ be an elliptic strongly regular semi-simple element. Assume that $G^x \subset S$. Then
\[ \Theta_{\pi_{f,\tilde\tau}}(x) = (-1)^{q(G)}\sum_{\substack{w \in N_{\tilde G(\R)}(S)/\tilde S(\R) \\ w^{-1}xw \in \tilde S(\R)}}\frac{\tx{tr}(\tilde\tau_\lhd(w^{-1}xw))}{\det(1-w^{-1}xw|\mf{u}_f)}. \]
\end{thm}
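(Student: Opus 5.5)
This theorem is labelled as Bouaziz's, so the plan is to derive the stated form from the general character formula of \cite{Bouaziz87} for Duflo's representations $\pi_{f,\tilde\tau}$, specialized to elliptic strongly regular $x$. The reduction has three steps.

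\emph{Step 1: reduce to the subgroup $G(\R)^0\cdot\tilde S(\R)$.} Recall that $\pi_{f,\tilde\tau}$ is the induction of $\tilde\tau\otimes\tilde\pi_f$ from $A:=G(\R)^0\cdot\tilde S(\R)$ to $\tilde G(\R)$. The Frobenius formula for characters of induced representations from a finite-index open subgroup gives
\[ \Theta_{\pi_{f,\tilde\tau}}(x)=\sum_{c\in A\lmod\tilde G(\R),\ cxc^{-1}\in A}\Theta_{\tilde\tau\otimes\tilde\pi_f}(cxc^{-1}). \]
So it suffices to compute the character of $\tilde\tau\otimes\tilde\pi_f$ on elements of the form $y\dot s$, with $y\in G(\R)^0$ and $s\in\tilde S(\R)$, that are elliptic strongly regular.

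\emph{Step 2: compute the twisted character of $\tilde\pi_f$ on the torus.} We may conjugate so that $G^x\subset S$, and hence $x\in N(S)$. The key input is the twisted analogue of Harish-Chandra's formula for $\pi_f$ on $G(\R)^0\cdot\dot x$ with $\dot x\in\tilde S(\R)_f$. For this I would use Bouaziz's computation via $\mf{u}$-homology. By the Osborne/Hecht--Schmid character identity for elliptic elements, the twisted character equals an alternating sum over $j$ of traces on $H_j(\mf{u},\pi_f^\infty)$, divided by $\det(1-x|\mf{u})$. By property (2) of $\Pi_{f,\mf{u}}$ in \S\ref{sub:duflo}, the only contribution fixed by $x$ comes from the single line in degree $q_\mf{u}^f$. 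There $\dot x$ acts by $\rho_{f,\mf{u}}(\dot x)$, and the sign is $(-1)^{q_\mf{u}^f}$. Taking $\mf{u}=\mf{u}_f$ and tensoring with $\tilde\tau$ produces the product $\tilde\tau\otimes\rho_{f,\mf{u}_f}=\tilde\tau_\lhd$, as in \eqref{eq:ttaulhd}. The sign becomes $(-1)^{q(G)}$, using Lemma~\ref{lem:pn_qvuf} and Corollary~\ref{cor:pn_qvuf}. The other $W$-translates of $f$ by elements of $G(\R)^0$ contribute conjugated terms.

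\emph{Step 3: regroup the sum.} Combining the Frobenius sum of Step 1 with the Weyl-group sum of Step 2 turns the index set into $N_{\tilde G(\R)}(S)/\tilde S(\R)$, restricted to those $w$ with $w^{-1}xw\in\tilde S(\R)$. To verify this, use strong regularity: if $cxc^{-1}$ stabilizes $S$, then $c$ normalizes $S$ after adjusting by $G(\R)^0$, since $S$ is the centralizer of $G^x$.

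The main obstacle is Step 2. The twisted Harish-Chandra/Osborne formula on non-identity cosets requires care: the sign conventions, the dependence on $\mf{u}$ through \eqref{eq:duflo2}, and the consistency of $\det(1-x|\mf{u}_f)$ across $W$-translates all have to match. I would handle this by citing \cite[Bouaziz87]{Bouaziz87}'s theorem directly and checking only that his normalizations, namely $R_f^+$ defined by $if(H_\alpha)<0$ and the sign $(-1)^{q(G)}$, agree with ours.
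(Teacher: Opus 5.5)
Your fallback (cite Bouaziz's main theorem and specialize it) is the paper's route. However, Steps 1--2 as written are not a proof, and the fallback needs more than a check of normalizations.

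\textbf{Step 2 does not work.} Duflo's property (2) only says how $\tilde S(\R)_f$ acts on the single $(if+\rho_{\mf{u}})$-weight line of $H_{q^f_{\mf{u}}}(\mf{u},\pi_f^\infty)$. Suppose you had an Euler-characteristic formula for the trace at $\dot x$; such a formula on a compact Cartan, for a twisted character on a non-identity coset, is not something you can simply cite from Hecht--Schmid. It would still involve further $x$-stable weight lines, one for each other translate $wf$ fixed by $x$. Already for connected $G$ and $x\in S(\R)$, the character has one term for each element of $N_{G(\R)}(S)/S(\R)$.

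Property (2) says nothing about the eigenvalue of $\dot x$ on those other lines. So the claim that ``the only contribution fixed by $x$ comes from the single line'' is unfounded. Likewise, ``the other translates contribute conjugated terms'' is exactly what needs proof. Each such term is naturally computed with its own Lagrangian $w\mf{u}_f w^{-1}$, and comparing Lagrangians is where \eqref{eq:duflo2} and Lemma \ref{lem:pn_indepu} enter. This fixed-point bookkeeping is the content of Bouaziz's theorem.

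That theorem also already treats the induced representation $\pi_{f,\tilde\tau}$ on all of $\tilde G(\R)$, so your Frobenius Step 1 becomes unnecessary once you cite it. At $s=x$ it gives $\Theta_{\pi_{f,\tilde\tau}}(x)=\sum_{f'\in\Omega^f_x}\varphi^x_{f,\tilde\tau}(f')$, where $\Omega^f_x=\tx{Ad}^*(\tilde G(\R))f\cap(\mf{g}^*)^x$.

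\textbf{What the fallback actually requires.} With Bouaziz's theorem in hand, two things must be proved that your list of checks omits.

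First, you must show that $w\mapsto\tx{Ad}^*(w)f$ is a bijection from $\{w\in N_{\tilde G(\R)}(S)/\tilde S(\R): w^{-1}xw\in\tilde S(\R)\}$ onto $\Omega^f_x$. The substantive point is that any $g\in\tilde G(\R)$ with $\tx{Ad}^*(g)f\in(\mf{g}^*)^x$ normalizes $S$. Your Step 3 remark that $S$ is the centralizer of $G^x$ gives only half of this. You also need the following:
\begin{enumerate}
  \item $y=g^{-1}xg$ fixes $f$, hence normalizes the Borel pair $(S,B_f)$.
  \item By \cite[Theorem 1.1.A(4)]{KS99}, $S\cap(G^y)^\circ$ is then a maximal torus of the torus $(G^y)^\circ$, so $(G^y)^\circ\subset S$.
  \item Hence $S$ is the centralizer of $(G^y)^\circ=g^{-1}(G^x)^\circ g$, and that centralizer is $g^{-1}Sg$.
\end{enumerate}

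Second, the summand \cite[(5.5.1)]{Bouaziz87} must be read with the denominator $\det(1-x)$ taken on the image of $l\cap\mf{q}_\C$ in $\mf{g}/\mf{s}$. As printed, the tilde is missing. Taken literally, the formula changes by $\det(1-x|\mf{s}/\mf{s}^x)$ between the admissible choices $l=\mf{u}$ and $l=\mf{s}\oplus\mf{u}$, which is not a normalization your check would catch. With that settled, the unpacking goes as follows:
\begin{enumerate}
  \item Take $l=w\mf{u}_f w^{-1}$. It is $x$-stable because $\mf{u}_f$ is $\tilde S$-stable and $w^{-1}xw\in\tilde S(\R)$.
  \item Use that the metaplectic character $\rho^{wf}_l$ depends on $wf$ only through $l$ and its centralizer $S$.
  \item Corollary \ref{cor:pn_qvuf} gives the sign $(-1)^{q^{wf}_{w\mf{u}_f w^{-1}}}=(-1)^{q^f_{\mf{u}_f}}=(-1)^{q(G)}$.
  \item Rewrite $\rho_{w\mf{u}_f w^{-1}}(\dot x)\,\tx{tr}\,\tilde\tau(w^{-1}\dot xw)=\rho_{\mf{u}_f}(w^{-1}\dot xw)\,\tx{tr}\,\tilde\tau(w^{-1}\dot xw)=\tx{tr}\,\tilde\tau_\lhd(w^{-1}xw)$.
\end{enumerate}
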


This statement differs optically from the formulas in \cite{Bouaziz87}. In the following subsection we are going to discuss how to derive it from Bouaziz's statements. For now, we compare it with the classical statement for connected reductive groups due to Harish-Chandra.

\begin{cor}
  Assume that $\tilde G=G$ is connected semisimple and that $\tau$ is irreducible, thus a character $S(\R)_f \to \C^\times$. Let $x \in S(\R)$ be strongly regular. 
  \[ \Theta_{\pi_{\tau}}(x) = (-1)^{q(G)}\sum_{w  \in N_{G(\R)}(S)/S(\R)}\frac{e^{d\tau-\rho}(w^{-1}xw)}{\prod_{\alpha>0}(1-e^{-\alpha}(w^{-1}xw))}, \]
  where the positive system of roots is $\{\alpha|\<H_\alpha,d\tau\> > 0\}$ and $\rho$ is half of the sum of these positive roots.
  \end{cor}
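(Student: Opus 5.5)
Specialize Theorem \ref{thm:bouaziz} to $\tilde G=G$ connected semisimple and unwind each ingredient. With $\tilde G=G$ we have $\tilde S=S$, so the index set $\{w\in N_{G(\R)}(S)/S(\R) : w^{-1}xw\in S(\R)\}$ is all of $N_{G(\R)}(S)/S(\R)$, since $w$ normalizes $S$ and $x\in S(\R)$. The hypothesis $G^x\subset S$ holds automatically: $x$ is strongly regular in $S(\R)$, so its centralizer in $G$ is abelian and contains the maximal torus $S$, hence equals $S$. Strong regularity together with Lemma \ref{lem:ell} puts us in the elliptic setting of the theorem, with $S$ elliptic by assumption.

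Next, take $f=d\tau/i$, so that $\tau\in X(f)$ is a character, and compute the numerator. By \eqref{eq:taulhd}, $\tau_\lhd=\tau\otimes\rho_{f,\mf{u}_f}=\tau\otimes\rho_{f,\mf{u}_\tau}^{-1}$ is a genuine character times an anti-genuine one, hence a character of $S(\R)$. Its differential is $d\tau-\rho_\tau$. To write $\tau_\lhd=e^{d\tau-\rho}$ as a character of $S(\R)$ (not just on the identity component), I would use Corollary \ref{cor:covers}. It identifies $S(\R)_f$ with the explicit cover $S(\R)_G$ in such a way that $\rho_{f,\mf{u}_\tau}$ becomes the genuine character $\dot s\mapsto \dot s^{\rho}$, where $\rho=\rho_\tau$ is the half-sum of the positive system $R_\tau^+=\{\alpha\mid \<H_\alpha,d\tau\>>0\}$. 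Interpreting $e^{d\tau}$ as $\tau$ on the cover, we get $\tau_\lhd(s)=\tau(\dot s)\dot s^{-\rho}=e^{d\tau-\rho}(s)$, which is independent of the lift $\dot s$.

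The denominator is $\det(1-y|\mf{u}_f)$ with $y=w^{-1}xw$. Since $\mf{u}_f=\bar{\mf{u}}_\tau$, this equals $\prod_{\alpha\in R_\tau^+}(1-e^{-\alpha}(y))$. Substituting numerator and denominator into Theorem \ref{thm:bouaziz} gives exactly the stated formula, with the sign $(-1)^{q(G)}$ carried over unchanged.

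The only delicate step is the sign and orientation bookkeeping: checking that $R_f^+=R_\tau^-$ (from $d\tau=if$ and the definitions in \S\ref{sub:genchar}), so that the $\rho$ appearing in $\tau_\lhd$ is the $\tau$-positive $\rho$ and $\mf{u}_f$ carries the roots $-\alpha$ for $\alpha>0$. I would verify this once on $\tx{SU}(2)$ to make sure no conjugation convention flips a sign.
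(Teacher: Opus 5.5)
Your proposal is correct and follows the paper's proof. The paper also just specializes Theorem \ref{thm:bouaziz}, using that $\tau$ is a character with $d\tau=if$, that $d\tau_\lhd=d\tau-\rho_\tau$, and that $\mf{u}_f=\bar{\mf{u}}_\tau$. Your detour through Corollary \ref{cor:covers} is harmless but unnecessary: since $G$ is semisimple and $S$ is elliptic, $S$ is anisotropic, so $S(\R)$ is connected and $\tau_\lhd$ is already determined by its differential.
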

  \begin{proof}
  Since $S(\R)_f$ is abelian, $\tau$ must be a character. As discussed in \S\ref{sub:genchar}, we have $d\tau=if$ and $d\tau_\lhd = d\tau-\rho$, where now we are writing $\rho=\rho_\tau$.
  \end{proof}

  \subsection{Bouaziz's character formula on elliptic elements: discussion}

In this subsection we discuss how Theorem \ref{thm:bouaziz} is a reformulation of (a special case of) the main result of \cite{Bouaziz87}.

We continue to assume that $\tilde G$ is a (possibly disconnected) affine algebraic $\R$-group with reductive identity component $G$, $f \in \mf{g}^*(\R)$ regular semi-simple elliptic, $\tilde S=Z_{\tilde G}(f)$. 

We are applying Bouaziz's main theorem (page 2 loc. cit.) to the case of the regular element $s=x$. In the notation there, the character of $\pi_{f,\tilde\tau}$ evaluated at $x$ is written as $\theta(x,f,\tilde\tau)(0)=\Theta(f,\tilde\tau)(x)$, and the main result  gives the formula
\begin{equation} \label{eq:b1}
  \sum_{f' \in \Omega_x^f} \varphi_{f,\tilde\tau}^x(f'),
\end{equation}
where $\Omega_x^f=\tx{Ad}(\tilde G(\R))^*f \cap \mf{z}^*$, $\mf{z}^* =(\mf{g}^*)^x$, and $\varphi_{f,\tilde\tau}^x(f')$ is described in \cite[(5.5.1)]{Bouaziz87}. We will first discuss the indexing set of the sum, and then move on to the summands.

\begin{lem}
  The map $g \mapsto \tx{Ad}^*(g)f$ is a bijection 
  \[ \{g \in N_{\tilde G(\R)}(S)/\tilde S(\R)\,|\, g^{-1}xg \in \tilde S(\R) \} \to \Omega_x^f. \]
\end{lem}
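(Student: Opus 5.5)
We need to show four things about the map $g \mapsto \tx{Ad}^*(g)f$ on the indicated set of cosets: it is well defined, its image lies in $\Omega_x^f$, it is injective, and it is surjective. Throughout we use that $\tilde S = Z_{\tilde G}(f)$, that $S=\tilde S\cap G$ is a maximal torus, and the standing assumption $G^x \subset S$.

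\emph{Well-definedness and image.} Replacing $g$ by $gs$ with $s \in \tilde S(\R)$ leaves both $\tx{Ad}^*(g)f$ and the condition $g^{-1}xg \in \tilde S(\R)$ unchanged, since $\tilde S(\R)$ is a group. Now take $g$ with $g^{-1}xg \in \tilde S(\R)$. Then $g^{-1}xg$ fixes $f$, so $x$ fixes $\tx{Ad}^*(g)f$. Hence $\tx{Ad}^*(g)f \in (\mf{g}^*)^x=\mf{z}^*$, and it lies in $\tx{Ad}(\tilde G(\R))^*f$ by construction.

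\emph{Injectivity.} If $\tx{Ad}^*(g_1)f=\tx{Ad}^*(g_2)f$, then $g_2^{-1}g_1 \in Z_{\tilde G(\R)}(f)=\tilde S(\R)$, so $g_1$ and $g_2$ define the same coset.

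\emph{Surjectivity.} Let $f'=\tx{Ad}^*(g)f \in \mf{z}^*$ with $g \in \tilde G(\R)$. Since $x$ fixes $f'$, we have $x \in Z_{\tilde G}(f')=g\tilde Sg^{-1}$, i.e.\ $g^{-1}xg \in \tilde S(\R)$. It remains to see that $g$ normalizes $S$.

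\begin{itemize}
\item Here is the key point. Since $\tilde S$ normalizes $S$, the element $x$ normalizes $gSg^{-1}=Z_G(f')^\circ$, which is a maximal torus. Moreover $x$ acts on $f'$ trivially.
\item Because $f'$ is regular semisimple and $x$-fixed, $f'$ lies in $(\mf{g}^*)^x$. I expect this space to be $\tx{Lie}^*$ of $(G^x)^\circ$, paired suitably. Concretely, restricting $f'$ to $gSg^{-1}$ gives a form fixed by $x$, and $gSg^{-1}$ should contain $(G^x)^\circ$.
\item To see the containment, note that $(G^x)^\circ$ centralizes $f'$: its Lie algebra $\mf{g}^x$ pairs with the $x$-fixed regular $f'$, and $f'$ kills $[\mf{g}^x,\mf{g}]$ restricted appropriately. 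The cleanest route is this: $(G^x)^\circ$ is a torus $S^x{}^\circ \subset S$ by the standing assumption, and its fixed points in the flag variety of $gSg^{-1}$-type data force containment.
\end{itemize}

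Alternatively, one can argue via the centralizer. Strong regularity gives that the centralizer of $G^x$ in $G$ is $S$. If $G^x \subset gSg^{-1}$, then $gSg^{-1}$ centralizes $G^x$, because tori are abelian. Hence $gSg^{-1}\subset S$, so $gSg^{-1}=S$ and $g \in N_{\tilde G(\R)}(S)$.

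So the crux is to show $G^x \subset gSg^{-1}=Z_G(f')$. This is equivalent to saying that $G^x$ fixes $f'$, which I expect to be the main obstacle. The argument I would give is as follows. The group $G^x$ is abelian, hence contained in $S$, and it commutes with $x$. Therefore $G^x$ preserves the finite set of $x$-fixed points of $\tx{Ad}^*(G)f'\cap$ (regular elements of the torus $\mf{s}'^*$). In particular the connected group $(G^x)^\circ$ fixes $f'$, because $\mf{z}^*\cap \tx{Ad}^*(G)f'$ is finite by regularity. This gives $(G^x)^\circ \subset gSg^{-1}$. The centralizer of $(G^x)^\circ$ is then $S$, as recalled before Lemma~\ref{lem:ell}. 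Since $gSg^{-1}$ is a maximal torus containing $(G^x)^\circ$, it lies in that centralizer, so $gSg^{-1}=S$.
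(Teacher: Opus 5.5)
Your arguments for well-definedness and injectivity are correct, as is your reduction of surjectivity to showing $(G^x)^\circ\subset gSg^{-1}=Z_G(f')$. The closing step $gSg^{-1}\subset Z_G((G^x)^\circ)=S$ is also correct, and this is the same outline as the paper. The gap is at the step you yourself flagged as the crux. Your final argument rests on the claim that $\mf{z}^*\cap\tx{Ad}^*(G)f'$ is finite ``by regularity''. Regularity of $f'$ does not give this: if $x$ acted trivially on $G$, then $\mf{z}^*=\mf{g}^*$ and the intersection would be the whole positive-dimensional orbit. So finiteness has to come from the hypotheses on $x$. In fact finiteness is equivalent to your conclusion. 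Since $\tx{Ad}(x)$ is semisimple, the tangent space to $(\tx{Ad}^*(G)f')^x$ at any of its points $f''$ is $\tx{ad}^*(\mf{g}^x)f''$, so this fixed locus is a finite union of $(G^x)^\circ$-orbits. The sentence just before has the same defect. The set $\tx{Ad}^*(G)f'\cap(\mf{s}')^*$ is $G^x$-stable only if $G^x$ normalizes $gSg^{-1}$, or, if $\mf{s}'=\mf{s}$, only if $f'\in\mf{s}^*$; either condition is equivalent to what you want to prove. Your bullet asserting that $f'$ kills $[\mf{g}^x,\mf{g}]$ is likewise left unproved. Finally, $G^x\subset S$ is the standing assumption, not a consequence of $G^x$ being abelian.

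The paper closes the gap with \cite[Theorem 1.1.A(4)]{KS99}. The element $y=g^{-1}xg\in\tilde S(\R)$ fixes $f$, so it normalizes the Borel pair $(S,B_f)$, where $B_f$ is given by $R_f^+$. Hence $S\cap(G^y)^\circ$ is a maximal torus of the torus $(G^y)^\circ=g^{-1}(G^x)^\circ g$. This gives $g^{-1}(G^x)^\circ g\subset S$, and then $S=Z_G((G^y)^\circ)=g^{-1}Sg$.

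Alternatively, you can make your abandoned bullet rigorous without any Borel subgroups:
\begin{itemize}
\item Since $\tx{Ad}(x)$ is semisimple, $\mf{g}=\mf{g}^x\oplus(1-x)\mf{g}$.
\item $\mf{g}^x=\tx{Lie}(G^x)\subset\mf{s}$ is abelian.
\item For $X\in\mf{g}^x$, $\tx{ad}(X)$ commutes with $\tx{Ad}(x)$, hence preserves $(1-x)\mf{g}$.
\item $f'$ kills $(1-x)\mf{g}$ because $f'$ is $x$-fixed.
\end{itemize}
Thus $f'([\mf{g}^x,\mf{g}])=0$, so $\mf{g}^x$ lies in the radical of $B_{f'}$. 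By Lemma~\ref{lem:radbf} this radical is $\tx{Lie}(gSg^{-1})$, giving $(G^x)^\circ\subset gSg^{-1}$. Also, $\mf{g}^x=\mf{s}^x$ and $\dim(\mf{g}^*)^x=\dim\mf{g}^x$. Hence $\mf{z}^*=(\mf{s}^*)^x\subset\mf{s}^*$, with $\mf{s}^*$ embedded as the annihilator of the root spaces. That inclusion is what would genuinely justify your finiteness claim.
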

\begin{proof}
Any $g \in \tilde G(\R)$ with $g^{-1}xg \in \tilde S(\R)$ satisfies $\tx{Ad}^*(g)f \in \mf{z}^*$, so the map is well-defined. It is injective, because the stabilizer of $f$ in $\tilde G(\R)$ is $\tilde S(\R)$ by definition. To show surjectivelty, let $g \in \tilde G(\R)$ be such that $\tx{Ad}(g)^*f$ lies in $\mf{z}^*$, equivalently $\tx{Ad}(g)^*f$ is fixed by $\tx{Ad}(x)^*$. Then $f$ is fixed by $\tx{Ad}^*(g^{-1}xg)$, so $g^{-1}xg \in \tilde S(\R)$. We claim this implies that $g$ normalizes $S$. Indeed, we have assumed $(G^x)^\circ \subset S$, and hence $S$ is the centralizer in $G$ of $(G^x)^\circ$. On the other hand, $g^{-1}xg$ fixes $f$, hence normalizes $S$ as well as the Borel $\C$-subgroup containing $S$ that is determined by $f$. According to \cite[Theorem 1.1.A(4)]{KS99} the intersection $S \cap (G^{g^{-1}xg})^\circ$ is a maximal torus in $(G^{g^{-1}xg})^\circ$, hence equal to $(G^{g^{-1}xg})^\circ$, and moreover $S$ is the centralizer in $G$ of $(G^{g^{-1}xg})^\circ$. This proves the claim that $g$ normalizes $S$.  
\end{proof}

With this, \eqref{eq:b1} becomes
\begin{equation} \label{eq:b2}
  \sum_{\substack{w \in N_{\tilde G(\R)}(S)/\tilde S(\R) \\ w^{-1}xw \in \tilde S(\R)}} \varphi_{f,\tilde\tau}^x(wf).  
\end{equation}

We now turn to the summand $\varphi_{f,\tilde\tau}^x(wf)$ defined in \cite[(5.5.1)]{Bouaziz87}, following work of Duflo--Heckman--Vergne \cite{DHV84} in the connected case. We warn the reader that there is a typo in \cite[(5.5.1)]{Bouaziz87}: a tilde is missing over the term $l \cap \mf{q}_\C$ in the denominator, see Remark \ref{rem:typo} below. Thus, that formula should read 
\begin{equation} \label{eq:551}
 \varphi_{f_0,\tau_0}^s(f) = \frac{(-1)^{q^f_{\widetilde{l \cap \mf{q}_\C}}}}{\det(1-s)_{\widetilde{l \cap \mf{q}_\C}}}\tx{tr}\,((\tau_{0_f}\rho_l^f)(s)).
\end{equation}
We first review the notation in this formula, before adapting it to our needs. We will also momentarily adopt the notation of Bouaziz's paper, for example writing $\mf{g}^*$ in place of $\mf{g}^*(\R)$ and $\mf{g}^*_\C$ in place of $\mf{g}^*$. Then $f_0 \in \mf{g}^*$ is a fixed element, which in the language of Duflo and Bouaziz is ``well-polarizable''; in our setting $\mf{g}$ is reductive and ``well-polarizable'' is equivalent to regular semi-simple. For our purposes we assume in addition that $f_0$ is elliptic, which implies that $f_0$ is ``standard'' in the language of Duflo and Bouaziz, i.e. it admits a ``totally complex polarization''. Such a polarization is simply a Borel $\C$-subalgebra containing $f_0$, and totally complex refers to the fact that complex conjugation sends this Borel subalgebra to its opposite. These notions are reviewed in \S5.2 of Bouaziz's paper.

Bouaziz takes $\tilde S$ to be the centralizer of $f_0$ in $\tilde G$, and $\tau_0$ to be a finite\-dimensional genuine representation of the metaplectic cover $\tilde S(\R)_{f_0}$ (reviewed in \S\ref{sub:mdc} here), whose restriction to the image of the exponential map is isotypic for the character $e^{if_0}$. The existence of such $\tau_0$ makes $f_0$ ``admissible'' by definition, another notion introduced in \S5.2 of Bouaziz's paper. 

The element $s \in \tilde G(\R)$ in \eqref{eq:551} is elliptic, but in general not assumed regular. This allows one to use \eqref{eq:551} for the computation of character values at non-elliptic regular semi-simple elements, by multiplying $s$ with an element that centralizes it, and such that the product is regular. We will not need this flexibility, as our focus will be on regular elliptic elements.

The element $f \in \mf{g}^*(\R)$ is $\tilde G(\R)$-conjugate to $f_0$, i.e. an element of the $\tilde G(\R)$-conjugacy class $\Omega_{f_0}$ of $f_0$. It is required that $f$ is fixed by $\tx{Ad}(s)$, i.e. $f \in \Omega_{f_0}^s$.

By definition $\mf{q}_\C=(1-s)\mf{g}_\C \subset \mf{g}_\C$, so that $\mf{g}_\C = \mf{q}_\C \oplus \mf{g}_\C^s$.

The symbol $l$ denotes a subspace of $\mf{g}_\C$ whose image in $\mf{g}_\C/\mf{g}_\C(f)$ is Lagrangian for the symplectic form $B_f$ reviewed in \S\ref{sub:bf}. The tilde notation signifies taking image in $\mf{g}_\C/\mf{g}_\C(f)$. 

The number $q^f_{\widetilde{l \cap \mf{q}_\C}}$ is the number of negative eigenvalues of the matrix representing the Hermitian form $(X,Y) \mapsto iB_f(X,\sigma(Y))$ on the Lagrangian subspace $\widetilde{l \cap \mf{q}_\C} \subset \mf{g}_\C/\mf{g}_\C(f)$.

The term $\tau_{0f}$ is a representation of the metaplectic cover of the centralizer of $f$ in $\tilde G(\R)$, obtained by $g$-conjugating $\tau_0$ for any $g \in \tilde G(\R)$ satisfying $gf_0=f$, while $\rho_l^f$ is the metaplectic character of that same cover. Even though $\rho_l^f$ has $f$ as superscript, it depends only on the Lagrangian and on the centralizer of $f$, see discussion of \S5.2, especially line after (5.2.1), of loc. cit. 

This completes the review of \eqref{eq:551}. We will now reinterpret it in the setting that we are using it. In our case, we take $s=x \in \tilde S(\R)$ strongly regular (automatically elliptic). Bouaziz's $f_0$ is our $f$, his $f$ is our $wf$, and his $\tau_0$ is our $\tilde\tau$.

The radical of the symplectic form $B_{wf}$ equals $\mf{s}$ for any $w$ in the summation index. If we let $\mf{u}$ to be the nilpotent radical of a Borel $\C$-subalgebra that is stable under $s$, then $\mf{g}=\mf{u}^\tx{opp} \oplus \mf{s} \oplus \mf{u}$ and the regularity of $s$ implies $\mf{q}_\C = \mf{u}^\tx{opp} \oplus (1-s)\mf{s} \oplus \mf{u}$. As discussed in \S\ref{sub:bf}, the image of $\mf{u}$ in $\mf{g}/\mf{s}$ is a Lagrangian subspace for $B_{wf}$. Therefore we may take $l=\mf{u}$. As just mentioned above, the metaplectic character $\rho_l^{wf}$ depends on $wf$ only through its centralizer, which is $S$. Therefore $\rho_l^f=\rho_\mf{u}$ as reviewed in \S\ref{sub:mdc}. 

We claim that we may take $\mf{u}=w\mf{u}_fw^{-1}$ with $\mf{u}_f$ as in Definition \ref{dfn:pn_rp}. Indeed, $\mf{u}_f$ is stable under any element of $\tilde S(\R)$, in particular under $w^{-1}xw$, hence $w\mf{u}_fw^{-1}$ is stable under $x$.

With all of these adjustments in notation, \eqref{eq:551} becomes in our case
\[ \frac{(-1)^{q^{wf}_{w\mf{u}_fw^{-1}}}}{\det(1-w^{-1}xw|\mf{u}_f)}\rho_{w\mf{u}_fw^{-1}}(\dot x)\tx{tr}\,\tilde\tau(w^{-1}\dot xw), \]
where $\dot x \in \tilde S(\R)_f$ is any preimage of $x$.

According to Corollary \ref{cor:pn_qvuf} we have 
\[ (-1)^{q^{wf}_{w\mf{u}_fw^{-1}}} = (-1)^{q^f_\mf{u}}.\]
Furthermore
\[ \rho_{w\mf{u}_fw^{-1}}(\dot x)\tx{tr}\,\tilde\tau(w^{-1}\dot xw) = \rho_{\mf{u}_f}(w^{-1}\dot xw)\tx{tr}\,\tilde\tau(w^{-1}\dot xw) = \tilde\tau_\lhd(w^{-1}xw).  \]
Combining this with \eqref{eq:b2} we obtain the formula of Theorem \ref{thm:bouaziz}.

In the remainder of this section, we will obtain further information about some terms in this formula, which will be useful in the proof of our main theorem.

\begin{lem} 
  Let $s \in \tilde S$ and $\alpha \in R(S,G)$ be such that $\tx{Ad}^*(s)\alpha=\alpha$. Then $\tx{Ad}(s)$ preserves the root spaces $\mf{g}_\alpha$ and $\mf{g}_{-\alpha}$ and acts on them by scalars $c_{\alpha,s}$ and $c_{-\alpha,s}$ respectively, satisfying $c_{\alpha,s} \cdot c_{-\alpha,s}=1$.
\end{lem}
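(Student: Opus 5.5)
The plan is to use only that $\tx{Ad}(s)$ normalizes $S$ and preserves the Lie bracket. The preliminary step is to make the action on roots explicit. Since $s \in \tilde S$ normalizes $S$, it acts on $X^*(S)$ by $\tx{Ad}^*(s)$, permutes $R(S,G)$, and satisfies
\[ \tx{Ad}(s)\mf{g}_\beta = \mf{g}_{\tx{Ad}^*(s)\beta}. \]
To check this, for $t \in S$ and $X \in \mf{g}_\beta$ one computes $\tx{Ad}(t)\tx{Ad}(s)X = \tx{Ad}(s)\tx{Ad}(s^{-1}ts)X = \beta(s^{-1}ts)\tx{Ad}(s)X$, and $\beta(s^{-1}ts)$ is exactly $(\tx{Ad}^*(s)\beta)(t)$ with the convention used in the paper.

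With this in hand, $\tx{Ad}^*(s)\alpha=\alpha$ gives $\tx{Ad}(s)\mf{g}_\alpha=\mf{g}_\alpha$. The action on characters is linear, so $\tx{Ad}^*(s)(-\alpha)=-\alpha$ as well, and $\mf{g}_{-\alpha}$ is also preserved. Both root spaces are one-dimensional, so $\tx{Ad}(s)$ acts on them by scalars $c_{\alpha,s}$ and $c_{-\alpha,s}$.

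For the product relation, pick nonzero $X_\alpha \in \mf{g}_\alpha$ and $X_{-\alpha}\in\mf{g}_{-\alpha}$. Then $[X_\alpha,X_{-\alpha}]$ is a nonzero multiple of the coroot $H_\alpha \in \mf{s}$. Since $\tx{Ad}(s)$ is a Lie algebra automorphism,
\[ \tx{Ad}(s)[X_\alpha,X_{-\alpha}] = c_{\alpha,s}c_{-\alpha,s}[X_\alpha,X_{-\alpha}]. \]
So it suffices to show that $\tx{Ad}(s)H_\alpha = H_\alpha$. The coroot is determined by the root datum: $\tx{Ad}(s)$ maps the $\mf{sl}_2$-triple attached to $\alpha$ to the one attached to $\tx{Ad}^*(s)\alpha=\alpha$. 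Equivalently, $\tx{Ad}(s)$ acts on $X_*(S)$ compatibly with the duality, so it sends $\alpha^\vee$ to $(\tx{Ad}^*(s)\alpha)^\vee = \alpha^\vee$. Hence $H_\alpha$ is fixed, and $c_{\alpha,s}c_{-\alpha,s}=1$ follows.

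The only point needing care is this fixedness of $H_\alpha$ when $s$ is in the disconnected group $\tilde S$ rather than in $S$. The intrinsic characterization settles it: $H_\alpha$ is the unique element of $[\mf{g}_\alpha,\mf{g}_{-\alpha}]$ on which $\alpha$ takes the value $2$. This characterization is preserved by any automorphism that fixes $\alpha$ and the two root spaces.
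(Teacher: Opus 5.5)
Your proof is correct and follows the paper's own argument: show that $\mathrm{Ad}(s)$ preserves $\mathfrak{g}_{\pm\alpha}$ and fixes $H_\alpha$, then compare $\mathrm{Ad}(s)[X_\alpha,X_{-\alpha}]$ with $c_{\alpha,s}c_{-\alpha,s}[X_\alpha,X_{-\alpha}]$. The only difference is that you justify why $H_\alpha$ is fixed, using its characterization as the element of the line $[\mathfrak{g}_\alpha,\mathfrak{g}_{-\alpha}]$ on which $\alpha$ takes the value $2$; the paper simply asserts this.
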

\begin{proof}
The conjugation action of $s$ preserves $S$ and acts on $\mf{s}$ and $\mf{s}^*$. From $\tx{Ad}^*(s)\alpha=\alpha$ we infer that $\tx{Ad}(s)$ fixes $H_\alpha$ and preserves the lines $\mf{g}_\alpha$ and $\mf{g}_{-\alpha}$. It must therefore act by scalars, and $H_\alpha=\tx{Ad}(s)H_\alpha =[\tx{Ad}(s)X_\alpha,\tx{Ad}(s)X_{-\alpha}]=c_{\alpha,s}c_{-\alpha,s}H_\alpha$ proves the claim.
\end{proof}

In other words, the root $\alpha : S \to \C^\times$ extends to a character $\tilde S_\alpha \to \C^\times$, where $S \subset \tilde S_\alpha \subset \tilde S$ is the stabilizer of $\alpha$ for the action of $\tilde S$ on $\tx{Lie}^*(S)$, and this extension is inverse to the extension of $-\alpha$. We will denote these extensions again by $\alpha$ and $-\alpha$, respectively. 

\begin{lem} \label{lem:pn_detroot}
Let $s \in \tilde S$ and let $\mc{O}$ be an orbit for the action of $\tx{Ad}^*(s)$ on $R(S,G)$. Let $\mf{g}_\mc{O}=\bigoplus_{\alpha \in \mc{O}}\mf{g}_\alpha$. Then
\begin{eqnarray*}
\det(s|\mf{g}_\mc{O})&=&(-1)^{|\mc{O}|-1}\alpha(s^{|\mc{O}|}).\\
\det(1-s|\mf{g}_\mc{O})&=&1-\alpha(s^{|\mc{O}|}).\\
\end{eqnarray*}
\end{lem}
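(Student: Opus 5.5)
The plan is to compute the matrix of $\tx{Ad}(s)$ on $\mf{g}_\mc{O}$ in a basis adapted to the cyclic action of $s$ on the orbit. Write $n=|\mc{O}|$ and fix $\alpha\in\mc{O}$. Then $\mc{O}=\{\alpha,\tx{Ad}^*(s)\alpha,\dots,\tx{Ad}^*(s)^{n-1}\alpha\}$, and these $n$ roots are pairwise distinct. Since $\tx{Ad}(s)$ carries $\mf{g}_\beta$ isomorphically onto $\mf{g}_{\tx{Ad}^*(s)\beta}$, the space $\mf{g}_\mc{O}$ is $\tx{Ad}(s)$-stable.

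First, choose a nonzero $X_0\in\mf{g}_\alpha$ and set $X_i=\tx{Ad}(s)^iX_0$ for $0\le i\le n-1$. Each $X_i$ spans the root line $\mf{g}_{\tx{Ad}^*(s)^i\alpha}$, so $X_0,\dots,X_{n-1}$ is a basis of $\mf{g}_\mc{O}$. In this basis $\tx{Ad}(s)X_i=X_{i+1}$ for $i<n-1$.

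Next, consider the last basis vector. We have $\tx{Ad}(s)X_{n-1}=\tx{Ad}(s^n)X_0$. The element $s^n\in\tilde S$ fixes $\alpha$, so it lies in the stabilizer $\tilde S_\alpha$. By the preceding lemma, $\tx{Ad}(s^n)$ acts on $\mf{g}_\alpha$ by the scalar $c_{\alpha,s^n}$, which by definition is the value $\alpha(s^n)$ of the extended root. Hence the matrix of $\tx{Ad}(s)$ is the companion-type matrix of the cyclic permutation $X_0\mapsto X_1\mapsto\cdots\mapsto X_{n-1}\mapsto cX_0$, with $c=\alpha(s^n)$.

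Finally, compute the two determinants from this matrix. The characteristic polynomial is $\det(t-s|\mf{g}_\mc{O})=t^n-c$, which one sees by expanding along the last row or by noting that $\tx{Ad}(s)^n=c$ on this cyclic space. Setting $t=1$ gives $\det(1-s|\mf{g}_\mc{O})=1-\alpha(s^n)$. Setting $t=0$ gives $(-1)^n\det(s)=-c$, that is $\det(s|\mf{g}_\mc{O})=(-1)^{n-1}\alpha(s^n)$; equivalently, this is the sign of an $n$-cycle times $c$.

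The only point needing care is that the right-hand side does not depend on the choice of $\alpha$ in $\mc{O}$. Since $s^n$ commutes with $s$, we get $(\tx{Ad}^*(s)\alpha)(s^n)=\alpha(s^{-1}s^ns)=\alpha(s^n)$. More concretely, the scalar by which $\tx{Ad}(s^n)$ acts on $X_1=\tx{Ad}(s)X_0$ equals the scalar on $X_0$. This is the step I expect to be the only subtle one, and it is handled by that commutation.
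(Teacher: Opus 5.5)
Your proof is correct and is the same argument as the paper's: the paper also takes the basis $X,\tx{Ad}(s)X,\dots,\tx{Ad}(s^{n-1})X$ of $\mf{g}_\mc{O}$ and reads both determinants off the resulting cyclic matrix with corner entry $c=\alpha(s^n)$. Your extra remarks, namely the characteristic polynomial $t^n-c$ and the independence of the right-hand side from the choice of $\alpha\in\mc{O}$, spell out what the paper leaves implicit.
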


Before we give the (very simple) proof we emphasize again that $\alpha$ denotes the \emph{extension} of the root $\alpha : S \to \mb{G}_m$ to the larger group $\tilde S_\alpha$ (discussed just before Lemma \ref{lem:pn_detroot}), and that the element $s^{|\mc{O}|} \in \tilde S_\alpha$ may not lie in $S$. This is related to the concept of ``roots of type R3'' in \cite{KS99}. The simplest example is the group $\tilde G = \tx{GL}_3 \rtimes \<\theta\>$, where $\theta$ is the pinned outer automorphism. Taking $s = 1 \rtimes \theta$ and $S \subset G$ the subgroup of diagonal matrices, the action $\tx{Ad}^*(s)$ on $R(S,G)=\{\pm\alpha,\pm\beta,\pm(\alpha+\beta)\}$ switches $\alpha,\beta$ and fixes $\gamma=\alpha+\beta$. If we let $\mc{O}=\{\gamma\}$, then $S_\gamma=S \rtimes \<\theta\>$, $s^{|\mc{O}|}=s \notin S$, and $\alpha : S_\gamma \to \mb{G}_m$ sends $s=1 \rtimes \theta$ to $-1$.

\begin{proof}
Choose $\alpha \in \mc{O}$ and $0 \neq X \in \mf{g}_\alpha$. Then $\{X,\tx{Ad}(s)X,\dots,\tx{Ad}(s^{n-1})X\}$ is a basis for $\mf{g}_\mc{O}$, where $n=|\mc{O}|$. In terms of this basis the matrix of the action of $\tx{Ad}(s)$ looks like
\[ \begin{bmatrix}
  0&1\\
  &0&1\\
  &&\ddots&\ddots\\
  &&&0&1\\
  c&\dots&&&0
\end{bmatrix}, 
\]
with $c=\alpha(s^n)$. 
\end{proof}

\begin{cor} \label{cor:pn_d4}
For $s \in \tilde S$,
\[ |\det(1-s|\mf{u}_f)| = |\det(1-s|\mf{g}/\mf{s})|^{1/2}. \]
\end{cor}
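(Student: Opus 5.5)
The plan is to decompose $\mf{g}/\mf{s}$ into $\tx{Ad}(s)$-stable pieces indexed by orbits and compare the contributions of $\mf{u}_f$ and its opposite using Lemma \ref{lem:pn_detroot}.

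\emph{Step 1: orbit decomposition.} Since $s \in \tilde S$ fixes $f$, $\tx{Ad}^*(s)$ preserves $R_f^+$, hence $R(S,G)=R_f^+\sqcup(-R_f^+)$ is an $s$-stable decomposition. The orbits $\mc{O}$ of $\tx{Ad}^*(s)$ on $R(S,G)$ therefore come in pairs $\mc{O}\subset R_f^+$ and $-\mc{O}\subset -R_f^+$, with $|\mc{O}|=|-\mc{O}|=n$. We then have
\[ \mf{u}_f=\bigoplus_{\mc{O}\subset R_f^+}\mf{g}_\mc{O},\qquad \mf{g}/\mf{s}\cong \bigoplus_{\mc{O}\subset R_f^+}(\mf{g}_\mc{O}\oplus\mf{g}_{-\mc{O}}) \]
as $\tx{Ad}(s)$-modules.

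\emph{Step 2: pairing the factors.} By Lemma \ref{lem:pn_detroot}, for $\alpha\in\mc{O}$ we get $\det(1-s|\mf{g}_\mc{O})=1-\alpha(s^n)$. Since $-\alpha\in-\mc{O}$, we also get $\det(1-s|\mf{g}_{-\mc{O}})=1-(-\alpha)(s^n)$. Here $s^n\in\tilde S_\alpha$, and by the lemma preceding Lemma \ref{lem:pn_detroot} the extension of $-\alpha$ to $\tilde S_\alpha$ is the inverse of the extension of $\alpha$. Thus the second determinant equals $1-c^{-1}$ with $c=\alpha(s^n)$.

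\emph{Step 3: the case $|c|=1$.} We then need $|1-c^{-1}|=|1-c|$, which holds precisely when $|c|=1$. This is the only real point, and I expect it to be the main obstacle. To establish it, I would argue that $s$ has finite order modulo $S$ (since $\tilde S/S$ is finite), so some power $s^{nm}$ lies in $S(\C)$. Then $c^m=\alpha(s^{nm})$. If we restrict to $s\in\tilde S(\R)$, which is the case needed for Theorem \ref{thm:bouaziz}, then $s^{nm}\in S(\R)$. Because $S$ is elliptic, the image of $S(\R)$ in $S_\tx{ad}(\R)$ is compact, so $|\alpha(s^{nm})|=1$ and hence $|c|=1$.

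For a general complex point $s\in\tilde S$ the identity fails, so I would interpret the statement for real points, or for points lying in a compact subgroup. I would add a remark to that effect: the step needing $|c|=1$ is exactly where the real/elliptic hypothesis enters.

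\emph{Step 4: conclusion.} Taking the product over all pairs $(\mc{O},-\mc{O})$ gives
\[ |\det(1-s|\mf{g}/\mf{s})|=\prod_{\mc{O}\subset R_f^+}|1-c_\mc{O}|^2=|\det(1-s|\mf{u}_f)|^2, \]
and taking square roots yields the claim.
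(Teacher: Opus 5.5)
Your proof is correct and is essentially the paper's argument: split $\mf{u}_f$ and $\mf{u}_{-f}$ into $\tx{Ad}^*(s)$-orbits $\mc{O}$ and $-\mc{O}$, use Lemma \ref{lem:pn_detroot} to get the factors $1-z_\mc{O}$ and $1-z_\mc{O}^{-1}$, and conclude from $|z_\mc{O}|=1$. The paper writes the last step as $|z_\mc{O}^{1/2}-z_\mc{O}^{-1/2}|$ and merely attributes $|z_\mc{O}|=1$ to ellipticity; your finite-order argument supplies that step, and you are right that the identity must be read for $s\in\tilde S(\R)$, since it fails for general complex points.
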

\begin{proof}
Since $\mf{u}_f$ is stable under $s$ it is the direct sum of $\mf{g}_\mc{O}$ for orbits $\mc{O}$ under the action of $\tx{Ad}^*(s)$ on $R(S,G)$, and Lemmas \ref{lem:pn_detroot} shows
\[ |\det(1-s|\mf{u}_f)| = \prod |1-\alpha(s^{|\mc{O}|})|, \]
the product running over the set of orbits of $\tx{Ad}^*(s)$ on $R^+_f$, and for each such orbit $\mc{O}$ we have chosen arbitrarily $\alpha \in \mc{O}$. Write $z_\mc{O}=\alpha(s^{|\mc{O}|})$. The ellipticity of $S$ implies that $|z_\mc{O}|=1$. Choosing a square root $z_\mc{O}^{1/2}$ and extracting it from each $(1-z_\mc{O})$ we obtain
\[ |\det(1-s|\mf{u}_f)| = \prod |z_\mc{O}^{1/2}-z_\mc{O}^{-1/2}|, \]
where $z_\mc{O}^{-1/2}$ denotes the inverse of the chosen square root. 

If we apply the same argument to $\mf{u}_{-f}$, we obtain
\[ |\det(1-s|\mf{u}_{-f})| = \prod |z_\mc{-O}^{1/2}-z_\mc{-O}^{-1/2}|, \]
where $\mc{O}$ again runs over the set of orbits of $\tx{Ad}^*(s)$ on $R^+_f$, but we are taking the negative $-\mc{O}$ of each such orbit. It is clear that $z_\mc{-O}=z_\mc{O}^{-1}$, and we are free to choose the square root of that to equal $z_\mc{O}^{-1/2}$. This shows
\[ |\det(1-s|\mf{g}/\mf{s})| =|\det(1-s|\mf{u}_f)| \cdot |\det(1-s|\mf{u}_{-f})| = |\det(1-s|\mf{u}_f)|^2.\qedhere \]
\end{proof}

\begin{lem} \label{lem:pn_indepu}
Let $\dot s \in \tilde S(\R)_f$ and let $\mf{u} \subset \mf{g}$ be the unipotent radical of a Borel $\C$-subalgebra stabilized by $s$. The number
\[ \frac{(-1)^{q_\mf{u}^f}\rho_{f,\mf{u}}(\dot s)}{\det(1-s|\mf{u})} \]
is independent of $\mf{u}$.
\end{lem}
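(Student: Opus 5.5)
Fix two $s$-stable unipotent radicals $\mf{u}_1,\mf{u}_2$ and show that the ratio of the two numbers is $1$. By \eqref{eq:duflo2},
\[ \rho_{f,\mf{u}_1}(\dot s)\rho_{f,\mf{u}_2}(\dot s)^{-1}=\det(s|\mf{u}_1/(\mf{u}_1\cap\mf{u}_2)). \]
So it suffices to show
\[ (-1)^{q^f_{\mf{u}_1}-q^f_{\mf{u}_2}}\det(s|\mf{u}_1/(\mf{u}_1\cap\mf{u}_2)) = \frac{\det(1-s|\mf{u}_1)}{\det(1-s|\mf{u}_2)}. \]
Write $\mf{u}_1=(\mf{u}_1\cap\mf{u}_2)\oplus\bigoplus_{\alpha\in P}\mf{g}_\alpha$, where $P=R^+_{\mf{u}_1}\cap R^-_{\mf{u}_2}$. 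Then $\mf{u}_2=(\mf{u}_1\cap\mf{u}_2)\oplus\bigoplus_{\alpha\in P}\mf{g}_{-\alpha}$. Both $\mf{u}_i$ are $s$-stable, so $P$ and $-P$ are $\tx{Ad}^*(s)$-stable. Hence $P$ is a union of $s$-orbits $\mc{O}$, and the common part $\mf{u}_1\cap\mf{u}_2$ cancels from the right-hand side.

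The computation is orbit by orbit. For an orbit $\mc{O}\subset P$ of size $n$ with $z_\mc{O}=\alpha(s^n)$, Lemma \ref{lem:pn_detroot} gives
\[ \det(1-s|\mf{g}_\mc{O})=1-z_\mc{O},\qquad \det(1-s|\mf{g}_{-\mc{O}})=1-z_\mc{O}^{-1}, \]
using $c_{-\alpha}=c_\alpha^{-1}$ for the extended roots. The quotient of these is $(1-z)/(1-z^{-1})=-z$. The same lemma gives $\det(s|\mf{g}_\mc{O})=(-1)^{n-1}z_\mc{O}$. Therefore
\[ \frac{\det(1-s|\mf{g}_\mc{O})}{\det(1-s|\mf{g}_{-\mc{O}})}\,\det(s|\mf{g}_\mc{O})^{-1}=(-1)^{n}. \]
Taking the product over orbits, the identity reduces to the parity statement
\[ q^f_{\mf{u}_1}-q^f_{\mf{u}_2}\equiv |P| \pmod 2. \]

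For this I would use the diagonalization from the proof of Lemma \ref{lem:pn_qvuf}. With normalized root vectors, the Hermitian form $iB_f(X,\sigma X)$ is diagonal on $\mf{u}$, with sign determined by the root. So $q^f_\mf{u}$ is the number of roots $\alpha\in R^+_\mf{u}$ whose sign is $-1$. Replacing $\alpha$ by $-\alpha$ flips the sign, because $f(H_{-\alpha})=-f(H_\alpha)$ while the compactness of $\alpha$ is unchanged. The roots of $\mf{u}_1\cap\mf{u}_2$ contribute equally to $q^f_{\mf{u}_1}$ and $q^f_{\mf{u}_2}$. Each $\alpha\in P$ contributes to exactly one of $q^f_{\mf{u}_1}$ (through $\alpha$) or $q^f_{\mf{u}_2}$ (through $-\alpha$). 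Hence
\[ q^f_{\mf{u}_1}+q^f_{\mf{u}_2}\equiv |P| \pmod 2, \]
which gives the parity statement. Equivalently, one can apply Lemma \ref{lem:pn_qvuf} to both radicals and compare.

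The main point needing care is the sign bookkeeping for orbits of the extended roots, particularly type-R3 roots where $s^n\notin S$. Here I rely on Lemma \ref{lem:pn_detroot} and the preceding lemma ($c_{\alpha,s}c_{-\alpha,s}=1$) to ensure that $-\mc{O}$ yields $z^{-1}$. It is also worth noting that \eqref{eq:duflo2} holds on all of $\tilde S(\R)_f$, as stated in \S\ref{sub:mdc}; this is what allows it to be applied to $\dot s$ even though $s$ need not lie in $S$.
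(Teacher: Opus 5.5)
Your proposal is correct and follows essentially the same route as the paper: apply \eqref{eq:duflo2}, split each radical into $\mf{u}_1\cap\mf{u}_2$ plus $\bigoplus_{\alpha\in P}\mf{g}_{\pm\alpha}$, and use Lemma \ref{lem:pn_detroot} orbit by orbit to get a factor $(-1)^{|\mc{O}|}$ from each orbit. The only difference is in the sign bookkeeping: the paper writes the second radical as $w\mf{u}$ with $w$ commuting with $s$ and gets the sign as $\epsilon(w)=(-1)^{\ell(w)}=(-1)^{|P|}$ from Corollary \ref{cor:pn_qvuf}, whereas you obtain $q^f_{\mf{u}_1}-q^f_{\mf{u}_2}\equiv|P|$ directly from the diagonalization of $iB_f(X,\sigma X)$, which avoids introducing $w$ altogether.
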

\begin{proof}
Another $\mf{u}'$ equals $w\mf{u}$ for a unique $w \in W_G(S)$ that commutes with $s$ in the group $\tx{Aut}_\C(S)$. More precisely, let $n \in N_G(S)$ represent $w$. The commutator $n^{-1}sns^{-1}$ lies in $G = \tilde G^0$, and normalizes $S$, since each of the factors does, hence lies in $N_G(S)$. Now $s$ normalizes $n\mf{u}n^{-1}$ if and only if $n^{-1}sn$ normalizes $\mf{u}$, and since $s$ already normalizes it, this is equivalent to $n^{-1}sns^{-1}$ normalizing it. The latter belonging to $N_G(S)$, this is equivalent to it lying in $S$. Thus the condition of $s$ stabilizing $w\mf{u}$ is equivalent to $n^{-1}sns^{-1} \in N_G(S)$ lying in $S$ (this condition is independent of the choice of $n$), and that in turn is equivalent to the triviality of the image of $n^{-1}sns^{-1}$ in $\tx{Aut}_\C(S)$.

From Corollary \ref{cor:pn_qvuf} we have
\[ (-1)^{q_{w\mf{u}}^f} = (-1)^{q_{\mf{u}}^f} \cdot \epsilon(w). \]

From \eqref{eq:duflo2} we have
\[ \rho_{f,w\mf{u}}(\dot s)=\rho_{f,\mf{u}}(\dot s) \cdot \det(s|w\mf{u}/(\mf{u} \cap w\mf{u})). \]

We see that changing from $\mf{u}$ to $w\mf{u}$ multiplies the expression by
\[ \frac{\epsilon(w)\det(s|w\mf{u}/(\mf{u} \cap w\mf{u}))\det(1-s|\mf{u})}{\det(1-s|w\mf{u})}. \]
Writing
\begin{eqnarray*}
  \mf{u}&=&(\mf{u} \cap w\mf{u}) \oplus (\mf{u} \cap w\mf{\bar u})\\
  w\mf{u}&=&(\mf{u} \cap w\mf{u}) \oplus (\mf{\bar u} \cap w\mf{u})  
\end{eqnarray*}
the above scalar becomes
\[ \frac{\epsilon(w)\det(s|\mf{\bar u} \cap w\mf{u})\det(1-s|\mf{u} \cap w\mf{\bar u})}{\det(1-s|\mf{\bar u} \cap w\mf{u})}. \]
Using Lemma \ref{lem:pn_detroot} this is seen to equal
\[ \epsilon(w) \cdot \prod_\mc{O} (-1)^{n-1}\alpha(s^n)^{-1} \frac{1-\alpha(s^n)}{1-\alpha(s^n)^{-1}},\]
where $\mc{O}$ runs over the set of $s$-orbits of roots in the set $R_\mf{u}^+ \cap wR_\mf{u}^-$, and $n=|\mc{O}|$. Note that, since $s$ preserves $\mf{u}$ and commutes with $w$, it acts on this set. Each factor in the product equals $(-1)^n$, so the whole product equals $(-1)^{|R_\mf{u}^+ \cap wR_\mf{u}^-|}=\ell(w)$.
\end{proof}

\begin{rem} \label{rem:typo}
  Just below the statement of \cite[(5.5.1)]{Bouaziz87} it is claimed that the formula does not depend on the choice of Lagrangian subspace $l$, and for this the reference \cite{DHV84} is given. Lemma \ref{lem:pn_indepu} above is a separate verification of this claim. 

  Note however that the formula \cite[(5.5.1)]{Bouaziz87} contains a typo: it is missing the tilde over $l \cap \mf{q}_\C$ in the denominator. Indeed, Bouaziz defines $l$ to be any subspace of $\mf{g}$ whose projection modulo the radical of $B_f$ is a Lagrangian. Two options for $l$ are $\mf{u}$ and $\mf{s} \oplus \mf{u}$. But switching between these options does change the outcome, it multiplies it by $\det(1-s|\mf{s}/\mf{s}^s)$, which depends only on the coset of $s$ in $\tilde G$, but is generally not equal to $1$.

  Another indication of this typo is the last displayed formula before Section 6.2 of \cite{Bouaziz87}, where the denominator is $\det(1-\tx{Ad}(x))_\mf{n}$, and the $\mf{n}$ there is what $\mf{u}$ is here.
\end{rem}

The following lemma compares the terms in Bouaziz's formula with the classical Harish-Chandra formula in the connected case.

\begin{lem}
Assume that $\tilde G=G$ is connected. Then
\[ \frac{(-1)^{q_{\mf{u}_f}^{wf}}}{\det(1-s|\mf{u}_f)}\rho_{f,\mf{u}}(\dot s) = \frac{(-1)^{q(G)}\epsilon(w)}{\prod\limits_{\<H_\alpha,d\chi\>>0}(\alpha(\dot s)^{1/2}-\alpha(\dot s)^{-1/2}) } \] 
\end{lem}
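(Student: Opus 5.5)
The plan is to reduce the identity to two independent pieces, a sign and a ratio of genuine functions on $S(\R)_f$. I read the statement with $\mf{u}=\mf{u}_f$ in $\rho_{f,\mf{u}}$ and with $s \in S(\R)$ strongly regular. In the connected case every $\mf{u}$ is stable under $s$, so by Lemma \ref{lem:pn_indepu} this choice of $\mf{u}$ is harmless anyway. By Lemma \ref{lem:pn_negative} the index set $\{\alpha \mid \<H_\alpha,d\chi\>>0\}$ of the product on the right is exactly $R_f^-=-R_f^+$. So the right-hand side is $(-1)^{q(G)}\epsilon(w)/D_{B^-}(\dot s)$, where $B^-$ is the Borel $\C$-subgroup with positive system $R_f^-$ and $D_{B^-}$ is the genuine function of \S\ref{sub:edc}.

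\emph{Sign.} I would apply Corollary \ref{cor:pn_qvuf} with $v=w$ and $\mf{u}=\mf{u}_f$. This gives $(-1)^{q_{\mf{u}_f}^{wf}}=(-1)^{q(G)}\epsilon(w)$, which is precisely the sign on the right-hand side. Here $q(G)$ is the normalization implicit in that corollary, namely $(-1)^{q(G)}=(-1)^{q^f_{\mf{u}_f}}$. This quantity is independent of $f$ by Lemma \ref{lem:pn_qvuf}: with $w=1$ it counts the non-compact positive roots modulo $2$, i.e.\ half the number of non-compact roots.

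\emph{Ratio.} Next I would transport everything to the explicit cover via Corollary \ref{cor:covers} (with $\tilde S=S$). Under that isomorphism $\rho_{f,\mf{u}_f}(\dot s)=\dot s^{\rho_f}$, where $\rho_f$ is half the sum of the roots in $R_f^+$. Since $s$ acts on $\mf{g}_\alpha$ by $\alpha(s)$, we have $\det(1-s\mid\mf{u}_f)=\prod_{\alpha\in R_f^+}(1-s^{\alpha})$. Reindexing by $\beta=-\alpha \in R_f^-$ rewrites this as $\prod_{\beta\in R_f^-}(1-s^{-\beta})$. The identity $D_B(\dot s)\,\dot s^{-\rho_B}=\prod_{\alpha>0}(1-s^{-\alpha})$ of \S\ref{sub:edc}, applied to $B^-$ with $\rho_{B^-}=-\rho_f$, turns this into $D_{B^-}(\dot s)\,\dot s^{\rho_f}$. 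Dividing $\rho_{f,\mf{u}_f}(\dot s)=\dot s^{\rho_f}$ by it gives
\[ \frac{\rho_{f,\mf{u}_f}(\dot s)}{\det(1-s\mid\mf{u}_f)} = \frac{1}{D_{B^-}(\dot s)}=\frac{1}{\prod_{\<H_\alpha,d\chi\>>0}(\alpha(\dot s)^{1/2}-\alpha(\dot s)^{-1/2})}. \]
Here the half-powers $\alpha(\dot s)^{\pm1/2}$ are understood as the genuine functions $\dot s^{\pm\alpha/2}$ on the explicit cover, as in the definition of $D_B$. Combining this with the sign step gives the claim.

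\emph{Expected obstacle.} The only delicate point I expect is bookkeeping of conventions. One must check that the identification of Corollary \ref{cor:covers} matches $\rho_{f,\mf{u}_f}$ with $\dot s^{+\rho_f}$ and not its inverse, using \eqref{eq:duflo1} and the fact that the differential of $\rho_{f,\mf{u}}$ is $\rho_{\mf{u}}$. One must also check that the positive system on the right-hand side is $R_f^-$ and not $R_f^+$, via Lemma \ref{lem:pn_negative}. A mismatch in either would only flip an overall factor $\epsilon(w_0)$, or replace $\dot s$ by $\dot s^{-1}$ in $D$, so it should be caught by checking the rank-one case $G=\tx{SL}_2$.
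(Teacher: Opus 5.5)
Your proposal is correct and follows the paper's proof. The paper uses exactly your two ingredients---Corollary \ref{cor:pn_qvuf} for the sign $(-1)^{q(G)}\epsilon(w)$ and Lemma \ref{lem:pn_negative} to identify the index set on the right as $R_f^-$---and leaves implicit the ratio computation you spell out via Corollary \ref{cor:covers} and $D_B(\dot s)\dot s^{-\rho_B}=\prod_{\alpha>0}(1-s^{-\alpha})$. One small correction: Lemma \ref{lem:pn_indepu} only lets you change $\mf{u}$ simultaneously in the sign, in $\rho_{f,\mf{u}}$ and in the determinant, so it does not make the choice $\mf{u}=\mf{u}_f$ inside $\rho_{f,\mf{u}}$ alone ``harmless''; $\mf{u}=\mf{u}_f$ is simply the intended reading of the statement.
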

\begin{proof}
According to Corollary \ref{cor:pn_qvuf} we have $(-1)^{q_{\mf{u}_f}^{wf}} = q(G)\epsilon(w)$. Lemma \ref{lem:pn_negative} shows that $\mf{u}_f$ is the \emph{opposite} of the unipotent radical determined by $\<H_\alpha,d\chi\>>0$.
\end{proof}

\subsection{Whittaker vectors and Duflo's construction} \label{sub:signchar}

In this subsection we assume that $G$ is quasi-split and maintain the assumption that $G(\R)$ has discrete series. Fix a non-trivial unitary character $\Lambda : \R \to \C^\times$ and an $\R$-pinning $\mc{P}$ of $G$. These lead to a Whittaker datum $\mf{w}$ for $G$. Let $A \subset \tilde G(\R)$ be the stabilizer of $\mc{P}$. Then $A \cap G(\R) \subset Z_G(\R)$. Thus, if $G$ is adjoint, then $\tilde G(\R) = G(\R) \rtimes A$. 

If an irreducible representation $\pi$ of $G(\R)$ extends to $\tilde G(\R)$, then all extensions differ by characters of $\tilde G(\R)/G(\R)$. In this subsection we will study two different such extensions in the case when $\pi$ is an irreducible $\mf{w}$-generic discrete series representation whose isomorphism class is $A$-stable, and show that they differ by a specific character. 

We begin by defining the relevant character. This character will be inflated from the adjoint quotient, so for the definition we assume without loss of generality that $G$ is adjoint, hence $\tilde G(\R) = G(\R) \rtimes A$. For $a \in A$ the fixed subgroup $G^a$ for the action of $a$ on $G$ is connected \cite[Lemma 3.1]{Ree10}. Define
\[ \epsilon_{\tilde G}(a) = (-1)^{q(G)-q(G^a)}.\]

\begin{lem}
  The inflation of $\epsilon_{\tilde G}$ to $\tilde G(\R)$ does not depend on $\mc{P}$.
\end{lem}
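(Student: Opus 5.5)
We need to show that the character of $\tilde G(\R)/G(\R)$ obtained by inflating $\epsilon_{\tilde G}$ does not depend on the $\R$-pinning $\mc{P}$. The definition uses $\mc{P}$ only through the subgroup $A=A_{\mc{P}} \subset \tilde G(\R)$, which splits $\tilde G(\R)=G(\R)\rtimes A_{\mc{P}}$ (recall $G$ may be taken adjoint). The plan is to compare two pinnings $\mc{P}$ and $\mc{P}'$ directly.

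First, since $G$ is adjoint, $G(\R)$ acts simply transitively on the set of $\R$-pinnings in a fixed $G(\R)$-orbit. Two arbitrary $\R$-pinnings need not be $G(\R)$-conjugate, but they are conjugate under $G_\tx{ad}(\R)=G(\R)$ once we allow the scalars of the simple root vectors to change. More robustly, any two $\R$-pinnings are conjugate by some $g\in G(\C)$, and we may also replace $g$ by an element of $G(\R)\cdot\tx{Aut}$. So I would argue in two steps.

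\textbf{Step (a): conjugate pinnings.} If $\mc{P}'=\tx{Ad}(g)\mc{P}$ with $g\in G(\R)$, then $A_{\mc{P}'}=gA_{\mc{P}}g^{-1}$. For $a\in A_{\mc{P}}$ the element $a'=gag^{-1}$ lies in the same coset $aG(\R)$ of $\tilde G(\R)/G(\R)$. Moreover $G^{a'}=gG^ag^{-1}$ is $\R$-isomorphic to $G^a$, so $q(G^{a'})=q(G^a)$ and the two inflations agree.

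\textbf{Step (b): general pinnings.} In general, write $\mc{P}'=\tx{Ad}(g)\mc{P}$ with $g\in G(\C)$, chosen so that $\tx{Ad}(g)$ carries $(T,B)$ to $(T',B')$ and $g^{-1}\sigma(g)=:t\in T(\C)$. Then $a$ and $a'=gag^{-1}$ again have the same image in $\tilde G/G$, but $a'$ is now a different $\R$-element of $\tilde G(\R)$ in the same coset as $a$. Since $G^{a'}=gG^ag^{-1}$, the group $G^{a'}$ is an inner twist of $G^a$ over $\R$, via the cocycle $t\in T^a$. This requires checking that $t$ can be taken $a$-fixed, which should follow because both pinnings are $a$-stable in the appropriate sense, or because $T\to T_a$ is surjective onto coinvariants.

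It then remains to show that $q$ is unchanged. Here $q(H)=\tfrac12\dim(H/K_H)$, which for groups with discrete series is $\tfrac12\dim$ of the symmetric space, and this is an invariant of the inner class only when $H$ is quasi-split. Both $G^a$ and $G^{a'}$ are quasi-split, since each carries the $\R$-pinning inherited from $\mc{P}$ or $\mc{P}'$ by \cite[Proposition 3.4.2(9)]{KalLLCD}. Quasi-split inner forms are isomorphic, so $q(G^{a'})=q(G^a)$.

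The main obstacle is step (b): checking that $G^{a'}$ is quasi-split, or equivalently $\R$-isomorphic to $G^a$. Once the pinning is inherited, this is immediate, since a group with an $\R$-pinning is quasi-split and $\R$-isomorphic to any other quasi-split inner form. I would therefore bypass cocycle bookkeeping entirely. For any $a\in\tilde G(\R)$ stabilizing some $\R$-pinning, $G^a$ is quasi-split, and its quasi-split inner class is determined by the coset of $a$. The latter holds because the isomorphism class of $G^a_{\C}$ together with the Galois action on its based root datum is determined by the action of $\sigma$ and $a$ on the based root datum of $G$, and both depend only on the coset.

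Hence $q(G^a)$ depends only on $aG(\R)$, and the claim follows.
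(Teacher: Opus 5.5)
Your argument reaches a correct conclusion, but by a detour the paper avoids, and it rests on a false remark. Contrary to your opening claim, when $G$ is adjoint any two $\R$-pinnings \emph{are} $G(\R)$-conjugate. The pinnings of $G_\C$ form a torsor under $G_\tx{ad}(\C)$. So if $\mc{P}'=\tx{Ad}(g)\mc{P}$ with both pinnings $\sigma$-stable, then also $\mc{P}'=\tx{Ad}(\sigma(g))\mc{P}$, whence $\sigma(g)=g$ and $g\in G_\tx{ad}(\R)=G(\R)$. This same transitivity is what gives the splitting $\tilde G(\R)=G(\R)\rtimes A_{\mc{P}}$ that you take as your starting point, so your remark also contradicts your own premise. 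Consequently the element $t=g^{-1}\sigma(g)$ in your Step (b) is always trivial, and Step (b) is vacuous. Step (a) alone is exactly the paper's proof: $A_{\mc{P}'}=gA_{\mc{P}}g^{-1}$, the element $gag^{-1}$ lies in the coset of $a$, and $\tx{Ad}(g):G^a\to G^{gag^{-1}}$ is an isomorphism of $\R$-groups.

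Your closing ``bypass'' is nevertheless a valid independent proof. For $a$ fixing an $\R$-pinning, $G^a$ inherits an $\R$-pinning and so is quasi-split. Its based root datum with Galois action is built functorially from the actions of $a$ and $\sigma$ on the based root datum of $G$. The action of $a$ depends only on its outer class, so two pinning-preserving elements in the same coset have $\R$-isomorphic fixed groups and hence equal $q$. This shows slightly more, namely dependence only on the outer class of $a$. It costs the structure theory of $G^a$ (Steinberg, \cite[Theorem 1.1.A]{KS99}) and the classification of quasi-split groups, whereas the paper needs only the torsor remark above.

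One small imprecision: $q$ is not an invariant of an inner class. What you actually use is that any two quasi-split groups in the same inner class are $\R$-isomorphic.
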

\begin{proof}
  Any other pinning is of the form $g\mc{P}g^{-1}$ for some $g \in G(\R)$. Using that pinning replaces $A$ by $gAg^{-1}$. Now $a \in A$ and $gag^{-1} \in gAg^{-1}$ map to the same element of $\tilde G(\R)/G(\R)$, and at the same time $q(G^a)=q(G^{gag^{-1}})$ since conjugation by $g$ gives an automorphism $G^a \to G^{gag^{-1}}$ of $\R$-groups.
\end{proof}

\begin{lem}
  The map $\epsilon_{\tilde G}$ is a group homomorphism.
\end{lem}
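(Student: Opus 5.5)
The plan is to realize $a \mapsto (-1)^{q(G)-q(G^a)}$ as the sign of a permutation representation of $A$. The sign of a permutation action is automatically a homomorphism $A \to \{\pm1\}$, so this would prove the lemma. As in the definition, we may assume $G$ is adjoint, so $\tilde G(\R)=G(\R)\rtimes A$ and each $G^a$ is connected. By Proposition \ref{pro:aa} we choose an $A$-stable pair $(S,C)$, with $S$ an elliptic maximal $\R$-torus of $G$ and $C$ a Borel $\C$-subgroup containing $S$. Then $A$ acts on the set $R^+=R(S,G)^+$ of $C$-positive roots.

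The first step is to check that this action preserves the subset $R^+_{nc}$ of non-compact positive roots. Each $a\in A$ is an $\R$-automorphism, so it commutes with $\sigma$. It therefore carries a normalized root vector $X_\alpha$ (Definition \ref{dfn:pn_normalized}) to a normalized root vector for $a\alpha$ with the same sign $\epsilon_\alpha$, because $[aX,\sigma(aX)]=a[X,\sigma X]=\epsilon_\alpha H_{a\alpha}$. By Fact \ref{fct:pn_normalized}, compactness is then preserved.

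Next recall that for a connected group with discrete series, $q(G)=\tfrac12\dim(G/K)$. Since all roots are imaginary, this equals the number of non-compact roots in any positive system, so $q(G)=|R^+_{nc}|$. Likewise $q(G^a)=|R(S^a,G^a)^+_{nc}|$, computed with the elliptic torus $S^a$ and the Borel subgroup $C^a$ from Proposition \ref{pro:aa}. Define $\epsilon'(a)$ to be the sign of the permutation of $R^+_{nc}$ induced by $a$. Since $\langle a\rangle$ permutes each orbit cyclically, this sign is
\[ \epsilon'(a) = (-1)^{|R^+_{nc}|-\#\{\langle a\rangle\text{-orbits on } R^+_{nc}\}}. \]
The map $\epsilon'$ is a homomorphism. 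It therefore suffices to prove
\[ q(G^a) \equiv \#\{\langle a\rangle\text{-orbits on } R^+_{nc}\} \pmod 2. \]

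The main obstacle is this orbit count. By \cite[Proposition 3.4.2(8)]{KalLLCD}, restricting roots to $S^a$ sends $\langle a\rangle$-orbits in $R(S,G)$ onto the possibly non-reduced system $R(S^a,G)$, and $R(S^a,G^a)$ is its subsystem of non-divisible roots. I would work one orbit $\mc O$ at a time, using a case analysis in the style of \cite{KS99}.
\begin{itemize}
\item If no two roots of $\mc O$ sum to a root, the restriction $\beta$ is a root of $G^a$. Its root space in $\mf g^a$ is spanned by $\sum_{k}a^kX_\alpha$. Applying Lemma \ref{lem:pn_detroot} together with the normalization, I expect $\beta$ to be non-compact exactly when $\alpha$ is, possibly up to a sign coming from $\alpha(a^{|\mc O|})$.
\item If $\mc O=\{\alpha,a\alpha\}$ with $\gamma=\alpha+a\alpha$ a root, we are in the type R2/R3 situation. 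Then $\gamma$ forms a singleton orbit, and the pair $\{\mc O,\{\gamma\}\}$ contributes $\beta$ and $2\beta$ to $R(S^a,G)$. Only the non-divisible root $\beta$ lies in $G^a$, unless the root space of $\gamma$ is $a$-fixed, in which case $2\beta$ replaces it. Here I would compute compactness of $\beta$ from that of $\alpha$ and $\gamma$. For this, \cite[Lemma 3.6.3]{KalLLCD} and the relation $\alpha(a)=-1$ on $\mf g_\gamma$, seen in the $\tx{GL}_3$ example, should show that the pair contributes to $q(G^a)$ with the same parity as its number of non-compact orbits.
\end{itemize}
Summing over orbits then gives the congruence, so $\epsilon_{\tilde G}=\epsilon'$ is a homomorphism. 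If the R3 bookkeeping resists a direct computation, my fallback is to reduce, exactly as in the proof of Proposition \ref{pro:aa}, to $G$ absolutely simple with $A$ cyclic or $S_3$, and then verify the remaining finitely many cases ($A_{2n}$, $A_{2n-1}$, $D_n$, $E_6$, $D_4$ with triality) by hand.
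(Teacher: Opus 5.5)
\textbf{The gap.} Your framework is the paper's: for a fixed $A$-stable elliptic Borel pair $(S,C)$, realize $(-1)^{q(G)-q(G^a)}$ as the sign of the permutation that $a$ induces on the non-compact $C$-positive roots. Your reduction to the congruence $q(G^a)\equiv\#\{\langle a\rangle\text{-orbits on }R^+_{nc}\}$ is also correct. But the step you call the main obstacle carries the whole content of the lemma, and you do not prove it. Both bullets end with ``I expect'' or ``should show''. The inputs you point to, a possible sign $\alpha(a^{|\mc O|})$ and the scalar by which $a$ acts on $\mf g_\gamma$, are not what settles it. Your alternative ``$2\beta$ replaces it'' never occurs: $a$ fixes a pinning of $(S,C)$ (transport the given pinning by $G^{a,\circ}$), so it acts by $-1$ on the root line of every root of type R3, and $2\beta$ is never a root of $G^a$.

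Two facts are needed.
\begin{enumerate}
\item For an orbit $\mc O$ not of type R3, the restriction $\beta$ of $\alpha\in\mc O$ is non-compact exactly when $\alpha$ is, with no sign ambiguity. The paper cites \cite[Lemma 3.6.3]{KalLLCD} for this. Directly: $a^{|\mc O|}$ acts trivially on $\mf g_\alpha$. For $Y=\sum_j a^jX_\alpha$ one gets $[Y,\sigma Y]=\epsilon_\alpha\sum_jH_{a^j\alpha}$, which is a positive multiple of $\epsilon_\alpha H_\beta$. The cross terms vanish because $a^j\alpha-a^l\alpha$ restricts to zero on $S^a$, hence is not a root.
\item Every root $\gamma$ of type R3 is compact. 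This is the missing idea. The pair $\{\mc O,\{\gamma\}\}$ contributes $[\alpha\text{ non-compact}]$ to $q(G^a)$, but contributes $[\alpha\text{ non-compact}]+[\gamma\text{ non-compact}]$ to your orbit count. So you need compactness of $\gamma$; information about $2\beta$ does not help.
\end{enumerate}

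\textbf{How to fill it.} Fact (2) follows from your own first step. Write $\gamma=\alpha+a^k\alpha$. Then $\alpha$ and $a^k\alpha$ have the same compactness, and compactness is multiplicative: a Cartan involution fixing $S$ pointwise acts on root lines by signs that multiply under brackets. Hence $\gamma$ is compact.

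The paper reaches the same conclusion slightly differently. Using Proposition \ref{pro:aa}, it chooses the $A$-stable $C$ so that all simple roots are non-compact. Then the compact roots are exactly those of even height. An R3 root is a sum of two roots swapped by a power of $a$, so it has even height and is compact.

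With (1) and (2), $q(G^a)$ equals the number of $\langle a\rangle$-orbits on $R^+_{nc}$, not merely modulo $2$. The rest of your argument then goes through, and the case-by-case fallback is unnecessary.
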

\begin{proof}
  The assumption that $G$ has discrete series implies that $G$ has an anisotropic maximal torus, and Proposition \ref{pro:aa} implies that $(S,C) \mapsto (S^A,C^A)$ is a bijection from the set of $A$-stable pairs consisting of an anisotropic maximal $\R$-torus $S \subset G$ and a Borel $\C$-subgroup $S \subset C \subset G$ to the set of analogous pairs for $G^A$. A root of $S$ is automatically imaginary, and is non-compact if and only if its restriction to $S^A$ is non-compact, according to \cite[Lemma 3.6.3]{KalLLCD}. Now $q(G)$ is the number of non-compact roots in $R(S,C)$, while $q(G^a)$ is the number of non-compact roots in $R(S^a,C^a)$. Restriction from $S$ to $S^a$ provides a bijection from the set of $a$-orbits in $R(S,C)$ that are not of type $R_3^a$ (in the sense of \cite[\S1.3]{KS99}, where we have included the relevant automorphism $a$ in the notation) to the set $R(S^a,C^a)$.
  
  We claim that $C$ can be chosen so that an $a$-orbit in $R(S,C)$ consisting of non-compact elements is automatically not of type $R_3^a$. Indeed, we can choose $C^A$ so that all simple roots of $S^A$ are non-compact, which implies that for the corresponding $C$ all simple roots of $S$ are non-compact. Given $\alpha,\beta \in R(S,C)$, $\alpha+\beta$ is compact if and only if $\alpha,\beta$ are either both compact or both non-compact. By our choice of $C$ we see that the compact roots are precisely those of even height. But any root of type $R_3^a$ has even height (it is a sum of two roots that are swapped by a power of $a$), and the claim is proven.

  The claim shows that $q(G^a)$ is the number of $a$-orbits if imaginary non-compact elements of $R(S,C)$. Write $X$ for the set of imaginary non-compact elements of $R(S,C)$, $n=\#X$, $\sigma_a$ for the permutation of $X$ induced by the action of $a$, and $|\sigma_a|$ for the number of orbits of $\sigma_a$ on $X$. Then 
  \[ q(G)-q(G^a) = n-|\sigma_a|, \]
  which implies
  \[ (-1)^{q(G)-q(G^a)}=\tx{sgn}(\sigma_a|X); \]
  indeed, if we decompose $X$ into orbits under $\sigma_a$, then each orbit $O$ contributes $|O|$ to $q(G)$ and $1$ to $q(G^a)$, hence the factor $(-1)^{|O|-1}$ to $(-1)^{q(G)-q(G^a)}$, but this factor is also the sign of $\sigma_a$ acting on $O$. 
\end{proof}

We now come to the main result of this subsection. For this, we drop the assumption that $G$ is adjoint. We will however still maintain the assumption that $\tilde G(\R) = G(\R) \rtimes A$. Let $\pi$ be an irreducible discrete series representation of $G(\R)$ that is $\mf{w}$-generic and whose isomorphism class is $A$-stable. Since the Whittaker datum $\mf{w}$ is $A$-admissible, according to Proposition \ref{pro:adgen} there exists an $A$-stable representative $(S,\tau)$ of its Harish-Chandra parameter. 

There are two ways to extend $\pi$ to a representation of $\tilde G(\R)$. One is to extend $\tau$ to a character of $S(\R) \rtimes A$ by letting it be trivial on $A$, and then apply Duflo's construction (\S\ref{sub:duflo}) to obtain an irreducible representation $\tilde\pi^D$ of $\tilde G(\R)$. Lemma \ref{lem:duflo1} shows that the restriction of $\tilde\pi^D$ to $G(\R)$ is $\pi$. The other way is to let, for each $a \in A$, $\tilde\pi^W(a) : \pi \circ a^{-1} \to \pi$ be the unique isomorphism whose action on the 1-dimensional space of Whittaker functions on $V_\pi^\infty$ is trivial. This makes sense, because a Whittaker functional with respect to $\pi$ is also one with respect to $\pi\circ a^{-1}$. Defining $\tilde\pi^W(g \rtimes a) = \pi(g)\circ\tilde\pi^W(a)$, we obtain the second extension $\tilde\pi^W$ of $\pi$ to $\tilde G(\R)$.

The following is the main result of this subsection, whose proof will be reduced here to the core case of a semi-simple simply connected group, and that core case will be handled in Appendix \ref{app:a}.

\begin{thm} \label{thm:rtci-sign}
  \[ \pi^W(A) = (-1)^{q(G)-q(G^A)}\pi^D(A). \]
\end{thm}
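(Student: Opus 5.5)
The statement is to be read elementwise: for each $a \in A$ we want $\tilde\pi^W(a) = \epsilon_{\tilde G}(a)\,\tilde\pi^D(a)$. Both $\tilde\pi^W$ and $\tilde\pi^D$ extend the same $\pi$, so $a \mapsto \tilde\pi^W(a)\tilde\pi^D(a)^{-1}$ is a scalar, and it defines a character of $A$. Since $\epsilon_{\tilde G}$ is a homomorphism, it suffices to treat each $a$ separately. We may therefore replace $A$ by the cyclic group $\langle a\rangle$; note that $q(G^a)$ is unchanged by this.

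\textbf{Reduction to the core case.} First, pass to the adjoint quotient. Then pull back to the simply connected cover $G_{\tx{sc}}$ of the derived group, together with the induced action of $a$, which preserves the induced pinning. The Whittaker normalization is compatible with restriction to the image of $G_{\tx{sc}}(\R)Z(\R)$, because Whittaker functionals only involve $U(\R)$, and $U$ lies in that image.

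The Duflo extension $\tilde\pi^D$ is compatible with the same restriction. For this we use Lemma \ref{lem:duflo1}(2) and the compatibility of the metaplectic and $\rho$-covers (Corollary \ref{cor:covers}, Lemma \ref{lem:cov1}). The key point is that the defining property of $\Pi_{f,\mf{u}}$ is characterized on the $(if+\rho_\mf{u})$-weight line of $\mf{u}$-homology, which only sees $\mf{g}$ and the identity component.

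The sign $\epsilon_{\tilde G}(a)$ is computed from the root data of $S$ and $S^a$. By the proof that $\epsilon_{\tilde G}$ is a homomorphism, it equals $\tx{sgn}(\sigma_a|X)$, where $X$ is the set of non-compact positive roots, and this is unchanged under isogeny. Restriction to $G_{\tx{sc}}(\R)$ may make $\pi$ reducible, but we can fix an $a$-stable generic constituent and compare the two intertwiners on it. This reduces the theorem to $G$ semisimple and simply connected, which is the case handled in Appendix \ref{app:a}.

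\textbf{Core case.} I would compare the two normalizations on a common test space. $\tilde\pi^D(a)$ is normalized by acting trivially on the weight line
\[ H_{q}(\mf{u}_f,\pi^\infty)_{if+\rho}, \]
twisted by $\rho_{f,\mf{u}}(\dot a)$; with $\tau$ extended trivially on $A$ this twist is a sign. $\tilde\pi^W(a)$ is normalized by acting trivially on the Whittaker functional.

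I would move both normalizations into the geometric picture. Via Beilinson--Bernstein localization, $\pi$ becomes the $D$-module direct image $j_!$ or $j_*$ of the rank-one local system on the closed $K$-orbit attached to $(S,\tau)$ on the flag variety. The $\mf{u}$-homology weight line is then the fiber of this local system at the $a$-fixed point $C$. The Whittaker line, on the other hand, is detected by the associated cycle: genericity means the leading term of the characteristic cycle on the regular nilpotent $K$-orbit is nonzero. Ginzburg's description of the characteristic cycles of direct images under open immersions identifies that multiplicity with a cohomology group of the orbit data, on which $a$ acts.

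The action of $a$ on this comparison space should be the determinant of $a$ on a space spanned by the non-compact simple directions transverse to the orbit, i.e. on $\bigwedge^{top}$ of the span of the non-compact roots. That determinant is $\tx{sgn}(\sigma_a|X)$, which gives $\epsilon_{\tilde G}(a)$.

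\textbf{Main obstacle.} The hard part is this last identification: computing precisely how $a$ acts on the characteristic-cycle fiber relative to the $\mf{u}$-homology line, including all sign conventions such as orientation and determinant twists. I would first test the computation in rank one, with $G=\tx{SL}_2$ and $a$ trivial, and then on $\tx{SL}_2\times\tx{SL}_2$ with $a$ the swap, where $\epsilon_{\tilde G}(a)=-1$. This pins down the conventions before attempting the general argument.
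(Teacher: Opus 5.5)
\textbf{Reduction step.} Your reduction to $G$ semisimple simply connected is essentially the paper's. There the constituent is made canonical: one takes the unique $\mf{w}_{\tx{sc}}$-generic $\pi^\natural$, which is then automatically $A$-stable. One then uses that the Whittaker space and the $(if+\rho_f)$-weight line of $\mf{u}$-homology are both one-dimensional and see only the restriction to $G(\R)^\natural$.

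\textbf{The gap is on the Whittaker side of the core case.} The fact you invoke, that genericity forces the associated variety to contain a regular nilpotent, only shows that the associated-cycle stalk at a regular nilpotent is non-zero. It says nothing about how $a$ acts on that stalk. In particular it does not show that this action agrees with the action on $\hom_{U(\R)}(\pi^\infty,\C_\psi)^*$.

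Ginzburg's theorem on direct images under open immersions does not close this either. The localization of $\pi^{\mrm{HC}}$ lives on the closed orbit $Q$, where no open immersion is involved. More fundamentally, a characteristic cycle is an algebraic invariant of the Harish-Chandra module, while Whittaker functionals are analytic objects on smooth vectors. The appendix records a natural isomorphism between the two only as an expectation, and notes it already fails for $\mrm{SL}_2(\R)$ if smooth vectors are replaced by analytic ones.

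\textbf{The step you flag as hardest is the formal one.} The canonical filtration on the closed-orbit direct image gives $\Gr^F\mc{M}=\mc{O}_{N_Q^*}\otimes\gamma\otimes\omega_{Q/\mc{B}}$. Take an $A$-fixed regular nilpotent $x$ in the conormal fibre at $B'$; its existence needs the $A$-stable pinning argument. Then:
\begin{itemize}
\item the stalk at $x$ is $\gamma_{B'}\otimes\det(\mf{g}/(\mf{b}'+\mf{k}))$;
\item a Tor computation identifies $\gamma_{B'}$ with $\mrm{H}(\tilde\pi)$;
\item $A$ acts on the determinant by $(-1)^{q(G)-q(G^A)}$, since it preserves the real structure from an $A$-fixed pinning.
\end{itemize}
Also, in the appendix $\tilde\pi^D$ is characterized by acting \emph{trivially} on the weight line. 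There is no residual sign $\rho_{f,\mf{u}}(\dot a)$ to compute.

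\textbf{What the paper does instead.} It carries the Whittaker normalization into geometry through principal series, where both sides can be computed by hand. Let $\Pi=\mrm{Ind}_{B(\R)\rtimes\<A\>}^{G(\R)\rtimes\<A\>}(\chi)$.
\begin{itemize}
\item Whittaker side: a Whittaker functional is determined by its restriction to functions supported in the open Bruhat cell, built with an $A$-fixed lift $w_0$. There it is $\psi$ times a Haar measure, hence $A$-fixed, so $\mrm{Wh}(\Pi)^*\cong\C_\chi$.
\item Geometric side: $\Pi^{\mrm{HC}}=\Gamma(j_*\mc{O}(\chi))$ for the open $K$-orbit. Ginzburg's deformation $g^s$ along an $A$-invariant boundary equation, plus a specialization lemma in equivariant $K$-theory, gives $\mrm{AC}(\Pi^{\mrm{HC}})_x=[\C_\chi]$.
\item Comparison: walking up through $K$-orbits via $\mb{P}^1$-fibrations, and discarding pieces whose characteristic cycle misses the regular nilpotent locus, shows $\pi$ is a quotient of $\mrm{Ind}(\lambda-\rho)$.
\item Conclusion: Whittaker multiplicity one makes this quotient $A$-equivariant for a unique lift $\tilde\pi$ and gives $\mrm{Wh}(\tilde\pi)=\mrm{Wh}(\Pi)$. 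Additivity and non-vanishing of the stalk give $\mrm{AC}(\tilde\pi^{\mrm{HC}})_x=[\C_\chi]$.
\end{itemize}
Without this bridge, or some genuine substitute, your outline cannot determine the $a$-action on the Whittaker line. The rank-one sanity checks you propose would not supply it.
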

\begin{proof}
  Let $G_\tx{sc}$ be the simply connected cover of the derived subgroup of $G$. We claim that the theorem holds for $G$ if and only if it holds for $G_\tx{sc}$. To see this, we will use \cite[Lemma 2.7.5, Theorem 2.7.7]{DPR}.

    Let $G(\R)^\natural \subset G(\R)$ be the image of $G_\tx{sc}(\R)$ and let $Z_G$ be the center of $G$. Then $G(\R)^\natural \cdot Z_G(\R)$ is of finite index in $G(\R)$, hence the restriction of $\pi$ to $G(\R)^\natural \cdot Z_G(\R)$ is a finite direct sum of representations of the form $\pi^\natural\otimes\omega$, where $\omega$ is the central character of $\pi$ and $\pi^\natural$ is an irreducible discrete series representation of $G(\R)^\natural$. Furthermore, 
    \[ \pi=\tx{Ind}_{G(\R)^\natural \cdot Z_G(\R)}^{G(\R)}(\pi^\natural\otimes\omega), \]
    for any choice of $\pi^\natural$ in the finite direct sum.

    The representations $\pi^\natural$ that occur in this direct sum form a single orbit under the conjugation action of $G(\R)$ and therefore belong to the same $L$-packet (when considered as representations of $G_\tx{sc}(\R)$ via pull-back). Since $\pi$ is $\mf{w}$-generic for the Whittaker datum $\mf{w}$ arising from the fixed pinning and character $\R \to \C^\times$, one of the $\pi^\natural$ must also be $\mf{w}_\tx{sc}$-generic, where $\mf{w}_\tx{sc}$ is the Whittaker datum for $G_\tx{sc}$ obtained from the same pinning and character. According to \cite[Lemma 3.3.6]{DPR} there is a unique $\pi^\natural$ in its $L$-packet that is generic for any fixed Whittaker datum. Thus, $\mf{w}_\tx{sc}$-genericity singles out precisely one $\pi^\natural$ among the constituents of the restriction of $\pi$ to $G(\R)^\natural$.
    
    Since $A$ preserves the isomorphism class of $\pi$ and also the Whittaker datum $\mf{w}_\tx{sc}$, it preserves the unique $\mf{w}_\tx{sc}$-generic $\pi^\natural$. We thus have $\pi^{\natural,*}(A) : \pi^\natural \to \pi^\natural \circ A^{-1}$ for $*=W,D$. We claim that 
    \[ \pi^*(A) = \tx{Ind}_{G(\R)^\natural \cdot Z_G(\R)}^{G(\R)} \pi^{\natural,*}(A),\qquad *=W,D. \]
    To see this we note that the case $*=W$ is governed by $\tx{Hom}_{U(\R)}(V_\pi^\infty,\C_\mf{w})$, where $U$ is the unipotent radical of the $\R$-Borel subgroup that is part of the Whittaker datum, while the case $*=D$ is governed by $H_{q(G)}(\mf{u},V_\pi^\infty)_{if+\rho}$, where $\mf{u}$ is the Lie algebra of the unipotent radical of the complex Borel subgroup determined by the infinitesimal character $f$. Both of these groups depend only on the restriction of $\pi$ to $G(\R)^\natural$, and their 1-dimensionality and vanishing properties show that they are unchanged if we replace $\pi$ with the unique $\mf{w}$-generic $\pi^\natural$. This proves the claim.

    The claim implies that $\pi^W(A)=\pi^D(A)$ if and only if $\pi^{\natural,W}(A)=\pi^{\natural,D}(A)$. On the other hand, it is clear from the definition of $q(G)$ and $q(G^A)$ that these numbers do not change if we replace $G$ by $G_\tx{sc}$.

    We have thus proved the claim that the theorem holds for $G$ if and only if it holds for $G_\tx{sc}$. The proof in the case that $G$ is semi-simple and simply connected is given in the appendix, see Proposition \ref{pro:signs}. 
\end{proof}

\section{Square-integrable $L$-packets of real disconnected groups} \label{sec:sqlp}

In this section we will construct the refined local Langlands correspondence for discrete series representations of rigid inner forms of quasi-split disconnected groups over $\R$. Thus, unlike in the previous sections, the disconnected groups considered here will be less general. The precise conditions are spelled out in \S\ref{sub:sqpac}.

\subsection{The local Langlands correspondence for disconnected tori} \label{sub:luo}

Here we shall briefly recall the local Langlands correspondence for disconnected tori, which was constructed in \cite[\S8]{KalLLCD} and reinterpreted in a more natural way in the thesis \cite{YiLuoThesis} of Yi Luo.

Let $T$ be a torus over a local field $F$ of characteristic zero; in this paper we are particularly interested in $F=\R$, but this special case will not bring any simplification. Let $A$ be a finite group acting on $T$. Let $\varphi : W_F \to {^LT}$ be an $L$-parameter for $T$ and $z \in Z^1(\mc{E}^\tx{rig},T)$, where $\mc{E}^\tx{rig}$ is the $F$-gerbe constructed in \cite{KalRI}. We write $[\varphi] : T(F) \to \C^\times$ for the character associated to the $\hat T$-conjugacy class $[\varphi]$ of $\varphi$ by the usual local Langlands correspondence, $[z] : \pi_0([\hat{\bar T}]^+) \to \C^\times$ for the character associated to the class $[z] \in H^1(\mc{E},T)$ by the generalized Tate-Nakayama duality \cite[\S4]{KalRI}, $\tx{Irr}([T \rtimes A]_z,[\varphi])$ for the set of irreducible representations of the inner form $[T \rtimes A]_z(F)$ whose restriction to $T_z(F)=T(F)$ contains the character $[\varphi]$, and $\tx{Irr}(\pi_0([\hat{\bar T} \rtimes A^{[z]}]_\varphi^+),[z])$ for the set of those irreducible representations of the preimage $[\hat{\bar T} \rtimes A^{[z]}]_\varphi^+$ of the centralizer of $\varphi$ in $[\hat T \rtimes A^{[z]}]$ that factor through the component group and whose restriction to $\pi_0([\hat{\bar T}]^+)$ contains $[z]$. It is shown in \cite[\S8]{KalLLCD} that there is a a natural bijection
\[ \tx{Irr}([T \rtimes A]_z,[\varphi]) \leftrightarrow \tx{Irr}(\pi_0([\hat{\bar T} \rtimes A^{[z]}]_\varphi^+),[z]), \]
characterized uniquely by character identities. An elementary argument reduces to the construction of a bijection
\begin{equation} \label{eq:llcdtor}
\tx{Irr}([T \rtimes A^{[z],[\varphi]}]_z,[\varphi]) \leftrightarrow \tx{Irr}(\pi_0([\hat{\bar T} \rtimes A^{[z],[\varphi]}]_\varphi^+),[z]),
\end{equation}
where $A^{[\varphi],[z]}$ is the subgroup of $A$ consisting of those elements that preserve the $\hat T$-conjugacy class $[\varphi]$ and the cohomology class $[z]$. This bijection is then obtained by constructing a natural isomorphism between the two pushouts
\[ \xymatrix{
  1 \ar[r]& T(F) \ar[r]\ar[d]^{[\varphi]}&[T \rtimes A^{[z],[\varphi]}]_z(F)\ar[r]& A^{[z],[\varphi]}\ar[r]& 1\\
&\C^\times
}\]
and 
\[ \xymatrix{
  1 \ar[r]& \pi_0([\hat{\bar T}]^+) \ar[r]\ar[d]^{[z]}&[\hat{\bar T} \rtimes A^{[z],[\varphi]}]_\varphi^+ \ar[r]& A^{[z],[\varphi]}\ar[r]& 1\\
&\C^\times
}\]
The construction of this isomorphism given in \cite[\S8]{KalLLCD} relies on a number of auxiliary choices, which are then shown to not influence the result. 

A cleaner interpretation of the bijection \eqref{eq:llcdtor} is given in section 5.3 of the thesis of Yi Luo, especially equation (193). We recall his result as follows.

\begin{pro}[Yi Luo] \label{pro:luo}
  Let $\pi \in \tx{Irr}([T \rtimes A^{[z],[\varphi]}]_z,[\varphi]) $ and $\rho \in \tx{Irr}(\pi_0([\hat{\bar T} \rtimes A^{[z],[\varphi]}]_\varphi^+),[z])$ correspond under the bijection \eqref{eq:llcdtor}. Then, after replacing $\pi$ and $\rho$ by isomorphic representations, both of them act on the same finite-dimensional complex vector space $V$ and 
  \[ \rho(\tilde s) \circ \pi(\tilde t) = \<(\varphi_0^{-1},s),(z^{-1},t)\>_\tx{TN}^{-1} \]
  for any $\tilde s = s \rtimes a^{-1} \in [\hat{\bar T} \rtimes A^{[z],[\varphi]}]_\varphi^+$ and $\tilde t = t \rtimes a \in [T \rtimes A^{[z],[\varphi]}]_z(F)$, where $\varphi(w) = \varphi_0(w) \rtimes w \in \hat T \rtimes W_F$.
\end{pro}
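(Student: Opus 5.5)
The plan is to turn the defining property of the bijection \eqref{eq:llcdtor} into a statement about pairings. For each $a \in A^{[z],[\varphi]}$, I would read an element $\tilde t = t \rtimes a$ of $[T \rtimes A^{[z],[\varphi]}]_z(F)$ as a hypercocycle $(z^{-1},t)$ of the complex of tori $T \xrightarrow{1-a} T$, with $z^{-1}$ the degree-one component and $t$ the degree-zero component. Dually, I would read an element $\tilde s = s \rtimes a^{-1}$ of the centralizer $[\hat{\bar T} \rtimes A^{[z],[\varphi]}]_\varphi^+$ as a hypercocycle $(\varphi_0^{-1},s)$ of the dual complex $\hat{\bar T} \xrightarrow{1-a^{-1}} \hat{\bar T}$. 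Here the condition that $\tilde s$ centralizes $\varphi$ is exactly the cocycle condition linking $\varphi_0$ and $s$, and the condition that $\tilde t$ is an $F$-point of the inner form is the one linking $z$ and $t$. The Tate--Nakayama pairing $\<-,-\>_\tx{TN}$ for complexes of tori over the gerbe $\mc{E}^\tx{rig}$, as extended in \cite{KalLLCD}, then gives a scalar $c(\tilde s,\tilde t)$, defined whenever the $A$-components of $\tilde s$ and $\tilde t$ are inverse to each other.

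Next I would verify the properties of $c$ that make the formula meaningful. First, \emph{equivariance}: replacing $t$ by $t t'$ with $t' \in T(F)$ multiplies $c$ by $[\varphi](t')^{\pm1}$, and replacing $s$ by $s s'$ with $s' \in \pi_0([\hat{\bar T}]^+)$ multiplies it by $[z](s')^{\pm1}$. Both follow from the functoriality of the pairing along the inclusions $T \to [T \to T]$ and $\hat{\bar T} \to [\hat{\bar T} \to \hat{\bar T}]$, together with the usual Langlands and Tate--Nakayama dualities. Second, \emph{multiplicativity}: $c$ should satisfy $c(\tilde s_1\tilde s_2,\tilde t_2\tilde t_1) = c(\tilde s_1,\tilde t_1)\,c(\tilde s_2,\tilde t_2)$, up to the appropriate conjugation of $\tilde t_1$ by $a_2$. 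This comes from the compatibility of the cup product with the composition of complexes $[T \xrightarrow{1-a_1a_2} T]$. Granting these, $c$ defines an isomorphism between the pushout of $[T \rtimes A^{[z],[\varphi]}]_z(F)$ along $[\varphi]$ and the opposite of the pushout of $[\hat{\bar T} \rtimes A^{[z],[\varphi]}]_\varphi^+$ along $[z]$. Concretely, $\tilde t$ is sent to the element acting on a representation space by $\rho(\tilde s)^{-1}c(\tilde s,\tilde t)^{-1}$.

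Given this isomorphism, for any $\rho$ in the right-hand set acting on $V$, the prescription $\pi(\tilde t) := \rho(\tilde s)^{-1}\<(\varphi_0^{-1},s),(z^{-1},t)\>_\tx{TN}^{-1}$ defines a representation of the left-hand group on the same $V$ with the correct $[\varphi]$-isotypic restriction, and this gives a bijection. It remains to check that this bijection coincides with the one of \cite[\S8]{KalLLCD}. There the isomorphism of pushouts is built after choosing auxiliary data, essentially lifts of $a$ and explicit representative cocycles. The plan is to compute the pairing of complexes using exactly those choices and observe that it reproduces the auxiliary-choice formula. Alternatively, one can verify that the new bijection satisfies the character identities that uniquely characterize the old one.

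I expect the main obstacle to be the multiplicativity step, together with the sign and inverse conventions in the final comparison. Cup products of hypercochains for complexes over the gerbe $\mc{E}^\tx{rig}$ require explicit cochain-level formulas, and the exponents $\pm1$ and the order of composition must match the convention $\rho(\tilde s)\circ\pi(\tilde t)$. I would first check everything in the split case, where $z$ is trivial and $A$ is cyclic, and then propagate the result by functoriality.
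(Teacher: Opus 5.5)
The paper gives no proof of this statement; it imports it from \cite[\S5.3, (193)]{YiLuoThesis}. So your outline has to stand on its own, and it has a genuine gap exactly where the content of the proposition lies.

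\textbf{The gap: the comparison with \cite[\S8]{KalLLCD}.} The bijection \eqref{eq:llcdtor} is not yours to define. It is the one built in \cite[\S8]{KalLLCD} from an isomorphism of the two pushouts that depends on auxiliary choices, and the proposition is a formula for \emph{that} bijection. Everything you actually argue only yields \emph{some} bijection with the correct restriction behaviour. That includes reading $\tilde t,\tilde s$ as the hypercocycles $(z^{-1},t)$ and $(\varphi_0^{-1},s)$ (which is how the paper uses them, e.g.\ in Lemma \ref{lem:d3}), the equivariance of $c$, and the resulting isomorphism of central extensions of $A^{[z],[\varphi]}$ by $\C^\times$. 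Such isomorphisms form a torsor under $\hom(A^{[z],[\varphi]},\C^\times)$. So a priori your bijection and that of \cite[\S8]{KalLLCD} can differ by twisting with a character of $A^{[z],[\varphi]}$. This paper is itself a warning that natural-looking normalizations can differ in exactly this way (cf.\ $\epsilon_{\tilde G}$ in Theorem \ref{thm:rtci-sign}).

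Your one sentence on the comparison is therefore the entire proof, and neither route you mention is carried out. Route (a) would require writing out the auxiliary-choice isomorphism of \cite[\S8]{KalLLCD} and matching it with $\tilde t\mapsto c(\tilde s,\tilde t)^{-1}\tilde s^{-1}$ at the cochain level. Route (b) would require the following steps.
\begin{enumerate}
\item Restrict to $\<a\>$, where $\tilde\pi$ and $\tilde\rho$ are characters. The sliced identity \eqref{eq:main} for the disconnected torus then reads $\tilde\rho(\tilde s)\tilde\pi(\tilde t)=\Delta_\tx{KS}''(\gamma_1,\tilde t)\,S\Theta_{\varphi_1}(\gamma_1)$, with $\gamma_1$ lifting the image of $t$ in $H(F)=T_a(F)$.
\item Show that only $\Delta_{III}^\tx{new}$ survives and that the right side equals $\<(\varphi_0^{-1},s),(z^{-1},t)\>_\tx{TN}^{-1}$. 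This is the torus analogue of Lemma \ref{lem:d3} and requires tracing $A_0$ through the $z$-pair and the $L$-embeddings.
\item Invoke uniqueness of a bijection satisfying these identities, together with compatibility with restriction to cyclic subgroups.
\end{enumerate}
None of this is in your proposal.

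\textbf{Multiplicativity is formal.} The step you single out as the main obstacle needs no cup products. Let $C=[T\xrightarrow{(1-a_1,1-a_2)}T\times T]$. The hypercocycle $(z^{-1},(t_1,t_2))$ of $C$ maps under the two projections to $(z^{-1},t_i)$. Under the morphism that is the identity in degree $0$ and $(u,v)\mapsto u\,a_1(v)$ in degree $1$, it maps to $(z^{-1},t_1a_1(t_2))$, the hypercocycle of $\tilde t_1\tilde t_2$. On the dual side, compare the product of the images of $(\varphi_0^{-1},s_1)$ and $(\varphi_0^{-1},s_2)$ with the image of the hypercocycle of $\tilde s_2\tilde s_1$. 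They differ by a coboundary built from $s_1$, precisely because $\tilde s_1$ centralizes $\varphi$. Functoriality of the pairing then gives $c(\tilde s_2\tilde s_1,\tilde t_1\tilde t_2)=c(\tilde s_1,\tilde t_1)\,c(\tilde s_2,\tilde t_2)$ on the nose, with no conjugation corrections. The identity is in any case forced by the formula you are trying to prove.
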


\subsection{Square-integrable $L$-packets} \label{sub:sqpac}

Let $G$ be a quasi-split connected reductive $\R$-group, and let $A$ be a finite group of automorphisms of $G$ that preserves a fixed pinning $\mc{P}$. Let $\tilde G = G \rtimes A$. Let $z \in Z^1(u \to \mc{E}^\tx{rig},Z(G)^A \to G)$ and let $\tilde G_z$ be the $z$-twist of $\tilde G$. Thus, in the language of \cite{KalLLCD}, $\tilde G$ is a quasi-split disconnected reductive group and $\tilde G_z$ is a rigid inner form thereof. The group we will be working with is $\tilde G_z$.

Fix an non-trivial unitary character $\Lambda : \R \to \C^\times$. Then $\mc{P}$ and $\Lambda$ lead to an $A$-admissible Whittaker datum $\mf{w}$ for the quasi-split group $G$. Write $\hat G$ for the common dual group of $G$ and $G_z$, and $^LG=\hat G \rtimes \Gamma$.

Let $\varphi : W_\R \to {^LG}$ be a discrete $L$-parameter. Langlands has associated an $L$-packet $\Pi_\varphi(G_z)$ of essentially discrete series representations of $G_z(\R)$. We define $\Pi_\varphi(\tilde G_z)$ to be the set of irreducible representations of $\tilde G_z(\R)$ whose restriction to $G_z(\R)$ intersects $\Pi_\varphi(G_z)$. 

The group $A$ acts on $H^1(u \to \mc{E}^\tx{rig},Z(G)^A \to G)$ and we let $A^{[z]}$ be the stabilizer of the class of $z$. Then $A^{[z]}$ also stabilizes the character of $\pi_0(Z(\hat{\bar G})^+)$ determined by $[z]$. Let $S_\varphi=\tx{Cent}(\varphi,\hat G)$ and $\tilde S_\varphi^{[z]}=\tx{Cent}(\varphi,\hat G \rtimes A^{[z]})$. 

The first main result of this paper is the construction of an injection
\begin{equation} \label{eq:is}
\Pi_\varphi(\tilde G_z) \to \tx{Irr}(\pi_0(\tilde S_\varphi^{+,[z]})),
\end{equation}
whose image consists of representations on which $\pi_0(Z(\hat{\bar G})^+)$ acts via $[z]$. This injection will depend on $\mf{w}$. Its construction will be given presently.

To that end, we recall the analogous injection
\begin{equation} \label{eq:is1}
\Pi_\varphi(G_z) \to \tx{Irr}(\pi_0(S_\varphi^+))  
\end{equation}
discussed in \cite[\S4]{DPR}. To $\varphi$ there is canonically associated a pair $(S,\tau,J_z)$ consisting of an $\R$-torus $S$, a stable class $J_z$ of embeddings $S \to G_z$ whose image is an elliptic maximal torus, and a genuine character $\tau$ of $S(\R)_G$ whose differential $d\tau \in i \cdot \tx{Lie}^*(S)(\R)$ is regular; we note here that the stable class $J_z$ endows $X^*(S)$ with a subset $X$ via which one obtains the double cover $S(\R)_G$, as well as with a set $R(S,G) \subset X^*(S)$ which gives meaning to the word ``regular''. Set $f=d\tau/i$. For each $j \in J_z$ we can transport $(f,\tau)$ via $j$ and obtain a representation $\pi_{f,\tau}$ of $G(\R)$, which we denote by $\pi_j$. Then
\[ \Pi_\varphi(G_z) = \{\pi_j|j \in J_z/G_z(\R)\}, \]
where $J_z/G_z(\R)$ denotes the set of $G_z(\R)$-conjugacy classes of elements in $J_z$. There is a unique $\mf{w}$-generic member $\pi_\mf{w} \in \Pi_\varphi(G_1)$, and it corresponds to an embedding $j_\mf{w} \in J_1$; we have written here $G=G_1$ to emphasize that we are speaking of the quasi-split form. We have the injective map (see \cite[\S2.3]{DPR} for the definition of $\tx{inv}(j_\mf{w},j)$)
\[ J_z/G_z(\R) \to H^1(u \to \mc{E}^\tx{rig},Z(G) \to S),\qquad j \mapsto \tx{inv}(j_\mf{w},j). \]
On the other hand, there is a tautological identification $S_\varphi=\hat S^\Gamma$, and hence $\pi_0(S_\varphi^+)=\pi_0([\hat{\bar S}]^+)$. The duality pairing of \cite[Corollary 5.4]{KalRI} provides the desired injection \eqref{eq:is1}.

We now turn to the construction of \eqref{eq:is}. We have the exact sequence
\[ 1 \to G_z(\R) \to \tilde G_z(\R) \to A^{[z]} \to 1. \]
The irreducible representations of $G_z(\R)$ that are contained in the restriction of an irreducible representation $\tilde\pi$ of $\tilde G_z(\R)$ form a single orbit under the action of $A^{[z]}$. The disjointness of discrete series $L$-packets of $G_z(\R)$ implies that if $\pi \in \Pi_\varphi(G_z)$ and $a\pi \in \Pi_\varphi(G_z)$ for some $a \in A^{[z]}$, then $a$ preserves the $L$-packet $\Pi_\varphi(G_z)$, hence also its parameter (seen as a $\hat G$-conjugacy class). Let $A^{[\varphi]}$ be the stabilizer of the $\hat G$-conjugacy class of $\varphi$, and let $A^{[z],[\varphi]}=A^{[z]} \cap A^{[\varphi]}$. From this discussion we obtain the disjoint union decomposition
\begin{equation} \label{eq:subpack}
\Pi_\varphi(\tilde G_z) = \coprod_{\pi \in \Pi_\varphi(G_z)/A^{[z],[\varphi]}} \Pi_\varphi(\tilde G_z,\pi),
\end{equation}
where $\Pi_\varphi(\tilde G_z,\pi)$ is the subset of $\Pi_\varphi(\tilde G_z)$ consisting of those representations $\tilde\pi$ of $\tilde G_z(\R)$ whose restriction to $G_z(\R)$ contains $\pi$. 

We apply the analogous discussion to the exact sequence
\begin{equation} \label{eq:subpack1}
1 \to \pi_0(S_\varphi^+) \to \pi_0(\tilde S_\varphi^{+,[z]}) \to A^{[z],[\varphi]} \to 1,
\end{equation}
and obtain the disjoint union decomposition
\[ \tx{Irr}(\pi_0(\tilde S_\varphi^{+,[z]})) = \coprod_{\eta \in \tx{Irr}(\pi_0(S_\varphi^+))/A^{[z],[\varphi]}} \tx{Irr}(\pi_0(\tilde S_\varphi^{+,[z]}),\eta), \]
where $\tx{Irr}(\pi_0(\tilde S_\varphi^{+,[z]}),\eta)$ is the subset of those irreducible representations of the group $\pi_0(\tilde S_\varphi^{+,[z]})$ whose restriction to $\pi_0(S_\varphi^+)$ contains the character $\eta$.

Given the injection \eqref{eq:is1} and its equivariance properties with respect to automorphisms (cf. \cite[Conjecture A.1]{KalLLCD}, it would be enough to construct a bijection
\begin{equation} \label{eq:is2}
\Pi_\varphi(\tilde G_z,\pi) \to \tx{Irr}(\pi_0(\tilde S_\varphi^{+,[z]}),\eta),
\end{equation}
for each $\pi \in \Pi_\varphi(G_z)$ and $\eta \in \tx{Irr}(\pi_0(S_\varphi^+))$ corresponding under \eqref{eq:is1}. 

\begin{lem} \label{lem:duflo2}
Let $\pi \in \Pi_\varphi(G_z)$ and let $(f,\tau)$ be a pair in the $G_z(\R)$-conjugacy class associated to $\pi$. Let $\tilde S=Z_{\tilde G_z}(f)$ and $S=\tilde S \cap G_z$. Duflo's construction produces a bijection $\tilde\tau \mapsto \pi_{f,\tilde\tau}$ between the set of irreducible $\tilde\tau \in X(f)$ such that $\tilde\tau|_{S(\R)_G}$ contains $\tau$, and the set $\Pi_\varphi(\tilde G_z,\pi)$.
\end{lem}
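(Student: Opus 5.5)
We will derive the statement from Lemma \ref{lem:duflo1}, applied to the affine algebraic $\R$-group $\tilde G_z$, whose identity component $G_z$ is reductive, together with the parametrization of $\Pi_\varphi(G_z)$ recalled above. Since $\pi$ is a discrete series representation and $(f,\tau)$ is its Harish-Chandra parameter, $f \in \mf{g}^*(\R)$ is regular semi-simple elliptic. Hence $\tilde S=Z_{\tilde G_z}(f)$ falls under the setting of \S\ref{sub:duflo}, and for each irreducible $\tilde\tau \in X(f)$ Duflo's construction yields an irreducible representation $\pi_{f,\tilde\tau}$ of $\tilde G_z(\R)$ by Lemma \ref{lem:duflo1}(1). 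The plan has three steps.

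\emph{The map lands in $\Pi_\varphi(\tilde G_z,\pi)$.} Apply Lemma \ref{lem:duflo1}(2) with $\tilde G_1=G_z$. The summand indexed by $c=1$ is $\pi_{f,\tilde\tau|_{S(\R)_f}}$. Decompose $\tilde\tau|_{S(\R)_f}$ into genuine characters, each of differential $if$. The hypothesis that $\tilde\tau|_{S(\R)_G}$ contains $\tau$ means that one of these characters is $\tau$, after identifying $S(\R)_f$ with $S(\R)_G$ via Corollary \ref{cor:covers}.

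We must check that Duflo's construction for the connected group $G_z$ and the character $\tau$ returns the discrete series with Harish-Chandra parameter $(S,\tau)$. This is a normalization check. It should follow by comparing Theorem \ref{thm:bouaziz} in the connected case with Harish-Chandra's character formula, as in the corollary following that theorem. Granting it, $\pi_{f,\tilde\tau}|_{G_z(\R)}$ contains $\pi$, so $\pi_{f,\tilde\tau} \in \Pi_\varphi(\tilde G_z,\pi)$.

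\emph{Surjectivity.} Let $\tilde\pi \in \Pi_\varphi(\tilde G_z,\pi)$. It is square-integrable, so by Lemma \ref{lem:duflo1}(3) we have $\tilde\pi \cong \pi_{f',\tilde\tau'}$ for some pair $(f',\tilde\tau')$. Restricting to $G_z(\R)$ via Lemma \ref{lem:duflo1}(2), the constituents are the $\pi_{f'^c,\cdot}$, which are conjugates of the representations attached to $f'$. One of them is $\pi$. Since the Harish-Chandra parameter of $\pi$ is determined up to $G_z(\R)$-conjugacy, we may conjugate $(f',\tilde\tau')$ by some $c \in \tilde G_z(\R)$ so that $f'=f$. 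The characters occurring in $\tilde\tau'|_{S(\R)_f}$ are then the parameters of the constituents of $\pi_{f,\tilde\tau'}|_{G_z(\R)}$ containing $f$. Because $\tilde S(\R)$ permutes these characters, we may, after a further conjugation by $\tilde S(\R)$ (which fixes $f$), assume that $\tau$ occurs. Here we also use that distinct characters with differential $if$ on $S(\R)_f$ give non-isomorphic discrete series.

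\emph{Injectivity, the main obstacle.} If $\pi_{f,\tilde\tau_1}\cong\pi_{f,\tilde\tau_2}$, Lemma \ref{lem:duflo1}(4) gives $g \in \tilde G_z(\R)$ with $\mathrm{Ad}^*(g)f=f$ and $\tilde\tau_2\cong\tilde\tau_1\circ\mathrm{Ad}(g)^{-1}$. Then $g \in \tilde S(\R)$. Since $\tilde\tau_1$ is a representation of $\tilde S(\R)_f$, conjugating by any lift of $g$ is an inner automorphism, so $\tilde\tau_1^g\cong\tilde\tau_1$ and hence $\tilde\tau_1\cong\tilde\tau_2$.

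The subtle points are the surjectivity step, where we must ensure that the conjugation is by an element of $\tilde G_z(\R)$ fixing $f$, and the normalization check identifying Duflo's $\pi_{f,\tau}$ with Harish-Chandra's $\pi$. The latter requires the metaplectic and $\rho$-cover identification of Corollary \ref{cor:covers}.
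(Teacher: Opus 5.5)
Your proposal is correct and follows essentially the paper's own argument. Both use Lemma \ref{lem:duflo1}(2)--(4) with $\tilde G_1=G_z$, conjugate so that $f'=f$, note that the residual $\tilde S(\R)$-ambiguity acts trivially on $\tilde\tau$, and use the fact that the centralizer of $f$ in $G_z(\R)$ is $S(\R)$ (your ``distinct characters with differential $if$ give non-isomorphic discrete series'') to conclude that $\tau$ itself, and not merely a conjugate, occurs in $\tilde\tau|_{S(\R)_G}$.

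One step is redundant. The ``further conjugation by $\tilde S(\R)$'' in your surjectivity step does nothing, since, as your own injectivity argument shows, it does not change the isomorphism class of $\tilde\tau'$. The real reason $\tau$ occurs is that once $f'=f$, $\pi$ can only lie in the $c=1$ summand. As for the normalization you flag, the paper takes it as given, since it writes the members of $\Pi_\varphi(G_z)$ as $\pi_{f,\tau}$.
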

\begin{proof}
Lemma \ref{lem:duflo1}(3,4) asserts that the members of $\Pi_\varphi(\tilde G_z,\pi)$ are obtained from Duflo's construction and thus correspond to certain $\tilde G_z(\R)$-orbits of pairs $(f',\tilde\tau)$, while Lemma \ref{lem:duflo1}(2) applied with $\tilde G_1=G_z$ asserts that a pair $(f',\tilde\tau)$ produces a member of $\Pi_\varphi(\tilde G_z,\pi)$ if and only if the $\tilde G_z(\R)$-orbit of $(f',\tilde\tau|_{S(\R)_G})$ meets the $G_z(\R)$-orbit of $(f,\tau)$. Therefore, up to $\tilde G_z(\R)$-conjugacy we have $f'=f$. This condition determines the pair $(f',\tilde\tau)$ in its $\tilde G_z(\R)$-conjugacy class up to conjugation by the centralizer of $f$ in $\tilde G_z(\R)$, which is $\tilde S(\R)$. But $\tilde S(\R)$ acts trivially on $\tilde\tau$, so the pair $(f',\tilde\tau)$ is uniquely determined. The restriction of $\tilde\tau$ to $S(\R)_G$ consists of characters whose differential equals to $f$, and one of which is $G_z(\R)$-conjugate to $\tau$. But the differential of $\tau$ is also $f$, and the centralizer of $f$ in $G_z(\R)$ is $S(\R)$, from which we conclude that $\tau$ is contained in the restriction of $\tilde\tau$ to $S(\R)_G$.
\end{proof}

Let us write $X(f,\tau)$ for the subset of $X(f)$ consisting of those representations whose restriction to $S(\R)_G$ contains $\tau$. Lemma \ref{lem:duflo2} provides a bijection $X^\tx{irr}(f,\tau) \to \Pi_\varphi(\tilde G_z,\pi)$, and therefore the construction of \eqref{eq:is2} reduces to constructing a bijection
\begin{equation} \label{eq:is3}
  X^\tx{irr}(f,\tau) \to \tx{Irr}(\pi_0(\tilde S_\varphi^{+,[z]}),\eta).
\end{equation}

In the following we will construct this bijection by identifying $\tilde S_\varphi$ with the centralizer of an $L$-parameter $\varphi_\mf{w}$ for a disconnected torus $\tilde S_\mf{w}$, and identifying $\tilde S$ with a rigid inner form of $\tilde S_\mf{w}$ in such a way that $\tau$ is the character with parameter $\varphi_\mf{w}$ (up to twisting away the genuine behavior, which is a step that is conceptually unnecessary but dictated by the current state of the literature). The desired bijection will then be a consequence of the local Langlands correspondence for disconnected tori established in \cite[\S8]{KalLLCD} and reviewed in \S\ref{sub:luo}.

Let $\pi_\mf{w}$ be the unique member of $\Pi_\varphi(G_1)$ that is $\mf{w}$-generic. According to Proposition \ref{pro:adgen} there exists an $A$-stable representative $(S_\mf{w},\tau_\mf{w})$ of the Harish-Chandra parameter of $\pi_\mf{w}$. The  $\C$-Borel subgroup $C_\mf{w}$ determined by $R_{\tau_\mf{w}}^+$ (notation from \S\ref{sub:genchar}) is then $A$-stable.

\begin{lem} \label{lem:semi2}
Fix a $\Gamma$-stable and $A$-stable pinning $(\hat T,\hat B,\{\hat X_\alpha\})$ in $\hat G$. Let $\hat\jmath : \hat S_\mf{w} \to \hat T$ be the unique admissible isomorphism that matches the Weyl chamber in $X_*(\hat S_\mf{w})$ determined by $C_\mf{w}$ to the Weyl chamber in $X_*(\hat T)$ determined by $\hat B$. Let $^L\jmath : \hat S_\mf{w} \rtimes W_\R \to {^LG}$ be the $L$-embedding extending $\hat\jmath$ and obtained from $C_\mf{w}$-based $\chi$-data. Then $^L\jmath$ is equivariant for the action of $A$ on both sides.
\end{lem}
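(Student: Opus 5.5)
The plan is to verify $A$-equivariance separately for the two ingredients of ${^L\jmath}$: the isomorphism $\hat\jmath$ on the identity component, and the values of ${^L\jmath}$ on $W_\R$, which are built from the $\chi$-data. Throughout, $A$ acts on $\hat S_\mf{w}$ by duality from its action on $S_\mf{w}$; this makes sense because $S_\mf{w}$ is $A$-stable. It acts on $\hat G$ through the $A$-stable pinning, and on $^LG$ trivially on the $W_\R$-factor.

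First, $\hat\jmath$. For $a\in A$, the conjugate $a\circ\hat\jmath\circ a^{-1}$ is again an isomorphism $\hat S_\mf{w}\to\hat T$. It lies in the same $\hat G$-conjugacy class of admissible embeddings, because $a$ acts on both sides through pinned (hence outer, class-preserving) automorphisms compatibly with the canonical $\Omega$-torsor identifying $X_*(\hat S_\mf{w})$ with $X^*(S_\mf{w})$. It carries the chamber of $C_\mf{w}$ to the chamber of $a(C_\mf{w})=C_\mf{w}$, which is stable since $C_\mf{w}$ is $A$-stable. It carries this to $a(\hat B)=\hat B$. Since an admissible isomorphism is uniquely determined by the pair of Weyl chambers it matches, $a\circ\hat\jmath\circ a^{-1}=\hat\jmath$.

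Second, the $W_\R$-part. Recall the Langlands--Shelstad construction of ${^L\jmath}$ on $W_\R$: ${^L\jmath}(w)=r_\chi(w)\,n(\omega(w))\cdot$ (base section). Here $\omega(\sigma)$ is the Weyl element transporting the $\Gamma$-action, $n$ is the Tits section relative to $\{\hat X_\alpha\}$, and $r_\chi$ is the $\hat S_\mf{w}$-valued cochain attached to $C_\mf{w}$-based $\chi$-data. Since $S_\mf{w}$ is elliptic, all roots are imaginary and the $C_\mf{w}$-based $\chi$-data are canonical: $\chi_\alpha(z)=(z/\bar z)^{1/2}$ for $\alpha$ positive, with the sign determined by $C_\mf{w}$. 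I will check $A$-equivariance of each factor in turn.

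\begin{enumerate}
\item \emph{Tits section.} The pinning is $A$-stable, so $a(n(\omega))=n(a\omega a^{-1})$.
\item \emph{Weyl element.} The element $\omega(\sigma)$ is the longest element $w_0$, which is $A$-fixed.
\item \emph{$\chi$-data.} These are $A$-invariant, since $a$ permutes $R(S_\mf{w},G)$ preserving positivity for $C_\mf{w}$ and $\chi_{a\alpha}=\chi_\alpha$.
\item \emph{The cochain $r_\chi$.} It is a product over $\Gamma$-orbits of roots (gauge choices); with an $A$-stable gauge, namely positivity for $C_\mf{w}$, the product is $A$-invariant.
\end{enumerate}

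Combining these gives $a\circ{^L\jmath}={^L\jmath}\circ a$ on $\hat S_\mf{w}\rtimes W_\R$.

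The main obstacle is the explicit $r_\chi$ and the Tits-lift bookkeeping. In particular, I must confirm that $A$ permuting roots inside an orbit does not introduce signs. One source of concern is type $R_3$ phenomena; another is the twisted factor $n(w_0)$ versus $a(n(w_0))$, which is handled by item 1 since $a$ preserves $\{\hat X_\alpha\}$. For this I would rely on \cite[Proposition 3.4.2]{KalLLCD}. Alternatively, if direct verification is messy, I would reduce to $\hat G^{A,\circ}$ and $S_\mf{w}^{A,\circ}$ via Proposition \ref{pro:aa} and Lemma \ref{lem:cov1}.
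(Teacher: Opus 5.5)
Your proposal is correct and follows essentially the same route as the paper. Both arguments get $a\circ\hat\jmath\circ a^{-1}=\hat\jmath$ from the uniqueness of the chamber-matching admissible isomorphism, and both check the $W_\R$-part factor by factor using the $A$-stability of the pinning, of $\hat B$ (hence of $\rho$, equivalently of your cochain $r_\chi$), and of the Weyl element $\omega_\sigma$.

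The differences are cosmetic. The paper quotes Shelstad's explicit formulas ${^L\jmath}(z)=(z/\bar z)^\rho$ and ${^L\jmath}(\sigma)=n(\omega_\sigma)\rtimes\sigma$, and deduces $a(\omega_\sigma)=\omega_\sigma$ from the commuting $A$- and $\Gamma$-actions; you instead identify $\omega_\sigma=w_0$ directly via ellipticity. Your concern about type $R_3$ roots is unnecessary here, since no twisting enters this untwisted embedding.
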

Recall here that $C_\mf{w}$-based $\chi$-data means that for any absolute root $\alpha$ that is $C_\mf{w}$-positive, $\chi_\alpha(z):=\tx{sgn}_\C(z):=z/|z|$.
\begin{proof}
It is enough to prove equivariance for the restrictions of $^L\jmath$ to $\hat S$ and to $W_\R$. The restriction to $\hat S_\mf{w}$ is the admissible isomorphism $\hat\jmath$. For any $a \in A$, $a\circ\hat\jmath\circ a^{-1}$ is also an admissible isomorphism $\hat S_\mf{w} \to \hat T$. Since the actions of $a$ preserve the distinguished Weyl chambers in $X_*(\hat S_\mf{w})$ and $X_*(\hat T)$, and since matching these chambers uniquely determines $\hat\jmath$, we see that $a\circ\hat\jmath\circ a^{-1}=\hat\jmath$.

The value of $^L\jmath$ on elements of the Weil group is given explicitly in \cite[\S10]{SheTE1}. There the Weil group is presented as the twisted product $\C^\times \boxtimes \Gamma_{\C/\R}$. For $z \in \C^\times$ we have $^L\jmath(z)=(z/\bar z)^\rho$, where $\rho$ is half sum of the $\hat B$-positive coroots of $\hat T$. Since $A$ preserves $\hat B$, it preserves $\rho$, and we see that the restriction of $^L\jmath$ to $\C^\times \subset W_\R$ is $A$-equivariant. For $\sigma \in \Gamma_{\C/\R}$ the value of $^L\jmath(\sigma)$ is $n(\omega_\sigma)\rtimes\sigma$, where $\omega_\sigma$ is the element of the Weyl group such that the action of $\omega_\sigma \rtimes \sigma$ on $\hat T$ is the transport via $\hat\jmath$ of the action of $\sigma$ on $\hat S_\mf{w}$, and $n(\omega_\sigma)$ is the Tits lift of $\omega_\sigma$ relative to the chosen pinning. Since the actions of $\Gamma$ and $A$ on $\hat S_\mf{w}$ commute, and we have already established that $\hat\jmath$ is $A$-equivariant, we have $a(\omega_\sigma \rtimes \sigma)=\omega_\sigma \rtimes \sigma$. Since the actions of $A$ and $\Gamma$ on $\hat T$ commute, we conclude from this $a(\omega_\sigma)=\omega_\sigma$. Since the pinning is $A$-stable we have $a(n(\omega_\sigma))=n(a(\omega_\sigma))$, which we just argued equals $n(\omega_\sigma)$. Since the actions of $A$ and $\Gamma$ on $\hat G$ commute, we finally conclude
\[ a({^L\jmath}(\sigma)) = a(n(\omega_\sigma) \rtimes \sigma)=n(\omega_\sigma) \rtimes\sigma = {^L\jmath}(\sigma). \qedhere\] 
\end{proof}

We now consider $\tilde S_\mf{w} = S_\mf{w} \rtimes A$. It is a disconnected torus in the sense of \cite{KalLLCD}. Lemma \ref{lem:semi2} shows that $^L\jmath \rtimes \tx{id}_A$ provides an inclusion
\[ \hat S_\mf{w} \rtimes (W_\R \times A) \to \hat G \rtimes (W_\R \times A).\]
\begin{lem} \label{lem:is}
This inclusion induces an isomorphism
\[ \tx{Cent}(\varphi_\mf{w},\hat S_\mf{w} \rtimes A^{[\varphi]}) \to \tilde S_\varphi, \]
where $\varphi_\mf{w} : W_\R \to {^LS_\mf{w}}$ is the $L$-parameter satisfying $\varphi = {^L\jmath}\circ\varphi_\mf{w}$.
\end{lem}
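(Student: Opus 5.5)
The plan is to reduce to the connected statement and then handle the $A$-components separately. Here $\tilde S_\varphi$ means $\tx{Cent}(\varphi,\hat G\rtimes A^{[\varphi]})$. Indeed, an element $g\rtimes a$ of $\hat G\rtimes A$ centralizes $\varphi$ only if $a$ preserves the $\hat G$-conjugacy class of $\varphi$, i.e. $a\in A^{[\varphi]}$. So both groups sit in exact sequences
\[ 1\to \tx{Cent}(\varphi_\mf{w},\hat S_\mf{w})\to \tx{Cent}(\varphi_\mf{w},\hat S_\mf{w}\rtimes A^{[\varphi]})\to A^{[\varphi]}, \qquad 1\to S_\varphi\to\tilde S_\varphi\to A^{[\varphi]}, \]
and the map induced by ${}^L\jmath\rtimes\tx{id}_A$ is compatible with them. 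It is well defined: by Lemma \ref{lem:semi2}, ${}^L\jmath$ is $A$-equivariant, so if $s\rtimes a$ centralizes $\varphi_\mf{w}$ then $\hat\jmath(s)\rtimes a$ centralizes ${}^L\jmath\circ\varphi_\mf{w}=\varphi$. It is injective because $\hat\jmath$ is.

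The first step is the connected case: $\hat\jmath$ restricts to an isomorphism $\hat S_\mf{w}^\Gamma=\tx{Cent}(\varphi_\mf{w},\hat S_\mf{w})\to S_\varphi$. This is the standard fact, used in \cite[\S4]{DPR}, that for a discrete parameter $S_\varphi=\tx{Cent}(\varphi(\C^\times),\hat G)^{\varphi(\sigma)}$. Regularity of $\varphi|_{\C^\times}$ makes $\tx{Cent}(\varphi(\C^\times),\hat G)=\hat T=\hat\jmath(\hat S_\mf{w})$, and the action of $\varphi(\sigma)$ on it transports via $\hat\jmath$ to the action of $\sigma$ on $\hat S_\mf{w}$ twisted by $\varphi_{\mf{w}}$. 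That twist is trivial because $\hat S_\mf{w}$ is abelian. Hence the map is an isomorphism on identity components of the above sequences, and it remains to show surjectivity onto $A^{[\varphi]}$ on the target side, namely that the source also surjects onto $A^{[\varphi]}$.

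Next I show the source surjects onto $A^{[\varphi]}$. Let $a\in A^{[\varphi]}$. Then $a\circ\varphi = \tx{Ad}(g)\circ\varphi$ for some $g\in\hat G$. By Lemma \ref{lem:semi2}, $a\circ\varphi={}^L\jmath\circ(a\circ\varphi_\mf{w})$, so it suffices to show $a\circ\varphi_\mf{w}$ is $\hat S_\mf{w}$-conjugate to $\varphi_\mf{w}$; then some $s\rtimes a$ centralizes $\varphi_\mf{w}$. Since $\hat S_\mf{w}$ is a torus, $\hat S_\mf{w}$-conjugacy classes of parameters correspond to characters of $S_\mf{w}(\R)$. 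The parameter $\varphi_\mf{w}$ corresponds, up to the $\chi$-data/genuine twist, to $\tau_\mf{w}$. Since $(S_\mf{w},\tau_\mf{w})$ is $A$-stable and the $C_\mf{w}$-based $\chi$-data are $A$-stable ($C_\mf{w}$ being $A$-stable), the character attached to $a\circ\varphi_\mf{w}$ is $\tau_\mf{w}\circ a^{-1}=\tau_\mf{w}$. Hence $a\circ\varphi_\mf{w}$ and $\varphi_\mf{w}$ are $\hat S_\mf{w}$-conjugate.

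This step is where the care lies: the correspondence between $\varphi_\mf{w}$ and $\tau_\mf{w}$ (Langlands' construction via the $\chi$-data) must be $A$-equivariant. I will check this directly rather than rely on the functoriality of the torus LLC under automorphisms. Alternatively, there is a purely dual-side argument. The element $g$ normalizes $\varphi(\C^\times)$, hence $\hat T$, and preserves the chamber determined by the regular element $\varphi|_{\C^\times}$. Since $a$ preserves $\hat B$ and this chamber equals $\hat B$'s chamber via $\hat\jmath$, we get $g\in\hat T$. Then $g^{-1}\rtimes a$ centralizes $\varphi$ and lies in the image, which gives surjectivity directly and also completes the five-lemma argument.
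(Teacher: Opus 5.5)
Your proof is correct and follows the paper's: the same pair of exact sequences, bijectivity on the kernels from the regularity of $\varphi|_{\C^\times}$, and the five lemma. The paper simply draws the top row as surjecting onto $A^{[\varphi]}$ without comment, so your extra step genuinely fills a gap. Of your two routes, the dual-side one actually closes it, since the torus-LLC route is only sketched: if $a\circ\varphi=\tx{Ad}(g)\circ\varphi$, then $g$ normalizes $\hat T$ and carries the regular element $\varphi|_{\C^\times}$ to its $a$-translate, so $g\in\hat T$ because $a$ preserves the $\hat B$-chamber. This uses the implicit normalization that $\varphi|_{\C^\times}$ lies in the $\hat B$-dominant chamber, and it is essentially the paper's later observation that $\tilde S_\varphi\subset\hat T\rtimes A$.
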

\begin{proof}
It is clear that we obtain a map of this form, and that it is injective. It fits into the following exact sequences
\[ \xymatrix{
1\ar[r]&  \tx{Cent}(\varphi_\mf{w},\hat S_\mf{w})\ar[r]\ar[d]&\tx{Cent}(\varphi_\mf{w},\hat S_\mf{w} \rtimes A^{[\varphi]})\ar[r]\ar[d]&A^{[\varphi]}\ar[r]\ar@{=}[d]&1\\
1\ar[r]&S_\varphi\ar[r]&\tilde S_\varphi\ar[r]&A^{[\varphi]}\ar[r]&1.
}
\]
The regularity of $\varphi|_{\C^\times}$ implies that the first vertical map is bijective. The 5-lemma completes the proof.
\end{proof}

Recall that we have fixed a representation $\pi \in \Pi_\varphi(G)$. It corresponds to a $G(\R)$-conjugacy class of pairs $(S,\tau)$ consisting of an elliptic maximal torus $S \subset G$ and a genuine character $\tau$ of $S(\R)_G$. On the other hand, we have the $A$-stable representative $(S_\mf{w},\tau_\mf{w})$ of the Harish-Chandra parameter of $\pi_\mf{w}$. Let $\eta \in \pi_0([\hat{\bar S_\mf{w}}]^+)^*$ denote the element corresponding under the generalized Tate-Nakayama isomorphism to $\tx{inv}(d\tau_\mf{w},d\tau) \in H^1(u \to W,Z(G) \to S_\mf{w})$. Transport $\eta$ to a character of $\pi_0(S_\varphi^+)$ under the isomorphism of Lemma \ref{lem:is}.

\begin{cor} \label{cor:is}
There is a natural bijection between $\tx{Irr}(\tilde S_\varphi^{+,[z]},\eta)$ and $X^\tx{irr}(f,\tau)$.
\end{cor}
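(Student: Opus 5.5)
The plan is to realize both sides of the desired bijection as the two sides of the local Langlands correspondence for the disconnected torus $\tilde S_\mf{w}=S_\mf{w}\rtimes A$ and its rigid inner forms, and then to invoke \eqref{eq:llcdtor} in the form of Proposition \ref{pro:luo}. On the dual side, Lemma \ref{lem:is} identifies $\tilde S_\varphi$ with $\tx{Cent}(\varphi_\mf{w},\hat S_\mf{w}\rtimes A^{[\varphi]})$. I would first check that this isomorphism is compatible with passing to the preimages in $\hat{\bar G}\rtimes A$ and $\hat{\bar S}_\mf{w}\rtimes A$ (the $+$-versions). This should follow because $\hat\jmath$ identifies $Z(\hat G)$ with a subgroup of $\hat S_\mf{w}$ compatibly with the covers. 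Restricting to $A^{[z]}$ then gives $\pi_0(\tilde S_\varphi^{+,[z]})\cong\pi_0([\hat{\bar S}_\mf{w}\rtimes A^{[z],[\varphi]}]^+_{\varphi_\mf{w}})$. Under this isomorphism, $\eta$ becomes the Tate--Nakayama character attached to $\tx{inv}(d\tau_\mf{w},d\tau)$. Hence $\tx{Irr}(\tilde S_\varphi^{+,[z]},\eta)$ becomes the right-hand side of \eqref{eq:llcdtor} for the torus $S_\mf{w}$ with $A^{[z],[\varphi]}$ acting and the class $x:=\tx{inv}(d\tau_\mf{w},d\tau)$.

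On the group side, choose $g\in G_z(\C)$ conjugating $d\tau_\mf{w}$ to $f=d\tau$. Conjugation by $g$ then identifies $S_\mf{w}$ with $S$, and $x$ is represented by the cocycle $g^{-1}\sigma(g)$ (combined with $z$). Since $d\tau_\mf{w}$ is $A$-fixed and $\tilde S_\mf{w}$ is its centralizer in $\tilde G$, conjugation by $g$ carries $\tilde S_\mf{w}\cap(G\rtimes A^{[z],[\varphi]})$ onto the $x$-twist $[S_\mf{w}\rtimes A^{[z],[\varphi]}]_x$, realized as $\tilde S=Z_{\tilde G_z}(f)$. Here I need to check that $\tilde S(\R)$ maps onto exactly $A^{[z],[\varphi]}$. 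Elements of $\tilde S(\R)$ fix $f$, so they preserve $\pi$ and hence $[\varphi]$ by the disjointness of discrete series $L$-packets. Conversely, Lemma \ref{lem:elem} lets one lift elements of the stabilizer to $\R$-points. Next, Corollary \ref{cor:covers} together with \eqref{eq:ttaulhd} converts genuine representations of $\tilde S(\R)_f$ in $X(f,\tau)$ into honest representations $\tilde\tau\mapsto\tilde\tau_\lhd$ of $\tilde S(\R)$ whose restriction to $S(\R)$ contains $\tau_\lhd$. Since $\rho_{f,\mf{u}_f}$ is a fixed genuine character, this map is a bijection on irreducibles. Finally, $\tau_\lhd$, transported to $S_\mf{w}$, should equal the character $[\varphi_\mf{w}]$ attached by the torus LLC, because $^L\jmath$ uses $C_\mf{w}$-based $\chi$-data, which is exactly the $\rho$-shift in \cite[\S4]{DPR}.

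Composing these identifications with \eqref{eq:llcdtor} yields the bijection. The step I expect to be the main obstacle is the comparison on the group side. Concretely, I must verify that the transport of $\tau_\lhd$ through $g$ matches $[\varphi_\mf{w}]$ on the inner form and not merely up to a sign character. This matters because the $\rho$-double cover identification of Corollary \ref{cor:covers} carries a sign ambiguity on non-identity components of $\tilde S(\R)$. I would first handle it on $S(\R)$ using the connected case of \cite{DPR}. On the components, I would use the canonical choice $\rho_{f,\mf{u}_f}$, which makes the construction natural. Any residual discrepancy, such as the one involving $\epsilon_{\tilde G}$ from Theorem \ref{thm:rtci-sign}, I would defer to the later normalization rather than build into this bijection.
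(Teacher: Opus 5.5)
There is a genuine gap on the group side, in the step where you claim that $\tilde S(\R)$ maps onto exactly $A^{[z],[\varphi]}$. This is false in general. Your own forward argument shows only that the image lies in the stabilizer of the isomorphism class of $\pi$ itself. The converse fails, because an element $a\in A^{[z],[\varphi]}$ may permute the members of $\Pi_\varphi(G_z)$ nontrivially.

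Concretely, $\tilde S$ is the twist of $\tilde S_\mf{w}=S_\mf{w}\rtimes A$ by $x=\tx{inv}(d\tau_\mf{w},d\tau)$, realized through $z'_\sigma=g^{-1}z_\sigma\sigma(g)$. An $\R$-point over $a$ therefore exists if and only if $a$ fixes the class $x$, equivalently fixes $\eta$. So the image is the subgroup $A^{[\eta]}$ used in the paper's proof, and it can be proper.

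For example, take $G=\tx{SL}_2\times\tx{SL}_2$ with $A=\Z/2$ swapping the factors, $z=1$, $\varphi=\phi\times\phi$, and $\pi=\pi^+\boxtimes\pi^-$. Then $A^{[z],[\varphi]}=A$, but $a$ sends $\pi$ to $\pi^-\boxtimes\pi^+$, so $\tilde S(\R)=S(\R)$. Lemma \ref{lem:elem} cannot supply the missing lift. The obstruction is cohomological, namely whether $a$ fixes the class of $z'$ in $S_\mf{w}$, and not a surjectivity question on real points.

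For the same reason, your use of \eqref{eq:llcdtor} ``with $A^{[z],[\varphi]}$ acting'' is not legitimate. Applied to the torus $S_\mf{w}$ and the class $x$, that bijection requires the acting group to stabilize $x$, so it is $A^{[\eta]}$. The stabilizer of the class $[z]$ for $G$ is in general larger than the stabilizer of the torus class $x$.

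The missing step is Clifford theory on the dual side. The torus correspondence for the rigid inner twist $\tilde S_\mf{w}\to\tilde S$, combined with Lemma \ref{lem:is} and tensoring with $\rho_{f,\mf{u}_f}$, gives a bijection between $X^{\tx{irr}}(f,\tau)$ and the $\eta$-isotypic irreducible representations of the preimage $\tilde S_\varphi^{+,[\eta]}$ of $A^{[\eta]}$. You then still need that induction $\tx{Irr}(\tilde S_\varphi^{+,[\eta]},\eta)\to\tx{Irr}(\tilde S_\varphi^{+,[z]},\eta)$ is a bijection. This cannot be skipped: a member of $\tx{Irr}(\tilde S_\varphi^{+,[z]},\eta)$ contains the whole $A^{[z],[\varphi]}$-orbit of $\eta$ on restriction. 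In the example it is the $2$-dimensional $\tx{Ind}\,\eta$, which is not what the torus correspondence for $\tilde S$ directly produces.

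With this correction, your remaining steps coincide with the paper's proof. Your worry about a sign ambiguity in Corollary \ref{cor:covers} is unfounded. The character $\rho_{f,\mf{u}_f}$ is the pull-back of Duflo's metaplectic character, and it corresponds to the canonical genuine character $\rho_B$ of $\tilde S(\R)_G$. You are right that the $\epsilon_{\tilde G}$-twist is not part of this corollary.
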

\begin{proof}
Let $A^{[\eta]}$ be the stabilizer in $A^{[\varphi]}$ of the character $\eta : \pi_0([\hat{\bar S_\mf{w}}]^+) \to \C^\times$, let $\tilde S_\varphi^{+,[\eta]}$ be the preimage of $A^{[\eta]}$ in $\tilde S_\varphi^+$, and let $\tx{Irr}(\tilde S_\varphi^{+,[\eta]},\eta)$ be the set of irreducible representations of $\tilde S_\varphi^{+,[\eta]}$ whose restriction to $[\hat{\bar S_\mf{w}}]^+$ is $\eta$-isotypic. Induction produces a bijection 
\[ \tx{Irr}(\tilde S_\varphi^{+,[\eta]},\eta) \to  \tx{Irr}(\tilde S_\varphi^{+,[z]},\eta). \]
Lemma \ref{lem:is} gives the bijection
\[ \tx{Irr}([\hat S_\mf{w} \rtimes A^{[\eta]}]^+_{\varphi_\mf{w}},\eta) \to \tx{Irr}(\tilde S_\varphi^{+,[\eta]},\eta). \]
The unique admissible isomorphism $S_\mf{w} \to S$ that transports $d\tau_\mf{w}$ to $d\tau$ extends to an inner twist $\tilde S_\mf{w} \to \tilde S$ which is rigidified by the element $\tx{inv}(d\tau_\mf{w},d\tau)$. Conjecture 5.12 of \cite{KalLLCD}, which in the case of tori is proved in \cite[\S8]{KalLLCD}, identifies the left hand side of the last bijection with the set of irreducible representations of $\tilde S(\R)$ whose restriction to $S(\R)$ contains the character $\tau_\mf{w}$ with $L$-parameter $\varphi_\mf{w}$. This character equals $\tau\otimes\rho_{\mf{u}_f}^{-1}$. Tensoring by $\rho_{\mf{u}_f}$ and recalling that this character extends naturally to $\tilde S(\R)_G$, we obtain a bijection 
\[ X^\tx{irr}(f,\tau) \to \tx{Irr}([\hat S_\mf{w} \rtimes A^{[\eta]}]^+_{\varphi_\mf{w}},\eta).\qedhere\]
\end{proof}

We have now obtained the following maps, for any fixed $\pi \in \Pi_\varphi(G)$ and pair $(f,\tau)$ representing the the Harish-Chandra parameter of $\pi$:
\begin{enumerate}
  \item the bijection $X^\tx{irr}(f,\tau) \to \Pi_\varphi(\tilde G,\pi)$ of Lemma \ref{lem:duflo2} and
  \item the bijection $X^\tx{irr}(f,\tau) \to \tx{Irr}(\tilde S_\varphi^{[z]},\eta)$ of Corollary \ref{cor:is}.
\end{enumerate}
The composition of these two is easily seen to be independent of the choice of $(f,\tau)$ in its $G(\R)$-conjugacy class. It is \emph{a} bijection of the form \eqref{eq:is2}, and splicing over all $\pi$ we obtain \emph{an} injection of the form \eqref{eq:is}. It will however turn out that this is \emph{not the} desired bijection. In fact, the desired bijection differs from the one just obtained by precomposing it with the operation $\otimes \epsilon_{\tilde G}$, where $\epsilon_{\tilde G}$ is the sign character constructed in \S\ref{sub:signchar}, with respect to the quasi-split group $\tilde G$. A first indication is given by the following lemma.

\begin{lem} \label{lem:whit}
  Under the composed map $\tx{Irr}(\tilde S_\varphi,1) \to X^\tx{irr}(f_\mf{w},\tau_\mf{w}) \to \tx{Irr}(\tilde G)$ of Lemma \ref{lem:duflo2} and Corollary \ref{cor:is}, the trivial representation is mapped to the representation $\tilde\pi_\mf{w} \otimes \epsilon_{\tilde G}$, where $\tilde\pi_\mf{w}$ is the Whittaker extension to $\tilde G(\R)$ of the representation $\pi_\mf{w}$.
\end{lem}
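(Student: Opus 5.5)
We follow the trivial representation through each map and then apply Theorem \ref{thm:rtci-sign}. We work on the quasi-split form ($z=1$), with $\pi=\pi_\mf{w}$ and $(f,\tau)=(f_\mf{w},\tau_\mf{w})$ the $A$-stable representative supplied by Proposition \ref{pro:adgen}. Then $\tilde S=Z_{\tilde G}(f_\mf{w})=S_\mf{w}\rtimes A$, because $f_\mf{w}$ is $A$-fixed and its centralizer in $G$ is $S_\mf{w}$. The invariant $\tx{inv}(d\tau_\mf{w},d\tau_\mf{w})$ is trivial, so $\eta=1$. Hence $A^{[\eta]}=A^{[\varphi]}$, which equals $A$ since $\pi_\mf{w}$ is $A$-stable, and the induction step in Corollary \ref{cor:is} is the identity.

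\textbf{Step 1: the torus side.} By Lemma \ref{lem:is}, $\tilde S_\varphi$ is identified with $\tx{Cent}(\varphi_\mf{w},\hat S_\mf{w}\rtimes A)$. This identification comes from the $A$-equivariant embedding of Lemma \ref{lem:semi2}, and under it the canonical splitting $a\mapsto 1\rtimes a$ of the dual disconnected torus is preserved. The rigid inner twist $\tilde S_\mf{w}\to\tilde S$ is the identity. I then apply Proposition \ref{pro:luo} with $\rho$ trivial. The pairing $\<(\varphi_0^{-1},s),(z^{-1},t)\>$ with $z=1$ evaluated at $s=1$ (elements $1\rtimes a^{-1}$) gives $\pi(1\rtimes a)=\tx{id}$. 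So the trivial representation corresponds to the character of $S_\mf{w}(\R)\rtimes A$ that extends $\tau_\mf{w}\otimes\rho_{\mf{u}_f}^{-1}$ and is trivial on $A$.

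\textbf{Step 2: the genuine twist.} Tensoring with $\rho_{f,\mf{u}_f}$ recovers an element $\tilde\tau\in X^\tx{irr}(f_\mf{w},\tau_\mf{w})$. I must check that this $\tilde\tau$ is the extension of $\tau_\mf{w}$ that is trivial on (a lift of) $A$, as used in defining $\tilde\pi^D$ before Theorem \ref{thm:rtci-sign}. This requires the natural extension of $\rho_{\mf{u}_f}$ to $\tilde S(\R)_G$ (via $\det(-|\mf{u}_f)$ and Corollary \ref{cor:covers}) to be trivial on the image of $A$. I expect this to hold because $A$ preserves the pinning and hence acts on $\mf{u}_f$ by permuting root spaces with compatible normalized vectors. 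The honest content is that the canonical lift of $A$ to the cover is the one on which $\rho_{f,\mf{u}_f}$ is $1$; this is where care is needed. With that granted, Lemma \ref{lem:duflo2} (Duflo's construction) sends $\tilde\tau$ to $\tilde\pi^D$.

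\textbf{Step 3: conclusion.} By Theorem \ref{thm:rtci-sign}, $\tilde\pi^W(a)=(-1)^{q(G)-q(G^a)}\tilde\pi^D(a)=\epsilon_{\tilde G}(a)\tilde\pi^D(a)$ for $a\in A$. Since $\tilde G(\R)=G(\R)\rtimes A$ and both extensions agree on $G(\R)$, this gives $\tilde\pi^D=\tilde\pi_\mf{w}\otimes\epsilon_{\tilde G}$. Here $\epsilon_{\tilde G}$ is a character by the preceding lemma, and $\epsilon_{\tilde G}^2=1$.

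\textbf{Main obstacle.} The substantive input is Theorem \ref{thm:rtci-sign}, which is assumed here. Within this lemma, the delicate point is the sign bookkeeping in Steps 1–2. One must verify that Luo's normalization together with the $\rho_{f,\mf{u}_f}$-twist really yields the extension trivial on $A$, rather than one twisted by a further character of $A$ arising from the action of $A$ on $\det(\mf{u}_f)$ or from the metaplectic lift. I would settle this by computing $\rho_{f,\mf{u}_f}(1\rtimes a)^2=\det(a|\mf{u}_f)$ directly with a pinning-compatible basis, and then fixing the square root via Corollary \ref{cor:covers}.
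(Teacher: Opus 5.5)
Your Steps 1 and 3 are the paper's argument. Luo's formula with $\rho$ trivial (the second argument of the pairing is the trivial class, since $z=1$ and $t=1$) gives the extension of the honest character $\tau_{\mf w,\lhd}$ of $S_\mf w(\R)$ to $S_\mf w(\R)\rtimes A$ that is trivial on $1\rtimes A$. Theorem \ref{thm:rtci-sign} then converts the resulting Duflo extension into $\tilde\pi_\mf w\otimes\epsilon_{\tilde G}$.

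\textbf{The gap is Step 2.} There you require that $\rho_{f,\mf u_f}$ be trivial on a canonical lift of $A$ to $\tilde S(\R)_G$, and you leave this as granted. That claim is false in general. Any lift $\dot a$ of $a$ satisfies $\rho_{f,\mf u_f}(\dot a)^2=\det(a|\mf u_f)$. Since $a$ preserves a pinning of $(S_\mf w,C_\mf w)$, hence a real form of $\mf u_f$, this determinant equals $(-1)^{\dim\mf u_f-\dim\mf u_f^a}$, as in the argument of Lemma \ref{lem:determinant}. Take the pinned involution of a group of type $D_4$ with discrete series, e.g.\ split $\tx{SO}(4,4)$. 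Then $\dim\mf u_f-\dim\mf u_f^a=12-9=3$, so $\rho_{f,\mf u_f}(\dot a)=\pm i$ for every lift $\dot a$. In fact $\dot a^2=(1,\det(a|\mf u_f))$ is then the nontrivial element of the kernel. So $\tilde S(\R)_G$ does not split over $\langle a\rangle$, and no genuine $\tilde\tau$ is trivial on any lift of $a$. The computation you propose in order to settle this point would refute it.

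\textbf{The fix is that Step 2 is unnecessary.} The phrase ``extend $\tau$ to a character of $S(\R)\rtimes A$ trivial on $A$'' in the definition of $\tilde\pi^D$ refers to a character of $S(\R)\rtimes A$ itself, i.e.\ to the honest character $\tau_\lhd$. The appendix makes this precise: $\tilde\pi^D$ is characterized by $A$ acting trivially on the line $H_{q(G)}(\mf u_f,\pi)_{if+\rho_f}$. By property (2) of Duflo's construction, $\tilde S(\R)$ acts on that line through $\tilde\tau\cdot\rho_{f,\mf u_f}=\tilde\tau_\lhd$. Hence $\tilde\pi^D=\pi_{f,\tilde\tau}$ for the $\tilde\tau$ with $\tilde\tau_\lhd|_{1\rtimes A}=1$. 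The bijection of Corollary \ref{cor:is} is $\tilde\tau\mapsto\tilde\tau_\lhd$ followed by Luo's correspondence, so Step 1 already produces exactly this $\tilde\tau$. No information about $\rho_{f,\mf u_f}$ on lifts of $A$ is needed. With Step 2 deleted, your argument is the paper's proof.
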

\begin{proof}
  The bijection $\tx{Irr}(\tilde S_\varphi,1) \to \tx{Irr}(\pi_0(\hat S_\mf{w} \rtimes A),1) \to X^\tx{irr}(f_\mf{w},\tau_\mf{w})$ sends the trivial representation to the extension of $\tau_\mf{w} : S_\mf{w}(\R) \to \C^\times$ to $S_\mf{w}(\R) \rtimes A$ that is trivial on the subgroup $1 \rtimes A$, according to the formula in Proposition \ref{pro:luo}. Let us call this extension $\tilde\tau_\mf{w}$. According to Theorem \ref{thm:rtci-sign}, the representation of $\tilde G(\R)$ that Duflo's construction assigns to $\tilde\tau_\mf{w}$ is the twist of $\tilde\pi_\mf{w}$ by $\epsilon_{\tilde G}$.
\end{proof}

Thus, we shall take the bijection \eqref{eq:is1}, and hence the injection \eqref{eq:is}, to be obtained by composing the maps from Lemma \ref{lem:duflo2} and Corollary \ref{cor:is}, \emph{and} then twisting by the character $\epsilon_{\tilde G}$. More precisely, if $\tilde\pi \in \Pi_\varphi(\tilde G_z)$, $\tilde\rho \in \tx{Irr}(\tilde S_\varphi^{[z]},[z])$, and $\tilde\tau$ are in correspondence $\tilde\rho \mapsfrom \tilde\tau \mapsto \tilde\pi$ under Corollary \ref{cor:is} and Lemma \ref{lem:duflo2}, respectively, then the injection \eqref{eq:is} maps $\tilde\pi \otimes \epsilon_{\tilde G}$ to $\tilde\rho$.

Besides Lemma \ref{lem:whit}, further and more robust evidence that this is the correct construction of the injection \eqref{eq:is} will be provided by Theorem \ref{thm:main}.

\begin{lem} \label{lem:rest}
  The injection \eqref{eq:is} constructed here satisfies \cite[Conjecture 7.1.1]{KalLLCD}.
\end{lem}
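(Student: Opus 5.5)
We need to verify that the injection \eqref{eq:is} satisfies \cite[Conjecture 7.1.1]{KalLLCD}. That conjecture says: if $\tilde\pi \mapsto \tilde\rho$, then the restriction of $\tilde\pi$ to $G_z(\R)$ corresponds under the connected injection \eqref{eq:is1} to the restriction of $\tilde\rho$ to $\pi_0(S_\varphi^+)$. More precisely, the irreducible constituents $\pi$ of $\tilde\pi|_{G_z(\R)}$ correspond, with matching multiplicities, to the characters $\eta$ of $\pi_0(S_\varphi^+)$ occurring in $\tilde\rho|_{\pi_0(S_\varphi^+)}$. The conjecture also fixes the normalization, namely the compatibility with the $[z]$-character on $\pi_0(Z(\hat{\bar G})^+)$.

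The plan is to check this separately on each piece \eqref{eq:is2}. Twisting by $\epsilon_{\tilde G}$ does not change restrictions to $G_z(\R)$, since $\epsilon_{\tilde G}$ is inflated from $\tilde G_z(\R)/G_z(\R)=A^{[z]}$. So it suffices to treat the untwisted composite $\tilde\pi \mapsfrom \tilde\tau \mapsto \tilde\rho$.

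On the representation side, we use Lemma \ref{lem:duflo1}(2) with $\tilde G_1=G_z$. It gives $\tilde\pi|_{G_z(\R)}$ as a sum over $\tilde S(\R)\lmod\tilde G_z(\R)/G_z(\R)$ of the Duflo representations attached to the conjugates $(f^c,\tilde\tau^c|_{S(\R)_G})$. Decomposing $\tilde\tau|_{S(\R)_G}$ into characters $\tau'$, each conjugate $w\tau'$ with $w\in \tilde S(\R)/S(\R)$ has differential $f$. Hence the constituents of $\tilde\pi|_{G_z(\R)}$ are the $\pi_{f^c,\tau'}$, with multiplicities read off from $\tilde\tau|_{S(\R)_G}$.

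On the dual side, the bijection of Corollary \ref{cor:is} is built from three pieces, and restriction must be tracked through each:
\begin{enumerate}
\item Induction from $\tilde S_\varphi^{+,[\eta]}$ to $\tilde S_\varphi^{+,[z]}$. By Mackey, restriction to $\pi_0(S_\varphi^+)$ of an induced representation is the sum of $A^{[z],[\varphi]}$-translates.
\item The isomorphism of Lemma \ref{lem:is}, which restricts to the identification $\pi_0(S_\varphi^+)=\pi_0([\hat{\bar S}_\mf{w}]^+)$.
\item The LLC for disconnected tori in the form of Proposition \ref{pro:luo}. Setting $\tilde s=s$ and $\tilde t=t$ with $a=1$ shows that the characters of $S(\R)$ in $\pi$ correspond to the characters of $\pi_0([\hat{\bar S}_\mf{w}]^+)$ in $\rho$ via Tate--Nakayama duality, twisted by the invariant of the rigidified inner twist $\tilde S_\mf{w}\to\tilde S$. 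This is exactly the recipe of \eqref{eq:is1} (\cite[\S4]{DPR}) that sends $\pi_j$ to the character determined by $\tx{inv}(j_\mf{w},j)$.
\end{enumerate}
Tensoring with $\rho_{f,\mf{u}_f}$ only removes the genuine behaviour and commutes with restriction.

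The main obstacle is matching the orbit structures on the two sides. The index set $\tilde S(\R)\lmod\tilde G_z(\R)/G_z(\R)$ together with the $\tilde S(\R)$-conjugates of $\tau$ must correspond to the $A^{[z],[\varphi]}$-orbit of $\eta$, with translates of $\tau$ by $a\in A^{[z]}$ going to $a\eta$. For this we need the $A$-equivariance of the connected correspondence \eqref{eq:is1} (the analogue of \cite[Conjecture A.1]{KalLLCD}). We would prove it from the $A$-equivariance of $^L\jmath$ (Lemma \ref{lem:semi2}), the $A$-stability of $(S_\mf{w},\tau_\mf{w})$ and of $\mf{w}$, and the naturality of $\tx{inv}$ and of the Tate--Nakayama pairing under automorphisms. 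Finally, the $[z]$-compatibility on $\pi_0(Z(\hat{\bar G})^+)$ follows from the corresponding property of the connected correspondence.
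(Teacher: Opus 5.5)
You have only verified the special case $B=\{1\}$ of \cite[Conjecture 7.1.1]{KalLLCD}, namely compatibility of \eqref{eq:is} with restriction to $G_z(\R)$ and $\pi_0(S_\varphi^+)$. Your treatment of that case is reasonable; the paper takes the $A$-equivariance of \eqref{eq:is1} as an input rather than proving it.

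The conjecture, however, is a functoriality statement for an arbitrary homomorphism of finite groups $B\to A$, which may be taken injective by \cite[Remark 7.1.4]{KalLLCD}. Write $G^A=G\rtimes A$ and $G^B=G\rtimes B$, and let $\pi^A\leftrightarrow\rho^A$ and $\pi^B\leftrightarrow\rho^B$ correspond under the injections \eqref{eq:is} built for $A$ and for $B$. The conjecture requires that the multiplicity of $\pi^B$ in $\pi^A|_{G^B_z(\R)}$ equal the multiplicity of $\rho^B$ in $\rho^A|_{\pi_0(S_\varphi^{B,+,[z]})}$.

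This does not follow from the case $B=\{1\}$. That case says nothing about which extensions of a given $\pi$ to $G^A_z(\R)$ restrict to which extensions to $G^B_z(\R)$; think of $A=\Z/4$, $B=\Z/2$ and an $A$-stable $\pi$. So you must show that the bijections constructed for $A$ and for $B$ are compatible with each other. For the same reason, your remark that $\epsilon_{\tilde G}$ is trivial on $G_z(\R)$ is not enough. You need $\epsilon_{G^A}$ to restrict to $\epsilon_{G^B}$ on $G^B_z(\R)$, which holds because $\epsilon(a)=(-1)^{q(G)-q(G^a)}$ depends only on $a$.

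Your setup closes this gap with the following steps.
\begin{enumerate}
\item If the $G_z(\R)$-restrictions of $\pi^A$ and $\pi^B$ are disjoint, both multiplicities vanish by \eqref{eq:subpack}, \eqref{eq:subpack1} and the case $B=\{1\}$.
\item Otherwise, pick a common constituent $\pi$ with parameter $(f,\tau)$. By Lemma \ref{lem:duflo2}, write $\pi^A=\pi_{f,\tau^A}$ and $\pi^B=\pi_{f,\tau^B}$.
\item Apply Lemma \ref{lem:duflo1}(2) with $\tilde G_1=G^B_z$ instead of $G_z$. The centralizer of $f$ in $G^A_z(\R)$ is $S^A(\R)$, so only the trivial double coset $c\in S^A(\R)\lmod G^A_z(\R)/G^B_z(\R)$ has $f^c$ conjugate to $f$ under $G^B_z(\R)$.
\item Hence, by Lemma \ref{lem:duflo1}(1,4), the multiplicity of $\pi^B$ in $\pi^A|_{G^B_z(\R)}$ equals that of $\tau^B$ in $\tau^A|_{S^B(\R)_f}$.
\item It remains to check that each step in the construction of \eqref{eq:is3} preserves this multiplicity. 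These steps are twisting by $\rho_{f,\mf{u}_f}$, transport along Lemma \ref{lem:is}, and induction from the stabilizer of $\eta$ (by Mackey). The last step is the correspondence for disconnected tori, whose compatibility with restriction along $B\to A$ is visible from the formula in Proposition \ref{pro:luo}.
\end{enumerate}
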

\begin{proof}
Consider a map of finite groups $B \to A$, which we may assume injective by \cite[Remark 7.1.4]{KalLLCD}, and write $G^A = G \rtimes A$ in place of $\tilde G$, as well as $G^B = G \rtimes B$. Let $\varphi : W_\R \to {^LG}$ be discrete, let $\rho^A \in \tx{Irr}(\pi_0(S_\varphi^{A,+,[z]}),{[z]})$ and $\rho^B \in \tx{Irr}(\pi_0(S_\varphi^{B,+,[z]}),{[z]})$, and let $\pi^A$ and $\pi^B$ be the corresponding representations of $G^A_z(\R)$ and $G^B_z(\R)$.

Since $\epsilon_{G_A}|_{G^B}=\epsilon_{G^B}$, it is enough to examine the naive versions of the injection \eqref{eq:is}, where we ignore the twist by $\epsilon_{\tilde G}$.

If the restrictions to $G_z(\R)$ of $\pi^A$ and $\pi^B$ are disjoint, then it follows from \eqref{eq:subpack}, \eqref{eq:subpack1}, and \eqref{eq:is2}, that the restrictions of $\rho^A$ and $\rho^B$ to $\pi_0(S_\varphi^+)$ are disjoint. In that case the multiplicity of $\pi^B$ in $\tx{Res}\,\pi^A$ and the multiplicity of $\rho^B$ in $\tx{Res}\,\rho^A$ are both zero, hence equal.

Assume now that the restrictions to $G_z(\R)$ of $\pi^A$ and $\pi^B$ are not disjoint and pick an irreducible representation $\pi$ of $G_z(\R)$ that is common to both restrictions. Then $\pi^A \in \Pi_\varphi(G^A_z,\pi)$ and $\pi^B \in \Pi_\varphi(G^B_z,\pi)$. According to Lemma \ref{lem:duflo2} there exist irreducible $\tau^A \in X^A(f)$ and $\tau^B \in X^B(f)$ such that $\pi^A=\pi_{f,\tau^A}$ and $\pi^B=\pi_{f,\tau^B}$. According to Lemma \ref{lem:duflo1}(2) we have $\tx{Res}\,\pi^A=\bigoplus_c \pi_{f^c,(\tau^A)^c}$, where the index $c$ runs over the set $S^A(\R) \lmod G^A_z(\R) / G^B_z(\R)$, where $S^A(\R)$ is the centralizer of $f$ in $G^A_z(\R)$. Note that $(\tau^A)^c$ need not be irreducible. Lemma \ref{lem:duflo1}(1,4) tells us that the multiplicity of $\pi^B$ in $\tx{Res}\,\pi^A$ equals the sum over those $c$ for which $f^c$ is $G^B_z(\R)$-conjugate to $f$ of the multiplicity of $\tau^B$ in $(\tau^A)^c$. But the condition that $f^c$ and $f$ be $G^B_z(\R)$-conjugate is equivalent to $c=1$, so we obtain the multiplicity of $\tau^B$ in $\tau^A|_{S^B(\R)_f}$.

Now $\tau^A$ and $\tau^B$ correspond to $\rho^A$ and $\rho^B$ under the bijection \eqref{eq:is3}. The construction of this bijection shows that the multiplicity of $\tau^B$ in $\tx{Res}\,\tau^A$ equals the multiplicity of $\rho^B$ in $\tx{Res}\,\rho^A$.
\end{proof}

\section{Character identities} \label{sec:charid}

\subsection{Statement of main result}

We continue with the set-up of \S\ref{sub:sqpac}. Thus, $\tilde G = G \rtimes A$ is a quasi-split disconnected reductive group, $z \in Z^1(u \to \mc{E}^\tx{rig},Z(G)^A \to G)$, $\tilde G_z$ is the $z$-twist of $\tilde G$, and $\varphi : W_\R \to {^LG}$ is a discrete $L$-parameter. 

Let $\tilde s \in \tilde S_\varphi^{+,[z]}$. Associated to $\tilde s$ is a refined twisted endoscopic datum $(H,\tilde s,\mc{H},\xi)$ as discussed in \cite[\S4.8]{KalLLCD}. We choose a $z$-pair $(H_1,\xi_1)$, where $\xi_1 : \mc{H} \to {^LH_1}$. Let $\varphi_1 : W_\R \to {^LH_1}$ be the composition of $\varphi$, which by construction takes image in $\mc{H}$, with $\xi_1$. Then $\varphi_1$ is a discrete $L$-parameter. We have the $L$-packet $\Pi_\varphi(\tilde G_z)$ constructed in \S\ref{sub:sqpac} and the $L$-packet $\Pi_{\varphi_1}(H_1)$ constructed classically by Langlands, as reviewed in \cite[\S4]{DPR}. We have the virtual character on $\tilde G_z(\R)$
\[ \Theta_{\varphi,z}^{\tilde s,\mf{w}} = e(G_z)\sum_{\tilde\pi \in \Pi_\varphi(\tilde G_z)} \tx{tr}(\rho_{\tilde\pi}(\tilde s)) \Theta_{\tilde\pi}, \]
where $\rho_{\tilde\pi} \in \tx{Irr}(\tilde S_\varphi^{+,[z]})$ is the image of $\tilde\pi$ under the injection \eqref{eq:is}, as well as the virtual character on $H_1(\R)$
\[ S\Theta_{\varphi_1} = \sum_{\pi_1 \in \Pi_{\varphi_1}(H_1)} \Theta_{\pi_1}. \]
The following is the second main result of this paper.
\begin{thm} \label{thm:main}
Let $\tilde f \in \mc{C}^\infty_c(\tilde G(\R))$ and $f_1 \in \mc{C}^\infty_c(H_1(\R))$ be matching functions as in \cite[Lemma 4.12.1]{KalLLCD}. Then 
\[ \Theta_{\varphi,z}^{\tilde s,\mf{w}}(\tilde f) = S\Theta_{\varphi_1}(f_1). \]
\end{thm}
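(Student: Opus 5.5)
We follow the outline of \cite{DPR} sketched in the introduction. Write $\tilde s = s\rtimes a^{-1}$ with $a\in A^{[z],[\varphi]}$. Both sides of the identity only involve the coset $[G\rtimes a]_z(\R)$ of $\tilde G_z(\R)$: the transfer factor is supported on elements of that coset whose norms lie in $H(\R)$. The contributions of the other cosets vanish by the definition of matching, after we restrict $\tilde f$ accordingly.

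\textbf{Step 1: reduction to regular elliptic elements.} The left side is a twisted invariant eigendistribution on $[G\rtimes a]_z(\R)$, since all members of the packet have infinitesimal character $f$. Transfer of $S\Theta_{\varphi_1}$ also gives such a distribution. We invoke Renard's twisted uniqueness theorem \cite{Ren97} to reduce to an identity of functions on the strongly regular set. This requires subdividing the twisted conjugacy classes according to the twisted Cartan subspaces of \cite{HL17}. The non-elliptic contributions are then controlled by the elliptic ones through the twisted Harish-Chandra boundary/vanishing argument: discrete-series characters are determined by their restriction to the elliptic set. Using the twisted Weyl integration formula of \cite{HL17} and the theory of norms developed first, the identity of distributions becomes the pointwise identity
\[ e(G_z)\sum_{\tilde\pi}\tx{tr}\rho_{\tilde\pi}(\tilde s)\,\Theta_{\tilde\pi}(x)=\sum_{y}\Delta(y,x)\,\frac{D_{H_1}(y)}{D_{\tilde G}(x)}\,S\Theta_{\varphi_1}(y). \]
Here $x$ is strongly regular elliptic in $[G\rtimes a]_z(\R)$, $y$ runs over the stable classes of norms of $x$, and the $D$'s are the appropriate Weyl discriminants (using Corollary~\ref{cor:pn_d4} on the $G$-side).

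\textbf{Step 2: compute the left side.} Conjugate $x$ so that $G^x\subset S$, with $S=Z(f)$ the elliptic torus attached to some $j\in J_z$, and apply Theorem~\ref{thm:bouaziz} to each $\tilde\pi=\pi_{f,\tilde\tau}\otimes\epsilon_{\tilde G}$. Summing over $\tilde\pi$ in $\Pi_\varphi(\tilde G_z,\pi)$ and over $\pi$, the terms $\tx{tr}\rho_{\tilde\pi}(\tilde s)\,\tx{tr}\tilde\tau_\lhd(w^{-1}xw)$ collapse by orthogonality (Schur) together with Yi Luo's formula, Proposition~\ref{pro:luo}, into the single scalar $\<(\varphi_0^{-1},s),(z^{-1},t)\>_{\tx{TN}}^{-1}$ for the disconnected torus $\tilde S$, with $t$ the conjugate of $x$. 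The sum over $w\in N_{\tilde G(\R)}(S)/\tilde S(\R)$ and over $j$ then becomes a sum over the stable class of $x$ relative to the $L$-embedding $^L\jmath$ of Lemma~\ref{lem:semi2}. The sign $\epsilon_{\tilde G}(a)=(-1)^{q(G)-q(G^a)}$ combines with $(-1)^{q(G)}$ from Bouaziz and $e(G_z)$ to give $(-1)^{q(G^a)}$, up to a Kottwitz-sign comparison.

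\textbf{Step 3: compute the right side and compare.} Expand $S\Theta_{\varphi_1}$ by Harish-Chandra's formula on the elliptic torus $S_H$ of $H_1$, which is a norm of $S^a$. Then unwind the transfer factor $\Delta=\Delta_I\Delta_{II}\Delta_{III}$ of \cite{KS99,KS12}, normalized via $\mf{w}$ and $z$ as in \cite{KalLLCD}. The term $\Delta_{III}$ is exactly the rigid Tate–Nakayama pairing that appeared on the $G$-side, taken for the complex of tori $S\to S$ via $1-a$. The terms $\Delta_{II}$ with $C_\mf{w}$-based $\chi$-data together with $\Delta_I$ must reproduce the quotient of $\det(1-w^{-1}xw|\mf{u}_f)$ by the $H$-side Weyl denominator, and the genuine-character bookkeeping via Corollary~\ref{cor:covers}. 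The $R_3^a$-type root orbits (Lemma~\ref{lem:pn_detroot}) are handled by the corrected factors of \cite{KS12}.

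\textbf{Main obstacle.} We expect the hardest step to be this final sign and root-by-root comparison. The $\Delta_{II}$ and $\Delta_I$ factors, the $\epsilon_{\tilde G}$ twist, $q(G^a)$, $e(G_z)$ versus $e(H)$, and the behaviour of roots of type $R_3$ under $a$ must all be matched exactly. We would first verify the identity at $x$ with $z=1$, $\tilde s=1\rtimes a^{-1}$ and the generic member, where Lemma~\ref{lem:whit} pins down the normalization. We would then propagate to general $s$ and $z$ through the multiplicativity of the Tate–Nakayama pairing.
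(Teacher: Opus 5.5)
Your outline follows the paper's architecture: a pointwise identity via norms and the twisted Weyl integration formula, reduction to the elliptic set via Renard, Bouaziz plus Luo plus $\epsilon_{\tilde G}$ on the $G$-side, and Harish-Chandra's formula plus unwinding of the transfer factor on the $H$-side. However, Step~1 has a real hole.

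\textbf{Step 1: components without elliptic elements.} Renard's theorem can only be applied on a connected component $G_z(\R)^\circ\tilde\delta$ of $[G\rtimes a]_z(\R)$ that contains strongly regular elliptic elements. It needs an ideal of $\mc{Z}(\mf{g})$ of elliptic type and an elliptic Cartan subspace in that component. Components without elliptic elements do occur; already the non-identity component of $\mathrm{PGL}_2(\R)$ is one. On such a component the elliptic identity says nothing, and you must show directly that both sides vanish.
\begin{itemize}
\item For the $G$-side this follows because $\pi_{f,\tilde\tau}$ is induced from $G_z(\R)^\circ\tilde S(\R)$ (Corollary~\ref{goodcomp2}).
\item For the $H$-side the paper needs a genuine argument. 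Norms of elements of such a component lie on non-elliptic tori of $H$. Shelstad's Cayley-transform descent through semiregular elements (with Corollary~\ref{trivalpha}) shows they cannot lie in $Z_H(\R)H_{\mathrm{der}}(\R)^\circ$, which is where $S\Theta_{\varphi_1}$ is supported.
\end{itemize}
Your slogan that discrete series characters are determined by their elliptic values does not help here. The transferred side is not a priori a combination of characters of $\tilde G_z(\R)$; that is what is being proved.

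\textbf{Step 1: the eigendistribution property is not free.} Your assertion that the transfer of $S\Theta_{\varphi_1}$ is an eigendistribution requires proof. You must show that $X\tilde f$ matches $X_1f_1$ for a homomorphism $X\mapsto X_1$ that includes the translation $I_{-\mu^*}$. Here $\mu^*$ is the central twist between $\xi$ and $\xi_{G^1}$, and the infinitesimal characters of $\varphi$ and $\varphi_1$ differ by it. This is Proposition~\ref{zmatch}, which rests on showing that $\Delta_{II}\Delta_{III}^{-1}$ is locally a constant multiple of $e^{\mu^*}$ near $\tilde\delta$. Only then does Proposition~\ref{eigendist} give infinitesimal character $if$ on both sides.

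\textbf{Step 3: what has to cancel.} $\Delta_{III}$ is not just the Tate--Nakayama pairing from the $G$-side. It is a pairing for the complex $S'\to S'\times S^{H_1}$, and it splits as that pairing times the Langlands pairing $\tau_{H_1,\lhd}(\gamma_1)$ on $S^{H_1}$ (Lemma~\ref{lem:d3}). It is this second factor that cancels the numerators of Harish-Chandra's formula on $H_1$; without it the two sides do not match.

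The remaining factors also need specific inputs you do not supply.
\begin{itemize}
\item $\Delta_I=1$ because the twisted splitting invariant for $R^+_{f'}$-based $a$-data is trivial (Lemma~\ref{lem:d1}).
\item $\epsilon\Delta_{II}/\Delta_{IV}$ equals $(-1)^{q(G^a)-q(H)}\det(1-\gamma_w|\mf{u}_{f^H})/\det(1-\tilde\delta'_w|\mf{u}_{f'})$ (Lemma~\ref{lem:d24}). The sign count there uses that $a$ acts by $-1$ precisely on the root lines of type R3 (Lemma~\ref{lem:dimrga}).
\end{itemize}

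Finally, your fallback cannot work: checking the case $z=1$, $\tilde s=1\rtimes a^{-1}$ and propagating to general $\tilde s$ and $z$ by multiplicativity of the Tate--Nakayama pairing. The groups $H$, $q(H)$, $\epsilon$ and $\Delta_{II}^H$ all change with $\tilde s$. The comparison has to be carried out directly for each $\tilde s$, as the paper does.
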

This verifies \cite[Conjecture 5.6.1(2)]{KalLLCD}. In fact, as shown in \cite[\S7.2]{KalLLCD}, \cite[Conjecture 5.6.1(2)]{KalLLCD} follows from \cite[Conjecture 7.2.1]{KalLLCD}, which states that
\begin{equation} \label{eq:main}
e(G_z)\sum_{\substack{\pi \in \Pi_\phi(G_z)\\\pi\circ a\cong\pi}} \tx{tr}\,(\pi\boxtimes\rho^\vee)^\tx{can}(\tilde f,\tilde s^{-1}) = S\Theta_{\varphi_1}(f_1),
\end{equation}
where now $a \in A$ is the image of $\tilde s^{-1}$, $\tilde f \in \mc{C}^\infty_c([G \rtimes a]_z(\R))$ and $f_1 \in \mc{C}_c^\infty(H_1(\R))$ match in the sense of \cite[(7.2.1)]{KalLLCD}. In what follows we will prove \eqref{eq:main}.

\subsection{Admissible isomorphisms} \label{sub:admiso}

We fix a pinning $(\hat T,\hat B,\{Y_\alpha\})$ of $\hat G$ that is stable under $A$ and $\Gamma$. Conjugating $(\varphi,\tilde s)$ by $\hat G$ if necessary we may assume that $\tilde s \in \hat T \rtimes A$. In fact, we have the following stronger statement that will not be necessary, but may be worth recording.

\begin{lem}
  We may conjugate $\varphi$ under $\hat G$ to that $\tilde S_\varphi \subset \hat T \rtimes A$.
\end{lem}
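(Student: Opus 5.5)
We want a $\hat G$-conjugate of $\varphi$ with $\tilde S_\varphi \subset \hat T \rtimes A$. The plan is to carry out, on the dual side, the construction used for Lemma \ref{lem:semi2} and Lemma \ref{lem:is}. That is, we identify $\tilde S_\varphi$ with the centralizer of a toral parameter $\varphi_\mf{w}$ inside $\hat S_\mf{w} \rtimes A^{[\varphi]}$, and arrange that the $L$-embedding $^L\jmath$ identifies $\hat S_\mf{w}$ with $\hat T$.

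First, we replace $\varphi$ within its $\hat G$-conjugacy class so that $\varphi$ factors through the $L$-embedding $^L\jmath : \hat S_\mf{w} \rtimes W_\R \to {^LG}$ of Lemma \ref{lem:semi2}. This uses only the standard Langlands–Shelstad description of discrete parameters. Conjugating, we may assume that $\varphi(\C^\times)$ lies in $\hat T$ and is $\hat B$-dominant regular. Since $\varphi$ is discrete, the centralizer of $\varphi(\C^\times)$ is then exactly $\hat T$. Moreover $\varphi(\sigma)$ normalizes $\hat T$ and acts on it by $-1$, hence lies in $n(\omega_\sigma)\hat T \rtimes \sigma$, with $\omega_\sigma$ the longest Weyl element (twisted by $\sigma$). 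Thus $\varphi = {^L\jmath}\circ\varphi_\mf{w}$, where $\hat\jmath$ is the admissible isomorphism matching chambers, and this $\hat\jmath$ is $A$-equivariant by Lemma \ref{lem:semi2}.

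Next, we take $\tilde s = g \rtimes a \in \tilde S_\varphi$ with $a \in A^{[\varphi]}$ and show $g \in \hat T$. Such an element centralizes $\varphi(\C^\times)$. By Lemma \ref{lem:semi2}, $a$ fixes $\varphi|_{\C^\times} = {^L\jmath}\circ\varphi_\mf{w}|_{\C^\times}$ up to $\hat S_\mf{w}$-valued modifications; more precisely, $a$ preserves $\hat T$, $\hat B$ and the image $(z/\bar z)^\rho$-part. Hence $a(\varphi(z))$ is again $\hat B$-dominant regular in $\hat T$. It is $\hat G$-conjugate to $\varphi(z)$, since $a$ preserves the class $[\varphi]$. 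Two dominant regular elements of $\hat T$ that are $\hat G$-conjugate are equal, because they are conjugate under the Weyl group and dominance singles out the chamber. So $a$ fixes $\varphi(\C^\times)$ pointwise. Then $g = \tilde s a^{-1}$ centralizes $\varphi(\C^\times)$ as well, so $g \in \hat T$ by regularity. This is the argument behind Lemma \ref{lem:is}: the first vertical map there is bijective precisely because of this regularity.

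The main point requiring care is the first step. We must verify that $\varphi(\C^\times)$ is indeed $A$-fixed and not merely fixed up to $\hat G$-conjugacy. To settle it, we use that $a$ preserves the $\hat B$-dominant chamber, which holds because the pinning is $A$-stable. We also handle the fact that $\varphi|_{\C^\times}$ has the form $z \mapsto z^{\mu}\bar z^{\nu}$ with $\mu - \nu$ regular; regularity of the differential ensures that the dominance argument applies to the infinitesimal character $\mu$ itself. With this, $\tilde S_\varphi \subset \hat T \rtimes A$ follows.
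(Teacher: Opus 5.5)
Your proof is correct and takes essentially the paper's approach: after conjugating so that $\varphi(\C^\times)\subset\hat T$ with $\hat B$-dominant regular exponent, your chamber argument shows that any $g\rtimes a\in\tilde S_\varphi$ stabilizes $(\hat T,\hat B)$, and since $a$ already does, $g\in\hat T$. The detour through $^L\jmath$ and Lemma \ref{lem:semi2} is unnecessary, and your claim that $\varphi(\sigma)$ acts on $\hat T$ by $-1$ holds only modulo $Z(\hat G)$ (this is why the paper first passes to $\hat G/Z(\hat G)$); neither point affects the core step.
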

\begin{proof}
Compose $\varphi$ with the projection $^LG \to {^LG}/Z(\hat G)$ to assume $G$ simply connected. Since $\varphi$ is discrete the centralizer of $\varphi(\C^\times)$ is a maximal torus of $\hat G$, so we may conjugate $\varphi$ to assume this maximal torus is $\hat T$. Then $\varphi(z)=(z/\bar z)^\lambda$ with $\lambda \in X_*(\hat T)_\Q$ regular. Conjugating $\varphi$ by $N_{\hat G}(\hat T)$ we may assume that $\lambda$ is $\hat B$-dominant. Thus $(\hat T,\hat B)$ are now determined by $\varphi$, so $\tilde S_\varphi$ lies in the stabilizer of $(\hat T,\hat B)$ in $\hat G \rtimes A$, but this stabilizer is $\hat T \rtimes A$.
\end{proof}

Pulling back $(\hat T,\hat B)$ under $\xi$ we obtain a pair $(\hat T^H,\hat B^H)$ of $\hat H$. Let $S^H \subset H$ be a maximal $\R$-torus. It is shown in \cite[Lemma 3.3.B]{KS99} that there exist a Borel $\C$-subgroup $C^H \subset H$ containing $S^H$, an $a$-stable maximal $\R$-torus $S' \subset G$, and $a$-stable Borel $\C$-subgroup $C' \subset G$ containing $S'$ (the existence of $C'$ makes $S'$ into an $a$-admissible maximal torus), such that the data $(S^H,C^H)$, $(\hat T^H,\hat B^H)$, $(\hat T,\hat B)$, and $(S',C')$, lead to an isomorphism $\eta : S^H \to (S')_a$ of $\R$-tori. Such an isomorphism is called \emph{admissible}. We let $s_\eta \in \hat S'_a$ denote the image in the group $\hat S'_a$ of $a$-coinvariants of $\hat S'$ of the transport of $s \in \hat T$ under the isomorphism $\hat T \to \hat S'$ determined by the pairs $(\hat T,\hat B)$ and $(S',C')$. It is shown in \cite[Lemma 4.2.A]{KS99} that $s_\eta$ depends only on $\eta$, and not on the remaining data.

\subsection{Conjugacy classes in $[G \rtimes a]_z^\tx{sr}(F)$} \label{sub:tc}

In this subsection we work over an arbitrary local field $F$ of characteristic zero (most of the discussion carries over to positive characteristic, with some additional care). It is well-known that the set of strongly regular semi-simple conjugacy classes in $G_z(F)$ can be described as the disjoint union 
\begin{equation} \label{eq:rc}
\coprod_S S_\tx{sr}(F)/\Omega_F(S,G_z),  
\end{equation}
where $S$ runs over the set (of representatives for) the $G_z(F)$-conjugacy classes of maximal tori $S \subset G_z$, $S_\tx{sr}$ denotes the set of strongly regular elements of $S$, and $\Omega_F(S,G_z)=N(S,G_z)(F)/S(F)$ is the rational Weyl group.

The above set has a natural topology and can be equipped with a Borel measure: the topology of $S(F)$ induces the subspace topology on $S_\tx{sr}(F)$ and the quotient topology on $S_\tx{sr}(F)/\Omega_F(S,G_z)$; choosing a Haar measure on each $S(F)$, restricting it to the subset $S_\tx{sr}(F)$ (whose complement has measure zero), and taking the quotient by the counting measure on $\Omega_F(S,G_z)$, induces a Borel measure.

There is a version of this discussion that applies to stable conjugacy, namely the set of stable conjugacy classes of strongly regular semi-simple elements is the disjoint union
\begin{equation} \label{eq:sc}
  \coprod_S S_\tx{sr}(F)/\Omega(S,G_z)(F),
\end{equation}
where now $S$ runs over the set (of representatives for) the stable classes of maximal tori $S \subset G_z$ and $\Omega(S,G_z)=N(S,G_z)/S$ is the absolute Weyl group, a finite algebraic group defined over $F$. One puts a topology and measure on this space in an analogous way.

The purpose of this section is to review a similar description for the set of $G_z(F)$-orbits of strongly regular semi-simple elements of $[G \rtimes a]_z(F)$, where $a \in A^{[z]}$ is fixed, as well as the set of stable orbits, following the discussion of \cite[\S5]{HL17}.

An element $\tilde\delta \in [G \rtimes a]_z^\tx{sr}$ gives rise to a maximal torus $S \subset G_z$, namely its ``double centralizer'' $S=Z(Z(\tilde\delta,G_z),G_z)$, as well as the ``Cartan subspace'' $S^\natural = S \cdot \tilde\delta$ of the twisted $G_z$-space $[G \rtimes a]_z$. Each element of $S^\natural$ acts on $S$ by conjugation, and all elements induce the same automorphism, which we shall call $\tau$. The set $S^\natural_\tx{sr} = S^\natural \cap [G \rtimes a]_z^\tx{sr}$ is Zariski open and dense, and for each $\tilde\delta \in S^\natural_\tx{sr}$ we have $Z(\tilde\delta,G_z)=S^\tau$. 

By definition, each element of $[G \rtimes a]_z^\tx{sr}$ belongs to a unique Cartan subspace $S^\natural$. Therefore, two elements of $S^\natural_\tx{sr}$ are $G_z$-conjugate if and only if they are $N(S^\natural,G_z)$-conjugate. The same statement works $F$-rationally: two elements of $S^\natural_\tx{sr}(F)$ are $G_z(F)$-conjugate if and only if they are $N(S^\natural,G_z)(F)$-conjugate. We have the inclusions $S \subset N(S^\natural,G_z) \subset N(S,G_z)$ and the action of $N(S^\natural,G_z)$ on $S^\natural$ by conjugation factors through a faithful action of $N(S^\natural,G_z)/S^\tau$. 

We define $\Omega(S^\natural,G_z)=N(S^\natural,G_z)/S^\tau$. This is an algebraic $F$-group, not necessarily finite. We define $\Omega_F(S^\natural,G_z)=N(S^\natural,G_z)(F)/S^\tau(F)$, and have the natural inclusion $\Omega_F(S^\natural,G_z) \subset \Omega(S^\natural,G_z)(F)$ that may be proper.

Given a Cartan subspace $S^\natural \subset [G \rtimes a]_z$, the map
\[ (G_z(F)/S^\tau(F)) \times S^\natural_\tx{sr}(F) \to [G \rtimes a]_z^\tx{sr}(F),\qquad (g,\tilde\delta) \mapsto g \tilde\delta g^{-1} \]
is a submersion whose fibers are torsors for the action of $\Omega_F(S^\natural,G_z)$ given by 
\[ n \cdot (g,\tilde\delta) = (gn^{-1},n\tilde\delta n^{-1}). \]
Therefore, the set of $G_z(F)$-conjugacy classes in $[G \rtimes a]_z^\tx{sr}(F)$ can be described as
\begin{equation} \label{eq:rtc}
   \coprod_{S^\natural} S^\natural_\tx{sr}(F)/\Omega_F(S^\natural,G_z),
\end{equation}
where $S^\natural$ runs over the set $G_z(F)$-conjugacy classes of Cartan subspaces of $[G \rtimes a]_z$. This set is topologized by the topology on each $S^\natural_\tx{sr}(F)$ obtained as a subspace of $S^\natural(F)$, the latter inheriting its topology from $S(F)$, under which it is a torsor.

Let us now describe the stable classes. Two elements $\tilde\delta_1,\tilde\delta_2 \in [G \rtimes a]_z^\tx{sr}(F)$ are called stably conjugate if there exists $g \in G(\bar F)$ such that $g\tilde\delta_1g^{-1}=\tilde\delta_2$. If that is the case then $g^{-1}\sigma(g) \in S_1^\tau$, where $S_1^\tau$ is the centralizer of $\tilde\delta_1$, and $S_1$ is the double centralizer of $\tilde\delta_1$, i.e. the centralizer of $S_1^\tau$. Therefore, $\tx{Ad}(g) : S_1 \to S_2$ and $\tx{Ad}(g) : S_1^\natural \to S_2^\natural$ are isomorphisms defined over $F$. We can now define two Cartan subspaces $S_1^\natural, S_2^\natural \subset [G \rtimes a]_z$ as stably conjugate if there exists $g \in G(\bar F)$ such that $gS_1^\natural g^{-1}=S_2^\natural$ and the isomorphism $\tx{Ad}(g) : S_1^\natural \to S_2^\natural$ is defined over $F$. Again this implies that $g^{-1}\sigma(g) \in S_1^\tau$, where $S_1^\tau$ is the centralizer of $S_1^\natural$, and that $\tx{Ad}(g) : S_1 \to S_2$ is an isomorphism defined over $F$, where $S_i$ is the double centralizer of $S_i^\natural$. Two elements $\tilde\delta_1,\tilde\delta_2$ lying in $S^\natural_\tx{sr}(F)$ are stably conjugate if and only if they are conjugate under $\Omega(S^\natural,G_z)(F)$. Therefore, the set of stable classes in $[G \rtimes a]_z^\tx{sr}(F)$ has the description
\begin{equation} \label{eq:stc}
 \coprod_{S^\natural} S^\natural_\tx{sr}(F)/\Omega(S^\natural,G_z)(F),
\end{equation}
where now $S^\natural$ runs over the set (of representatives of) stable conjugacy classes of Cartan subspaces of $[G \rtimes a]_z$.

The fact that $\Omega_F(S^\natural,G_z)$ is usually infinite can sometimes be inconvenient. This group contains the normal subgroup $S(F)/S^\tau(F)$ with finite index. The quotient embeds into $\Omega_F(S,G_z)=N(S,G_z)(F)/S(F)$ and one can check that the image of this embedding is precisely the subgroup of $\Omega_F(S,G_z)$ consisting of those elements that are fixed by the conjugation action on $\Omega_F(S,G_z)$ of one, hence any, element of $S^\natural$. If we write $\Omega_F(S,G_z)^\tau$ for this subgroup, we have the exact sequence
\[ 1 \to S(F)/S^\tau(F) \to \Omega_F(S^\natural,G_z) \to \Omega_F(S,G_z)^\tau \to 1. \]

We can now divide the action of $\Omega_F(S^\natural,G_z)$ on $S^\natural(F)$ in stages. We first divide the action of the subgroup $S(F)/S^\tau(F)$. For $t \in S(F)/S^\tau(F)$ and $\tilde\delta \in S^\natural(F)$ we have $t\tilde\delta t^{-1}=t\tau(t)^{-1}\tilde\delta$. Therefore, the quotient of $S^\natural(F)$ by this action, which we shall call $S^\natural(F)_\tau$, is a torsor under the group $S(F)_\tau=(1-\tau)S(F) \lmod S(F)$ of $\tau$-coinvariants in $S(F)$. We write $S^\natural_\tx{sr}(F)_\tau$ for the image of $S^\natural_\tx{sr}(F)$ in $S^\natural(F)_\tau$. The action of $\Omega_F(S^\natural,G_z)$ on $S^\natural_\tx{sr}(F)_\tau$ now factors through the finite quotient $\Omega_F(S,G_z)^\tau$. 

Dividing out the action of $S(F)/S^\tau(F)$ from the source of the above submersion leads to the submersion
\[ G(F)/S(F) \times S^\natural_\tx{sr}(F)_\tau \to [G \rtimes a]_z^\tx{sr}(F), \]
whose fibers are torsors for the finite group $\Omega_F(S,G_z)^\tau$. Therefore, we obtain the alternative description of the set of $G_z(F)$-conjugacy classes in $[G \rtimes a]_z^\tx{sr}(F)$ as 
\begin{equation} \label{eq:rtc1}
  \coprod_{S^\natural} S^\natural_\tx{sr}(F)_\tau/\Omega_F(S,G_z)^\tau,
\end{equation}
where $S^\natural$ runs over the set $G_z(F)$-conjugacy classes of Cartan subspaces of $[G \rtimes a]_z$. This set is topologized by the topology on each $S^\natural_\tx{sr}(F)_\tau$, which as a torsor under $S(F)_\tau$ receives its topology from that group, and in turn the topology on $S(F)_\tau$ is the quotient of the topology of $S(F)$.

Moreover, the above set is endowed with a measure, by equipping each $S^\natural_\tx{sr}(F)_\tau/\Omega_F(S,G_z)^\tau$ with the quotient measure of the counting measure on the Weyl group $\Omega_F(S,G_z)^\tau$ and the measure on $S^\natural_\tx{sr}(F)_\tau$ obtained from a Haar measure on $S(F)_\tau$ via the torsor structure. 

A similar discussion applies to stable classes. Here we have the normal subgroup $S/S^\tau \subset \Omega(S^\natural,G_z)$ of finite index, whose quotient embeds into $\Omega(S,G_z)$ as the subgroup of $\tau$-fixed points. This leads to the exact sequence 
\[ 1 \to S/S^\tau \to \Omega(S^\natural,G_z) \to \Omega(S,G_z)^\tau \to 1, \]
which, upon taking $F$-points, becomes
\[ 1 \to [S/S^\tau](F) \to \Omega(S^\natural,G_z)(F) \to \Omega(S,G_z)^\tau(F) \to H^1(F,S/S^\tau) \to \cdots. \]
Write $\Omega(S,G_z)^\tau(F)'$ for the image of $\Omega(S^\natural,G_z)(F)$ in $\Omega(S,G_z)^\tau(F)$.

We can now divide out from $S^\natural(F)$ the action of $[S/S^\tau](F)$ by conjugation, which again is the same as the action by left-multiplication by $t\tau(t)^{-1}$ for $t \in [S/S^\tau](F)$. The map $1-\tau : S/S^\tau \to S$ induces the exact sequence
\[ 1 \to S/S^\tau \to S \to S_\tau \to 1, \]
where $S_\tau$ is the torus of $\tau$-coinvariants of $S$, and taking $F$-points we arrive at 
\[ 1 \to [S/S^\tau](F)\to S(F) \to S_\tau(F) \to H^1(F,S/S^\tau) \to \cdots. \]
Letting $S^\natural_\tau$ be the quotient of $S^\natural$ by the conjugation action of $S$, equivalently the left-multiplication action by $(1-\tau)S \cong S/S^\tau$, we see that $[S/S^\tau](F) \lmod S^\natural(F)$ is a (usually proper) subspace of $S^\natural_\tau(F)$, which we shall call $S^\natural_\tau(F)'$. The action of $\Omega(S^\natural,G_z)(F)$ on $S^\natural_\tau(F)$ factors through the quotient $\Omega(S,G_z)(F)'$ and preserves the subspace $S^\natural_\tau(F)'$. Note that, while $\Omega(S,G_z)$ acts on $S^\natural_\tau$, the subgroup $\Omega(S,G_z)(F)$ generally doesn't preserve the subset $S^\natural_\tau(F)$.
The stable classes in $[G \rtimes a]_z^\tx{sr}(F)$ are then alternatively described as 
\begin{equation} \label{eq:stc1}
  \coprod_{S^\natural} S^\natural_{\tau,\tx{sr}}(F)'/\Omega(S,G_z)^\tau(F)',
\end{equation}
where now $S^\natural$ runs over the set of (representatives for) the stable classes of Cartan subspaces of $[G \rtimes a]_z$.

The descriptions of the rational and stable conjugacy classes in $[G \rtimes a]_z^\tx{sr}(F)$ are related to each other via the map
\begin{equation} \label{eq:snt}
  S^\natural(F)_\tau \to S^\natural_\tau(F)
\end{equation}
induced by the projection $S^\natural(F) \to S^\natural_\tau(F)$. It is useful to keep in mind that this map is neither surjective nor injective. Its fibers account for $G_z(F)$-conjugacy classes that meet $S^\natural(F)$ and become stably conjugate by an element of $S(\bar F)$. Its image, which we have denoted by $S^\natural_\tau(F)'$, accounts for those elements of $S^\natural_\tau(F)$ that come from stable classes of elements of $S^\natural(F)$. The following lemma provides some quantitative information.

\begin{lem} \label{lem:snt}
  \begin{enumerate}
    \item The kernel of \eqref{eq:snt} is a torsor under the finite abelian group $\tx{ker}(H^1(F,S^\tau) \to H^1(F,S))$.
    \item The image $S^\natural_\tau(F)'$ of \eqref{eq:snt} is open and closed in $S^\natural_\tau(F)$ and we have 
    \[ S^\natural_\tau(F) = \coprod_{\alpha} \dot\alpha \cdot S^\natural_\tau(F)', \]
    where $\alpha$ runs over $\tx{ker}(H^1(F,S/S^\tau) \stackrel{1-\tau}{\to} H^1(F,S))$ and $\dot\alpha \in S_\tau(F)$ is any lift of $\alpha$ under the connecting homomorphism $S_\tau(F) \to H^1(F,S/S^\tau)$.
    \item The inclusion $S^\natural_\tau(F)' \to S^\natural_\tau(F)$ induces an injection 
    \[ S^\natural_{\tau,\tx{sr}}(F)'/\Omega(S,G_z)^\tau(F)' \to S^\natural_{\tau,\tx{sr}}(F)/\Omega(S,G_z)^\tau(F). \]
  \end{enumerate}
\end{lem}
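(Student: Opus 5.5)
The whole lemma comes from Galois cohomology of the two short exact sequences of $F$-groups attached to the automorphism $\tau$ of $S$:
\[ 1 \to S^\tau \to S \to S/S^\tau \to 1, \qquad 1 \to S/S^\tau \xrightarrow{1-\tau} S \to S_\tau \to 1. \]
Fix a base point $\tilde\delta_0 \in S^\natural(F)$. Then $S^\natural(F)_\tau$ is identified with $S(F)_\tau = S(F)/(1-\tau)S(F)$ and $S^\natural_\tau(F)$ with $S_\tau(F)$. Under these identifications, \eqref{eq:snt} becomes the homomorphism $S(F)/(1-\tau)(S(F)) \to S_\tau(F)$ induced by the projection $S \to S_\tau$, translated by the image of $\tilde\delta_0$. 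So it suffices to compute the kernel, image and cokernel of this homomorphism. This also explains the word ``kernel'' in (1): it means the fibre over the image of the base point.

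\textbf{Part (1).} From the second sequence, the kernel of $S(F) \to S_\tau(F)$ is $(1-\tau)([S/S^\tau](F))$. Hence the kernel of $S(F)/(1-\tau)S(F) \to S_\tau(F)$ is the cokernel of $S(F) \to [S/S^\tau](F)$, mapped via $1-\tau$. The first sequence identifies this cokernel with $\ker(H^1(F,S^\tau) \to H^1(F,S))$. One then checks that $1-\tau$ is injective on $[S/S^\tau](F)$, which holds because $S/S^\tau \xrightarrow{1-\tau} S$ is injective. The resulting group is finite, since $H^1(F,S^\tau)$ is finite over a local field of characteristic zero ($S^\tau$ is diagonalizable). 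Every fibre is then a torsor under it.

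\textbf{Part (2).} From the second sequence, the image of $S(F) \to S_\tau(F)$ is the kernel of the connecting map $\delta : S_\tau(F) \to H^1(F,S/S^\tau)$. The image of $\delta$ is $\ker(H^1(F,S/S^\tau) \xrightarrow{1-\tau} H^1(F,S))$. Therefore $S_\tau(F)$ is the disjoint union of the cosets $\dot\alpha \cdot \mathrm{im}$, with $\alpha$ running over that kernel and $\dot\alpha$ any lift of $\alpha$. Translating by $\tilde\delta_0$ gives the stated decomposition of $S^\natural_\tau(F)$. For openness, note that $\delta$ is continuous with discrete (indeed finite) target, so its fibres are open. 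Each coset is therefore open, and its complement, a finite union of open cosets, is open too, so each coset is also closed.

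\textbf{Part (3).} Injectivity means: two elements of $S^\natural_{\tau,\mathrm{sr}}(F)'$ that are conjugate under $\Omega(S,G_z)^\tau(F)$ are already conjugate under $\Omega(S,G_z)^\tau(F)'$. Take $\bar\delta_1,\bar\delta_2$ in $S^\natural_{\tau,\mathrm{sr}}(F)'$ with $w\bar\delta_1 = \bar\delta_2$. Lift them to $\tilde\delta_1,\tilde\delta_2 \in S^\natural_{\mathrm{sr}}(F)$ and lift $w$ to $n \in N(S^\natural,G_z)(\bar F)$. Then $n\tilde\delta_1 n^{-1}$ and $\tilde\delta_2$ have the same image in $S^\natural_\tau$, so they differ by conjugation by some $t \in S(\bar F)$. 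Consequently $\tilde\delta_1$ and $\tilde\delta_2$ are conjugate by $g = tn \in N(S^\natural,G_z)(\bar F)$.

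Because both elements are $F$-rational and strongly regular, $g^{-1}\sigma(g)$ centralizes $\tilde\delta_1$ for every $\sigma$, hence lies in $S^\tau$. So $g$ defines a point of $\Omega(S^\natural,G_z)(F) = [N(S^\natural,G_z)/S^\tau](F)$. Its image in $\Omega(S,G_z)^\tau$ is $w$, so $w$ lies in $\Omega(S,G_z)^\tau(F)'$. This is the same argument as the stable-conjugacy discussion preceding \eqref{eq:stc}.

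\textbf{Main obstacle.} The delicate step is part (3): verifying that conjugation by $n$ on the coinvariant quotient is compatible with the lifts, so that equality in $S^\natural_\tau$ really yields an $S(\bar F)$-conjugation upstairs. This relies on the fact that $S^\natural_\tau$ is the quotient of $S^\natural$ by $S$-conjugation. A secondary issue is making the torsor statements of (1) and (2) base-point-free.
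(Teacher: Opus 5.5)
Your proposal is correct and follows the same route as the paper. Parts (1) and (2) come from the $F$-point long exact sequences of $1\to S^\tau\to S\to S/S^\tau\to 1$ and $1\to S/S^\tau\xrightarrow{1-\tau}S\to S_\tau\to 1$ (the paper puts these into one diagram and uses the snake lemma for the kernel), and part (3) is the same observation that an element $g\in N(S^\natural,G_z)(\bar F)$ conjugating one rational strongly regular element to another satisfies $g^{-1}\sigma(g)\in S^\tau$, hence gives an $F$-point of $\Omega(S^\natural,G_z)$ lying over $w$; your explicit construction $g=tn$, from a lift $n$ of $w$ corrected by $t\in S(\bar F)$, just spells out a step the paper leaves implicit.
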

\begin{proof}
  The exact sequence
  \[ 1 \to S^\tau \to S \stackrel{1-\tau}{\to} S \to S_\tau \to 1 \]
  leads to the diagram
  \[ \scalebox{0.95}{\xymatrix{
    1\ar[r]&S(F)/S^\tau(F)\ar[r]\ar[d]&S(F)\ar[r]\ar@{=}[d]&S(F)_\tau\ar[r]\ar[d]&1\\
    1\ar[r]&[S/S^\tau](F)\ar[d]\ar[r]&S(F)\ar[r]&S_\tau(F)\ar[r]&H^1(F,S/S^\tau)\ar[r]&H^1(F,S)\\
    &H^1(F,S^\tau)\ar[d]\\
    &H^1(F,S)
  }}\]
  which shows that the image of $S(F)_\tau \to S_\tau(F)$, being the kernel of $S_\tau(F) \to H^1(F,S/S^\tau)$ is an open subgroup of finite index, and the cokernel of $S(F)_\tau \to S_\tau(F)$ equals the kernel of $H^1(F,S/S^\tau) \to H^1(F,S)$, hence (1).
  
  The kernel of $S(F)_\tau \to S_\tau(F)$ is isomorphic to the kernel of $H^1(F,S^\tau) \to H^1(F,S)$ by the snake lemma, hence (2).

  The claim in (3) can be reformulated as follows: Let  $\tilde\delta,\tilde\delta' \in S^\natural_\tx{sr}(F)$. If their images in $S^\natural_\tau(F)$ are in the same $\Omega(S,G_z)^\tau(F)$-orbit, then they are in the same $\Omega(S,G_z)^\tau(F)'$-orbit. Being in the same $\Omega(S,G_z)^\tau(F)$-orbit implies the existence of $n \in N(S^\natural,G_z)(\bar F)$ with $n\tilde\delta n^{-1}=\tilde\delta'$. Then $n^{-1}\sigma_z(n) \in S^\tau$, hence the image of $n$ in $\Omega(S^\natural,G_z)$ is an $F$-point.
\end{proof}

\subsection{Norms} \label{sub:norm}

We continue with an arbitrary local field $F$ of characteristic zero. A key role in the proof of Theorem \ref{thm:main} is played by the Kottwitz--Shelstad notion of a norm. In this subsection we will review this notion, following \cite[Definition 3.3.2]{KalLLCD}, and then relate it to the discussion of conjugacy classes and Cartan subspaces of \S\ref{sub:tc}.

Let $\tilde\delta = \delta \rtimes a \in [G \rtimes a]_z^\tx{sr}(F)$. An (abstract) norm of $\tilde\delta$ is a pair $(S',\gamma')$ consisting of an $a$-stable maximal $F$-torus $S' \subset G$ that is contained in an $a$-stable $\bar F$-Borel subgroup of $G$ (such $S'$ is called $a$-admissible), and an element $\gamma' \in S'_a(F)$, where $S'_a=S'/(1-a)S'$ is the quotient of $a$-coinvariants of $S'$, such that there exists $g \in G(\bar F)$ with the property that $\delta' \rtimes a := \tilde\delta' := g^{-1}\tilde\delta g$ lies in $S'(\bar F) \rtimes a$ and the image of $\delta'$ in $S'_a(\bar F)$ equals $\gamma'$. According to \cite[Lemma 3.3.4(1)]{KalLLCD} an (abstract) norm for $\tilde\delta$ exists and is unique up to stable conjugacy under $G^{a,\circ}$. Moreover, according to \cite[Lemma 3.3.4(2)]{KalLLCD}, $S=\tx{Cent}(\tx{Cent}(\tilde\delta,G_z),G_z)$ is a maximal $F$-torus of $G_z$ and $\tx{Ad}(g) : S' \to S$ is an isomorphism defined over $F$. We say that $\gamma^H \in H(F)$ is a \emph{norm} of $\tilde\delta$, or that $\gamma^H$ and $\tilde\delta$ are \emph{related} if there exist an admissible isomorphism $S^H \to S'_a$ that transports $\gamma^H$ to $\gamma'$, where $S^H$ is the centralizer of $\gamma^H$. 

We note here that the pair $(\tilde\delta,g)$ determines $(S',\gamma')$. Indeed, if we set $\tilde\delta'=g^{-1}\tilde\delta g$, then $\tilde\delta'$ is still strongly regular semi-simple and $S'$ is its double centralizer, while $\gamma'$ is the image of $\delta'$ in $S'_a$. In the following proposition we will describe the possible $g$ and reinterpret the concept of a norm in a way that is uniform for a given Cartan subspace. 

\begin{rem}
  If $S^\natural$ is a Cartan subspace of $[G \rtimes a]_z$ then there exists a $\bar F$-Borel pair of $G$ normalized by $S^\natural$. The maximal torus in any such Borel pair equals the double centralizer $S$ of $S^\natural$. Indeed, due to \cite[Lemma 3.11.2]{HL17} these claims hold for any one element $\tilde\delta$ of the non-empty (\cite[Proposition 3.13.1(2)]{HL17}) subset $S^\natural_\tx{sr}$, after which we use $S^\natural=S \cdot \tilde\delta$.
\end{rem}

\begin{pro} \label{pro:norm}
  Let $S^\natural$ be a Cartan subspace of $[G \rtimes a]_z$ defined over $F$ and let $S$ be its double centralizer. 
  \begin{enumerate}
    \item There exists an element $g \in G(\bar F)$ satisfying the following equations:
    \begin{enumerate}
      \item $t_\sigma := z_\sigma \sigma(g)g^{-1} \in S(\bar F)$,
      \item $\tilde\delta_g := g \cdot (1 \rtimes a) \cdot g^{-1} \in S^\natural(\bar F)$.
    \end{enumerate}
    \item The torus $S' := \tx{Ad}(g)^{-1}S$ is defined over $F$ and $a$-admissible. More precisely, $S'$ is defined over $F$, $a$-stable, and  if $C$ is any $\bar F$-Borel subgroup containing $S$ and normalized by $S^\natural$, then $C'=\tx{Ad}(g)^{-1}C$ is $a$-stable. 
    \item The isomorphism $\tx{Ad}(g) : S' \to S$ is defined over $F$ and translates the automorphism $\tau$ of $S$, by which any one element of $S^\natural$ acts via conjugation, to the automorphism $a$ of $S'$.
    \item The element $g$ is unique up to replacement by $sgx$ with $s \in S(\bar F)$ and $x \in G^{a,\circ}(\bar F)$ satisfying $g\sigma(x)x^{-1}g^{-1} \in S(\bar F)$.
    \item The image of $\tilde\delta_g$ in $S_\tau^\natural=(1-\tau)S \lmod S^\natural$ is independent of the choice of $g$ and lies in $S_\tau^\natural(F)$; we will call it $a_\tau$. 
    \item Let $\tilde\delta \in S^\natural_\tx{sr}(F)$ and let $\gamma = \tilde\delta_\tau \cdot a_\tau^{-1} \in S_\tau(F)$, where $\tilde\delta_\tau \in S_\tau^\natural(F)$ is the image of $\tilde\delta$. Let $(S',\gamma') := \tx{Ad}(g)^{-1}(S,\gamma)$. Then $(S',\gamma')$ is an abstract norm of $\tilde\delta$, and every abstract norm arises this way.
    \item In the notation of the above point, let $\delta := \tilde\delta \cdot \tilde\delta_g^{-1} \in S(\bar F)$. The tuple $(t_\sigma^{-1},\delta)$ lies in $Z^1(F,S \stackrel{1-\tau}{\to} S)$ and the image of its class under $\tx{Ad}(g)^{-1}$ is equal to 
    $\tx{inv}((S',\gamma'),\tilde\delta) \in H^1(F,S' \stackrel{1-a}{\to} S')$.
  \end{enumerate}
\end{pro}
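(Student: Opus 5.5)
The plan is to build $g$ from the existence of abstract norms, rather than constructing it from scratch, and then to read off all remaining assertions from the two defining equations (a), (b). Throughout, $\sigma_z$ denotes the twisted Galois action on $[G \rtimes a]_z$. Since $z$ takes values in $G$ and $\sigma$ fixes $1 \rtimes a$, we have
\[ \sigma_z(g(1\rtimes a)g^{-1}) = z_\sigma\sigma(g)\,(1 \rtimes a)\,\sigma(g)^{-1}z_\sigma^{-1}. \]

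For (1), pick any $\tilde\delta \in S^\natural_\tx{sr}(F)$; this set is non-empty by \cite[Proposition 3.13.1(2)]{HL17}. By \cite[Lemma 3.3.4]{KalLLCD} there is an abstract norm $(S'_0,\gamma'_0)$ together with $g_0 \in G(\bar F)$ such that $g_0^{-1}\tilde\delta g_0 = \delta'_0 \rtimes a$ with $\delta'_0 \in S'_0(\bar F)$. Moreover $\tx{Ad}(g_0) : S'_0 \to S$ is defined over $F$, which translates into $z_\sigma\sigma(g_0)g_0^{-1} \in S(\bar F)$. Since $S'_0 = (1-a)S'_0 \cdot S_0'^{a}$ over $\bar F$, write $\delta'_0 = u\,a(u)^{-1}t'$ with $u \in S'_0(\bar F)$ and $t' \in S_0'^{a}(\bar F)$. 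Put $g = g_0u$. Then
\[ g(1 \rtimes a)g^{-1} = \tilde\delta \cdot g_0 t'^{-1} g_0^{-1} \in S^\natural, \]
which is (b). Condition (a) persists because $\sigma(u)u^{-1} \in S'_0$ and $\tx{Ad}(g_0)S'_0 = S$.

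For (2) and (3), the torus $S' = \tx{Ad}(g)^{-1}S = S'_0$ is defined over $F$ by (a). Conjugation by $\tilde\delta_g$ induces $\tau$ on $S$, and conjugation by $1\rtimes a$ induces $a$ on $S'$; by (b) these correspond under $\tx{Ad}(g)$, which gives (3). If $C \supset S$ is normalized by $S^\natural \ni \tilde\delta_g$, then $\tx{Ad}(g)^{-1}C$ is normalized by $1 \rtimes a$, hence $a$-stable, which gives (2).

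For (4), let $g_1,g_2$ both satisfy (a) and (b). The elements $\tilde\delta_{g_1}$ and $\tilde\delta_{g_2}$ both lie in $S^\natural$, so they differ by an element of $S$. Since both induce $\tau$, I will show they are $S$-conjugate up to an element of $S^\tau$, writing the difference as $(1-\tau)(s)\cdot t$ with $t \in S^\tau$. The factor $t$ can then be absorbed as in step (1) via $S = (1-\tau)S \cdot S^\tau$. Consequently $x = g_2^{-1}s^{-1}g_1$ centralizes $1\rtimes a$, i.e.\ $x \in G^a$. To land in $G^{a,\circ}$, I will use $G^a = T'^{a}\cdot G^{a,\circ}$ for the $a$-stable torus $T' = S'$ (Steinberg), and move the $T'^a$-part into $s$. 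Finally, the cocycle condition on $x$ is exactly (a) for $g_2$ versus $g_1$.

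For (5), changing $g$ to $sgx$ replaces $\tilde\delta_g$ by $s\tilde\delta_g s^{-1} = (1-\tau)(s)\,\tilde\delta_g$, so the image $a_\tau$ is unchanged. For rationality, the displayed formula together with (a) gives $\sigma_z(\tilde\delta_g) = t_\sigma\tilde\delta_g t_\sigma^{-1}$, which has the same image in $S^\natural_\tau$.

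For (6), write $\tilde\delta = \delta\,\tilde\delta_g$ with $\delta \in S$. Then $g^{-1}\tilde\delta g = (g^{-1}\delta g)\rtimes a$, so by definition $(S',\tx{Ad}(g)^{-1}\bar\delta)$ is an abstract norm. Here $\bar\delta$ is the image of $\delta$ in $S_\tau$, which equals $\tilde\delta_\tau a_\tau^{-1} = \gamma$. For the converse, I will compare the freedom in (4) with the uniqueness of abstract norms up to $G^{a,\circ}$-stable conjugacy \cite[Lemma 3.3.4(1)]{KalLLCD}. For (7), the cocycle identity for $(t_\sigma^{-1},\delta)$ in $Z^1(F,S\xrightarrow{1-\tau}S)$ follows by applying $\sigma_z$ to $\tilde\delta = \delta\tilde\delta_g$ and using $\sigma_z(\tilde\delta) = \tilde\delta$. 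The identification with $\tx{inv}$ is then a matter of unwinding its definition in \cite{KalLLCD}.

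I expect the main obstacle to be twofold. The first is the $G^a$ versus $G^{a,\circ}$ issue in (4), which must be compatible with the uniqueness statement used in (6). The second is matching the sign and inverse conventions in (7) against the definition of $\tx{inv}$.
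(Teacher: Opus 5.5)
\textbf{Gap in part (4).} The problem is in (4), and it carries over to the independence claim in (5) and to the converse in (6), since you derive both from (4).

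You write $\tilde\delta_{g_2}=(1-\tau)(s)\,t\,\tilde\delta_{g_1}$ with $t\in S^\tau$. This is automatic from $S=(1-\tau)S\cdot S^\tau$ and carries no information. You then assert that $t$ ``can be absorbed as in step (1)''. It cannot.
\begin{itemize}
\item In step (1) nothing was absorbed. The leftover factor $g_0t'^{-1}g_0^{-1}$ simply remained, because (b) only asks for membership in $S^\natural$.
\item The modifications at hand are $g\mapsto s'g$ with $s'\in S$, $g\mapsto gu$ with $u\in\tx{Ad}(g)^{-1}S$, and $g\mapsto gy$ with $y\in G^a$. Each changes $\tilde\delta_g$ by an element of $(1-\tau)S$ or not at all. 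None of them moves the class of $\tilde\delta_g$ in $S^\natural_\tau=(1-\tau)S\lmod S^\natural$.
\end{itemize}
Removing $t$ means proving $t\in(1-\tau)S$. That is, you must show that two elements of $S^\natural$ which are $G$-conjugate to $1\rtimes a$ are already $S$-conjugate. This is exactly the independence statement of (5), so your (4) presupposes the fact it is meant to supply.

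\textbf{The missing ingredient.} It is the $G^{a,\circ}$-conjugacy of $a$-stable $\bar F$-Borel pairs, applied directly to the pairs $(S_i',C_i')=\tx{Ad}(g_i)^{-1}(S,C)$. These are $a$-stable by (2), for any $C\supset S$ normalized by $S^\natural$.
\begin{itemize}
\item Pick $x\in G^{a,\circ}(\bar F)$ with $\tx{Ad}(x)(S_2',C_2')=(S_1',C_1')$.
\item Then $g_2$ and $g_1x$ both carry $(S_2',C_2')$ to $(S,C)$. Hence $g_2(g_1x)^{-1}$ normalizes $(S,C)$ and lies in $S(\bar F)$.
\end{itemize}
This gives $g_2=sg_1x$ with $x\in G^{a,\circ}$ in one step. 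The $G^a$ versus $G^{a,\circ}$ issue you anticipated never arises; your decomposition $G^a=(S_1')^aG^{a,\circ}$ is itself proved by this same argument. The condition $g_1\sigma(x)x^{-1}g_1^{-1}\in S(\bar F)$ then follows from (a) for $g_2$, as you say.

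\textbf{Smaller remarks.}
\begin{itemize}
\item In (1) the modification by $u$ is unnecessary, since $g_0(1\rtimes a)g_0^{-1}=g_0(\delta_0')^{-1}g_0^{-1}\cdot\tilde\delta\in S\tilde\delta=S^\natural$ already.
\item The same computation, applied to an arbitrary abstract norm $(S',\gamma')$ with its element $g$, shows directly that $g$ satisfies (a) and (b). This gives the converse in (6) without comparing freedoms, which is how the paper argues.
\item Your treatment of (2), (3), the rationality of $a_\tau$, the forward part of (6), and the cocycle identity in (7) is correct and agrees with the paper.
\end{itemize}
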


\begin{proof}
(1) follows from the existence of abstract norms (\cite[Lemma 3.3.4(1)]{KalLLCD}) and part (4), proved independently below.

(2) Let $C$ be a $\bar F$-Borel subgroup containing $S$ and normalized by $S^\natural$. Then $(S,C)$ is normalized by $\tilde\delta_g$, hence the $\bar F$-Borel pair $(S',C'):=\tx{Ad}(g)^{-1}(S,C)$ is normalized by $1 \rtimes a$, i.e. it is $a$-stable.

By assumption $S$ is stable under $\tx{Ad}(z_\sigma)\circ\sigma$, hence $S'=\tx{Ad}(g)^{-1}S$ is stable under $\tx{Ad}(g)^{-1}\circ\tx{Ad}(z_\sigma)\circ\sigma\circ\tx{Ad}(g)=\tx{Ad}(g^{-1}z_\sigma\sigma(g))\circ\sigma$, but $g^{-1}z_\sigma\sigma(g)=g^{-1}t_\sigma g \in S'(\bar F)$ due to 1(a), hence $S'$ is stable under $\sigma$, i.e. it is defined over $F$. 

(3) Again by 1(a), the isomorphism $\tx{Ad}(g) : S' \to S$ is defined over $F$. By 1(b) it identifies the action of $S^\natural$ on $S$ with the action of $a$ on $S'$.

(4) We first check that if $g$ satisfies (a) and (b) of (1), then so does $sgx$. For (a) we see $z_\sigma\sigma(sgx)(sgx)^{-1}=z_\sigma\sigma(s)\sigma(g)\sigma(x)x^{-1}g^{-1}s^{-1}$. Since $\tx{Ad}(z_\sigma)\circ\sigma$ normalizes $S$ we have $z_\sigma\sigma(s)=s'z_\sigma$ for $s'=z_\sigma\sigma(s)z_\sigma^{-1} \in S(\bar F)$, and hence $z_\sigma\sigma(sgx)(sgx)^{-1}=s'(z_\sigma\sigma(g)g^{-1})(g\sigma(x)x^{-1}g^{-1})s^{-1}$, where all four terms in the product lie in $S(\bar F)$. For (b) we note that $sgx(1\rtimes a)(sgx)^{-1}=\tx{Ad}(s)(g (1 \rtimes a)g^{-1}) \in S^\natural$.

Now consider two elements $g_1,g_2$ satisfying (a) and (b). Choose a Borel $\bar F$-subgroup $C$ containing $S$ and stable under conjugation by $S^\natural$. According to (2), $(S_i',C_i')=\tx{Ad}(g_i)^{-1}(S,C)$ are $a$-stable $\bar F$-Borel pairs. Therefore they are conjugate under $G^{a,\circ}(\bar F)$. Taking $x \in G^{a,\circ}(\bar F)$ with $x^{-1}(S_1',C_1')x=(S_2',C_2')$ we see that $g_2$ and $g_1x$ both conjugate $(S_2',C_2')$ to $(S,C)$. We conclude that $s:=g_2x^{-1}g_1^{-1}$ normalizes $(S,C)$ and therefore lies in $S(\bar F)$. Thus $g_2=sg_1x$. To show that $g_1\sigma(x)x^{-1}g_1^{-1} \in S(\bar F)$, we apply condition (a) to $g_2=sg_1x$ and see
\begin{eqnarray*}
  S(\bar F)&\ni& z_\sigma\sigma(sg_1x)(sg_1x)^{-1}\\
  &=&z_\sigma\sigma(s)\sigma(g_1)\sigma(x)x^{-1}g_1^{-1}s^{-1}\\
  &=&s'(z_\sigma\sigma(g_1)g_1^{-1})(g_1\sigma(x)x^{-1}g_1^{-1})s^{-1}, 
\end{eqnarray*}
where $s'=z_\sigma\sigma(s)z_\sigma^{-1}$. Since $s \in S(\bar F)$ and $\tx{Ad}(z_\sigma)\circ\sigma$ normalizes $S$ we get $s' \in S(\bar F)$. Furthermore $z_\sigma\sigma(g_1)g_1^{-1} \in S(\bar F)$ by condition (a) for $g_1$. It follows that $g_1\sigma(x)x^{-1}g_1^{-1} \in S(\bar F)$, as desired.

(5) The element $\tilde\delta_g$ is replaced by $s \tilde\delta_g s^{-1}$ if we replace $g$ by $sgx$. Since $S^\natural_\tau$ is the set of $S$-conjugacy classes in $S^\natural$, the image of $\tilde\delta_g$ in $S^\natural_\tau$ is unchanged. Moreover, $z_\sigma\sigma(\tilde\delta_g )z_\sigma^{-1} = z_\sigma\sigma(g)(1 \rtimes a)\sigma(g^{-1})z_\sigma^{-1} = z_\sigma\sigma(g)g^{-1} \tilde\delta g \sigma(g^{-1})z_\sigma^{-1}$, which by 1(a) is $S$-conjugate to $\tilde\delta$.

(6) Let $\tilde\delta' = g^{-1}\tilde\delta g = \delta' \rtimes a$. If $C$ is any $\bar F$-Borel subgroup stable normalized by $S^\natural$, then the $\bar F$-Borel pair $(S,C)$ is normalized by $\tilde\delta$, hence $(S',C')=\tx{Ad}(g)^{-1}(S,C)$ is normalized by $\tilde\delta'$. But it also also $a$-stable by (2), so it is also normalized by $\delta'$, showing $\delta' \in S'(\bar F)$. The image $\gamma' \in S'_a(\bar F)$ of $\delta'$ equals $\tx{Ad}(g)^{-1}\gamma$ and thus lies in $S'_a(F)$. We conclude that $(S',\gamma')$ is an abstract norm of $\tilde\delta$.

Conversely, let $(S',\gamma')$ be an abstract norm for $\tilde\delta$ and let $g \in G(\bar F)$ be such that $\tilde\delta'=g^{-1}\tilde\delta g \in S'(\bar F) \rtimes a$ and if we write $\tilde\delta'=\delta' \rtimes a$ then the image of $\delta'$ in $S'_a(\bar F)$ equals $\gamma'$. Then $\tilde\delta_g = g \cdot (1 \rtimes a) \cdot g^{-1} = g(\delta')^{-1}g^{-1} \cdot g\tilde\delta'g^{-1} = g(\delta')^{-1}g^{-1} \cdot \tilde\delta$. Since $\tx{Ad}(g)S'=S$ we have $g(\delta')^{-1}g^{-1} \in S(\bar F)$, hence $\tilde\delta_g \in S^\natural$, confirming 1(b). Moreover, the isomorphism $\tx{Ad}(g) : S' \to S$ is defined over $F$, hence $g^{-1}z_\sigma\sigma(g)z_\sigma^{-1} \in S'(\bar F)$, hence $z_\sigma\sigma(g)g^{-1} \in S(\bar F)$, confirming 1(a).

(7) The image of $(t_\sigma^{-1},\delta)$ under $\tx{Ad}(g)^{-1}$ equals $((g^{-1}z_\sigma\sigma(g))^{-1},\delta')$, which is the definition of the invariant given in \cite[\S4.3,\S4.4]{KalLLCD}.
\end{proof}

\begin{rem} \label{rem:norm}
  \begin{enumerate}
    \item Condition (b) of (1) can be rewritten in a manner that is analogous to condition (a), as follows. For one, hence every, $\tilde\delta \in S^\natural$, if we write $\tilde\delta = \delta \rtimes a$, then $\delta a(g)g^{-1} \in S(\bar F)$.
    \item Part (1,b) of the proposition can be reinterpreted to say that $S^\natural = S \cdot \tilde\delta_g$, while part (5) says that $S_\tau^\natural = S_\tau \cdot a_\tau$. The latter decomposition is more robust, since $a_\tau$ depends only on the Cartan subspace $S^\natural$ (in particular, it does not depend on the choice of $g$) and is an $F$-point. In other words, the $S_\tau$-torsor $S_\tau^\natural$ and the $S_\tau(F)$-torsor $S_\tau^\natural(F)$ are both canonically trivialized. Note that $\tilde\delta_g$ is a quasi-central element in the sense of Digne--Michel, see \cite[\S3.8]{HL17}, and it has finite order equal to that of $a$.
    \item The assignment $\tilde\delta \mapsto \gamma := \tilde\delta_\tau \cdot a_\tau^{-1}$ used in part (6) of the proposition provides a continuous map $S^\natural(F) \to S_\tau(F)$ that is equivariant for left-multiplication by $S(F)$.
    \item The class of $(t_\sigma^{-1},\delta) \in Z^1(F,S \stackrel{1-\tau}{\to} S)$ in part (7) of the proposition does depend on the choice of $g$. If we replace $g$ by $sg$ with $s \in S(\bar F)$, the class is unchanged, but if we replace $g$ by $gx$ with $x \in G^{a,\circ}(\bar F)$ satisfying $g\sigma(x)x^{-1}g^{-1} \in S(\bar F)$, then the class multiplies by the image of the class of $g\sigma(x)x^{-1}g^{-1} \in H^1(F,S^\tau)$ under the map $H^1(F,S^\tau) \to H^1(F,S \stackrel{1-\tau}{\to} S)$ induced by the inclusion $S^\tau \to S = [S \to 1] \to [S \to S]$. This reflects the fact that now the abstract norm $(S',\gamma')$ that was associated to $g$ has been replaced by its $x^{-1}$-conjugate, which in turn is associated to $gx$.
  \end{enumerate}
\end{rem}

\begin{dfn} \label{dfn:relel}
  Let $S^H \subset H$ be a maximal torus and let $S^\natural \subset [G \rtimes a]_z$ be a Cartan subspace. Let $S$ be the double centralizer of $S^\natural$. An \emph{admissible relation} between $S^H$ and $S^\natural$ is an isomorphism $S^H \to S_\tau$ obtained as the composition of $\tx{Ad}(g) : S'_a \to S_\tau$, with $g \in G(\bar F)$ as in Proposition \ref{pro:norm}(1), and an admissible isomorphism  $S^H \to S'_a$. We say that $S^H$ and $S^\natural$ are \emph{related}, if there exists an admissible relation between them.
\end{dfn}

Note that $\gamma^H \in S^H(F)$ is related to $\tilde\delta \in S^\natural_\tx{sr}(F)$ if and only if there exists an admissible relation $S^H \to S_\tau$ sending $\gamma^H$ to the image $\gamma \in S_\tau(F)$ of $\tilde\delta$ under the map $S^\natural(F) \to S_\tau(F)$ of Remark \ref{rem:norm}(3).

\subsection{The space of related strongly regular conjugacy classes}

The concept of related elements of Definition \ref{dfn:relel} provides a correspondence between the set of stable classes of strongly $G$-regular semi-simple elements of $H(F)$ and the set of $G_z(F)$-classes of strongly regular semi-simple elements of $[G \rtimes a]_z(F)$, where an element of $H(F)$ is called $G$-strongly regular if it is a norm of a strongly regular element of $[G \rtimes a]_z(F)$. We can embody this correspondence in a topological space $\mf{X}$ defined as the fiber product
\[ \xymatrix{
  &\mf{X}\ar[dl]\ar[dr]&\\
  H_{G\tx{-sr}}(F)/\tx{st-conj}\ar[dr]&&[G \rtimes a]_z^\tx{sr}(F)/G_z(F)\tx{-conj}\ar[dl]\\
  &[G \rtimes a]_z^\tx{sr}(F)/\tx{st-conj}&
}
  \]
Explicitly, the underlying set of $\mf{X}$ consists of pairs $([[\gamma]],[\tilde\delta])$, where $[[\gamma]]$ is a stable class of strongly $G$-regular semi-simple elements of $H(F)$, and $[\tilde\delta]$ is a $G_z(F)$-conjugacy class of strongly regular semi-simple elements of $[G \rtimes a]_z(F)$, and $\gamma$ is a norm of $\tilde\delta$. Taking into account the discussion of \S\ref{sub:tc} and \S\ref{sub:norm}, we obtain the following picture
\begin{equation} \label{eq:diagx}
  \scalebox{0.85}{\xymatrix{
  &\mf{X}\ar[dl]\ar[dr]&\\
  H_{G\tx{-sr}}(F)/\tx{st-conj}\ar@{=}[d]\ar[dr]&&[G \rtimes a]_z^\tx{sr}(F)/G_z(F)\tx{-conj}\ar@{=}[d]\ar[dl]\\
  \coprod\limits_{S^H} S^H_{G\tx{-sr}}(F)/\Omega(S^H,H)(F)\ar[dr]&[G \rtimes a]_z^\tx{sr}(F)/\tx{st-conj}\ar@{^(->}[d]&\coprod\limits_{S^\natural} S^\natural_\tx{sr}(F)_\tau/\Omega_F(S,G_z)^\tau\ar[dl]\\
  &\coprod\limits_{S^\natural} S^\natural_{\tau,\tx{sr}}(F)/\Omega(S,G_z)(F)^\tau
  }}\end{equation}
where on the left $S^H$ runs over the set of stable classes of those maximal tori of $H$ which are related to Cartan subspaces of $[G \rtimes a]_z$, on the right $S^\natural$ runs over the set of $G_z(F)$-conjugacy classes of Cartan subspaces of $[G \rtimes a]_z$, and on the bottom $S^\natural$ runs over the stable classes of Cartan subspaces of $[G \rtimes a]_z$; the inclusion on the bottom comes from Lemma \ref{lem:snt}(3).

\begin{lem} \label{lem:x-top}
  All maps in Diagram \eqref{eq:diagx} are covering maps with open images and finite fibers. 
\end{lem}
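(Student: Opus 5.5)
We will treat the maps of Diagram \eqref{eq:diagx} one at a time, using the explicit descriptions \eqref{eq:rtc1} and \eqref{eq:stc1} together with the analogous description of stable classes in $H$. Each target decomposes as a disjoint union indexed by (classes of) tori or Cartan subspaces, and each piece is a quotient of an open subset of a torsor under a locally compact abelian group ($S^H(F)$, $S(F)_\tau$ or $S_\tau(F)$) by a finite group. The key steps are as follows. First, show that each map respects these decompositions and, on each piece, is induced by a continuous homomorphism of tori (or of their torsors) followed by a quotient by finite groups. Second, show that this homomorphism is a local homeomorphism with finite kernel and open image. Third, note that such a map, restricted to the open dense strongly regular locus where the finite group actions are free, descends to a covering map with finite fibers on the quotients. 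Since disjoint unions and fiber products of covering maps with finite fibers again have these properties, the claim for the two maps out of $\mf{X}$ will follow once the two maps into $[G \rtimes a]_z^\tx{sr}(F)/\tx{st-conj}$ are handled.

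For the right diagonal map, on the piece indexed by $S^\natural$ the map is induced by \eqref{eq:snt}, which is a map of torsors over $S(F)_\tau \to S_\tau(F)$. Lemma \ref{lem:snt}(1),(2) shows that this homomorphism has finite kernel and open, closed image of finite index. Since $S(F)\to S(F)_\tau$ and $S(F) \to S_\tau(F)$ are both quotient maps onto Lie groups with the same Lie algebra $\tx{Lie}(S)_\tau(F)$, the induced map is a local homeomorphism. We then pass to quotients: $\Omega_F(S,G_z)^\tau$ maps into $\Omega(S,G_z)^\tau(F)'$, and Lemma \ref{lem:snt}(3) gives injectivity into the full quotient. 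The fibers are finite because only finitely many rational Cartan subspace classes lie in one stable class; this uses the finiteness of $H^1(F,S^\tau)$.

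For the left diagonal map, on the piece indexed by $S^H$ we choose an admissible relation $S^H \to S_\tau$ (Definition \ref{dfn:relel}). This is an isomorphism of $F$-tori, so it is a homeomorphism on $F$-points. The map sends $S^H_{G\tx{-sr}}(F)$ to $S_\tau(F)$, composed with the trivialization $\gamma \mapsto \gamma a_\tau$ of $S^\natural_\tau(F)$ from Remark \ref{rem:norm}(2). The main obstacle is comparing the Weyl group actions: we must show that the different admissible relations, which differ by $\Omega(S^H,H)(F)$ and by the ambiguity in $g$ from Proposition \ref{pro:norm}(4), map $\Omega(S^H,H)(F)$ injectively into $\Omega(S,G_z)^\tau(F)$. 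For this we would use the standard embedding of the endoscopic Weyl group into $\Omega(S',G)^a$ from \cite{KS99}, transported via $\tx{Ad}(g)$. The image is the preimage of $S^\natural_\tau(F)'$, which is open by Lemma \ref{lem:snt}(2). Strong $G$-regularity guarantees freeness, so the map is a covering with finite fibers onto an open set.
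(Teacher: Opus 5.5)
Your proposal is correct and follows essentially the same route as the paper. The Cartesian square reduces the two maps out of $\mf{X}$ to the two bottom maps; the right one is handled by Lemma~\ref{lem:snt} together with finiteness of the rational classes in a stable class; and the left one follows because an admissible relation followed by the base point $a_\tau$ is a homeomorphism $S^H(F)\to S^\natural_\tau(F)$.

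You also spell out points the paper leaves implicit: Weyl-group compatibility, freeness on the strongly regular locus, and openness of the $G$-strongly regular locus. One caution: your general ``third step'' does not follow from freeness alone, since an open $\Gamma_1$-stable subset of a free $\Gamma_2$-space need not induce a covering of quotients. It does hold here, because the sets involved (e.g.\ $S^\natural_\tau(F)'$) are finite unions of cosets of the open finite-index subgroup $\mathrm{im}(S(F)\to S_\tau(F))$, and the Weyl groups act by affine maps that permute these cosets.
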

\begin{proof}
  Since the diagram is Cartesian, the properties of the top two diagonal maps mirror the properties of the bottom two diagonal maps. Consider first the right bottom map. There are only finitely many $G_z(F)$-conjugacy classes of Cartan subspaces of $[G \rtimes a]_z$ in a fixed stable class. For any Cartan subspace $S^\natural$ we claim that the map $S^\natural_\tx{sr}(F)_\tau/\Omega_F(S,G_z)^\tau \to S^\natural_{\tau,\tx{sr}}(F)/\Omega(S,G_z)^\tau(F)$ is a covering map with open image and finite fibers. It is enough to show that the same is true for the map $S^\natural(F)_\tau \to S^\natural_\tau(F)$, but this was discussed in Lemma \ref{lem:snt}.
  
  Consider next the bottom left map. Since there are only finitely many stable classes of maximal tori $S^H$ in $H$, we may focus on a single one. It is related to a unique stable class of Cartan subspaces $S^\natural$ of $[G \rtimes a]_z$. To show that $S^H_{G\tx{-sr}}(F)/\Omega(S^H,H)(F) \to S^\natural_{\tau,\tx{sr}}(F)/\Omega(S,G_z)(F)^\tau$ is a finite cover with open image, it is enough to show the same statement for the map $S^H(F) \to S^\natural_\tau(F)$. Recall that this map is obtained as the composition of an admissible relation $S^H(F) \to S_\tau(F)$ and the map $S_\tau(F) \to S^\natural_\tau(F)$ given by the canonical base point $a_\tau \in S^\natural_\tau(F)$. It follows that $S^H(F) \to S^\natural_\tau(F)$ is a homeomorphism.
\end{proof}

\begin{rem}
  Let $f : X \to Y$ be covering map of topological spaces with finite fibers and open image. Given a Borel measure on $Y$ we can form the ``pull-back measure'' on $X$ to be the unique Borel measure whose quotient by the counting measures on the fibers equals the given measure on $Y$. In other words, $f : X \to \C$ is integrable if and only if $\bar f(y)=\sum_{x \mapsto y}f(x)$ is integrable on $Y$, and then $\int_X f(x)dx = \int_Y \bar f(y) dy$.
\end{rem}

We endow $\mf{X}$ with the pull-back under the map $\mf{X} \to [G \rtimes a]_z^\tx{sr}(F)/\tx{st-conj}$, which is a covering with open image and finite fibers according to Lemma \ref{lem:x-top}, of the measure on $[G \rtimes a]_z^\tx{sr}(F)/\tx{st-conj}$ discussed in \S\ref{sub:tc}. 

For the next lemma we will synchronize the measures on the spaces in Diagram \eqref{eq:diagx} as follows. In each of the three disjoint unions in this diagram, we place a measure by taking the sum of the quotient measures obtained from the counting measure on the relevant Weyl group, and a Haar measure on the relevant torus (transported to the appropriate torsor). What then remains is to synchronize the measures on the tori $S^H(F)$, $S_\tau(F)$, and $S(F)_\tau$, whenever $S^H$ and $S^\natural$ are related.

For this we note that specifying a Haar measure on any one of the three groups $S(F)_\tau$, $S(F)^\tau=S^\tau(F)$, and $S_\tau(F)$, specifies a measure on the other two. This follows from the two exact sequences
\[ 0 \to \tx{Lie}(S^\tau) \to \tx{Lie}(S) \to \tx{Lie}(S) \to \tx{Lie}(S_\tau) \to 0\]
and 
\[ 1 \to S^\tau(F) \to S(F) \to S(F) \to S(F)_\tau \to 1, \]
both induced by the map $1-\tau : S \to S$ sending $t$ to $t\tau(t)^{-1}$. The first sequence shows that the spaces of top forms on $S^\tau(F)$ and $S_\tau(F)$ are canonically isomorphic. Since specifying a Haar measure on either group is equivalent to specifying a pair $(\omega,dx)$ of a top form $\omega$ and a Haar measure $dx$ on $F$, subject to the equivalence $(a\omega,|a|_F^{-1}dx)$ for $a \in F^\times$, we see that the spaces of Haar measures of $S^\tau(F)$ and $S_\tau(F)$ are canonically isomorphic. Via the second sequence we can start with a Haar measure $ds$ on $S^\tau(F)$, choose a Haar measure $dt$ on $S(F)$, transport the quotient measure $dt/ds$ via $1-\tau$ to the image $(1-\tau)S(F)=S(F)/S^\tau(F)$, and again transport the quotient measure $dt/(dt/ds)$ via the isomorphism $S(F)/(1-\tau)S(F) \to S(F)_\tau$ to a measure on $S(F)_\tau$, which is independent of the choice of $dt$.

With this observation we can now synchronize the measures in Diagram \eqref{eq:diagx} as follows. Whenever $S^H$ and $S^\natural$ are related, use the admissible isomorphism $S^H(F) \to S^\tau(F)$ to transport a chosen measure on one of these groups to the other, and then use the previous paragraph to endow the groups $S_\tau(F)$ and $S(F)_\tau$ with the corresponding measures. 

Although not necessary at the moment, we can also synchronize the measures among the different groups $S^H(F)$, using the fact that all maximal tori in $H$ are conjugate under $H_\tx{sc}(\bar F)$, so the spaces of top forms on $\tx{Lie}(S^H)(\bar F)$ are canonically identified.

\begin{lem} \label{lem:x-maes}
  Assume that $S^H$ and $S^\natural$ are related, and that the measures are synchronized as just described. Via the maps $S^H \to S_\tau$ and $S(F)_\tau \to S_\tau(F)$, the measures on $S^H(F)/\Omega(S^H,H)(F)$ and $S^\natural(F)_\tau/\Omega_F(S,G_z)^\tau$ are pulled back from the measure on  $S^\natural_\tau(F)/\Omega(S,G_z)(F)^\tau$.
\end{lem}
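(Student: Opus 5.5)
The claim is local: it reduces to comparing Haar measures on the three tori $S^H(F)$, $S(F)_\tau$ and $S_\tau(F)$, together with the counting measures on the finite Weyl groups. On each of the three spaces, the measure was defined as the quotient of a torsor measure by a counting measure. So it suffices to check two things. First, the maps $S^H(F)\to S^\natural_\tau(F)$ and $S^\natural(F)_\tau\to S^\natural_\tau(F)$, before dividing by the Weyl groups, are local measure isomorphisms onto their images, i.e. they preserve the chosen Haar measures locally. Second, the Weyl group quotients are compatible, in the sense that pushing forward with fiber counting is the same as dividing by counting measure at generic points. The strongly regular loci have full measure and the relevant Weyl groups act freely there, so these two checks together give the statement.

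For the $H$-side, the map $S^H(F)\to S^\natural_\tau(F)$ is the admissible relation $S^H\to S_\tau$ followed by translation by the canonical $F$-point $a_\tau$ of Proposition \ref{pro:norm}(5). In the proof of Lemma \ref{lem:x-top} it was already noted that this map is a homeomorphism. By the synchronization, the measure on $S_\tau(F)$ is the transport of the one on $S^H(F)$. Here I read the stated transport $S^H(F)\to S^\tau(F)$ as going through the canonical identification of top forms on $\mathrm{Lie}(S^\tau)$ and $\mathrm{Lie}(S_\tau)$; I must check that this matches the identification $\mathrm{Lie}(S^H)\cong\mathrm{Lie}(S_\tau)$ coming from the admissible isomorphism. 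This holds because the composition $S^\tau\to S\to S_\tau$ is an isogeny, and the identification of top forms via the four-term exact sequence is the one induced by the differential of this isogeny. Translation by $a_\tau$ preserves Haar measure. For the Weyl groups, a generic fiber of $S^H(F)/\Omega(S^H,H)(F)\to S^\natural_\tau(F)/\Omega(S,G_z)(F)^\tau$ over an orbit has cardinality $[\Omega(S,G_z)(F)^\tau:\Omega(S^H,H)(F)]$, counted appropriately. The counting measures then match, since summing over a fiber reproduces the quotient by the larger group.

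For the $G$-side, $S(F)_\tau\to S_\tau(F)$ is a homomorphism with finite kernel and open image, by Lemma \ref{lem:snt}. I must show it carries the synchronized Haar measure on $S(F)_\tau$ to the restriction of that on $S_\tau(F)$, locally near the identity. To do this I compare both measures via $1-\tau$ at the Lie algebra level. The measure on $S(F)_\tau$ is built as $dt/(dt/ds)$, and near the identity this is just the top form obtained from the exact sequence $0\to\mathrm{Lie}(S^\tau)\to\mathrm{Lie}(S)\to\mathrm{Lie}(S)\to\mathrm{Lie}(S_\tau)\to0$. This is exactly the form that was used to define the measure on $S_\tau(F)$, and the differential of $S(F)_\tau\to S_\tau(F)$ is the identity on $\mathrm{Lie}(S_\tau)$. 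The Weyl group step goes as before, using that $\Omega_F(S,G_z)^\tau$ maps to $\Omega(S,G_z)(F)^\tau$, together with the injectivity of Lemma \ref{lem:snt}(3).

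The main obstacle is this Lie-algebra-level verification that the two constructions of the Haar measure on $S(F)_\tau$ (via the group-level sequence) and on $S_\tau(F)$ (via top forms) agree locally. I would settle it by writing $\omega_S=\omega_{S^\tau}\wedge\omega'$ and pushing $\omega'$ forward along $1-\tau$; the fibers of this map have finite index and do not affect the local density. The factors $|\cdot|_F$ in the pairs $(\omega,dx)$ cancel symmetrically.
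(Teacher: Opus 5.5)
Your proposal is correct in its main line. The reduction to the tori is the same as in the paper. The difference is in how you prove that $d\bar s$ on $S(F)_\tau$ is the pull-back of $d\tilde s$ on $S_\tau(F)$.

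You argue locally. Since $S(F)_\tau\to S_\tau(F)$ has finite kernel and open image, being a pull-back is equivalent to agreeing near the identity. Near the identity, the group-level construction $dt/(dt/ds)$ is computed by the form coming from the four-term Lie algebra sequence, and the differential of the map is the identity of $\mathrm{Lie}(S_\tau)$.

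The paper argues globally. It integrates a test function $f$ on $S(F)$ in two ways, applying the principle $dc|_{B(F)/A(F)}=db/da$ to the sequences $1\to S/S^\tau\to S\to S_\tau\to 1$ and $1\to S^\tau\to S\to S/S^\tau\to 1$. It then identifies the finite set $[S/S^\tau](F)/(S(F)/S^\tau(F))$ that appears as the kernel of $S(F)_\tau\to S_\tau(F)$, via the snake lemma. This yields $\int_{S(F)_\tau}\bar f\,d\bar s=\int_{S_\tau(F)}\sum_\gamma \bar f(\gamma\tilde s)\,d\tilde s$ directly.

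Your route is shorter, and it shows transparently that the four-term identification is exactly what $dt/(dt/ds)$ computes. The paper's route stays at the level of integration formulas, needs no local charts, and exhibits the finite kernel explicitly, at the price of handling the non-surjectivity of $S(F)\to S_\tau(F)$. Both rest on the same compatibility of form-measures in exact sequences.

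One sentence in your $H$-side paragraph is false, although you do not need it. The identification of top forms given by the four-term sequence is \emph{not} the one induced by the differential of the isogeny $S^\tau\to S\to S_\tau$. Since $\tau$ has finite order, $\mathfrak{s}=\mathfrak{s}^\tau\oplus(1-\tau)\mathfrak{s}$. Running your own recipe (write $\omega_S=\omega_{S^\tau}\wedge\omega'$ and push $\omega'$ forward along $1-\tau$) shows that the two identifications differ by $\pm\det(1-\tau\mid(1-\tau)\mathfrak{s})$. For $\tau$ swapping the two factors of $\mathbb{G}_m^2$ this factor is $\pm 2$.

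So if the measure on $S^\tau(F)$ were obtained from $S^H(F)$ through the isogeny, the four-term sequence would then put on $S_\tau(F)$ a measure differing from the admissible transport by $|\det(1-\tau\mid(1-\tau)\mathfrak{s})|_F$. The $H$-side assertion would then fail.

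The synchronization must be read the way you first read it: transport from $S^H(F)$ to $S_\tau(F)$ by the admissible relation, then pass to $S^\tau(F)$ by the four-term identification. This is also how the paper's proof uses it; it simply notes that the measures on $S^H(F)$ and $S_\tau(F)$ are compatible under $S^H\to S_\tau$. Under this reading there is nothing to check, so drop the isogeny justification.
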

\begin{proof}
  Consider first the map $S^H_{G\tx{-sr}}(F)/\Omega(S^H,H)(F) \to S^\natural_{\tau,\tx{sr}}(F)/\Omega(S,G_z)(F)^\tau$. The space $S^\natural_\tau(F)$ is a torsor under $S_\tau(F)$ and the map $S^H(F) \to S^\natural_\tau(F)$ is given by an admissible isomorphism $S^H \to S_\tau$ and the identification $S_\tau \to S^\natural_\tau$ via the canonical element $a_\tau \in S^\natural_\tau(F)$. Therefore we must show that the measures on $S^H(F)/\Omega(S^H,H)(F)$ and $S_\tau(F)/\Omega(S,G_z)(F)^\tau$ are compatible under pull-back. But both measures pull-back to the given measures on $S^H(F)$ and $S_\tau(F)$, which are compatible under the admissible isomorphism $S^H \to S_\tau$.

  Consider now the map $S^\natural(F)_\tau/\Omega_F(S,G_z)^\tau \to S^\natural_\tau(F)/\Omega(S,G_z)(F)^\tau$. Since this map is equivariant under $S(F)_\tau$, and $S^\natural(F)_\tau$ is a torsor for $S(F)_\tau$, we reduce to considering $S(F)_\tau/\Omega_F(S,G_z)^\tau \to S_\tau(F)/\Omega(S,G_z)(F)^\tau$. The measures on both spaces pull back to the chosen Haar measures on $S(F)_\tau$ and $S_\tau(F)$, respectively. It is thus enough to show that the measure $d\bar s$ on $S(F)_\tau$ is the pull-back of the measure $d \tilde s$ on $S_\tau(F)$, provided they are synchronized as described before the statement of the lemma.

  For this, let $f : S(F) \to \C$ be locally constant and compactly supported and let $\bar f(\bar s)=\int_{S(F)/S^\tau(F)}f((1-\tau)t \cdot \bar s)dt/ds$, where $dt$ and $ds$ are fixed Haar measures on $S(F)$ and $S^\tau(F)$, respectively.  The measure $d\bar s$ on $S(F)_\tau$ is defined so that 
  \[ \int_{S(F)_\tau}\bar f(\bar s)d\bar s = \int_{S(F)}f(t)dt. \]

  We shall now use the fact that, if $1 \to A \to B \to C \to 1$ is an exact sequence of tori, and the measures on these tori arise from a fixed Haar measure on $F$ and volume forms $\omega_A,\omega_B,\omega_C$ that satisfy $\omega_B = \omega_A \otimes \omega_C$ under the canonical isomorphism $\Wedge^\bullet\tx{Lie}(B)^* = \Wedge^\bullet\tx{Lie}(A)^* \otimes \Wedge^\bullet\tx{Lie}(C)^*$, then $dc|_{B(F)/A(F)}=db/da$.

  We apply this to the bottom horizontal exact sequence of the diagram in the proof of Lemma \ref{lem:x-top} and see that 
  \[ \int_{S(F)}f(t)dt = \int_{S_\tau(F)} \int_{[S/S^\tau](F)} f((1-\tau)a \cdot \tilde s) dad\tilde s, \]
  where $d\tilde s$ is the measure on $S_\tau(F)$ and $da$ is the measure on $[S/S^\tau](F)$, all coming from the top forms on $S^\tau$, $S$, and $S_\tau$, synchronized via the exact sequence of their Lie algebras coming from the map $1- \tau$. 

  We apply the same principle to the vertical exact sequence in the same diagram and obtain 
  \begin{eqnarray*}
    \int_{[S/S^\tau](F)} f((1-\tau)a \cdot \tilde s)da&=& \sum_{\gamma \in \frac{[S/S^\tau](F)}{[S(F)/S^\tau(F)]}} \int_{S(F)/S^\tau(F)} f((1-\tau)\bar t \cdot \gamma \cdot \tilde s)d\bar t\\
    &=&\sum_{\gamma \in \frac{[S/S^\tau](F)}{[S(F)/S^\tau(F)]}} \bar f(\gamma \cdot \tilde s),   
  \end{eqnarray*}
  where $d\bar t=dt/ds$. Plugging this into the previous formula we obtain
  \[ \int_{S(F)}f(t)dt = \int_{S_\tau(F)} \sum_{\gamma \in [S/S^\tau](F)/[S(F)/S^\tau(F)]} \bar f(\gamma \cdot \tilde s) d\tilde s. \]
  Recalling that, by the snake lemma, $[S/S^\tau](F)/[S(F)/S^\tau(F)]$ is exactly the kernel of $S(F)_\tau \to S_\tau(F)$, we see that 
  \[ \int_{S_\tau(F)} \sum_{\gamma \in [S/S^\tau](F)/[S(F)/S^\tau(F)]} \bar f(\gamma \cdot \tilde s) d\tilde s \]
  is the integral of $\bar f$ with respect to the pull-back to $S(F)_\tau$ of the measure $d\tilde s$ of $S_\tau(F)$.
\end{proof}

  \begin{cor} \label{cor:x-meas}
      Let $f : \mf{X} \to \C$ be integrable. Then $\int_\mf{X} f(x)dx$ is equal to either of the expressions
      \[ \sum_{S^H} \int_{S^H_\tx{sr}(F)/\Omega(S^H,H)(F)} \sum_{[\tilde\delta]} f([[\gamma]],[\tilde\delta])d\gamma,\]
      and
      \[ \sum_{S^\natural} \int_{S^\natural_\tx{sr}(F)_\tau/\Omega_F(S,G_z)^\tau} \sum_{[[\gamma]]} f([[\gamma]],[\tilde\delta])d\tilde\delta. \]
      In the first expression, $S^H$ runs over the set of stable classes of maximal tori of $H$ and $[\tilde\delta]$ runs over the set of $G_z(F)$-conjugacy classes of elements of $[G \rtimes a]_z^\tx{sr}(F)$ that are related to $\gamma$. 

      In the second expression, $S^\natural$ runs over the set of $G_z(F)$-conjugacy classes of Cartan subspaces of $[G \rtimes a]_z$ and $[[\gamma]]$ runs over the set of stable conjugacy classes of norms of $\tilde\delta$.
  \end{cor}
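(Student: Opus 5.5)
The plan is to combine the definition of the measure on $\mf{X}$ as a pull-back with Lemma \ref{lem:x-maes}, which says that both top maps of Diagram \eqref{eq:diagx} are measure-compatible with the bottom space $Y:=[G \rtimes a]_z^\tx{sr}(F)/\tx{st-conj}$. By definition, $dx$ is the pull-back of the measure $dy$ on $Y$ along the covering $p:\mf{X} \to Y$, so
\[ \int_\mf{X} f(x)\,dx = \int_Y \sum_{x \mapsto y} f(x)\,dy. \]
The key observation is that $p$ factors in two ways: through $q_H : \mf{X} \to H_{G\tx{-sr}}(F)/\tx{st-conj}$ followed by the bottom-left map $r_H$, and through $q_G:\mf{X}\to[G\rtimes a]_z^\tx{sr}(F)/G_z(F)$ followed by the bottom-right map $r_G$.

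Take the $H$-side first. By Lemma \ref{lem:x-maes}, the measure $d\gamma$ on $\coprod_{S^H} S^H(F)/\Omega(S^H,H)(F)$ is the pull-back of $dy$ under $r_H$. Since pull-backs of measures along finite coverings compose, $dx$ is also the pull-back of $d\gamma$ along $q_H$. Equivalently, the fiber sum over $p^{-1}(y)$ splits as a sum over $\gamma \in r_H^{-1}(y)$ of sums over $q_H^{-1}(\gamma)$, and then Lemma \ref{lem:x-maes} converts $\int_Y\sum_{\gamma\mapsto y}(\cdot)\,dy$ into $\int(\cdot)\,d\gamma$. Because the diagram is Cartesian, the fiber $q_H^{-1}([[\gamma]])$ is identified with the fiber of $r_G$ over the image of $[[\gamma]]$. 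That fiber is the set of $G_z(F)$-classes $[\tilde\delta]$ having $\gamma$ as a norm, and this yields the first expression. The strongly regular locus $S^H_\tx{sr}$ and the $G$-strongly regular locus differ only by a null set (or $f$ vanishes off the image), so the domain may be written as in the statement. Tori $S^H$ not related to any $S^\natural$ contribute an empty inner sum.

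The $G$-side is symmetric. Lemma \ref{lem:x-maes} gives that $d\tilde\delta$ on $\coprod_{S^\natural}S^\natural(F)_\tau/\Omega_F(S,G_z)^\tau$ is the pull-back of $dy$ under $r_G$. The fibers of $q_G$ are the stable classes $[[\gamma]]$ of norms of $\tilde\delta$, which produces the second expression.

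The only point requiring care is the transitivity of pull-back measures, i.e.\ that the composite of finite coverings with open image is again such a covering and that fiber counting is multiplicative. This follows directly from the description in the preceding Remark together with Lemma \ref{lem:x-top}. A second point to check is that the measure identification in Lemma \ref{lem:x-maes} is on each component, so that we may sum over components; this is immediate because the components are disjoint and open. Integrability of the intermediate functions follows from the integrability of $f$ together with the finiteness of the fibers.
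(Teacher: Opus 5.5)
Your argument is correct and is essentially the paper's proof. The paper restricts to the open-and-closed pieces $\mf{X}'$ of $\mf{X}$ lying over a pair $(S^H,S^\natural)$ and identifies the two expressions as integrals of $f$ against the pull-backs to $\mf{X}'$ of the measures on the two middle spaces. It then concludes by Lemma \ref{lem:x-maes} and transitivity of pull-back, exactly as you do globally via the Cartesian fiber identification. One small wording point: Lemma \ref{lem:x-maes} concerns the two lower diagonal maps of Diagram \eqref{eq:diagx}, not the top ones, which is in fact how you use it later in your argument.
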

  \begin{proof}
    The topology of $\mf{X}$ is defined so that for each $S^H \subset H$ and $S^\natural \subset [G \rtimes a]_z$, the preimage $\mf{X}'$ of $S^H_\tx{sr}(F)/\Omega(S^H,H)(F) \times S^\natural_\tx{sr}(F)_\tau/\Omega_F(S,G_z)^\tau$ in $\mf{X}$ is open and closed. Therefore 
    $\int_\mf{X}f(x)dx = \sum_{\mf{X}'}\int_{\mf{X}'}f(x)dx$. The space $\mf{X}'$ sits in the Cartesian diagram of topological spaces 
    \[ 
  \xymatrix{
  &\mf{X}'\ar[dl]\ar[dr]&\\
  S^H_{G\tx{-sr}}(F)/\Omega(S^H,H)(F)\ar[dr]&&S^\natural_\tx{sr}(F)_\tau/\Omega_F(S,G_z)^\tau\ar[dl]\\
  &S^\natural_{\tau,\tx{sr}}(F)/\Omega(S,G_z)(F)^\tau
  }\] 
    and it is enough to show that $\int_{\mf{X}'}f(x)dx$ is equal to 
    \[ \int_{S^H_\tx{sr}(F)/\Omega(S^H,H)(F)} \sum_{[\tilde\delta]} f([[\gamma]],[\tilde\delta])d\gamma, \]
    where the sum now runs over those elements of $S^\natural_\tx{sr}(F)_\tau/\Omega_F(S,G_z)^\tau$ which have $\gamma$ as a norm, and is furthermore equal to 
    \[ \int_{S^\natural_\tx{sr}(F)_\tau/\Omega_F(S,G_z)^\tau} \sum_{[[\gamma]]} f([[\gamma]],[\tilde\delta])d\tilde\delta \]
    where the sum now runs over those elements of $S^H_\tx{sr}(F)/\Omega(S^H,H)(F)$ which  are norms of $\tilde\delta$. 

    These two expressions are the integrals of $f$ with respect to the two measures on $\mf{X}'$ obtained by pulling back the measures on $S^H_{G\tx{-sr}}(F)/\Omega(S^H,H)(F)$ and $S^\natural_\tx{sr}(F)_\tau/\Omega_F(S,G_z)^\tau$, respectively, so the claim reduces to Lemma \ref{lem:x-maes}.
  \end{proof}

\subsection{Twisted endoscopic lifting of distributions}

We continue with an arbitrary local field $F$ of characteristic zero. The goal of this subsection is to prove a twisted version of \cite[Lemma 2.11.3]{DPR}.

Let $\Delta_{KS}''$ denote the transfer factor $\Delta_{KS}$ of \cite[\S4.11,\S5.5]{KalLLCD} but with the contribution of $\Delta_{IV}$ inverted.

\begin{lem} \label{lem:twistlift}
  Let $d_\phi$ be a genuine stable distribution on $H_1(F)$ that is represented by the locally integrable genuine function $\phi$ on $H_1(F)$ and a fixed Haar measure $d\gamma$ on $H(F)$. The distribution $\tilde f \mapsto d_\phi(f_1)$ on $[G \rtimes a]_z(F)$ is represented by the locally integrable function 
  \[ \tilde\delta \mapsto \sum_\gamma \Delta_{KS}''(\gamma_1,\tilde\delta)\phi(\gamma_1) \]
  and a Haar measure $d\tilde\delta$ on $[G \rtimes a]_z(F)$. Here $d\gamma$ and $d\tilde\delta$ are used in the transfer map $\tilde f \mapsto f_1$, the sum runs over the set of strongly regular semi-simple elements $\gamma \in H(F)$ up to stable conjugacy, and $\gamma_1 \in H_1(F)$ is an arbitrary lift of $\gamma$.
\end{lem}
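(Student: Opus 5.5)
The plan is to unfold the definition of $d_\phi(f_1)$ by the Weyl integration formula on $H_1(F)$. Then I rewrite the resulting integral over stable classes as an integral over the space $\mf{X}$ of related classes. Finally I refold it, via the twisted Weyl integration formula of Henniart--Lemaire \cite{HL17}, into an integral over $[G \rtimes a]_z(F)$ against $\tilde f$. This is the twisted analogue of the argument for \cite[Lemma 2.11.3]{DPR}.

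First, since $\phi$ is genuine, stable and locally integrable, I write
\[ d_\phi(f_1)=\sum_{S^H}|\Omega(S^H,H)(F)|^{-1}\int_{S^H(F)}|D^H(\gamma)|\,\phi(\gamma_1)\,SO_{\gamma_1}(f_1)\,d\gamma, \]
where $S^H$ runs over stable classes of maximal tori. The product $\phi(\gamma_1)SO_{\gamma_1}(f_1)$ is independent of the lift $\gamma_1$, because both factors transform by the same character of $Z_1$ (with the opposite sign convention, which is absorbed into the $\Delta_{IV}$-inversion later). The integrand is supported on $G$-strongly regular $\gamma$ up to measure zero: tori not related to any Cartan subspace contribute nothing, since $f_1$ is a transfer and its stable orbital integrals vanish off norms.

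Next, I insert the transfer identity $SO_{\gamma_1}(f_1)=\sum_{[\tilde\delta]}\Delta_{KS}(\gamma_1,\tilde\delta)O_{\tilde\delta}(\tilde f)$ from \cite[Lemma 4.12.1]{KalLLCD}, written with normalized orbital integrals. The integrand is then a function on $\mf{X}$, and I apply Corollary \ref{cor:x-meas} to switch from the first expression, an integral over $S^H$ summed over $[\tilde\delta]$, to the second, an integral over $S^\natural_\tx{sr}(F)_\tau/\Omega_F(S,G_z)^\tau$ summed over the norms $[[\gamma]]$ of $\tilde\delta$. The measures are synchronized as in Lemma \ref{lem:x-maes}, using the Haar measures $d\gamma$ and $d\tilde\delta$ that enter the normalization of transfer.

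The main obstacle is the bookkeeping of the Weyl discriminant factors and the $\Delta_{IV}$ term. With normalized orbital integrals $|D^H(\gamma)|^{1/2}SO_\gamma$ and $|D^{\tilde G}(\tilde\delta)|^{1/2}O_{\tilde\delta}$, the two discriminants appear. I expect to use the definition of $\Delta_{IV}$ in \cite{KS99} as the quotient $|D_{\tilde G}(\tilde\delta)|^{1/2}/|D_H(\gamma)|^{1/2}$; inverting it (the passage to $\Delta''_{KS}$) converts the factor $|D^H(\gamma)|$ into $|D^{\tilde G}(\tilde\delta)|$, which is exactly the Jacobian of the twisted Weyl formula on $S^\natural$. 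A careful check is needed that the twisted discriminant $\det(1-\tx{Ad}(\tilde\delta)|\mf{g}/\mf{s}^\tau)$ matches the one used there, including the contribution from $\mf{s}/\mf{s}^\tau$, consistent with the measures on $S(F)_\tau$ versus $S^\tau(F)$.

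Finally, the twisted Weyl integration formula \cite{HL17}, in the form \eqref{eq:rtc1} with measures on $S^\natural(F)_\tau$, turns $\sum_{S^\natural}|\Omega_F(S,G_z)^\tau|^{-1}\int |D(\tilde\delta)|\,O_{\tilde\delta}(\tilde f)\,\Psi(\tilde\delta)\,d\tilde\delta$ into $\int_{[G\rtimes a]_z(F)}\tilde f\,\Psi$ with $\Psi(\tilde\delta)=\sum_\gamma\Delta''_{KS}(\gamma_1,\tilde\delta)\phi(\gamma_1)$. Local integrability of $\Psi$ then follows by running the same identity with $|\tilde f|$ and $|\phi|$, since the transfer factors are bounded.
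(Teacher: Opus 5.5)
Your argument is the paper's. The steps match one for one: the stable Weyl integration formula on $H(F)$ \cite[Remark 2.10.3]{DPR}, insertion of the matching identity of \cite[Lemma 4.12.1]{KalLLCD}, reindexing over $\mf{X}$ via Corollary \ref{cor:x-meas}, and the twisted Weyl integration formula \cite[Corollaire 7.3.7]{HL17}. The last step is the identity $|D_{H/S^H}(\gamma)|\,\Delta_{KS}(\gamma_1,\tilde\delta)=|D_{[G\rtimes a]_z/S}(\tilde\delta)|\,\Delta''_{KS}(\gamma_1,\tilde\delta)$, which holds because $\Delta''_{KS}=\Delta_{KS}\Delta_{IV}^{-2}$. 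The check you flag about $\mf{s}/\mf{s}^\tau$ is a fair one. The paper uses $\mf{g}/\mf{s}$ both in the twisted Weyl formula and in $\Delta_{IV}$ (cf. the proof of Lemma \ref{lem:d24}), so no discrepancy arises in its conventions.

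Three side remarks need correcting.
\begin{enumerate}
\item The matching identity with $\Delta_{KS}$ is for \emph{unnormalized} orbital integrals. The factor $\Delta_{IV}$ is itself the discriminant normalization, and the paper says explicitly that it does not normalize orbital integrals. If you also normalized them, you would end up with $\Delta_{KS}\Delta_{IV}^{-1}$ instead of $\Delta''_{KS}$. Your final bookkeeping is right only in the unnormalized convention.
\item $\phi$ and $f_1$ transform under $Z_1(F)$ by mutually inverse characters. That is why $\phi\cdot f_1$ and $\Delta''_{KS}(\gamma_1,\tilde\delta)\phi(\gamma_1)$ are independent of the lift. $\Delta_{IV}$ plays no role in this.
\item Your closing justification of local integrability does not work. $\Delta''_{KS}$ contains the factor $\Delta_{IV}^{-1}=|D_{H/S^H}(\gamma)|^{1/2}|D_{[G\rtimes a]_z/S}(\tilde\delta)|^{-1/2}$, which is in general unbounded near the singular set, so ``transfer factors are bounded'' is false. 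A genuine argument would have to use the exact cancellation between $\Delta_{IV}$ and the discriminants, together with local bounds on $|D_{[G\rtimes a]_z/S}(\tilde\delta)|^{1/2}O_{\tilde\delta}(|\tilde f|)$. The paper's proof does not address local integrability at all, so this is an addition on your part rather than a divergence from it.
\end{enumerate}
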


\begin{cor} \label{cor:main1}
  Equation \eqref{eq:main}, and hence Theorem \ref{thm:main}, are equivalent to the identity 
  \begin{equation} \label{eq:main1}
    e(G_z)\sum_{\substack{\pi \in \Pi_\phi(G_z)\\\pi\circ a\cong\pi}}  \tx{tr}(\pi \boxtimes \rho^\vee)^\tx{can}(\tilde\delta,\tilde s^{-1}) = \sum_{\gamma \in H(\R)^\tx{sr}/\tx{st}} \Delta_\tx{KS}''(\gamma_1,\tilde\delta) S\Theta_{\varphi_1}(\gamma_1)
  \end{equation}
  of functions of $\tilde\delta \in [G \rtimes a]_z(F)_\tx{sr}$.
\end{cor}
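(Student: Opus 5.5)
Since both sides of \eqref{eq:main} are given by integrating against locally integrable functions, my plan is to derive Corollary \ref{cor:main1} directly from Lemma \ref{lem:twistlift} together with the existence of character functions.

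For the left-hand side, each $\pi \in \Pi_\phi(G_z)$ with $\pi\circ a \cong \pi$ carries its canonical extension. I will identify the twisted trace $\tx{tr}(\pi\boxtimes\rho^\vee)^\tx{can}(\tilde f,\tilde s^{-1})$ with a distribution on $[G\rtimes a]_z(\R)$. By the twisted analogue of Harish-Chandra's regularity theorem (Bouaziz, Renard), this distribution is represented by a locally integrable function on $[G\rtimes a]_z(\R)$ that is real analytic on the strongly regular set, with respect to the Haar measure $d\tilde\delta$ fixed for the transfer. Writing this function as $\tilde\delta\mapsto \tx{tr}(\pi\boxtimes\rho^\vee)^\tx{can}(\tilde\delta,\tilde s^{-1})$, the left-hand side of \eqref{eq:main} becomes
\[ \int_{[G\rtimes a]_z(\R)} \tilde f(\tilde\delta)\, e(G_z)\sum_\pi \tx{tr}(\pi\boxtimes\rho^\vee)^\tx{can}(\tilde\delta,\tilde s^{-1})\,d\tilde\delta. \]

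For the right-hand side, $S\Theta_{\varphi_1}$ is a stable genuine distribution on $H_1(\R)$. It transforms under the central subgroup $Z_1$ by the character dictated by the $z$-pair, and it is represented by its locally integrable Harish-Chandra character. Lemma \ref{lem:twistlift} then applies with $\phi = S\Theta_{\varphi_1}$. It shows that $\tilde f \mapsto S\Theta_{\varphi_1}(f_1)$ is represented, with respect to the same $d\tilde\delta$, by the function $\tilde\delta\mapsto\sum_\gamma \Delta''_\tx{KS}(\gamma_1,\tilde\delta) S\Theta_{\varphi_1}(\gamma_1)$.

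It follows that \eqref{eq:main} holds for all matching pairs $(\tilde f, f_1)$ exactly when the two representing functions define the same distribution. One direction is immediate: if \eqref{eq:main1} holds pointwise on the strongly regular set, then integrating against $\tilde f$ gives \eqref{eq:main}, since the strongly regular set has full measure. For the converse, note that every $\tilde f\in\mc{C}^\infty_c([G\rtimes a]_z(\R))$ has a transfer $f_1$. Hence \eqref{eq:main} gives equality of the two distributions on all test functions. Two locally integrable functions defining the same distribution agree almost everywhere. Both functions are continuous on the open dense strongly regular set, the right side because the transfer factors and $S\Theta_{\varphi_1}$ are smooth there and the sum over $\gamma$ is locally finite by Lemma \ref{lem:x-top}, so they agree everywhere on it.

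Finally, Theorem \ref{thm:main} is equivalent to \eqref{eq:main} by the reduction recalled after its statement (\cite[\S7.2]{KalLLCD}). The only real point to check is the continuity of the right-hand function on the regular set, and the covering description of $\mf{X}$ handles this.
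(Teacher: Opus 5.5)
Your proposal is correct and is essentially the paper's argument. The paper gives no separate proof and treats the corollary as immediate from Lemma \ref{lem:twistlift}, combined with the fact (Bouaziz) that the twisted characters on the left are locally integrable and analytic on the strongly regular set; you have simply made explicit the converse direction, via existence of transfer and continuity of both sides there. One slight overstatement: the paper recalls from \cite[\S7.2]{KalLLCD} only the implication from \eqref{eq:main} to Theorem \ref{thm:main}, not an equivalence, but that is the only direction ever used.
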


\begin{proof}[Proof of Lemma \ref{lem:twistlift}]
  The proof of this lemma follows the same argument as \cite[Lemma 2.11.3]{DPR}; unlike loc. cit. we are not normalizing orbital integrals and characters by the square root of the Weyl discriminant, so the formulas here a slightly different. By assumption we have $d_\phi(f_1)=\int_{H(F)}\phi(\gamma_1)f_1(\gamma_1)d\gamma$. According to the stable Weyl integration formula \cite[Remark 2.10.3]{DPR} we have
  \[ d_\phi(f_1)=\sum_{S^H} \int_{S^H_\tx{sr}(F)/\Omega(S^H,H)(F)} |D_{H/S^H}(\gamma)|SO_\gamma(\phi \cdot f_1)d\gamma, \]
  where $D_{H/S^H}(\gamma)=\det(1-\tx{Ad}(\gamma)|\tx{Lie}(H)/\tx{Lie}(S^H))$.

  The stable invariance of $\phi$ implies $SO_\gamma(\phi \cdot f_1)=\phi(\gamma_1)SO_{\gamma_1}(f_1)$ for an arbitrary lift $\gamma_1$ of $\gamma$. Since $f_1$ and $\tilde f$ are matching, we have 
  \[ SO_{\gamma_1}(f_1)=\sum_{\tilde\delta} \Delta_{KS}(\gamma_1,\tilde\delta)O_{\tilde\delta}(\tilde f), \]
  where $\tilde\delta$ runs over the set of $G_z(F)$-conjugacy classes of strongly regular elements of $[G \rtimes a]_z(F)$. In this identity we have chosen arbitrarily Haar measures $dh$ and $dg$ on $H(F)$ and $G_z(F)$ and have synchronized the measures on the centralizers of $\gamma$ and $\tilde\delta$ as explained before Lemma \ref{lem:x-maes}.
  
  Putting things  together we arrive at 
  \[ d_\phi(f_1)=\sum_{S^H} \int_{S^H_\tx{sr}(F)/\Omega(S^H,H)(F)} |D_{H/S^H}(\gamma)| \phi(\gamma_1)\sum_{\tilde\delta} \Delta_{KS}(\gamma_1,\tilde\delta)O_{\tilde\delta}(\tilde f)d\gamma. \]
  The next step is to reindex the sums and integral. The support of the integrand is the image of the map $\mf{X} \to H(F)_\tx{sr}/\tx{st}$ of Diagram \eqref{eq:diagx}, so Corollary \ref{cor:x-meas} allows us to rewrite this as 
  \[ d_\phi(f_1)=\sum_{S^\natural} \int_{S^\natural(F)_\tau/\Omega_F(S,G_z)^\tau} O_{\tilde\delta}(\tilde f)\sum_\gamma \phi(\gamma_1)\Delta_{KS}(\gamma_1,\tilde\delta)|D_{H/S^H}(\gamma)|d\tilde\delta,\]
  where now $S^\natural$ runs over the set of $G_z(F)$-conjugacy classes of Cartan subspaces of $[G \rtimes a]_z$ and $\gamma$ runs over the set of stable classes of strongly regular semi-simple elements of $H(F)$ that are related to the integration variable $\tilde\delta$ (we can drop this condition, because for those $\gamma$ who fail it the factor $\Delta_{KS}(\gamma_1,\tilde\delta)$ will be zero). Applying the twisted Weyl integration formula \cite[Corollaire 7.3.7]{HL17} we arrive at 
  \[ d_\phi(f_1)= \int_{[G \rtimes a]_z(F)} \tilde f(\tilde\delta) \sum_\gamma \phi(\gamma_1)\Delta_{KS}(\gamma_1,\tilde\delta)\frac{|D_{H/S^H}(\gamma)|}{|D_{[G \rtimes a]_z/S}(\tilde\delta)|}d\tilde\delta,  \]
  where
  \[ D_{[G \rtimes a]_z/S}(\tilde\delta) = \det(1-\tx{Ad}(\tilde\delta)|\tx{Lie}(G)/\tx{Lie}(S)).\]
\end{proof}

\subsection{Elliptic norms over $\R$} 

We now consider return to $F=\R$ and consider further the special case of elliptic elements.

\begin{lem} \label{lem:ellnorm}
  Let $\tilde\delta \in [G \rtimes a]_z(\R)$ be strongly regular and elliptic, in the sense of Definition \ref{dfn:ell}. 
  \begin{enumerate}
    \item There exists a norm $\gamma \in H(\R)$ of $\tilde\delta$.
    \item Any norm $\gamma \in H(\R)$ of $\tilde\delta$ is elliptic, i.e. $S^H/Z(H)$ is an anisotropic $\R$-torus.
    \item If $Z(G)(\R)$ is compact, $\tilde\delta'$ can be chosen to lie in $S'(\R) \rtimes a$.
  \end{enumerate}
\end{lem}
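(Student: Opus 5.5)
Throughout, let $S^\natural$ be the Cartan subspace of $[G \rtimes a]_z$ containing $\tilde\delta$, $S$ its double centralizer, and $\tau$ the automorphism of $S$ induced by $S^\natural$. By Lemma \ref{lem:ell} and ellipticity of $\tilde\delta$, $S/Z(G)$ is anisotropic, so $S$ is an elliptic maximal torus of $G_z$.

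\emph{Part (1).} Proposition \ref{pro:norm}(6) supplies the abstract norm $(S',\gamma')=\tx{Ad}(g)^{-1}(S,\gamma)$ with $\gamma \in S_\tau(\R)$. The task is to produce a maximal torus $S^H\subset H$ and an admissible isomorphism $S^H \to S'_a$ defined over $\R$; then $\gamma^H$ is the preimage of $\gamma'$. By \cite[Lemma 3.3.B]{KS99} (recalled in \S\ref{sub:admiso}), such a pair exists as soon as the $\Gamma$-action on $X^*(S'_a)$ transported to $X^*(\hat T^H)$ lies in $\Omega(\hat T^H,\hat H)\rtimes\Gamma$. Since $S/Z(G)$ is anisotropic, $\sigma$ acts on $X^*(S')/X^*(Z(G))$ by $-1$, hence also by $-1$ on the $a$-coinvariants modulo the centre. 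The element $-1$ is realized by $w_0\rtimes\sigma$ in the Weyl group of $\hat H$, because $H$ is a quasi-split form that shares its $\Gamma$-action on the centre with that of the $a$-twisted datum. So the transported action lies in $\Omega_H\rtimes\Gamma$, which is exactly the transfer criterion. Concretely, I would take $S^H$ to be an elliptic maximal torus of $H$ and $\eta$ the admissible isomorphism obtained by matching chambers. I expect the main obstacle to be checking that the resulting $\eta$ is $\Gamma$-equivariant onto $S'_a$ and not merely onto some twist of it. This should follow from the fact that $\sigma$ acts on both sides by $-1$ modulo the centres, together with the matching of the centre actions $Z(\hat H)^\Gamma \supset Z(\hat G)^{\Gamma,a}$.

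\emph{Part (2).} Any norm $\gamma^H$ has centralizer $S^H \cong S'_a \cong S_\tau$ over $\R$. Since $S/Z(G)$ is anisotropic, so is every quotient torus of it, in particular $S_\tau/Z(G)_\tau$. Transporting through the admissible isomorphism, which matches $Z(H)^\circ$ with the image of $Z(G)$ up to isogeny, gives that $S^H/Z(H)$ is anisotropic.

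\emph{Part (3).} We need $g$ as in Proposition \ref{pro:norm}(1) with $\tilde\delta'=g^{-1}\tilde\delta g \in S'(\R)\rtimes a$. Choose $g$ so that $S'$ is the $a$-admissible elliptic torus and $\tx{Ad}(g)$ is defined over $\R$; then $\delta'\in S'(\C)$ and its image in $S'_a$ is real. By Proposition \ref{pro:norm}(4) we may still replace $g$ by $sg$ with $s\in S(\C)$, which changes $\delta'$ by an element of $(1-a)S'$. It therefore suffices to show that every $\R$-point of $S'_a$ whose fibre is nonempty lifts to $S'(\R)$ modulo $(1-a)S'(\C)$. When $Z(G)(\R)$ is compact, $S'$ is anisotropic, so $S'(\R)$ is connected, and Lemma \ref{lem:elem} applied to $1\to (1-a)S' \to S' \to S'_a\to 1$ gives surjectivity of $S'(\R)\to S'_a(\R)$ on the identity component. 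Handling possible disconnectedness of $S'_a$ will require choosing the lift inside $\tilde\delta_g$'s class, using the quasi-central element of Remark \ref{rem:norm}(2).
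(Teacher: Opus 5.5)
You have a genuine gap in part (1). It is at the step where you assert that $-1$ (on the root span) is realized in $\Omega(T^H,H)\sigma_H$ ``because $H$ is a quasi-split form that shares its $\Gamma$-action on the centre''. Quasi-splitness does not give this. That condition says precisely that $H$ has an elliptic maximal torus, and an elliptic endoscopic group of a group with discrete series need not have one.

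For instance, take $G=\mathrm{Sp}_4$ and the quasi-split non-split $H=\mathrm{SO}(3,1)$. This $H$ is an elliptic endoscopic group, since its dual $\mathrm{SO}_4(\C)$ has finite centre. But $H$ is isogenous to $\mathrm{Res}_{\C/\R}\mathrm{SL}_2$, so it has no anisotropic maximal torus, and no elliptic element of $\mathrm{Sp}_4(\R)$ has a norm in it. So (1) cannot follow from the data you use.

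The missing input is that here $H$ arises from $\tilde s\in\tilde S_\varphi$ with $\varphi$ discrete. Hence $\varphi_1$ is a discrete parameter for $H_1$, and $H$ does have an elliptic maximal torus. Three further problems in part (1):
\begin{enumerate}
\item The inclusion $Z(\hat H)^\Gamma\supset Z(\hat G)^{\Gamma,a}$ you invoke for the centres is the automatic direction. What you need is ellipticity of the datum, which makes $Z_H/(Z_G)_a$ anisotropic.
\item \cite[Lemma 3.3.B]{KS99} goes from tori of $H$ to $a$-admissible tori of $G$. It is not the criterion you attribute to it.
\item The $\Gamma$-equivariance of your chamber-matching $\eta$ is left unproved.
\end{enumerate}

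Once you have an elliptic $S^H$, the paper avoids that equivariance check entirely. It proceeds as follows.
\begin{enumerate}
\item Transfer $S^H$ by \cite[Lemma 3.3.B]{KS99} to an $a$-admissible $S_0\subset G$ with $S^H\cong(S_0)_a$ over $\R$.
\item Since $S^H/Z_H$ and $Z_H/(Z_G)_a$ are both anisotropic, $(S_0/Z_G)_a$ is anisotropic. Hence $S_0^{a,\circ}$ is elliptic in $G^1$, and $S_0$ is elliptic by Proposition \ref{pro:aa}.
\item Corollary \ref{cor:aeonj} says any two $a$-admissible elliptic tori are $G^1(\R)$-conjugate. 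It therefore moves $S_0$ onto the norm torus $S'$, giving an admissible $\R$-isomorphism $S^H\to S'_a$ along which $\gamma'$ pulls back.
\end{enumerate}
This uniqueness of $a$-admissible elliptic tori is the idea your part (1) lacks.

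Your part (2) is the paper's argument. You only need that $(Z_G)_a$ maps into $Z_H$, so that $S^H/Z_H$ is a quotient of $(S'/Z_G)_a$. The claimed isogeny between $Z(H)^\circ$ and the image of $Z(G)$ is false in general, already for $G=\mathrm{SL}_2$, $H=\mathrm{U}(1)$.

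In part (3), the loose end you leave does not exist. $S'_a=S'/(1-a)S'$ is the quotient of a torus by a subtorus, hence a torus. It is anisotropic as a quotient of $S'$, so $S'_a(\R)$ is connected, and Lemma \ref{lem:elem} gives surjectivity of $S'(\R)\to S'_a(\R)$ outright. Modifying $g$ by a suitable element of $S'(\C)$ then puts $\tilde\delta'$ in $S'(\R)\rtimes a$, exactly as in the paper. No appeal to the quasi-central element is needed.
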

\begin{proof}
  (1) Since $\varphi_1$ is a discrete parameter for $H_1$ the group $H_1$ has elliptic maximal tori, and then so does $H$. Let $S^H \subset H$ be one such. According to \cite[Lemma 3.3.B]{KS99} it transfers to the quasi-split group $G$, i.e. there exists an $a$-admissible maximal torus $S_0 \subset G$ and an admissible isomorphism $S^H \to (S_0)_a$. 
  
  In \cite[\S5.1]{KS99} it is shown that the natural inclusion $Z_G \to S_0$ induces an injection $(Z_G)_a \to (S_0)_a$, whose composition with the inverse of the above isomorphism takes image in $Z_H$.

  Since $H$ is an elliptic endoscopic group of $G$, the map $(Z_G)_a \to Z_H$ is an isogeny. The ellipticity of $S^H$ now implies that $(S_0/Z_G)_a$ is anisotropic, from which we infer that $(S_0/Z_G)^a$ is anisotropic, and hence that $S_0^{a,\circ}$ is an elliptic maximal torus of $G^{a,\circ}=G^1$. 
  
  On the other hand, there exists an abstract norm $(S',\gamma')$ of $\tilde\delta$. Since the double centralizer $S$ of $\tilde\delta$ is elliptic due to Lemma \ref{lem:ell} and there is an $\R$-isomorphism $S' \to S$, we conclude that $S'$ is elliptic. Corollary \ref{cor:aeonj} now shows that $S_0$ and $S'$ are conjugate under $G^1(\R)$. We have thus exhibited an admissible isomorphism $S^H \to (S')_a$. Pulling back $\gamma'$ under this isomorphism we obtain the desired norm $\gamma \in H(\R)$ of $\gamma'$.

  (2) Lemma \ref{lem:ell} implies that $S$ is elliptic. Since $\tx{Ad}(g) : S' \to S$ is an isomorphism over $\R$ we see that $S'$ is also elliptic. Hence $(S'/Z(G))_a$ is anisotropic, and therefore so is $S^H/Z(H)$. 
  
  (3) Since $Z(G)(\R)$ is compact, $S'$ is anisotropic, and so is $S'_a$. Lemma \ref{lem:elem} implies that $S'(\R) \to S'_a(\R)$ is surjective. Since the preimage of $\gamma$ under the map $S'(\C) \to S'_a(\C)$ is the set of $S'(\C)$-conjugates of $\tilde\delta'$, we see that we may modify $g$ by right multiplication by $S'(\C)$ to ensure $\tilde\delta' \in S'(\R) \rtimes a$. 
\end{proof}

\subsection{Reduction to elliptic elements}

 Corollary \ref{cor:main1} reinterpreted Equation \eqref{eq:main}, which is the identity claimed in Theorem \ref{thm:main}, as Equation \eqref{eq:main1}. Our next task is to show that it is enough to prove this identity for elliptic elements $\tilde\delta$.

 Recalling Equation \eqref{eq:main}, we
define the distribution $\Theta$ by
\begin{equation}
  \label{thetadist}
\Theta(\tilde{f})  = e(G_z)\sum_{\substack{\pi \in \Pi_\varphi(G_z)\\\pi\circ a\cong\pi}} 
\mathrm{tr}(\pi \boxtimes \rho^{\vee})^{\mathrm{can}} (\tilde{f},
\tilde{s}^{-1}) - S\Theta_{\varphi_{1}}(f_{1})
\end{equation}
for test functions $\tilde{f} \in
C_{c}^{\infty}( [G \rtimes a ]_{z}(\mathbb{R}))$.  The goal of
this section  is to
prove that $\Theta = 0$ under the assumption that
$\Theta(\tilde{f}) = 0$ for all test functions $\tilde{f}$ supported on the
strongly regular elliptic subset of $[G \rtimes a
]_{z}(\mathbb{R)}$. 
Our strategy towards this goal is to show that the 
restriction of $\Theta$ to any connected component of $ [G \rtimes
  a ]_{z}(\mathbb{R})$ (in the manifold topology) satisfies this
property (\emph{cf.}
\cite[\S 6.4]{Mezo13}). Since int this section we are working with the single automorphism $a \in A^{[z]}$, we set for convenience $A=\<a\>$.

We wish to relate the connected components of $[G \rtimes
  a ]_{z}(\mathbb{R})$ to the representations in the definition of $\Theta$.
To this end, fix a summand $\pi_{0} \in \Pi_{\varphi}(G_{z})$ of
(\ref{thetadist}).   The condition $\pi_{0} \circ a \cong \pi_{0}$
means that   $\pi_{0}$ extends to an irreducible representation
$\tilde{\pi}_{0}$ of $\tilde{G}_{z}(\mathbb{R})$.  According to Lemma \ref{lem:duflo1}(3), there exists regular semi-simple elliptic $f_{0} \in
\mathfrak{g}_{z}^{*}(\mathbb{R})$ and  $\tilde{\tau}_{0} \in X(f_{0})$
such that $\tilde{\pi}_{0} = \pi_{f_{0},\tilde{\tau}_{0}}$.  In the
following lemma, we regard the character $\Theta_{\tilde{\pi}_{0}}$ as
a function defined on the regular subset of
$\tilde{G}_{z}(\mathbb{R})$ (\cite[\S 3.1]{Bouaziz87}).
\begin{pro}
\label{goodcomp}
Suppose $\tilde{\delta} \in [G \rtimes a]_{z}(\mathbb{R})$ is a
regular semi-simple element such that
$\Theta_{\tilde{\pi}_{0}}(\tilde{\delta}) \neq 0$.  Then $\tilde{\delta}$
belongs to a connected component of $[G \rtimes a]_{z}(\mathbb{R})$ which
contains a strongly regular elliptic element fixing a
$G_{z}(\mathbb{R})$-conjugate of $f_{0}$.
\end{pro}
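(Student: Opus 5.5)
We plan to use Bouaziz's general character formula from \cite{Bouaziz87}, not only its elliptic specialization in Theorem \ref{thm:bouaziz}. For an arbitrary regular semi-simple $\tilde\delta \in \tilde G_z(\R)$ it expresses $\Theta_{\pi_{f_0,\tilde\tau_0}}(\tilde\delta)$ through the Jordan decomposition. Write $\tilde\delta = s\cdot u$, or rather $\tilde\delta = s \exp(X)$, where $s$ is elliptic (it generates a relatively compact subgroup modulo the center) and $X$ lies in the Lie algebra of the centralizer of $s$. The character value is then a sum over $f' \in \Omega^{f_0}_s = \tx{Ad}(\tilde G_z(\R))^* f_0 \cap (\mf{g}^*)^s$ of terms $\varphi^s_{f_0,\tilde\tau_0}(f')$ times a Fourier transform of the orbit of $f'$ evaluated at $X$; this is the formula \eqref{eq:b1} with $s$ elliptic but not regular. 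The first step is therefore this: if $\Theta_{\tilde\pi_0}(\tilde\delta)\neq 0$, then $\Omega^{f_0}_s$ is non-empty. Hence, after replacing $\tilde\delta$ by a $G_z(\R)$-conjugate, and changing $f_0$ within its $\tilde G_z(\R)$-orbit, the elliptic part $s$ fixes a $G_z(\R)$-conjugate $f$ of $f_0$. Since $\tilde\pi_0$ restricts irreducibly to $G_z(\R)$, Lemma \ref{lem:duflo1}(2) lets us take $f$ in the $G_z(\R)$-orbit itself.

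Next we deform $\tilde\delta$ inside its connected component. The element $s$ lies in $\tilde S(\R)$, where $\tilde S = Z_{\tilde G_z}(f)$, and $s$ has the same image as $\tilde\delta$ in $\pi_0$. We need $s$ and $\tilde\delta = s\exp(X)$ to lie in the same connected component of $[G\rtimes a]_z(\R)$, and the path $t\mapsto s\exp(tX)$ provides this. The remaining point is to move from $s$ to a strongly regular elliptic element. We take $s t$ with $t$ in the identity component $S^{\tau}(\R)^\circ$ of the fixed points of $s$ on the elliptic torus $S=\tilde S\cap G_z$. This torus is connected and compact modulo center, so $st$ stays in the component of $s$ and still fixes $f$, because all of $\tilde S(\R)$ fixes $f$.

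We then show that for generic $t \in (S^s)^\circ(\R)$ the element $st$ induces a strongly regular qss automorphism of $G_z$. The element $s$ normalizes the Borel pair $(S,C_f)$, with $C_f$ determined by $R^+_f$, so $st$ is quasi-semisimple. Regularity of $st$ means that no root orbit $\mc{O}$ of $\tx{Ad}^*(s)$ has $\alpha((st)^{|\mc{O}|})=1$, and in addition $Z(st,G_z)$ must be abelian. Each such condition cuts out a proper closed subset of $(S^s)^\circ$, by Lemma \ref{lem:pn_detroot} together with the fact, from \cite[\S1.3]{KS99}, that the restricted roots are non-trivial on $(S^s)^\circ$. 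Ellipticity then follows from Lemma \ref{lem:ell}: the double centralizer of $st$ is $S$, which is elliptic.

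We expect the main obstacle to be the first step, namely extracting non-emptiness of $\Omega^{f_0}_s$ from Bouaziz's formula at non-elliptic regular elements. This requires careful handling of his Jordan decomposition, the relative center, and the compatibility of the conjugacy between $f_0$ and $f'$ with $G_z(\R)$ versus $\tilde G_z(\R)$. A secondary difficulty is strong regularity, as opposed to mere regularity, of the deformed element. For this we would first try the density argument on $(S^s)^\circ(\R)$. If roots of type $R_3$ make that argument fail, we would fall back on strongly regular elements being Zariski dense in the Cartan subspace $S\cdot s$, by \cite[Proposition 3.13.1]{HL17}.
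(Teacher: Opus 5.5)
Your argument is sound in outline, but it takes a genuinely different route from the paper. The paper uses no character formula at this step. It uses only that Duflo's construction makes $\tilde\pi_0=\tx{Ind}_{G_z(\R)^\circ\tilde S(\R)}^{\tilde G_z(\R)}(\tilde\tau_0\otimes\tilde\pi_f)$ an induction from an open subgroup of finite index whose coset representatives can be taken in $G_z(\R)$. Hence $\Theta_{\tilde\pi_0}$ is supported on $\bigcup_{h\in G_z(\R)} h\,G_z(\R)^\circ\tilde S(\R)\,h^{-1}$.

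The paper then uses $\pi_0\circ a\cong\pi_0$ to pick $\tilde\delta_0\in\tilde S(\R)$ over $a$, made regular by Bouaziz's Lemma 1.4.1. Combined with $S(\R)\subset G_z(\R)^\circ Z_{G_z}(\R)$, the part of $G_z(\R)^\circ\tilde S(\R)$ in the $a$-coset is $\coprod_j G_z(\R)^\circ g_j\tilde\delta_0$ with each $g_j$ central. So every such component contains $g_j\tilde\delta_0$, which fixes $f_0$, and conjugating by $h$ finishes.

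Your route localizes at the elliptic part $s$ of $\tilde\delta$. It proves the stronger, pointwise statement that $s$ itself fixes a $G_z(\R)$-conjugate of $f_0$. The price is that you need Bouaziz's formula at singular elliptic $s$, which the paper only mentions in passing.

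That formula is a germ statement: it holds for $X$ in a neighbourhood of $0$ in $\mathfrak{g}(s)$. The hyperbolic part $X$ of $\tilde\delta$ need not be small, so you should add one step. For every $t>0$ one has $\mathfrak{g}^{s\exp(tX)}=\mathfrak{g}^s\cap\ker(\tx{ad}\,X)$, so the path $t\mapsto s\exp(tX)$, $t\in(0,1]$, stays in the regular set. The analyticity of $\Theta_{\tilde\pi_0}$ there then carries the vanishing for small $t$ (when $\Omega^{f_0}_s=\emptyset$) to $t=1$.

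Your deformation to a strongly regular element is more careful than the paper, which only produces a regular element and calls it strongly regular. The cleanest form of your fallback is this. The set $S(\R)^\circ s$ is Zariski dense in $S\cdot s$, so it meets the non-empty Zariski-open strongly regular locus. Every $ts$ with $t\in S(\R)^\circ$ lies in $G_z(\R)^\circ s$ and still fixes $f$.
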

\begin{proof}
Let $\tilde{S}$ be the centralizer of $f_{0}$ in $\tilde{G}$ and $S =
\tilde{S} \cap G$. The group $S(\mathbb{R})$ is an elliptic maximal
torus in $G_{z}(\mathbb{R})$ and $f_{0} \in
\mathfrak{s}^{*}(\mathbb{R})$.  The assumption
  $\pi_0\circ a \cong \pi_0$, which we recall means that there exists $\tilde\delta_0 \in \tilde G_z(\R)$ mapping to $a$ such that $\pi_0\circ\tx{Ad}(\tilde\delta_0) \cong \pi_0$, implies that we can choose this $\tilde\delta_0$ such that $\tx{Ad}(\tilde\delta_0)f_{0} = f_0$. Let $B_0$ be the $\C$-Borel subgroup corresponding to the positive system $R_{f_0}^+$. Since 
$\tilde{\delta}_{0}$ fixes $f_{0}$, the pair is preserved by
$\tilde{\delta}_{0}$, implying that $\tilde\delta_0$ is
semi-simple. Thus, $\tx{Ad}(\tilde{\delta}_0)$ acts semisimply on the
derived subgroup of $G_{z}(\mathbb{R})$. An application of 
\cite[Lemma 1.4.1]{Bouaziz87} to the setting of the derived subgroup
produces  a regular element of
$S(\R)\tilde\delta_{0} \subset 
\tilde{S}(\R)$. Replacing $\tilde\delta_0$ by such an element we
obtain a 
regular semi-simple element $\tilde\delta_0 \in \tilde S(\R)$.  

Recall from Section \ref{sub:duflo} that
$$\tilde{\pi}_{0} = \pi_{f_{0}, \tilde{\tau}_{0}} =
\mathrm{ind}_{G_{z}(\mathbb{R})^{\circ} \,
  \tilde{S}(\mathbb{R})}^{\tilde{G}_{z}(\mathbb{R})}\, (\tilde{\tau}_{0}
\otimes \tilde{\pi}_{f}).$$
The finite quotient
$$\tilde{G}_{z}(\mathbb{R})/ ( G_{z}(\mathbb{R})^{\circ} \,
  \tilde{S}(\mathbb{R}))  \cong G_{z}(\mathbb{R})/ \left(
  G_{z}(\mathbb{R})^{\circ} \,   S(\mathbb{R}) \right) \cong 
G_{z}(\mathbb{R})/ \left( G_{z}(\mathbb{R})^{\circ} \,
Z_{G_{z}}(\mathbb{R}) \right) $$ 
(\cite[page 134]{Lan89}) has representatives in
$G_{z}(\mathbb{R})$, and  the induced character 
$\Theta_{\tilde{\pi}_{0}}$ has support in the conjugates of
$G_{z}(\mathbb{R})^{\circ} \, \tilde{S}(\mathbb{R})$ under these representatives.
In particular, since $\Theta_{\tilde{\pi}_{0}}(\tilde{\delta}) \neq
0$, there exists $h \in G_{z}(\mathbb{R})$ such that 
$$\tilde{\delta} \in h  ( G_{z}(\mathbb{R})^{\circ} \,
  \tilde{S}(\mathbb{R})) h^{-1}.$$
Now,
\begin{align*}
G_{z}(\mathbb{R})^{\circ}\, \tilde{S}(\mathbb{R})
&= G_{z}(\mathbb{R})^{\circ}\, S(\mathbb{R}) \langle \tilde{\delta}_{0}
\rangle \\
&= G_{z}(\mathbb{R})^{\circ} \, Z_{G_{z}}(\mathbb{R})  \langle
\tilde{\delta}_{0}  \rangle\\
& = \coprod_{j} G_{z}(\mathbb{R})^{\circ} \, g_{j}  \langle
\tilde{\delta}_{0}  \rangle
\end{align*}
where $g_{j}$ runs over a set of representatives of the finite group
$Z_{G_{z}}(\mathbb{R})/ (Z_{G_{z}}(\mathbb{R}) \cap G_z(\R)^{\circ})$.
It follows that
$$\tilde{\delta}  \in h (  G_{z}(\mathbb{R})^{\circ} \, g_{j} 
\tilde{\delta}_{0} )h^{-1}$$
for some $g_{j}$.  This set is a connected component of $[G \rtimes
  a]_{z}(\mathbb{R}) = G_{z}(\mathbb{R}) \tilde{\delta}_{0}$
which contains the strongly regular elliptic element $h g_{j}
\tilde{\delta}_{0} h^{-1}$ fixing  $\mathrm{Ad}^{*}(h)f_{0}$, as desired.
\end{proof}

\begin{lem}
  \label{srell}
  Any regular semi-simple elliptic element of $[G \rtimes
    a]_{z}(\mathbb{R})$ fixes a stable conjugate
  of $f_{0}$.
  \end{lem}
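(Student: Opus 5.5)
Let $\tilde\delta \in [G \rtimes a]_z(\R)$ be regular semi-simple elliptic. The plan is to produce an elliptic maximal torus $S_1$ with a Borel pair normalized by $\tilde\delta$, and then identify the resulting regular element of $\mf{s}_1^*$ with a stable conjugate of $f_0$. First, since $\tilde\delta$ is semi-simple it normalizes some $\C$-Borel pair $(S_1,C_1)$ of $G_z$, where $S_1$ is the double centralizer of $(G_z^{\tilde\delta})^\circ$ as in the discussion before Lemma \ref{lem:ell}. By ellipticity (Lemma \ref{lem:ell}, or its analogue for regular elements, which holds since the argument there only used that $(G^x)^\circ$ is a torus and that $G$ has discrete series), $S_1$ is an elliptic maximal $\R$-torus of $G_z$. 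Its centralizer is then also defined over $\R$, and $\tx{Ad}(\tilde\delta)$ induces an $\R$-automorphism $\tau_1$ of $S_1$ that preserves the positive system $R(S_1,C_1)$.

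Second, I transport $f_0$. Recall from the proof of Proposition \ref{goodcomp} that there is $\tilde\delta_0 \in [G\rtimes a]_z(\R)$ fixing $f_0$, with $S=Z_{G_z}(f_0)$ elliptic and $\tx{Ad}(\tilde\delta_0)$ preserving $(S,B_0)$, where $B_0$ corresponds to $R^+_{f_0}$. Both $(S,B_0,\tilde\delta_0)$ and $(S_1,C_1,\tilde\delta)$ are Borel pairs normalized by elements of the same coset $[G\rtimes a]_z$. Let $g \in G_z(\C)$ conjugate $(S,B_0)$ to $(S_1,C_1)$. Then $\tx{Ad}(g)\tilde\delta_0 g^{-1}$ and $\tilde\delta$ both normalize $(S_1,C_1)$ and lie in the same $G_z$-coset, so they differ by an element of $S_1(\C)$. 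Hence they induce the same automorphism of $S_1$. Set $f_1=\tx{Ad}^*(g)f_0 \in \mf{s}_1^*$. It is fixed by this automorphism, hence by $\tilde\delta$, because $S_1$ acts trivially on $\mf{s}_1^*$.

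Third, and this is the main obstacle, I must show that $f_1$ is an $\R$-point, i.e. that $\tx{Ad}(g)$ can be chosen to be an $\R$-isomorphism $S\to S_1$. This is exactly what makes $f_1$ a stable conjugate of $f_0$. Since both tori are elliptic, complex conjugation acts on $X^*(S)$ and on $X^*(S_1)$ modulo the center as $-1$ composed with the same central action. Consequently, any $G_z(\C)$-conjugation between elliptic maximal tori is defined over $\R$ on the derived part. More precisely, $g^{-1}\sigma(g)$ normalizes $S$, and its Weyl image is the element relating $\sigma$ to $-1$ on both sides. Ellipticity forces this image to be trivial after possibly modifying $C_1$ by the $\tau_1$-fixed Weyl element. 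I would use Proposition \ref{pro:aa} and Corollary \ref{cor:aeonj}, applied to the group $G_z^{\tilde\delta,\circ}$, to arrange this compatibly with the $\tilde\delta$-stability: the $\tau_1$-stable chambers correspond to chambers of the fixed-point group, and there elliptic tori are all stably conjugate by $\R$-isomorphisms respecting chambers. Once $\tx{Ad}(g)|_S$ is defined over $\R$, the functional $f_1$ lies in $\mf{s}_1^*(\R)$, is regular, is fixed by $\tilde\delta$, and is stably conjugate to $f_0$, which proves the lemma.
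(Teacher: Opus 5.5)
Your argument is correct and is essentially the paper's: bring $f_0$ onto the $\tilde\delta$-stable Borel pair $(S_1,C_1)$ by a conjugation, then compare $\tilde\delta$ with an element of the $a$-coset fixing $f_0$. Your comparison ($g\tilde\delta_0g^{-1}$ and $\tilde\delta$ differ by an element of $S_1(\C)$, hence induce the same automorphism of $S_1$) is slightly cleaner than the paper's. The paper compares two real elements, and so needs a real element of the coset fixing the Weyl-conjugated $f_0$.

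Your third step is much easier than you make it. On $X^*(S)_\Q$, complex conjugation acts by $-1$ on the span of the roots and by the canonical action coming from $G_z/G_{z,\tx{der}}$ on the part orthogonal to the coroots. So for elliptic $S$ it commutes with the whole absolute Weyl group, and the Weyl image of $g^{-1}\sigma(g)$ is trivial for every $g$ with $gSg^{-1}=S_1$. Hence $\tx{Ad}(g)\colon S\to S_1$ is defined over $\R$ on all of $S$, not just on its derived part. No modification of $C_1$ is needed. You should also drop the appeal to Proposition \ref{pro:aa} and Corollary \ref{cor:aeonj} for $G_z^{\tilde\delta,\circ}$: that group is a torus because $\tilde\delta$ is regular, and those results concern a pinning-preserving action on the quasi-split $G$.
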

  \begin{proof}
    Let $\tilde\delta$ be a regular semi-simple elliptic element of $[G \rtimes a]_z(\R)$. Let $S_1$ be the connected double centralizer of $\tilde\delta$. Then $\tilde\delta$ normalizes $S_1$, as well as a $\C$-Borel subgroup $B_1$ containing $S_1$.

Define $\tilde S_1$ as the product in $\tilde G_z$ of $S_1$ and the cyclic group generated by $\tilde\delta$. Then $\tilde S_1$ is a subgroup of $\tilde G_z$ (since $\tilde\delta$ normalizes $S_1$), defined over $\R$, and whose intersection with $G_z$ equals $S_1$ (by definition $S_1$ lies in that intersection, so enough to see the converse. But note that all elements of $S_1$, as well as the element $\tilde\delta$, normalize the pair $(S_1,B_1)$, so the group generated by them does, so its intersection with $G_z$ must lie in $S_1$). By construction $\tilde\delta$ lies in $\tilde S_1(\R)$. 

Let $\tilde S_0$ be the centralizer of $f_0$ in $\tilde G_z$. The intersections of $\tilde S_0$ and $\tilde S_1$ with $G_z$ are elliptic maximal tori, hence $G_z(\R)$-conjugate, and we conjugate $f_0$ by $G_z(\R)$ to arrange that they are equal, and call them $S$. We conjugate $f_0$ by an element of the $\C$-Weyl group of $S$ to ensure that the Borel $\C$-subgroup determined by the regular element $f_0$ is equal to $B_1$, and hence stabilized by $\tilde\delta$. Now let $\tilde\delta_0$ be any element of $\tilde S_0(\R)$ that projects onto a; it exists by our assumption. Then the difference $\tilde\delta \cdot (\tilde\delta_0)^{-1}$ is an element of $G_z(\R)$ that normalizes $S$ and $B_1$, hence lies in $S(\R)$. But this means that $\tilde\delta$ fixes $f_0$. 
  \end{proof}

  \begin{cor}
    \label{goodcomp2}
    Suppose  $\tilde{\delta}$  is a
    regular semi-simple element which belongs to a connected component of
    $[G \rtimes a]_{z}(\mathbb{R})$ which does not contain  regular
    elliptic elements.  Then
    $\Theta_{\tilde{\pi}}(\tilde{\delta}) = 0$ for all extensions
    $\tilde{\pi}$ of representations $\pi \in \Pi_{\varphi}(G_{z})$
    occurring in (\ref{thetadist}).
    \end{cor}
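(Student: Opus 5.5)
We argue by contraposition, using Proposition \ref{goodcomp}. Let $\pi \in \Pi_\varphi(G_z)$ occur in \eqref{thetadist}, so $\pi\circ a \cong \pi$, and let $\tilde\pi$ be any extension of $\pi$ to $\tilde G_z(\R)$. Since $a$ preserves the isomorphism class of $\pi$, we may take $\tilde G_z(\R)$ to mean the preimage of $\<a\>$, in line with the convention $A=\<a\>$ of this subsection.

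First we check that Proposition \ref{goodcomp} applies to $\tilde\pi$ in place of $\tilde\pi_0$. By Lemma \ref{lem:duflo1}(3), $\tilde\pi = \pi_{f,\tilde\tau}$ for some regular semi-simple elliptic $f \in \mf{g}_z^*(\R)$ and some $\tilde\tau \in X(f)$. The proof of Proposition \ref{goodcomp} uses only the following: $\tilde\pi_0$ is obtained from Duflo's construction applied to a pair $(f_0,\tilde\tau_0)$, and there is an element $\tilde\delta_0$ mapping to $a$ that fixes $f_0$. Both hold for $(f,\tilde\tau)$, because the second condition comes from $\pi\circ a\cong\pi$. Any other choice of extension differs from $\tilde\pi$ by a character of $\tilde G_z(\R)/G_z(\R)$, and is therefore also of this form.

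Now suppose $\Theta_{\tilde\pi}(\tilde\delta)\neq 0$. Proposition \ref{goodcomp} then shows that $\tilde\delta$ lies in a connected component of $[G\rtimes a]_z(\R)$ which contains a strongly regular elliptic element, namely $hg_j\tilde\delta_0h^{-1}$. Strongly regular elliptic elements are in particular regular elliptic. This contradicts the hypothesis that the component of $\tilde\delta$ contains no regular elliptic elements. Hence $\Theta_{\tilde\pi}(\tilde\delta)=0$.

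The main point requiring care is the claim that the element constructed in the proof of Proposition \ref{goodcomp} is genuinely elliptic in the sense of Definition \ref{dfn:ell}. To see this, note that $\tilde\delta_0$ was replaced by a regular element of $S(\R)\tilde\delta_0$ via \cite[Lemma 1.4.1]{Bouaziz87}. This element normalizes the elliptic torus $S$ and fixes $f_0$, so its double centralizer is $S$. Lemma \ref{lem:ell} then gives ellipticity, since $S/Z(G)$ is anisotropic. Regularity and membership in the component are preserved under conjugation by $h$ and multiplication by $g_j \in Z_{G_z}(\R)$. Lemma \ref{srell} does not enter this argument.
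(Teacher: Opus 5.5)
Your proof is correct and follows the paper's argument. You apply Proposition~\ref{goodcomp} with $\pi$ (and any extension $\tilde\pi$) in place of $\pi_0$, so that $\Theta_{\tilde\pi}(\tilde\delta)\neq 0$ would force the component of $\tilde\delta$ to contain a strongly regular elliptic element. Your separate check of ellipticity is redundant, since Proposition~\ref{goodcomp} already asserts it; likewise, the paper's extra remark that $if$ is stably conjugate to $if_0$ is not needed for the statement as given.
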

    \begin{proof}
    Let $if \in \mf{s}^*(\R)$ be the infinitesimal character of $\pi$. Proposition \ref{goodcomp} applied to $\pi$ in place of $\pi_0$ shows that if $\Theta_{\tilde\pi}(\tilde\delta) \neq 0$, then $\tilde\delta$ lies in a connected component of $[G \rtimes a]_z(\R)$ that contains an elliptic element $\tilde\delta_0$ which fixes a $G_z(\R)$-conjugate of $if$. As $\pi_{0}$ and $\pi$ belong to the same L-packet, the local Langlands correspondence dictates that $if$ is stably conjugate to $if_0$, therefore $\tilde\delta_0$ fixes a stable conjugate of $if_0$.
\end{proof}

Let us return to the strategy of showing that $\Theta$ vanishes by
considering its restriction to connected components of $[G \rtimes
a]_{z}(\mathbb{R})$.  Every connected component of $[G \rtimes
a]_{z}(\mathbb{R})$ may be written as $G_{z}(\mathbb{R})^{\circ}
\tilde{\delta}$ where $\tilde{\delta}$ is a strongly regular semi-simple element
in $[G \rtimes a]_{z}(\mathbb{R})$. In view of Corollary \ref{goodcomp2} we separate the connected
components into two kinds: 
those which contain strongly regular elliptic elements, and those which do not. 
Until Theorem \ref{thetazero}, we fix a connected component
$G_{z}(\mathbb{R})^{\circ} \tilde{\delta}$ containing strongly regular
elliptic elements. Until then, Lemma \ref{srell} allows us to
assume that $\tilde{\delta}$ is strongly 
regular elliptic and that there exists $f \in
\mathfrak{s}^{*}(\mathbb{R})$ in the stable conjugacy class of
$f_{0}$ such that 
\begin{equation}
\label{assumef}
\mathrm{Ad}^{*}(\tilde{\delta})f = f.
\end{equation}
The element $f$ is an elliptic and regular
infinitesimal character of some representation in
$\Pi_{\varphi}(G_{z})$.  As in Section \ref{sec:sqlp}, we denote the centralizer of 
$f$ in $G$ by $S$.
The maximal torus $S$ is preserved by
$\mathrm{Ad}(\tilde{\delta})$, and is equal to the double centralizer of  $\tilde{\delta}$.
By Lemma \ref{lem:ellnorm}, the element $\tilde{\delta}$ has a norm in
$H(\mathbb{R})$. We describe the norm in the notation of \S\ref{sub:norm}, i.e.  $g^{-1} \tilde{\delta} g = \tilde{\delta}' = \delta'
\rtimes a$, $\delta' \in S'$, $\gamma' = \delta' (1-a)S'$, $\gamma'
= \eta(\gamma_{1})$.

The proof of $\Theta$ vanishing on $G_z(\R)^\circ\tilde\delta_0$ relies on a twisted
version of Harish-Chandra's Uniqueness Theorem, due to Renard
(\cite[Theorem 15.1]{Ren97}). We will show that the restriction
$$\Theta_{|} = \Theta_{|G_{z}(\mathbb{R})^{\circ} \,\tilde{\delta}}$$
of $\Theta$ to $ G_{z}(\mathbb{R})^{\circ} \,\tilde{\delta}$ satisfies the
hypotheses of this theorem. Work of Bouaziz will
reduce this task to proving that $\Theta_{|}$ is an \emph{invariant
eigendistribution}.

By ``invariant'' we mean invariant under
conjugation by $G_{z}(\mathbb{R})^{\circ}$.  This is to say
$$\Theta_{|}(\tilde{f}^{y}) = \Theta_{|}(\tilde{f}), \quad \tilde{f} \in
C_{c}^{\infty}(G_{z}(\mathbb{R})^{\circ} \, \tilde{\delta}).$$
for all $y \in G_{z}(\mathbb{R})^{\circ}$, where
$$\tilde{f}^{y}(x) = \tilde{f}(y^{-1}xy).$$
The component $G_{z}(\mathbb{R})^{\circ} \,
\tilde{\delta}$ is preserved under conjugation by
$G_{z}(\mathbb{R})^{\circ}$ since $\tilde{\delta}$ normalizes $G_{z}(\mathbb{R})^{\circ}$.  
The invariance of $\Theta_{|}$ is easily verified by looking back to
the terms in 
(\ref{thetadist}).  The portion of $\mathrm{tr}(\pi \boxtimes
\rho^{\vee})^{\mathrm{can}} (\tilde{f}, \tilde{s}^{-1})$ which depends
on $\tilde{f}$ is the character value
$\mathrm{tr}(\tilde{\pi}(\tilde{f}))$, where 
$\tilde{\pi}$ is an extension of $\pi$ to $\tilde{G}_{z}(\mathbb{R})$.  Since characters are invariant, so too is
$\mathrm{tr}(\pi \boxtimes \rho^{\vee})^{\mathrm{can}}$.  The
invariance of the term $S\Theta_{\varphi_{1}}(f_{1})$ stems from the
defining property of $f_{1} \in C_{c}^{\infty}(H(\mathbb{R}))$.
Indeed, the function $f_{1}$ is defined by matching 
orbital integrals of $\tilde{f}$ (\cite[Lemma 4.12.1]{KalLLCD}).
Since orbital integrals are invariant, the function $f_{1}$ also matches
the orbital integrals of $\tilde{f}^{y}$, and  
$S\Theta_{\varphi_{1}}(f_{1})$ is insensitive to conjugation by $y$.

Having dispensed with the invariance of $\Theta_{|}$, we move to showing that
$\Theta_{|}$ is an eigendistribution.  Let $\mathcal{Z}(\mathfrak{g})$ be the
centre of the universal enveloping algebra of $\mathfrak{g}$.  The
algebra $\mathcal{Z}(\mathfrak{g})$ acts on
$C_{c}^{\infty}(G_{z}(\mathbb{R})^{\circ} \, \tilde{\delta})$ by
differential operators.  The algebra acts on $\Theta_{|}$ through the
transpose, that is
$$X\Theta_{|}(\tilde{f}) = \Theta_{|}(X^{\mathrm{tr}} \tilde{f}),
\quad \tilde{f} \in C_{c}^{\infty}(G_{z}(\mathbb{R})^{\circ} \,
\tilde{\delta}), \, X \in \mathcal{Z}(\mathfrak{g}),$$
where $X^{\mathrm{tr}}$ is defined \cite[X.4]{KnappSS}.
One says that $\Theta_{|}$ is an
eigendistribution with infinitesimal character $\Lambda \in
\mathfrak{s}^{*}$ if
$$X \Theta_{|} = \Lambda(\beta_{\mathfrak{g}}(X))\ \Theta_{|}, \quad X \in
\mathcal{Z}(\mathfrak{g}),$$ 
where $\beta_{\mathfrak{g}}: \mathcal{Z}(\mathfrak{g}) \rightarrow
\mathcal{S}(\mathfrak{s})^{\Omega(S,G)}$ is the Harish-Chandra
isomorphism onto the Weyl group invariants of the  symmetric algebra
of $\mathfrak{s}$.
We wish to prove that $\Theta_{|}$ is an eigendistribution with
infinitesimal character $if \in \mathfrak{s}^{*}$.  Weaving these
definitions into (\ref{thetadist}), it becomes apparent
that we need an understanding of the function in $C_{c}^{\infty}(H(\mathbb{R}))$
that matches $X\tilde{f}$ for $X \in \mathcal{Z}(\mathfrak{g})$.  As it
turns out, this matching function is of the form $X_{1}f_{1}$, where
$f_{1} \in C_{c}^{\infty}(H(\mathbb{R}))$ matches $\tilde{f}$, and
$X_{1} \in \mathcal{Z}(\mathfrak{h})$.

The definition of the map $X \mapsto X_{1}$ requires a digression.
It is a composition of six  algebra
homomorphisms,
\begin{align}
  \label{zedone}
  \nonumber
\mathcal{Z}(\mathfrak{g})
& \stackrel{\beta_{\mathfrak{g}}}{\longrightarrow}
\mathcal{S}(\mathfrak{s})^{\Omega(S,G)}
\stackrel{\mathrm{Ad}(g)^{-1}}{\longrightarrow}
\mathcal{S}(\mathfrak{s}')^{\Omega(S',G)} 
\stackrel{\phi}{\longrightarrow}
\mathcal{S}((\mathfrak{s}')^{a})^{\Omega(S',G)^{a}}\\
&\stackrel{\eta^{-1}}{\longrightarrow}
\mathcal{S}(\mathfrak{s}^{H})^{\Omega(S^{H},H)}
\stackrel{I_{-\mu^{*}}}{\longrightarrow}
\mathcal{S}(\mathfrak{s}^{H})^{\Omega(S^{H},H)} 
\stackrel{\beta_{\mathfrak{h}}^{-1}}{\longrightarrow}
\mathcal{Z}(\mathfrak{h}).
\end{align}
The first and last maps in this sequence are Harish-Chandra
isomorphisms.  The second map is induced from the $\mathbb{R}$-isomorphism
$\mathrm{Ad}(g)^{-1}: S \rightarrow S'$ of maximal tori
in the definition of the norm of $\tilde{\delta}$ in \S\ref{sub:norm}.  The
third map $\phi$ is induced from the projections of $\mathfrak{s}'$
onto $(\mathfrak{s}')^{a}$ and is defined in \cite[\S
  2.4]{Bouaziz87}.  The fourth map is induced from the admissible
isomorphism $\eta$ of \S\ref{sub:admiso} and the isomorphisms
$$\Omega(S',G)^{a} \cong \Omega(\hat{S}', \hat{G})^{a} \cong
\Omega(((\hat{S}')^{a})^{\circ},  (\hat{G}^{a})^{\circ}) \cong
\Omega(\hat{S}^{H},\hat{H}) \cong \Omega(S^{H}, H)$$
(\cite[\emph{p.} 14]{KS99}).

The fifth and final map
$I_{-\mu^{*}}$ is more elaborate.  Here, $\mu^{*} \in
(\mathfrak{s}^{H})^{*}$ is defined from the endoscopic embeddings $\xi:
{^L}H \rightarrow {^L}G$ and $\xi_{1}: {^L}G^{1} \rightarrow {^L}G$ of
\cite[\S 4.1]{KalLLCD} in the following manner. According to
\cite[Proposition 3.1.2]{She81}, the restrictions of these two
embeddings to $\mathbb{C}^{\times} \subset W_{\mathbb{R}}$ have the
form
$$\xi(1 \times z) = t(z) \times z, \mbox{ and } \xi_{1}(1 \times z) =
t_{1}(z) \times z, \quad z \in \mathbb{C}^{\times},$$ 
where $t$ and $t_{1}$ are characters of $\mathbb{C}^{\times}$ taking
values in the centres of $\hat{H}$ and $\hat{G}^{1}$ respectively.  
The centrality of $t_{1}(z)$ is equivalent to  $\alpha(t_{1}(z)) = 1$ for
all roots $R(\hat{S}_{a}', \hat{G}^{1})$.  This root system
is identical to the system $R_{\mathrm{res}}(\hat{S}',\hat{G})$ of
reduced roots.  Under the 
isomorphism $\eta$, the root system $R(\hat{S}^{H},\hat{H})$ may be
regarded as a subsystem of $R_{\mathrm{res}}(\hat{S}', \hat{G})$
\cite[Theorem 1.1.A]{KS99}.  In this way,
$\alpha(t_{1}(z)) = 1$ for all $\alpha \in R(\hat{S}^{H},\hat{H})$,
and this implies that $t_{1}(z) \in Z(\hat{H})$.  Since $Z(\hat{H})
\subset \hat{S}^{H}$ we may view the product $t t_{1}^{-1}$ as a
character with values in $\hat{S}^{H}$.  The linear form $\mu^{*} \in
X_{*}(\hat{S}^{H}) \otimes \mathbb{C} = 
X^{*}(S^{H}) \otimes \mathbb{C} = (\mathfrak{s}^{H})^{*}$ is
defined by the equation
\begin{equation}
\label{mustar}
tt_{1}^{-1}(z) = z^{\mu^{*}} \bar{z}^{\sigma_{S_{H}} \mu^{*}},
\quad z \in \mathbb{C}^{\times},
\end{equation}
where $\sigma_{S^{H}}$ is the action of $\Gamma$ on $\hat{S}^{H}$ in ${^L}S^{H}$.
As $tt_{1}^{-1}$ takes values in the centre of $\hat{H}$, the
functional $\mu^{*}$ is fixed by $\Omega(S^{H},H)$.  

We are now able to define $I_{-\mu^{*}}$. The linear map
 $\mathfrak{s}^{H} \rightarrow
\mathcal{S}(\mathfrak{s}^{H})$ defined by
$$Y \mapsto Y - \mu^{*}(Y)$$
extends to an algebra automorphism of $\mathcal{S}(\mathfrak{s}^{H})$
(\cite[Proposition 3.1]{KnappSS}). As  $\mu^{*}$ is
fixed by $\Omega(S^{H},H)$,  the automorphism of
$\mathcal{S}(\mathfrak{s}^{H})$ restricts to an automorphism
$I_{-\mu^{*}}$ of $\mathcal{S}(\mathfrak{s}^{H})^{\Omega(S^{H},H)}$.
This completes the definition of the homomorphism $X \mapsto X_{1}$
given in (\ref{zedone}). 

Although (\ref{zedone}) is  defined relative to the tori $S$, $S'$ and
$S^{H}$ determined by the strongly regular elliptic element
$\tilde{\delta}$ and its norm, any other strongly regular semi-simple (not
necessarily elliptic) element of
$[G \rtimes a]_{z}(\mathbb{R})$ and norm leads to the same map.
This can be seen from the facts that maximal tori are conjugate over the algebraic closure,  that
the central algebras $\mathcal{Z}(\mathfrak{g})$ and 
$\mathcal{Z}(\mathfrak{h})$ are pointwise fixed under conjugation over the algebraic closure, and that
the character $tt_{1}^{-1}$ takes values in any maximal torus of $\hat{H}$.  

The purpose of the map $X \mapsto X_{1}$ is to show
that $X\tilde{f}$ matches $X_{1}f_{1}$ when $\tilde{f}$ matches
$f_{1}$.  The geometric transfer identity \cite[Lemma
  4.12.1]{KalLLCD} which defines the matching  involves the
transfer factors $\Delta_{KS}$ of \cite[\S 4.10]{KalLLCD}. Therefore
one would expect the map $X \mapsto X_{1}$ to interact with these transfer
factors.  One such interaction will come from the linear functional
$\mu^{*}$ appearing in (\ref{zedone}).  This linear functional
appears in the  term $\Delta_{III}^{\mathrm{new}}$ of $\Delta_{KS}$ in the following
way.  The number $\Delta_{III}^{\mathrm{new}}(\gamma_{1},
\tilde{\delta})$ is defined 
through a pairing, and one of the terms in this pairing is a cocycle
$a_{S'} \in Z^{1}(W_{\mathbb{R}}, \hat{S}')$  (\cite[Corollary
  4.10.4]{KalLLCD}).  The the cocyle $a_{S'}$ 
determines an L-parameter $W_{\mathbb{R}} \rightarrow {^L}S'$ of the
elliptic tours $S'$.  Through the local Langlands correspondence the
L-parameter determines a character of $S'(\mathbb{R})$. The
value of the inverse of this character on an element $x^{*} \in
S'(\mathbb{R})$ is denoted by  $\langle x^{*}, [a_{S'}]^{-1} \rangle$
in \cite[Remark 4.10.1]{KalLLCD}.  The following lemma makes the
connection between this character and $\mu^{*}$.  Before stating the
lemma, we remark that $\Delta_{III}^{\mathrm{new}}$ depends on a
choice of $\chi$-data, but $\Delta_{KS}$ does not.  We may therefore
use based $\chi$-data (see the note after the statement of Lemma \ref{lem:semi2} for definition) in the proof of the lemma.
\begin{lem}
\label{neartildedelta}
Suppose $\tilde{\delta} \in [G \rtimes a]_{z}(\mathbb{R})$ is strongly
regular elliptic with norm $\gamma_{1} \in
H(\mathbb{R})$.  
Suppose further that $x \in S^{\tilde{\delta}}(\mathbb{R})$ such that
$x\tilde{\delta}$ is strongly regular elliptic.  Then
\begin{enumerate}
\item The strongly regular element $x \tilde{\delta}$ has norm
  $\gamma_{x} \gamma' \in 
  S'_{a}(\mathbb{R})$, where $\gamma_{x} \in S'_{a}(\mathbb{R})$ is the
  coset of $g^{-1}xg \in S'(\mathbb{R})$. 

\item Let $x^{*} = g^{-1} x g \in S'(\mathbb{R})$.  Then
  $$\Delta_{III}^{\mathrm{new}}(\eta^{-1}(\gamma_{x}) \gamma_{1}, 
  x\tilde{\delta}) = \langle x^{*}, [a_{S'}]^{-1} \rangle
  \  \Delta_{III}^{\mathrm{new}}(\gamma_{1},   \tilde{\delta}).$$
\item For based $\chi$-data of $R_{\mathrm{res}}(S', G)$, the
  differential of the character 
  $\langle \cdot, [a_{S'}]^{-1} 
  \rangle$ on $S'(\mathbb{R})$ is equal to $\rho_{G_\mathrm{res}} -
  \rho_{H}   - \mu^{*}$,  
where $\rho_{H}$ is the half-sum of the positive roots of $R(S^{H},
\hat{H})$ (determined by $\eta$) and $\rho_{G_{\mathrm{res}}}$ is the
half-sum of the positive roots of $R_{\mathrm{res}}(\hat{S}',\hat{G}) =
R(\hat{S}'_{a}, \hat{G}^{1})$. 
\end{enumerate}
\end{lem}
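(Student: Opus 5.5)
The plan is to verify the three parts in order, working throughout with the fixed $g$ from Proposition \ref{pro:norm} attached to $\tilde\delta$. The same $g$ will also serve $x\tilde\delta$, and this is what makes the comparison possible.

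\textbf{Part (1).} Since $x\in S^{\tilde\delta}(\R)$, $x$ commutes with $\tilde\delta$ and lies in $S$. It follows that $x\tilde\delta$ lies in the same Cartan subspace $S^\natural=S\cdot\tilde\delta$. By Proposition \ref{pro:norm}(6), the norm of $x\tilde\delta$ is therefore computed with the same $g$. Concretely,
\[ g^{-1}x\tilde\delta g=(g^{-1}xg)(\delta'\rtimes a)=x^*\delta'\rtimes a, \]
and its image in $S'_a$ is $\gamma_x\gamma'$. The element $x^*$ is real because $\mathrm{Ad}(g):S'\to S$ is defined over $\R$ (Proposition \ref{pro:norm}(3)). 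Pulling back through the same admissible isomorphism $\eta$ shows that $\eta^{-1}(\gamma_x)\gamma_1$ is a norm of $x\tilde\delta$.

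\textbf{Part (2).} I will recall the definition of $\Delta_{III}^{\mathrm{new}}$ in \cite[\S4.10]{KalLLCD}. It is a pairing between the class of the invariant $\mathrm{inv}((S',\gamma'),\tilde\delta)\in H^1(\R,S'\xrightarrow{1-a}S')$ (possibly rigidified via $z$) and the hypercocycle built from $a_{S'}$ and $s_\eta$ in the dual complex. By Proposition \ref{pro:norm}(7), the invariant is represented by $(t_\sigma^{-1},\delta')$ with $\delta'=g^{-1}\tilde\delta_g^{-1}\tilde\delta g$. Replacing $\tilde\delta$ by $x\tilde\delta$ while keeping $g$ fixed leaves $t_\sigma$ unchanged and replaces $\delta'$ by $x^*\delta'$.

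So the invariant changes by the image of $x^*\in S'(\R)=H^0$ under $H^0(\R,S')\to H^1(\R,S'\to S')$, coming from the degree-one inclusion of $S'$. The pairing is bilinear, and its restriction to this $H^0$ term is, by the construction in \cite[Remark 4.10.1]{KalLLCD}, exactly $\langle x^*,[a_{S'}]^{-1}\rangle$: the $s_\eta$-component pairs trivially with a class coming from the second term. This gives the stated identity. The step requiring care here is sign conventions, namely whether the character comes out as $[a_{S'}]$ or its inverse, and I will fix these by matching conventions with \cite[\S4.10]{KalLLCD}.

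\textbf{Part (3).} This is the main computational step. Using based $\chi$-data, $a_{S'}$ is described explicitly as in \cite[\S10]{SheTE1} and \cite[Corollary 4.10.4]{KalLLCD}. It is the $\hat S'$-valued cocycle measuring the difference between the two $L$-embeddings
\[ \xi\circ{}^L\eta_{S^H}\quad\text{and}\quad \xi_1\circ{}^L\jmath_{S'} \]
of ${}^LS'$-type tori into ${}^LG$, where each $L$-embedding is built from its own $\chi$-data. On $\C^\times\subset W_\R$, Lemma \ref{lem:semi2}'s formula gives ${}^L\jmath(z)=(z/\bar z)^{\rho}$, with the relevant $\rho$'s being $\rho_{G_{\mathrm{res}}}$ for the $\hat G^1$ side and $\rho_H$ for the $\hat H$ side. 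The restrictions of $\xi$ and $\xi_1$ to $\C^\times$ contribute $t(z)$ and $t_1(z)$ respectively, through \eqref{mustar}.

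Taking the quotient, $a_{S'}$ restricted to $\C^\times$ is
\[ z\mapsto z^{\lambda}\bar z^{\sigma\lambda},\qquad \lambda=\pm(\rho_{G_{\mathrm{res}}}-\rho_H)\pm\mu^*. \]
The differential of the associated character of the elliptic torus $S'(\R)$ is then read off from the Langlands correspondence for tori: it is the $\lambda$ appearing in $z^\lambda\bar z^{\sigma\lambda}$. Inverting, and keeping track of signs via the convention $\langle\cdot,[a_{S'}]^{-1}\rangle$ and the based choice $\chi_\alpha=\mathrm{sgn}_\C$, should give $\rho_{G_{\mathrm{res}}}-\rho_H-\mu^*$.

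The main obstacle I expect is getting these signs, and the identification of $\hat S'_a$ with $\hat S^H$ via $\eta$, exactly right. I would test the bookkeeping against the untwisted case treated in \cite{DPR} to confirm the signs.
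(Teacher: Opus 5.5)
Your proposal follows the paper's proof in all three parts: the same $g$ serves $x\tilde\delta$ in (1). In (2) the invariant splits off the factor $\mathrm{inv}(\gamma_x,(\mathbf{1},x^*))$, whose pairing with $A_0$ is $\langle x^*,[a_{S'}]^{-1}\rangle$ by \cite[Remarks 4.3.2, 4.10.1]{KalLLCD}. In (3) one compares the two $L$-embeddings on $\C^\times$. The signs you leave open in (3) are fixed by the convention \eqref{cocycledef}, i.e.\ $\xi'_{S'}(1\rtimes z)=a_{S'}(z)\,\xi_{S'}(1\rtimes z)$, which gives $a_{S'}(z)=(z/\bar z)^{\rho_H-\rho_{G_{\mathrm{res}}}}\,tt_1^{-1}(z)$; thus $\rho_H$ and $\mu^*$ necessarily carry the same sign, and inverting yields $\rho_{G_{\mathrm{res}}}-\rho_H-\mu^*$.
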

\begin{proof}
For the first assertion we note that
$$g^{-1} (x\tilde{\delta}) g= (g^{-1}xg)
( g^{-1} \tilde{\delta}g) =  (g^{-1}xg) (\delta'
\rtimes a).$$
The element $\delta'$ lies $S'$, and $g^{-1}xg$
lies in $S'(\mathbb{R})$, since $\mathrm{Ad}(g)^{-1}: S \rightarrow S'$ is an
$\mathbb{R}$-isomorphism.  The first assertion now follows from the
definition of norm (\cite[Definition 3.3.2]{KalLLCD}).

For the second assertion, we begin with the pairing
\begin{equation}
\label{del3def}
\Delta_{III}^{\mathrm{new}}(\eta^{-1}(\gamma_{x}) \gamma_{1},
x\tilde{\delta}) = \langle \mathrm{inv}(\gamma_{x} \gamma', (z,x
\delta)), A_{0}  
\rangle
\end{equation}
displayed in \cite[(4.4)]{KalLLCD}.  The term
$\mathrm{inv}(\gamma_{x} \gamma', (z, x\delta)) \in H^{1}(\Gamma, S'
\stackrel{1-a}{\rightarrow} S')$ on the right is the class of a
$1$-cocycle $((z')^{-1},
x' \delta')$, where $(z^{*}, x' \delta') = g^{-1} \cdot
(z,x\delta)$ (\cite[Lemma 4.3.1, (4.1)]{KalLLCD}).   To say that $((z')^{-1},
x'\delta')$ is a $1$-cocyle is to say that
$$(1-a)(z'(\sigma)^{-1}) = (x' \delta')^{-1} \ \sigma( x'
\delta' ), \quad \sigma \in \Gamma.$$
From the first assertion we see that $x' \in S'(\mathbb{R})$ and so
$$(1-a)(\mathbf{1}(\sigma)^{-1}) = 1 = (x')^{-1}
\ \sigma(x'),$$
where $\mathbf{1}$ is the trivial cocycle in $Z^{1}(\Gamma, S')$.
In consequence, $(\mathbf{1},x')$ is a $1$-cocycle and 
$$\mathrm{inv}(\gamma_{x} \gamma_{1}, ((z')^{-1},
x' \delta') ) = \mathrm{inv}(\gamma_{x},(\mathbf{1},x'))  \
\mathrm{inv}(\gamma_{1},((z')^{-1}, \delta'))$$
in  $H^{1}(\Gamma, S'\stackrel{1-a}{\rightarrow} S')$.
Looking back to (\ref{del3def}), we find that
\begin{align*}
\Delta_{III}^{\mathrm{new}}(\eta^{-1}(\gamma_{x}) \gamma_{1},
x\tilde{\delta}) &= \langle
\mathrm{inv}(\gamma_{x},(\mathbf{1},x^{*})) ,A_{0}\rangle \  \langle
\mathrm{inv}(\gamma_{1},((z^{*})^{-1}, \delta^{*})) ,A_{0}\rangle\\
&=  \langle
\mathrm{inv}(\gamma_{x},(\mathbf{1},x^{*})) ,A_{0}\rangle
\ \Delta_{III}^{\mathrm{new}}(\gamma_{1}, \tilde{\delta})
\end{align*}
The second assertion follows from \cite[Remarks 4.3.2 and
  4.10.1]{KalLLCD}. 

For the final assertion, we begin by recalling the definition of the cocycle
$a_{S'} \in Z^{1}(\Gamma, S')$ as seen in the proof of \cite[Corollary
  4.10.4]{KalLLCD}.  In fact we only need the definition of
restriction of $a_{S'}$ to $\mathbb{C}^{\times} \subset
W_{\mathbb{R}}$. The cocyle $a_{S'}$ arises through the comparison 
of two L-embeddings.   Fixing based $\chi$-data for the system of reduced
roots $R_{\mathrm{res}}(S', G)$ (\cite[\S 1.3]{KS99}) and
regarding $R(\hat{S}^{H}, \hat{H})$ as a subset of
$R_{\mathrm{res}}(\hat{S}', \hat{G})$, one obtains two L-embeddings
$$\xi^{1}_{S'}: {^L}S'_{a} \rightarrow ((\hat{G}^{a})^{\circ} \rtimes
  W_{\mathbb{R}}) = {^L}G^{1}$$
and
$$\xi^{H} :{^L}S^{H} \rightarrow {^L}H.$$
The first L-embedding may be composed with L-embedding $\xi_{G^{1}}: {^L}G^{1}
\rightarrow {^L}G$ to obtain
\begin{equation}
    \label{xione}
\xi_{G^{1}} \circ \xi^{1}_{S'}: {^L}S'_{a} \rightarrow  {^L}G.
\end{equation}
The second L-embedding may be composed with $\xi: {^L}H \rightarrow
{^L}G$ and ${^L}\eta: {^L} S'_{a} \rightarrow {^L}S^{H}$ to obtain
another L-embedding
\begin{equation}
\label{xiH}
    \xi \circ \xi^{H} \circ {^L}\eta:  {^L}S'_{a} \rightarrow  {^L}G.
\end{equation}
By \cite[Lemma 4.10.2]{KalLLCD}, embedding (\ref{xione}) extends to an
L-embedding $\xi_{S'}: {^L}S' \rightarrow {^L}G$, and embedding (\ref{xiH})
extends to an L-embedding $\xi'_{S'}:  {^L}S' \rightarrow {^L}G$
The restriction of the cocycle $a_{S'}$ to $\mathbb{C}^{\times}\subset
W_{\mathbb{R}}$ is given by the equation
\begin{equation}
\label{cocycledef}
\xi_{S'}^{-1} \circ \xi'_{S'}(1\rtimes z) = a_{S'}(z) \rtimes z, \quad
    z \in \mathbb{C}^{\times}.
\end{equation}
As $\mathbb{C}^{\times} \subset W_{\mathbb{R}} \subset {^L}S'_{a}$,
the element $a_{S'}(z)$ may be computed directly from this equation 
using the L-embeddings (\ref{xione}) and (\ref{xiH}), as follows
\begin{align*}
& \xi'_{S'}(1\rtimes z) = \xi_{S'}( a_{S'}(z) \rtimes z)\\
& \Rightarrow \xi'_{S'}(1\rtimes z) = a_{S'}(z) \ \xi_{S'}(1 \rtimes z)\\
& \Rightarrow \xi \circ \xi_{S^{H}} \circ {^L}\eta (1 \rtimes z) =
  a_{S'}(z) \ \xi_{G^{1}} \circ \xi_{S'}^{1}(1 \rtimes z)\\
& \Rightarrow \xi( (z/\bar{z})^{\rho_{H}} \rtimes z)
  =  a_{S'}(z) \  \xi_{G^{1}} (  (z / \bar{z})^{\rho_{G_{\mathrm{res}}}} \rtimes z)\\
& \Rightarrow a_{S'}(z) = (z/\bar{z})^{ \rho_{H}-  \rho_{G_{\mathrm{res}}}} \ \xi(1
  \rtimes z) \  \xi_{G^{1}}(1 \rtimes z)^{-1} \\
& \Rightarrow a_{S'}(z) =  (z/\bar{z})^{ \rho_{H} - \rho_{G_{\mathrm{res}}}
   } \ tt_{1}^{-1}(z).
\end{align*}
We have used the assumption of based $\chi$-data and $S'$ being
elliptic in the fourth implication (\cite[\S 10]{SheTE1}).  The
final assertion now follows from (\ref{mustar}) and the local
Langlands correspondence for real tori applied to $a_{S'}$.  
\end{proof}

\begin{pro}
\label{zmatch}
Suppose $\tilde{f} \in C_{c}^{\infty}([G \rtimes a]_{z}(\mathbb{R}))$
and $f_{1} \in C_{c}^{\infty}(H(\mathbb{R}))$ are functions which
satisfy the transfer identity of \cite[Lemma 4.12.1]{KalLLCD}.  Then
for any $X \in \mathcal{Z}(\mathfrak{g})$ the functions  $X\tilde{f}
\in C_{c}^{\infty}([G \rtimes a]_{z}(\mathbb{R}))$ 
and $X_{1}f_{1} \in C_{c}^{\infty}(H(\mathbb{R}))$ also satisfy the
transfer identity.
\end{pro}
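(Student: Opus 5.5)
The plan is to prove the matching pointwise, one Cartan subspace at a time. The transfer identity of \cite[Lemma 4.12.1]{KalLLCD} says that for every strongly $G$-regular $\gamma_1$
\[ SO_{\gamma_1}(f_1)=\sum_{\tilde\delta}\Delta_{KS}(\gamma_1,\tilde\delta)O_{\tilde\delta}(\tilde f). \]
Both sides are smooth functions on the relevant Cartan subspaces, so it suffices to show that applying $X$ on the left (that is, replacing $\tilde f$ by $X\tilde f$) corresponds to applying $X_1$ on the right.

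Fix a Cartan subspace $S^\natural$ with double centralizer $S$, a point $\tilde\delta\in S^\natural_{\rm sr}(\R)$, and a norm $\gamma_1$ described via $g$, $S'$ and $\eta$ as in \S\ref{sub:norm}. I will parametrize nearby elements as $x\tilde\delta$ with $x\in S^{\tilde\delta}(\R)$ close to $1$; by the twisted analogue of Lemma \ref{neartildedelta}(1) their norms are $\eta^{-1}(\gamma_x)\gamma_1$. That lemma is stated for elliptic elements, but its first two parts only use the $\R$-isomorphism $\mathrm{Ad}(g)^{-1}:S\to S'$, so I expect them to extend verbatim to arbitrary strongly regular elements.

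First I invoke Bouaziz's twisted radial component formula \cite[\S 2.4]{Bouaziz87}. For the normalized orbital integral $\Phi_{\tilde f}(x\tilde\delta)=D_{[G\rtimes a]/S}(x\tilde\delta)^{1/2}\,O_{x\tilde\delta}(\tilde f)$, with the square root taken on a suitable double cover, it gives
\[ \Phi_{X\tilde f}=\phi(\beta_{\mathfrak g}(X))\,\Phi_{\tilde f}, \]
where $\phi(\beta_{\mathfrak g}(X))$ acts as a constant-coefficient differential operator along $\mathfrak s^\tau$. Harish-Chandra's classical formula gives the analogous identity on $H$ for normalized stable orbital integrals: $\Phi^{st}_{X_1f_1}=\beta_{\mathfrak h}(X_1)\Phi^{st}_{f_1}$.

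Next I compare the normalizations. I rewrite $\Delta_{KS}(\eta^{-1}(\gamma_x)\gamma_1,x\tilde\delta)$ as a product of three pieces. The first is the ratio of discriminant square roots $D_H^{1/2}/D_{[G\rtimes a]}^{1/2}$; this comes from $\Delta_{IV}$ together with the genuine characters $\rho_{G_{\rm res}}$ and $\rho_H$. The second is a locally constant factor, coming from $\Delta_I$, $\Delta_{II}$ and the signs. The third is the character $\langle x^*,[a_{S'}]^{-1}\rangle$ of Lemma \ref{neartildedelta}(2). By Lemma \ref{neartildedelta}(3), with based $\chi$-data, the differential of this character is $\rho_{G_{\rm res}}-\rho_H-\mu^*$. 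The $\rho$-terms should cancel exactly against the passage from $|D|^{1/2}$ to the genuine Weyl denominators $\prod(\alpha^{1/2}-\alpha^{-1/2})$ on both sides, as in Harish-Chandra's and Shelstad's untwisted argument. What remains is the character $e^{-\mu^*}$ multiplying the normalized stable orbital integral.

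Conjugating a constant-coefficient operator $D$ by multiplication by $e^{-\mu^*}$ gives the translate $I_{-\mu^*}(D)$. Hence, after transporting everything through $\mathrm{Ad}(g)^{-1}$ and $\eta$, the operator acting on the $G$-side becomes exactly $\beta_{\mathfrak h}(X_1)$, with $X_1$ the image of $X$ under (\ref{zedone}). Summing over $\tilde\delta$ in the fibre of $\gamma_1$, all of which share the same $S$-data up to $G_z(\R)$-conjugacy, yields $SO_{\gamma_1}(X_1f_1)=\sum\Delta_{KS}O_{\tilde\delta}(X\tilde f)$ on an open dense set. Continuity then gives it on all strongly $G$-regular $\gamma_1$. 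Compact support is preserved because $X$ and $X_1$ are differential operators.

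I expect the main obstacle to be the bookkeeping in the second step. One must check that $\phi$ (the projection to $\mathfrak s'^a$), the restricted root system $R_{\rm res}$, and the genuine square roots of the twisted discriminant combine so that only $\mu^*$ survives. Type $R_3$ roots in particular contribute the factors $\alpha(s^{|\mathcal O|})$ of Lemma \ref{lem:pn_detroot}, and $\Delta_{IV}$ must be seen to match them. I would first verify this on an elliptic $\tilde\delta$ using Corollary \ref{cor:pn_d4} and Lemma \ref{lem:pn_detroot}, and then argue that the $\rho$-cancellation is an identity of algebraic characters on $S'$, hence independent of ellipticity.
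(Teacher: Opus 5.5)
Your outline follows the paper: a twisted radial-component formula on the $G$-side (the paper uses \cite[Lemma 23]{Mezo13}) and Harish-Chandra's formula on $H$. It also shares the cancellation of $\Delta_{IV}$ against the discriminants, the constancy of $e$, $\epsilon$ and $\Delta_I$, Lemma \ref{neartildedelta}(1),(2) for the variation of $\Delta_{III}^{\mathrm{new}}$, and the product rule $I_{-\mu^*}(D)\circ e^{\mu^*}=e^{\mu^*}\circ D$. But the step where the $\rho$-terms disappear fails as you state it. You list $\Delta_{II}$ among the locally constant factors and let the $\rho$-part of the $\Delta_{III}$-character cancel against the passage from $|D|^{1/2}$ to $\prod(\alpha^{1/2}-\alpha^{-1/2})$. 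That passage is only a locally constant phase: on an elliptic torus each $\alpha^{1/2}-\alpha^{-1/2}$ is $i$ times a real number. So it cannot absorb a non-trivial character of $x$. Followed literally, your bookkeeping leaves $e^{\rho_H-\rho_{G_{\mathrm{res}}}+\mu^*}$ rather than $e^{\mu^*}$, and would force a translate by $\mu^*+\rho_H-\rho_{G_{\mathrm{res}}}$ in place of $I_{-\mu^*}$ in (\ref{zedone}). The missing point is that $\Delta_{II}$ is not locally constant. With $a_\alpha=i$ and based $\chi$-data, each factor of $\Delta_{II}$ is a half-power of $N\alpha(x\delta)$ times a locally constant sign. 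Hence $\Delta_{II}(\eta^{-1}(\gamma_x)\gamma_1,x\tilde\delta)$ is a constant multiple of $e^{\rho_{G_{\mathrm{res}}}-\rho_H}$; the paper takes this from \cite[p.~66]{Mezo13}. It is $\Delta_{II}$ that cancels the $\rho$-part of $(\Delta_{III}^{\mathrm{new}})^{-1}$, whose variation is $\langle x^*,[a_{S'}]\rangle$ with differential $\rho_H-\rho_{G_{\mathrm{res}}}+\mu^*$.

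Your extension from elliptic to arbitrary Cartan subspaces also does not work as proposed. Parts (1) and (2) of Lemma \ref{neartildedelta} do extend, as you say. Part (3) and the $\Delta_{II}$ computation, however, genuinely use ellipticity. The character $\langle\cdot,[a_{S'}]\rangle$ is attached to $a_{S'}$ by the Langlands correspondence for the real torus $S'$. Its differential is read off from Shelstad's formula for the $L$-embeddings on $\C^\times$, which depends on the real structure of $S'$ and on the $\chi$-data. So this is not an identity of algebraic characters that can be transported from an elliptic $S'$ to a non-elliptic one. The paper instead reduces, via \cite[Lemma 1.6.3]{Bouaziz87}, to the sets $G_z^{\dot\delta_j}(\R)^\circ\dot\delta_j$ for finitely many $\dot\delta_j$, and redoes the computation for each. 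With $\chi$-data based on the imaginary roots, $\rho_H$ and $\rho_{G_{\mathrm{res}}}$ are replaced by half-sums of positive imaginary roots, simultaneously in the differential of $\langle\cdot,[a_{S'}]\rangle$ and in $\Delta_{II}$. These still cancel (cf.\ \cite[Lemmas 3.3.D, 3.5.A]{LS87}). Finally, the $\dot\delta$ in the fibre of $\gamma_1$ are only stably conjugate, not $G_z(\R)$-conjugate. What you need there is that the discriminants are stably invariant and that $\Delta_{KS}$ changes only by a constant under stable conjugacy.
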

\begin{proof}
We are to prove
\begin{equation}
\label{geomatch}
SO_{\gamma_{1}}(X_{1}f_{1}) = \sum_{\dot{\delta}}
  \Delta_{KS}(\gamma_{1}, \dot{\delta}) \,
  O_{\dot{\delta}}(X \tilde{f}),
\end{equation}
where $X \in \mathcal{Z}(\mathfrak{g})$, $\gamma_{1} \in
H(\mathbb{R})$ is the norm of  some strongly 
regular semi-simple element  $\dot{\delta}_{0} \in [G \rtimes
  a]_{z}(\mathbb{R})$, and the 
sum on the right runs over the $\tilde{G}_{z}(\mathbb{R})$-conjugacy
classes in the stable conjugacy class of $\dot{\delta}_{0}$.
According to \cite[Lemma 1.6.3]{Bouaziz87}, there exist
strongly regular semi-simple elements $\dot{\delta}_{1}, \ldots ,
\dot{\delta}_{\ell} \in [G \rtimes a]_{z}(\mathbb{R})$ such that any
strongly regular semi-simple element is $G_{z}(\mathbb{R})$\-conjugate
to an element in $G_{z}^{\dot{\delta}_{j}}(\mathbb{R})^{\circ} \,
\dot{\delta}_{j}$ for some $1 \leq j \leq \ell$.  It therefore
suffices to prove (\ref{geomatch}) for $\dot{\delta} \in
G_{z}^{\dot{\delta}_{j}}(\mathbb{R})^{\circ} \, \dot{\delta}_{j}$.
Without loss of generality  $\dot{\delta}_{1} = \tilde{\delta}$ 
and we begin by proving this identity for $\dot{\delta}_{0} = 
x\tilde{\delta}$ and $\gamma_{1} = \eta^{-1}(\gamma_{x}) \gamma$ as in Lemma
\ref{neartildedelta}.  The only special property of $\tilde{\delta}$
in this setting is that it is elliptic.  Most of what follows does
not use the property of being elliptic.  We will highlight the
places where the elliptic property is used.  Once the proof for
$\dot{\delta}_{1} = \tilde{\delta}$ is complete, we will describe
the proof for arbitrary $\dot{\delta}_{j}$ by indicating how the
elliptic proof is to be modified.

According to \cite[Lemma 23]{Mezo13}, 
$$O_{x\tilde{\delta}} (X \tilde{f}) = D_{Ga}(x\delta)^{-1} \left(
\mathrm{Ad}(g) \circ \phi \circ \mathrm{Ad}(g)^{-1} \circ
\beta_{\mathfrak{g}}(X) ( D_{Ga}(x\delta) \,
O_{x\tilde{\delta}}(\tilde{f}) )
\right)$$
where $D_{Ga}$ is defined in \cite[\S 4.5]{KS99}.
This a twisted version of a result of Harish-Chandra which tells us
that 
$$O_{\eta^{-1}(\gamma_{x}) \gamma}(X_{1} f_{1}) =
  D_{H}(\eta^{-1}(\gamma_{x}) \gamma)^{-1} \left( \beta_{\mathfrak{h}}(X_{1})
  D_{H}(\eta^{-1}(\gamma_{x}) \gamma) O_{\gamma_{1}}(f_{1}) \right).$$
It is straightforward to show that the values $D_{H}(\eta^{-1}(\gamma_{x})
\gamma)$ and $D_{Ga}(x \delta)$ are not changed if their arguments are
replaced by stably conjugate elements.  
Therefore we may substitute the
previous two equations into (\ref{geomatch}) to see that we are
required to prove

\resizebox{0.95\textwidth}{!}{\begin{minipage}{\linewidth}
\begin{align*}
  &  \beta_{\mathfrak{h}}(X_{1}) \left(
  D_{H}(\eta^{-1}(\gamma_{x}) \gamma)\ SO_{\eta^{-1}(\gamma_{x})
    \gamma}(f_{1}) \right)=\\
&\frac{D_{H}(\eta^{-1}(\gamma_{x}) \gamma)}{D_{Ga}(x \delta)}
\sum_{\dot{\delta}} 
  \Delta_{KS}(\eta^{-1}(\gamma_{x}) \gamma, \dot{\delta})  \left(
\mathrm{Ad}(g) \circ \phi \circ \mathrm{Ad}(g)^{-1} \circ
\beta_{\mathfrak{g}}(X) ( D_{Ga}(x \delta) \,
O_{\dot{\delta}}(\tilde{f}) )\right)
\end{align*}
\end{minipage}
}

If we substitute the right-hand side of (\ref{geomatch}) into the
left-hand side of this equation and rearrange terms slightly, we see
that the equation to prove is

\resizebox{0.87\textwidth}{!}{\begin{minipage}{\linewidth}
\begin{align}
\label{geomatchsub}
&   \sum_{\dot{\delta}} \beta_{\mathfrak{h}}(X_{1}) \left(
  \frac{D_{H}(\eta^{-1}(\gamma_{x}) \gamma)}{D_{Ga}(x \delta)} 
    \Delta_{KS}( \eta^{-1}(\gamma_{x}) \gamma, \dot{\delta}) \,
  D_{Ga}(x \delta) \ O_{\dot{\delta}}( \tilde{f}) \right)\\
& =  \sum_{\dot{\delta}}
\frac{D_{H}(\eta^{-1}(\gamma_{x}) \gamma)}{D_{Ga}(x \delta)}
  \Delta_{KS}(\eta^{-1}(\gamma_{x}) \gamma, \dot{\delta})  \left(
\mathrm{Ad}(g) \circ \phi \circ \mathrm{Ad}(g)^{-1} \circ
\beta_{\mathfrak{g}}(X) ( D_{Ga}(x \delta) \,
O_{\dot{\delta}}(\tilde{f}) \right).
\end{align}
\end{minipage}
}

This equation holds if
\begin{align*}
\beta_{\mathfrak{h}}(X_{1}) \
&  \frac{D_{H}(\eta^{-1}(\gamma_{x}) \gamma)}{D_{Ga}(x \delta)} 
    \Delta_{KS}( \eta^{-1}(\gamma_{x}) \gamma, \dot{\delta})\\
& =   \frac{D_{H}(\eta^{-1}(\gamma_{x}) \gamma)}{D_{Ga}(x \delta)} 
  \Delta_{KS}(\eta^{-1}(\gamma_{x}) \gamma, \dot{\delta})  \
\left(\mathrm{Ad}(g) \circ \phi \circ \mathrm{Ad}(g)^{-1} \circ
\beta_{\mathfrak{g}}(X) \right),
\end{align*}
Under stable conjugacy, the only terms that change above are the
transfer factors $\Delta_{KS}$, and they differ by a constant.  We may
therefore restrict ourselves to proving the identity for $\dot{\delta} =
\dot{\delta}_{0}$, \emph{i.e.} proving 
 \begin{align}
\label{diffeqid}
\beta_{\mathfrak{h}}(X_{1}) \
&  \frac{D_{H}(\eta^{-1}(\gamma_{x}) \gamma)}{D_{Ga}(x \delta)} 
    \Delta_{KS}( \eta^{-1}(\gamma_{x}) \gamma, x \tilde{\delta})\\
\nonumber & =   \frac{D_{H}(\eta^{-1}(\gamma_{x}) \gamma)}{D_{Ga}(x \delta)} 
  \Delta_{KS}(\eta^{-1}(\gamma_{x}) \gamma, x \tilde{\delta})  \
\left(\mathrm{Ad}(g) \circ \phi \circ \mathrm{Ad}(g)^{-1} \circ
\beta_{\mathfrak{g}}(X) \right)
\end{align}
We have reduced the proof of (\ref{geomatch}) to the proof of
(\ref{diffeqid}). Equation (\ref{diffeqid}) is to be interpreted as
an identity of linear maps on a space of smooth functions.
On the right-hand side $\mathrm{Ad}(g) \circ \phi \circ
\mathrm{Ad}(g)^{-1} \circ \beta_{\mathfrak{g}}(X)$ is a differential
operator determined by  an element in
$\mathcal{S}(\mathfrak{s}^{\tilde{\delta}})$.  It is a differential
operator of smooth functions defined on $S^{\tilde{\delta}}(\mathbb{R})$,
\emph{i.e.}~functions in the variable $x$.
 As a differential
operator, it depends only on a small open neighbourhood of a point $x$
and through the exponential map this open neighbourhood is isomorphic
to an open subset of the Lie algebra
$\mathfrak{s}^{\tilde{\delta}}(\mathbb{R})$.  This differential operator
is then multiplied by the function
$$\frac{D_{H}(\eta^{-1}(\gamma_{x}) \gamma)}{D_{Ga}(x \delta)} 
  \Delta_{KS}(\eta^{-1}(\gamma_{x}) \gamma, x \tilde{\delta})$$
which is a function on $S^{\tilde{\delta}}(\mathbb{R})$.  We
view this function as a locally defined function on
$\mathfrak{s}^{\tilde{\delta}}(\mathbb{R})$.  

On the left-hand side of (\ref{diffeqid}), one first multiplies by the
above function and then one applies the differential operator 
$\beta_{\mathfrak{h}}(X_{1})$.  It is a differential operator determined by
an element in $\mathcal{S}(\mathfrak{s}^{H})$. Consequently,  it is
a differential operator of smooth functions defined on $S^{H}(\mathbb{R})$, or
equivalently, as a locally defined smooth functions on
$\mathfrak{s}^{H}(\mathbb{R})$.  The 
discrepancy of spaces with the right-hand side appears because of the
substitution made in (\ref{geomatchsub}).  In that substitution we are
identifying the element $x \in S^{\tilde{\delta}}(\mathbb{R})$ with the
element $\eta^{-1}(\gamma_{x}) \in S^{H}(\mathbb{R})$ through the
progression
$$\xymatrix@1{
x \ar@{|->}[r]  & x'  \ar@{|->}[r] & \gamma_{x} \ar@{|->}[r]&
\eta^{-1}(\gamma_{x})\\
S^{\tilde{\delta}}(\mathbb{R}) \ar[r]^{\mathrm{Ad}(g)^{-1}} &
  (S')^{a}(\mathbb{R}) \ar[r] & S'_{a}(\mathbb{R})  \ar[r]^{\eta^{-1}} &
  S^{H}(\mathbb{R})  }$$
As we only need to prove an identity of locally defined functions, we
can pass to the isomorphisms
\begin{equation}
\label{samespace}
\mathfrak{s}^{\tilde{\delta}}(\mathbb{R})
\stackrel{\mathrm{Ad}(g)^{-1}}{\rightarrow} (\mathfrak{s}')^{a}(\mathbb{R})
\stackrel{\eta^{-1}}{\rightarrow} \mathfrak{s}^{H}(\mathbb{R}).
\end{equation}
The right-hand side of  (\ref{diffeqid}) applies to locally defined
functions on $S^{\tilde{\delta}}(\mathbb{R})$ and the left-hand side
applies to locally defined functions on $S^{H}(\mathbb{R})$.  The two
are identified via the isomorphisms of (\ref{samespace}).  It is
most convenient to identify both sides with functions on the middle space
$(\mathfrak{s}')^{a}(\mathbb{R})$, for then Equation (\ref{diffeqid})
becomes
\begin{align}
\label{diffeqid1}
& \left(  I_{-\mu^{*}} \circ \phi \circ \mathrm{Ad}(g)^{-1} \circ 
  \beta_{\mathfrak{g}}(X) \right)  \frac{D_{H}(\eta^{-1}(\gamma_{x})
    \gamma)}{D_{Ga}(x \delta)}  
  \Delta_{KS}(\eta^{-1}(\gamma_{x}) \gamma, x \tilde{\delta}) \\
\nonumber & = \frac{D_{H}(\eta^{-1}(\gamma_{x}) \gamma)}{D_{Ga}(x \delta)}  
  \Delta_{KS}(\eta^{-1}(\gamma_{x}) \gamma, x \tilde{\delta})\  (\phi \circ
\mathrm{Ad}(g)^{-1} \circ \beta_{\mathfrak{g}}(X)).
\end{align}
In order to simplify this hypothetical identity further, we unpack
the definition of 
$$\Delta_{KS} = e([G \rtimes a]_{z})\, \epsilon_{L}(V, \psi) \,
(\Delta_{I}^{\mathrm{new}})^{-1} \Delta_{II} 
(\Delta_{III}^{\mathrm{new}})^{-1} \Delta_{IV}$$
(\cite[\S 4.9]{KalLLCD}).
It is immediate from 
\cite[\S 4.5]{KS99} that $\Delta_{IV}(\eta^{-1}(\gamma_{x})
\gamma, x \tilde{\delta})$ cancels with $
\frac{D_{H}(\eta^{-1}(\gamma_{x}) \gamma)}{D_{Ga}(x \delta)}$.  In
addition, the three terms,  $e([G \rtimes a]_{z})$, $\epsilon_{L}(V,
\psi)$, and  $\Delta_{I}^{\mathrm{new}}(\eta^{-1}(\gamma_{x}) \gamma, x
\tilde{\delta}) = \Delta_{I}^{\mathrm{new}}( \gamma,
\tilde{\delta})$, are all constants independent of $x$ (\cite[\S
  3.7]{KalLLCD}, \cite[\S
  5.3 and 4.2]{KS99}).
As a result, Equation (\ref{diffeqid1}) reduces to 
\begin{align*}
&    \left( 
  I_{-\mu^{*}} \circ \phi \circ \mathrm{Ad}(g)^{-1} \circ 
  \beta_{\mathfrak{g}}(X) \right)\  \Delta_{II}(\eta^{-1}(\gamma_{x})
  \gamma, x \tilde{\delta}) 
(\Delta_{III}^{\mathrm{new}}(\eta^{-1}(\gamma_{x}) \gamma, x
\tilde{\delta}))^{-1} \\
\nonumber & =  \Delta_{II}(\eta^{-1}(\gamma_{x}) \gamma, x \tilde{\delta})
(\Delta_{III}^{\mathrm{new}}(\eta^{-1}(\gamma_{x}) \gamma, x
\tilde{\delta}))^{-1}
  (\phi \circ \mathrm{Ad}(g)^{-1} \circ \beta_{\mathfrak{g}}(X)).
\end{align*}
By Lemma \ref{neartildedelta}.2, we have a further reduction to 
\begin{align}
\label{diffeqid2}
&    \left( 
  I_{-\mu^{*}} \circ \phi \circ \mathrm{Ad}(g)^{-1} \circ 
  \beta_{\mathfrak{g}}(X) \right) \ \Delta_{II}(\eta^{-1}(\gamma_{x})
  \gamma, x \tilde{\delta}) \, 
\langle x^{*}, [a_{S'}] \rangle  \\
\nonumber & =  \Delta_{II}(\eta^{-1}(\gamma_{x}) \gamma, x \tilde{\delta})\, 
\langle x^{*}, [a_{S'}] \rangle
 (\phi \circ \mathrm{Ad}(g)^{-1} \circ \beta_{\mathfrak{g}}(X)).
\end{align}
Recall that we are regarding this identity as locally defined on
$(\mathfrak{s}')^{a}(\mathbb{R})$.  Regarded in this manner,  Lemma
\ref{neartildedelta}.3 tells us that the character $\langle \cdot
, [a_{S'}] \rangle$ is equal to $e^{   \rho_{H} -\rho_{G_\mathrm{res}}
  + \mu^{*}}$.  This result depends on $\tilde{\delta}$ being elliptic
and this is the first time in the argument that we are using the
assumption of being elliptic.
The argument on \cite[page 66]{Mezo13} shows that
$\Delta_{II}(\eta^{-1}(\gamma_{x}) \gamma, x \tilde{\delta})$,
regarded in this manner,  is a constant multiple of $e^{
  \rho_{G_\mathrm{res}} - \rho_{H}}$.  This also uses the fact that
$\tilde{\delta}$ is elliptic, as it affects the form of the based
$\chi$-data in the definition of $\Delta_{II}$.  Equation \ref{diffeqid2}
now simplifies to 
$$\left( 
  I_{-\mu^{*}} \circ \phi \circ \mathrm{Ad}(g)^{-1} \circ 
  \beta_{\mathfrak{g}}(X) \right) \ e^{\mu^{*}}  
 =  e^{\mu^{*}} \ (\phi \circ
\mathrm{Ad}(g)^{-1} \circ \beta_{\mathfrak{g}}(X)).$$
This equation may be seen to hold by reviewing the definition of
$I_{-\mu^{*}}$ and applying the product rule to the left-hand side.
This concludes the proof of (\ref{geomatch}) for $\tilde{\delta} =
\dot{\delta}_{1}$.  

The proof of  (\ref{geomatch}) for arbitrary $\dot{\delta}_{j}$ is the
same up to the point near the end where we invoke Lemma
\ref{neartildedelta}.3 to obtain 
$$\langle \cdot, [a_{S'}] \rangle = e^{\rho_{H}-\rho_{G_{\mathrm{res}}} + \mu^{*}}.$$
If $\dot{\delta}_{j}$ is not elliptic the choice of based $\chi$-data
alters this equation by replacing $\rho_{H}$ with the half-sum of the
positive \emph{imaginary} roots of $R(S^{H},H)$, and replacing
$\rho_{G_{\mathrm{res}}}$ with the half-sum of the of the positive
\emph{imaginary} roots of $R_{\mathrm{res}}(S',G)$.  The choice of based
$\chi$-data alters $\Delta_{II}(\eta^{-1}(\gamma_{x}) \gamma, x
\dot{\delta}_{j})$ in the same way so that the product 
of the two terms cancel as before (\emph{cf.}~\cite[Lemma 3.3.D and
  Lemma 3.5.A]{LS87}).  This is the only change to the
proof required for $\dot{\delta}_{j}$ and so the proposition is proved.
\end{proof}

\begin{pro}
\label{eigendist}
The distribution $\Theta_{|}$ is an eigendistribution with
infinitesimal character $if \in \mathfrak{s}^{*}$.
\end{pro}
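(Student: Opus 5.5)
We show separately that each of the two summands of $\Theta_{|}$ in \eqref{thetadist} is an eigendistribution for $\mathcal{Z}(\mathfrak{g})$ with infinitesimal character $if$; their difference then is one as well.

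\emph{The $G$-side.} Each representation $\pi\in\Pi_\varphi(G_z)$ with $\pi\circ a\cong\pi$ has infinitesimal character given by a stable conjugate of $if_0$, hence equal to $if$ as a character of $\mathcal{Z}(\mathfrak{g})$, since $\beta_{\mathfrak{g}}$ is insensitive to conjugation over $\C$. Any extension $\tilde\pi$ to $\tilde G_z(\R)$, for instance $\pi_{f,\tilde\tau}$ from Lemma \ref{lem:duflo1}, restricts to $G_z(\R)$ as $\pi$ or a finite sum of $\tilde G_z(\R)$-conjugates of $\pi$. The algebra $\mathcal{Z}(\mathfrak{g})$ acts on these conjugates by the scalars $\Lambda(\beta_{\mathfrak g}(X))$, with $\Lambda$ ranging over the $\tilde G_z(\R)$-twists of $if$. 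These twists lie in the same $\Omega(S,G)(\C)$-orbit up to the outer action of $a$. That action preserves $\mathcal{Z}(\mathfrak g)$, and because it fixes $f$ by \eqref{assumef}, it fixes the character $\chi_{if}$. Hence $\tilde\pi(X\tilde f)=\chi_{if}(X)\,\tilde\pi(\tilde f)$, and taking traces gives the eigen-property for $\mathrm{tr}(\pi\boxtimes\rho^\vee)^{\mathrm{can}}(\cdot,\tilde s^{-1})$. We restrict to test functions supported on $G_z(\R)^\circ\tilde\delta$. This subspace is stable under $\mathcal{Z}(\mathfrak g)$, since $\mathcal{Z}(\mathfrak g)$ acts by invariant differential operators.

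\emph{The $H$-side.} By Proposition \ref{zmatch}, if $\tilde f$ matches $f_1$, then $X\tilde f$ matches $X_1f_1$. This gives
\[S\Theta_{\varphi_1}((X\tilde f)_1)=S\Theta_{\varphi_1}(X_1f_1)=\chi^{H}_{\varphi_1}(X_1)\,S\Theta_{\varphi_1}(f_1).\]
Here $\chi^H_{\varphi_1}$ is the infinitesimal character of the discrete packet $\Pi_{\varphi_1}(H_1)$; all members of a Langlands packet share it. Each test function on $H_1(\R)$ has the appropriate $Z_1$-equivariance, and we pass between $H$ and $H_1$ as in \cite{DPR}. The remaining task, and the main obstacle, is the identity
\[\chi^H_{\varphi_1}(X_1)=\chi_{if}(X).\]
Unwinding \eqref{zedone}, this says the following. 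Let $\lambda_H\in(\mathfrak s^H)^*$ represent the infinitesimal character of $\varphi_1$. Then $\lambda_H+\mu^*$ should correspond to $if$ under
\[\mathfrak s^*\xrightarrow{\ \mathrm{Ad}(g)^{*}\ }\mathfrak s'^*\xrightarrow{\ \phi^*\ }((\mathfrak s')^a)^*\xrightarrow{\ \eta^*\ }(\mathfrak s^H)^*,\]
where the shift $\mu^*$ accounts for $I_{-\mu^*}$.

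We verify this on the dual side using Lemma \ref{lem:is} and Lemma \ref{lem:semi2}. The restriction $\varphi|_{\C^\times}$ is $z\mapsto z^{\lambda}\bar z^{\sigma\lambda}$, with $\lambda\in X_*(\hat T)\otimes\C$ corresponding to $if$ under the $A$-equivariant admissible isomorphism. Because $\tilde s$ centralizes $\varphi$, the element $\lambda$ is $a$-fixed. So $\lambda$ lies in $X_*(\hat T^{a,\circ})_\C$, which is identified via $\phi$ with the dual of $(\mathfrak s')^a$. On the other hand, $\varphi_1=\xi_1\circ\varphi$. By the definition of $t,t_1$ recalled before \eqref{mustar}, restricting to $\C^\times$ multiplies by $tt_1^{-1}$ relative to the factorization through ${^L}G^1$. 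By \eqref{mustar}, the infinitesimal character of $\varphi_1$ is therefore $\lambda$ shifted by $-\mu^*$ (or $+\mu^*$, depending on the direction), transported by $\eta$. We need to fix that sign so that it matches $I_{-\mu^*}$, and to check that $\phi$ corresponds to restriction from $\hat T$ to $(\hat T^{a})^\circ$ on Harish-Chandra parameters. The second point is a compatibility between Bouaziz's map \cite[\S2.4]{Bouaziz87} and the identification $\Omega(S',G)^a\cong\Omega(S^H,H)$. We confirm it by direct comparison on linear generators of $\mathcal{S}(\mathfrak s')$.

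Combining the two sides gives $X\Theta_{|}=\chi_{if}(X)\Theta_{|}$.
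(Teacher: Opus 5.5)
Your overall architecture is the paper's: split $\Theta_{|}$ into the two sums, treat the $G$-side through the common infinitesimal character of the packet, and treat the $H$-side through Proposition \ref{zmatch} plus a computation of the infinitesimal character $\mu_1$ of $\varphi_1$. The $H$-side computation, however, is the whole content of the proposition, and you neither carry it out nor set it up correctly.

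\textbf{The sign of the shift is left open.} You say the shift is ``$-\mu^*$ (or $+\mu^*$, depending on the direction)'' and propose to ``fix that sign so that it matches $I_{-\mu^*}$''. The sign is not yours to choose; it is forced by \eqref{mustar}. The paper determines it as follows. With based $\chi$-data it writes $\varphi=\xi_{S'}\circ\varphi_{S'}$ and $\varphi_1=\xi^H\circ{}^L\eta\circ\varphi_{S^H}$, so that $\varphi_{S'}=a_{S'}\varphi_{S^H}$ on $\C^\times$. Lemma \ref{neartildedelta}(3) then gives $\mathrm{Ad}(g)^{-1}if-\rho_{G_{\mathrm{res}}}=(\rho_H-\rho_{G_{\mathrm{res}}}+\mu^*)+(\mu_1-\rho_H)$, that is, $\mu_1=\mathrm{Ad}(g)^{-1}if-\mu^*$. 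Your heuristic ``multiply by $tt_1^{-1}$'' is the right mechanism. What it omits is the check that the $\rho$-shifts coming from the $L$-embeddings cancel, and that is exactly what this bookkeeping supplies.

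\textbf{The target identity is false.} With this value of $\mu_1$, your identity $\chi^H_{\varphi_1}(X_1)=\chi_{if}(X)$ fails. Put $p=\eta^{-1}\circ\phi\circ\mathrm{Ad}(g)^{-1}\circ\beta_{\mathfrak g}(X)$. Evaluation at $\nu$ composed with $I_{-\mu^*}$ is evaluation at $\nu-\mu^*$. Hence
\[ \chi^H_{\varphi_1}(X_1)=p(\mu_1-\mu^*)=p(\mathrm{Ad}(g)^{-1}if-2\mu^*). \]
Your ``unwinding'' to ``$\lambda_H+\mu^*$ corresponds to $if$'' is a sign slip relative to your own identity, and it happens to land on the true relation.

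\textbf{The missing ingredient is the transpose.} In the paper's conventions (those under which Proposition \ref{zmatch} holds with $I_{-\mu^*}$), $\mathcal Z$ acts on distributions by $X\Theta(\tilde f)=\Theta(X^{\mathrm{tr}}\tilde f)$. A character satisfies $\mathrm{tr}\,\pi_1(Y^{\mathrm{tr}}f_1)=\chi_{\pi_1}(Y)\,\mathrm{tr}\,\pi_1(f_1)$, not the same identity without the transpose. So Proposition \ref{zmatch} must be applied to $X^{\mathrm{tr}}$. Since the Harish-Chandra isomorphisms intertwine ${}^{\mathrm{tr}}$ with $H\mapsto -H$, one finds $(X^{\mathrm{tr}})_1=\bigl(\beta_{\mathfrak h}^{-1}(I_{\mu^*}(p))\bigr)^{\mathrm{tr}}$. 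The $H$-side eigenvalue is therefore $p(\mu_1+\mu^*)=p(\mathrm{Ad}(g)^{-1}if)=\chi_{if}(X)$, as required. This is precisely the point the paper flags: the shift by $\mu^*$ in $\mu_1$ is undone by the discrepancy between $(X^{\mathrm{tr}})_1$ and $(X_1)^{\mathrm{tr}}$.

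Your $G$-side formula $\tilde\pi(X\tilde f)=\chi_{if}(X)\,\tilde\pi(\tilde f)$ also drops the transpose. Dropping it consistently on both sides does not rescue the argument: in general it leaves a discrepancy of $2\mu^*$ between the infinitesimal characters of the two sums. You need to track transposes explicitly and pin down the sign of $\mu^*$ from \eqref{mustar}, as the paper does.
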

\begin{proof}
Let $X \in \mathcal{Z}(\mathfrak{g})$ and $\tilde{f} \in
C_{c}^{\infty}(G_{z}(\mathbb{R})^{\circ} \, \tilde{\delta})$.  From
Section 5.4 and Proposition \ref{zmatch}
\begin{align*}
X \Theta_{|}(\tilde{f}) & = \Theta(X^{\mathrm{tr}} \tilde{f})\\
& =  \sum_{\substack{\pi \in \Pi_\varphi(G_z)\\\pi\circ a\cong\pi}} 
\mathrm{tr}(\pi \boxtimes \rho^{\vee})^{\mathrm{can}} (X^{\mathrm{tr}}
\tilde{f},
\tilde{s}^{-1}) - S\Theta_{\varphi_{1}}(X^{\mathrm{tr}}_{1} f_{1})\\
& =  \sum_{\substack{\pi \in \Pi_\varphi(G_z)\\\pi\circ a\cong\pi}} 
  \tilde{\rho}(\tilde{s}) \ \mathrm{tr}\,\tilde{\pi} (X^{\mathrm{tr}}\tilde{f})
 - \sum_{\pi_{1} \in \Pi_{\varphi_{1}}(H)} \mathrm{tr}\, \pi_{1}((X^{\mathrm{tr}})_{1} f_{1})
\end{align*}
The infinitesimal character of each representation $\tilde{\pi}$ in
the first sum is stably conjugate to the infinitesimal character of $\pi \in
\Pi_{\varphi}(G_{z})$.  By the local Langlands 
correspondence, the infinitesimal characters of these representations
are stably conjugate to
to $\mathrm{Ad}(g)^{-1}if \in (\mathfrak{s}')^{*}$ 
where 
$$\varphi(1 \rtimes z) = z^{\mathrm{Ad}(g)^{-1} if} \,
\bar{z}^{\sigma_{S'}(\mathrm{Ad}(g)^{-1} if)}, \quad z \in
\mathbb{C}^{\times} \subset W_{\mathbb{R}}$$
for $\varphi$ chosen so that its image lies in ${^L}S'$.  Thus, 
$$ \sum_{\substack{\pi \in \Pi_\varphi(G_z)\\\pi\circ a\cong\pi}} 
  \tilde{\rho}(\tilde{s}) \ \mathrm{tr}\,\tilde{\pi}
  (X^{\mathrm{tr}}\tilde{f}) = if (\beta_{\mathfrak{g}}(X))
  \sum_{\substack{\pi \in \Pi_\varphi(G_z)\\\pi\circ a\cong\pi}}  
  \tilde{\rho}(\tilde{s}) \ \mathrm{tr}\,\tilde{\pi} (\tilde{f}),$$
and the first sum in $\Theta_{|}$ is an eigendistribution of
infinitesimal character $if$. 

To deal with the second sum appearing in $X \Theta_{|}$, we consider
the infinitesimal character of each representation $\pi_{1}
\in \Pi_{\varphi_{1}}(H)$. In this case the infinitesimal characters
$\mu_{1} \in (\mathfrak{s}^{H})^{*}$ are
given by 
$$\varphi_{1}(1 \rtimes z ) = z^{\mu_{1}}
\bar{z}^{\sigma_{S^{H}}(\mu_{1})}, \quad z \in \mathbb{C}^{\times}
\subset W_{\mathbb{R}}.$$ 
and $\mu_{1}$ is determined by the equation $\varphi = \xi \circ
\varphi_{1}$ as follows.  Fixing 
based $\chi$-data for $R_{\mathrm{res}}(S', G)$ as in Lemma
\ref{neartildedelta} and using the L-embeddings appearing in the proof
of that lemma, we see from \cite[\S 7.b]{SheTE2} that
$\varphi =  \xi_{S'} \circ \varphi_{S'}$, where $\varphi_{S'}:
W_{\mathbb{R}} \rightarrow {^L}S'$ is an L-parameter whose
infinitesimal character equals $\mathrm{Ad}(g)^{-1} if -
\rho_{G_{\mathrm{res}}}$.
Similarly, $\varphi_{1} =  \xi^{H} \circ  {^L}\eta \circ
\varphi_{S^{H}}$ where $\varphi_{S^{H}}: 
W_{\mathbb{R}} \rightarrow {^L}S'_{a}$ is an L-parameter whose
infinitesimal character equals $\mu_{1} - \rho_{H}$.
Substituting these equations into  $\varphi = \xi \circ \varphi_{1}$,
we find that
$$\xi_{S'}(\varphi_{S'}(z)) = \xi \circ \xi^{H} \circ {^L}\eta \circ
\varphi_{S^{H}}(z) = \xi'_{S'}(\varphi_{S^{H}}(z)) =
\xi_{S'}(a_{S'}(z) \, \varphi_{S^{H}}(z)), \quad z \in \mathbb{C}^{\times}.$$ 
As $\xi_{S'}$ is injective, we have in turn that
$$\varphi_{S'}(z) = a_{S'}(z) \, \varphi_{S^{H}}(z), \quad z \in
\mathbb{C}^{\times}$$
and 
$$\mathrm{Ad}(g)^{-1} if - \rho_{G_{\mathrm{res}}} = (\rho_{H}-
\rho_{G_{\mathrm{res}}}   + \mu^{*}) + (\mu_{1} -\rho_{H})$$
(Lemma \ref{neartildedelta}.3). 
Consequently, the infinitesimal character of $\varphi_{1}$ equals
$$\mu_{1} = \mathrm{Ad}(g)^{-1}if - \mu^{*}$$
under the identifications of $(\mathfrak{s}')^{a} \cong
\mathfrak{s}^{H}$.  It is tempting to think that the infinitesimal
character of the second sum of $\Theta_{|}$ is equal to $\mu_{1}$ and
not $\mathrm{Ad}(g)^{-1}if$.  However, 
the shift by $\mu^{*}$ is undone by the discrepancy between
$(X^{\mathrm{tr}})_{1}$ and $(X_{1})^{\mathrm{tr}}$.  
Indeed, from the characterization of the transpose map $X \mapsto
X^{\mathrm{tr}}$ given in \cite[\S X.4]{KnappSS} and the definition
of $X \mapsto X_{1}$ given in (\ref{zedone}), one may deduce that
$(X^{\mathrm{tr}})_{1}$ differs from $(X_{1})^{\mathrm{tr}}$ by
replacing $I_{-\mu^{*}}$ with $I_{\mu^{*}}$.   More precisely,
$$(X^{\mathrm{tr}})_{1} = (\beta^{-1}_{\mathfrak{h}} \circ I_{\mu^{*}}
\circ \eta^{-1} \circ \phi \circ 
\mathrm{Ad}(g)^{-1} \circ \beta_{\mathfrak{g}}(X))^{\mathrm{tr}}.$$
We substitute this into the second sum in $X\Theta_{|}$ and compute
\begin{align*}
 &\sum_{\pi_{1} \in \Pi_{\varphi_{1}}(H)} \mathrm{tr}\,
 \pi_{1}((X^{\mathrm{tr}})_{1} f_{1})\\
&= \mu_{1} \left( I_{\mu^{*}}
\circ \eta^{-1} \circ \phi \circ \mathrm{Ad}(g)^{-1} \circ
\beta_{\mathfrak{g}}(X) \right) \sum_{\pi_{1} \in 
  \Pi_{\varphi_{1}}(H)} \mathrm{tr}\,  \pi_{1}(f_{1}) \\
&= (\mu_{1}+ \mu^{*}) \left( \eta^{-1} \circ \phi \circ \mathrm{Ad}(g)^{-1} \circ
\beta_{\mathfrak{g}}(X) \right) \sum_{\pi_{1} \in 
  \Pi_{\varphi_{1}}(H)} \mathrm{tr}\,  \pi_{1}(f_{1}) \\
& = if (\beta_{\mathfrak{g}}(X)) \sum_{\pi_{1} \in 
  \Pi_{\varphi_{1}}(H)} \mathrm{tr}\,  \pi_{1}(f_{1}).
\end{align*}
This proves that the second sum in $\Theta_{|}$ is also an
eigendistribution of infinitesimal character $if$.
\end{proof}

\begin{thm}
\label{uniquethm}
Suppose $\Theta_{|}$ vanishes on the regular subset of
$S^{\tilde{\delta}}(\mathbb{R}) \tilde{\delta}$.  Then the
distribution $\Theta_{|}$ vanishes on 
$G_{z}(\mathbb{R})^{\circ}  \tilde{\delta}$, that is $\Theta_{|}  = 0$.
\end{thm}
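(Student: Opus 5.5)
We plan to deduce the theorem from Renard's twisted uniqueness theorem \cite[Theorem 15.1]{Ren97}, applied on the single connected component $G_z(\R)^\circ\tilde\delta$. In its twisted form, Harish-Chandra's uniqueness theorem says the following. Suppose an invariant eigendistribution on such a component is tempered, i.e. satisfies the appropriate growth condition. Suppose moreover that it vanishes on the regular elements of the elliptic twisted Cartan subset, here $S^{\tilde\delta}(\R)\tilde\delta$. Then it vanishes identically. We already know that $\Theta_|$ is invariant under $G_z(\R)^\circ$-conjugation, by the discussion preceding Lemma \ref{neartildedelta}. We also know it is an eigendistribution for $\mc{Z}(\mf{g})$ with infinitesimal character $if$, by Proposition \ref{eigendist}. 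So what remains is to match the remaining hypotheses of Renard's theorem and to check that the set where vanishing is assumed is the one that theorem requires.

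\textbf{Step 1: regularity and local integrability.} Using Bouaziz's regularity theorem for invariant eigendistributions on non-connected groups \cite{Bouaziz87}, we represent $\Theta_|$ by a locally integrable function that is analytic on the regular set. This is also visible directly from the construction. The first summand of $\Theta$ is a finite combination of characters of the representations $\tilde\pi$. The second summand is represented by the function of Lemma \ref{lem:twistlift}, which is built from $S\Theta_{\varphi_1}$ and the transfer factors.

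\textbf{Step 2: temperedness.} Both pieces of $\Theta$ are tempered. The first consists of characters of essentially discrete series representations, twisted by a central character. The second arises by transfer from a stable tempered character on $H_1(\R)$, and transfer of tempered stable distributions remains tempered. This supplies Renard's growth hypothesis.

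\textbf{Step 3: the elliptic Cartan subset.} Renard's theorem is phrased in terms of Cartan subspaces, that is, twisted Cartan subsets, of the component. If the component contains an elliptic twisted Cartan subset, the theorem asserts that vanishing on that subset forces the distribution to vanish. We show that $S^{\tilde\delta}(\R)\tilde\delta$, viewed inside $S^\natural=S\tilde\delta$, meets every $G_z(\R)^\circ$-conjugacy class of elliptic regular elements in the component. For this we use Lemma \ref{srell} together with the conjugacy of elliptic maximal tori under $G_z(\R)^\circ$, and we use Proposition \ref{pro:norm} to pass between $S^\natural$ and $S^\tau\tilde\delta$. Because $S$ is anisotropic modulo the center, the map $S\to S^\natural$, $s\mapsto s\tilde\delta s^{-1}$, moves elements within $(1-\tau)S\cdot\tilde\delta$. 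Consequently every element of $S^\natural(\R)$ in the component is $S(\R)$-conjugate to an element of $S^\tau(\R)\tilde\delta$, up to a finite ambiguity that is controlled by Lemma \ref{lem:elem}.

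\textbf{Step 4: conclusion.} By Steps 1–3, $\Theta_|$ satisfies all hypotheses of \cite[Theorem 15.1]{Ren97} and vanishes on the elliptic twisted Cartan subset. Hence $\Theta_|=0$ on all of $G_z(\R)^\circ\tilde\delta$.

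The main obstacle is Step 3, together with verifying the precise form of Renard's hypotheses. In particular, Renard may require vanishing on a neighborhood in the elliptic Cartan subspace, or a formulation with respect to $\tilde G(\R)^\circ$ rather than the component. Our first attempt will be to reduce to $(G_z)_{\mathrm{der}}$ with the central character factored out, so that anisotropy holds and Lemma \ref{lem:elem} gives the needed surjectivity.
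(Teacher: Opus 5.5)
\textbf{The gap: a missing hypothesis on the infinitesimal character.} Your overall route is the paper's: apply \cite[Theorem 15.1]{Ren97} on the component $G_z(\R)^\circ\tilde\delta$, using invariance, Proposition \ref{eigendist}, Bouaziz's regularity theorem and temperedness. However, the form of the twisted uniqueness theorem you state leaves out its essential hypothesis, so your proof never checks it.

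Neither Harish-Chandra's theorem nor Renard's twisted version applies to an arbitrary tempered invariant eigendistribution. The annihilating ideal in $\mc{Z}(\mf{g})$ must be of \emph{elliptic type} relative to the elliptic twisted Cartan subspace. This means the infinitesimal character must be regular and lie in $i(\mf{s}^{\tilde\delta})^*(\R)$. Without this condition your statement is already false for untwisted $\tx{SL}_2(\R)$. The character of a unitary principal series is a tempered invariant eigendistribution that vanishes on the regular elliptic set, yet it is nonzero.

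\textbf{How to fill it.} This is exactly where assumption \eqref{assumef} is used. Since $f$ is regular and $\tx{Ad}^*(\tilde\delta)f=f$, the element $if$ is a regular element of $i(\mf{s}^{\tilde\delta})^*(\R)$. Hence the kernel $\mathcal{B}$ of $X\mapsto if(\beta_{\mf{g}}(X))$ is of elliptic type. Because $\mc{Z}(\mf{g})/\langle X-if(\beta_{\mf{g}}(X))\rangle\cong\C$, the ideal generated by the $X-if(\beta_{\mf{g}}(X))$ is maximal and equals $\mathcal{B}$. Proposition \ref{eigendist} then shows that $\mathcal{B}$ annihilates $\Theta_{|}$. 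Knowing only that $\Theta_{|}$ is an eigendistribution, as your proposal does, is not enough.

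\textbf{Step 3 is unnecessary.} The step you flag as the main obstacle is not needed. In Renard's theorem one can take $\Omega=G_z(\R)^\circ\tilde\delta$ and $B=\exp(\mf{s}^{\tilde\delta}(\R))\tilde\delta$. This $B$ already lies in $S^{\tilde\delta}(\R)\tilde\delta$, so the hypothesis of the statement gives vanishing on the regular part of $B$ directly. No argument that $B$ meets every elliptic conjugacy class is required.

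\textbf{The growth hypothesis needs one more step.} Temperedness is not literally Renard's condition. You must convert it into a bound $D_{Ga}(x\delta)\,|\Theta(x)|\le C$ on the regular set of the component. The paper does this in three moves:
\begin{itemize}
\item it obtains temperedness of $\Theta_{|}$ from \cite{She12};
\item it represents $\Theta_{|}$ by an analytic function via Proposition \ref{eigendist} and \cite[Theorem 2.1.1]{Bouaziz87};
\item it deduces the bound from \cite[Proposition 3.6.1]{Bouaziz87}.
\end{itemize}
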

\begin{proof}
This is a straightforward application of \cite[Theorem 15.1]{Ren97}.
In Renard's theorem we may take $\Omega = G_{z}(\mathbb{R})^{\circ}
\tilde{\delta}$ and $B = \exp(\mathfrak{s}^{\tilde{\delta}}(\mathbb{R}))
\tilde{\delta}$. 
There are two hypotheses to be verified.  

The first hypothesis
is that $\Theta_{|}$ is annihilated by an
ideal $\mathcal{B} \subset \mathcal{Z}(\mathfrak{g})$ of ``elliptic
type''.  Recall from assumption (\ref{assumef}) that the regular
element $if \in \mathfrak{g}$  belongs to
$i(\mathfrak{s}^{\tilde{\delta}})^{*}(\mathbb{R})$. From 
these properties of $if$ it follows by definition that the
kernel $\mathcal{B}$ of the homomorphism $\mathcal{Z}(\mathfrak{g}) \rightarrow
\mathbb{C}$   
$$X \mapsto if( \beta_{\mathfrak{g}}(X)), \quad X \in
  \mathcal{Z}(\mathfrak{g})$$
is of elliptic type.  Clearly, the ideal $\langle  X - if(\beta_{\mathfrak{g}}(X))
\rangle$ generated by  the elements
$$ X - if(\beta_{\mathfrak{g}}(X)), \quad X \in
\mathcal{Z}(\mathfrak{g})$$
is contained in $\mathcal{B}$.  On the other hand, it is easy to see
that
$$\mathcal{Z}(\mathfrak{g})/ \langle  X - if(\beta_{\mathfrak{g}}(X))
\rangle \cong \mathbb{C}.$$
Therefore  $\langle  X - if(\beta_{\mathfrak{g}}(X))
\rangle$ is a maximal ideal and is equal to $\mathcal{B}$.
Proposition \ref{eigendist} tells us that $\Theta_{|}$ is annihilated
by the elliptic ideal $\langle  X - if(\beta_{\mathfrak{g}}(X))
\rangle = \mathcal{B}$.

The remaining hypothesis to be verified is that there exists $C > 0$
such that 
$$D_{Ga}(x \delta) \, |\Theta(x)| \leq C$$
for all regular elements $x \in G_{z}(\mathbb{R})^{\circ}
\tilde{\delta}$. 
In this inequality, $\Theta$ is taken to be a locally integrable
analytic function.  This is possible by Proposition  \ref{eigendist} and
\cite[Theorem 2.1.1]{Bouaziz87}.  Since $\Theta_{|}$ is a \emph{tempered}
invariant eigendistribution (\cite{She12}), the above inequality
is given by \cite[Proposition 3.6.1]{Bouaziz87}.  
\end{proof}
\begin{thm}
\label{thetazero}
Suppose $\Theta$ vanishes on the strongly regular elliptic
subset of $[G \rtimes a]_{z}(\mathbb{R})$.  Then the  distribution
$\Theta$ vanishes on $[G \rtimes a]_{z}(\mathbb{R})$. 
\end{thm}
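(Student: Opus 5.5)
The plan is to show that the restriction of $\Theta$ to every connected component of $[G \rtimes a]_z(\R)$ vanishes. Since these components are open and closed and cover the space, a partition of unity subordinate to them reduces $\Theta(\tilde f)=0$ to the case where $\tilde f$ is supported in a single component. Every component has the form $G_z(\R)^\circ\tilde\delta$ with $\tilde\delta$ strongly regular semi-simple. Following Corollary \ref{goodcomp2}, we treat separately the components that contain strongly regular elliptic elements and those that do not.

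\textbf{Components without regular elliptic elements.} Here the first term of \eqref{thetadist} is handled directly. By Corollary \ref{goodcomp2}, every character $\Theta_{\tilde\pi}$ occurring in $\tx{tr}(\pi\boxtimes\rho^\vee)^\tx{can}$ vanishes on the regular set of such a component. These characters are locally integrable functions representing the distributions, so they define the zero distribution there.

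For the $H$-side term we use Lemma \ref{lem:twistlift}. The restriction of $\tilde f\mapsto S\Theta_{\varphi_1}(f_1)$ is represented by $\tilde\delta\mapsto\sum_\gamma\Delta''_{KS}(\gamma_1,\tilde\delta)S\Theta_{\varphi_1}(\gamma_1)$. A nonzero term requires $\gamma$ to be a norm of $\tilde\delta$ with $S\Theta_{\varphi_1}(\gamma_1)\neq0$. By Harish-Chandra's description of discrete series characters on $H_1(\R)$, such $\gamma_1$ lie in Cartan subgroups meeting the closure of the elliptic torus conjugacy data. I would argue, using the relation of Cartan subspaces and tori in Definition \ref{dfn:relel} together with Lemma \ref{lem:ellnorm}, that a component of $[G\rtimes a]_z(\R)$ whose norms hit the support of $S\Theta_{\varphi_1}$ must contain an element related to an elliptic element of $H(\R)$. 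That element is then regular elliptic, which contradicts the assumption on the component.

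This step is the main obstacle, since the support of stable discrete series characters is not confined to elliptic tori. A cleaner alternative, which I would try first, is to apply Renard's uniqueness argument (Theorem \ref{uniquethm}) in a version for components without elliptic Cartan subspaces. In that version an invariant eigendistribution of elliptic-type infinitesimal character satisfying the growth bound must vanish identically, and this already follows from \cite[Theorem 15.1]{Ren97} with $B$ empty. The eigendistribution property on such a component holds by the same proof as Proposition \ref{eigendist}, since Proposition \ref{zmatch} was proved for all $\dot\delta_j$, not only elliptic ones. The bound follows from temperedness as in Theorem \ref{uniquethm}.

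\textbf{Components containing regular elliptic elements.} By Lemma \ref{srell} we choose the base point $\tilde\delta$ strongly regular elliptic and fixing a stable conjugate $f$ of $f_0$, as in \eqref{assumef}. The regular elements of $S^{\tilde\delta}(\R)\tilde\delta$ are regular elliptic. Those that are strongly regular are dense there, and by hypothesis $\Theta$ vanishes on them. Since $\Theta_{|}$ is represented by a locally integrable analytic function (Proposition \ref{eigendist} together with \cite[Theorem 2.1.1]{Bouaziz87}), it vanishes on the whole regular subset of $S^{\tilde\delta}(\R)\tilde\delta$. Theorem \ref{uniquethm} then gives $\Theta_{|}=0$.

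Summing over components gives $\Theta=0$.
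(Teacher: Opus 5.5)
The components containing strongly regular elliptic elements are handled exactly as in the paper, and so is the $G$-side term on the remaining components (via Corollary \ref{goodcomp2}). The gap is the $H$-side term $S\Theta_{\varphi_1}(f_1)$ on a component $G_z(\R)^\circ\tilde\delta$ that contains no elliptic elements. You leave your first route unproved, and your fallback does not work.

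Renard's Theorem 15.1, as used in Theorem \ref{uniquethm}, is a uniqueness statement relative to the twisted elliptic Cartan subset $B=\exp(\mf{s}^{\tilde\delta}(\R))\tilde\delta$. The hypothesis that the annihilating ideal be of elliptic type was verified through \eqref{assumef}, i.e.\ using an elliptic $\tilde\delta$ in the component with $if\in i(\mf{s}^{\tilde\delta})^*(\R)$. On your components there is no such $B$, so this hypothesis has nothing to refer to, and there is no ``$B$ empty'' version. A uniqueness theorem of this type deduces vanishing from vanishing on the elliptic Cartan subset; it does not produce vanishing by itself. Nor could such a version hold using only the growth bound and a regular real infinitesimal character. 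For example, the character of the tempered representation of $\mathrm{SL}_3(\R)$ unitarily induced from a discrete series of the maximal Levi subgroup, trivial on the split component, is a nonzero tempered invariant eigendistribution with regular real infinitesimal character. Yet $\mathrm{SL}_3(\R)$ has no elliptic Cartan subgroup.

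What is missing is a direct proof that $\Theta_{\pi_1}(\gamma_1)=0$ for every $\pi_1\in\Pi_{\varphi_1}(H)$ and every norm $\gamma_1$ of a strongly regular $\tilde\delta_1$ in such a component. The paper does this in two steps.

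First, the torus $S_1^H$ containing $\gamma_1$ is not elliptic. Otherwise the proof of Lemma \ref{lem:ellnorm}(1) gives that $(S_1')^{a,\circ}$ is elliptic in $G^{a,\circ}$. Proposition \ref{pro:aa} then makes $S_1'$ elliptic, and hence $S_1$ and $\tilde\delta_1$ are elliptic, a contradiction.

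Second, discrete series characters of $H(\R)$ are supported in $Z_H(\R)H_{\mathrm{der}}(\R)^\circ$ \cite[p.~134]{Lan89}. So it suffices to show $\gamma_1\notin Z_H(\R)H_{\mathrm{der}}(\R)^\circ$, and this is a descent argument. Suppose $\gamma_1$ did lie there.
\begin{itemize}
\item One finds a real root $\alpha$ with $\alpha(\gamma_1)>0$, using the decomposition $\gamma_{\mathrm{der}}=\gamma_A\gamma_T\gamma_F$ of \cite[Proposition 7.110]{KnappLie} and Corollary \ref{trivalpha}.
\item One moves within $\gamma_1S_1^H(\R)^\circ$ to a semiregular $\gamma_2$ with $\alpha(\gamma_2)=1$.
\item Shelstad's norms of semiregular elements and Cayley transforms \cite[Lemmas 6.1, 6.2, 6.6]{She12} then produce a norm in $Z_H(\R)H_{\mathrm{der}}(\R)^\circ$ of a strongly regular element of the same component, lying on a less split torus.
\end{itemize}
Iterating ends on an elliptic torus of $H$, contradicting the first step.

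Your sketch (``a component whose norms hit the support must contain an element related to an elliptic element'') points in this direction. What it lacks are exactly the support statement and the Cayley-transform descent. As you observed yourself, the support of $S\Theta_{\varphi_1}$ is not confined to elliptic tori, which is why some such descent is unavoidable.
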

\begin{proof}
The strongly regular subset is dense and open in the regular subset
(\emph{cf.}~\cite[Lemma 1.5.1]{Bouaziz87}).   Let $\Theta_{|}$ be the
restriction to a connected component $G_{z}(\mathbb{R})^{\circ}
\tilde{\delta}$ which contains strongly regular elliptic elements.  If
$\Theta_{|}$  vanishes on the strongly  
regular subset of $S^{\tilde{\delta}}(\mathbb{R}) \tilde{\delta}$
then, as an analytic function (\cite[Theorem 2.1.1]{Bouaziz87}), it
vanishes on the regular subset of $S^{\tilde{\delta}}(\mathbb{R})
\tilde{\delta}$ as well.  Theorem \ref{uniquethm} therefore applies
and shows that $\Theta$ vanishes on $G_{z}(\mathbb{R})^{\circ}
\tilde{\delta}$.

Now let $\Theta_{|}$ be the
restriction  of $\Theta$ to a connected component $G_{z}(\mathbb{R})^{\circ}
\tilde{\delta}$ which does not contain any strongly regular elliptic
elements.   Corollary
\ref{goodcomp2} implies that 
$$ \sum_{\substack{\pi \in \Pi_\varphi(G_z)\\\pi\circ a\cong\pi}} 
\mathrm{tr}(\pi \boxtimes \rho^{\vee})^{\mathrm{can}} (\tilde{f},
\tilde{s}^{-1}) = 0, \quad \tilde{f} \in
C_{c}^{\infty}(G_{z}(\mathbb{R})^{\circ} \tilde{\delta}).$$
We must prove that $S\Theta_{\varphi_{1}}(f_{1}) = 0$ as well.
The function $f_{1} \in C_{c}^{\infty}(H(\mathbb{R}))$ has support in
the set of norms of elements in $G_{z}(\mathbb{R})^{\circ}
\tilde{\delta}$.  To complete the proof, we shall prove that
$\Theta_{\pi_{1}}(\gamma_{1}) = 0$ 
for every $\pi_{1} \in \Pi_{\varphi_{1}}(H)$ and norm $\gamma_{1} \in
H(\mathbb{R})$. 
To this end, let $\tilde{\delta}_{1}  \in G_{z}(\mathbb{R})^{\circ}
\tilde{\delta}$ be strongly regular semi-simple, and suppose it has
norm $\gamma_{1} \in H(\mathbb{R})$ given by maps of pairs
$$(\tilde{\delta}_{1}, S_{1}) \mapsto
(\tilde{\delta}_{1}', S_{1}') \mapsto (\gamma_{1}', S_{1,a}')
\mapsto (\gamma_{1}, S_{1}^{H})$$
as in the proof of Lemma \ref{srell}.  

Let us prove that $S_{1}^{H}$ is not elliptic.  Suppose by way of
contradiction that $S_{1}^{H}$ is elliptic.  
The proof of Lemma \ref{lem:ellnorm}(1) then shows that $(S_{1}')^{a, \circ}$ is
elliptic in $G^{a,\circ}$. 
Proposition \ref{pro:aa} shows that $S_{1}'$ is elliptic.  This forces
$S_{1}$ and $\tilde{\delta}_{1}$ to be elliptic as well, and 
contradicts our hypothesis on $G_{z}(\mathbb{R})^{\circ}
\tilde{\delta}$.  Thus, the maximal torus $S_{1}^{H} \subset H$ is not
elliptic. 

Our next and final step is to prove that $\gamma_{1}$ does not belong
to $Z_{H}(\mathbb{R}) H_{\mathrm{der}}(\mathbb{R})^{\circ}$, where
$H_{\mathrm{der}}$ is the derived subgroup of $H$.  This step
proves $\Theta_{\pi_{1}}(\gamma_{1}) = 0$, for the support of
$\Theta_{\pi_{1}}$ lies in $Z_{H}(\mathbb{R})
H_{\mathrm{der}}(\mathbb{R})^{\circ}$ for every $\pi_{1} \in
\Pi_{\varphi_{1}}(H)$ (\cite[page 134]{Lan89}).   

Suppose by way of
contradiction that $\gamma_{1} \in Z_{H}(\mathbb{R})
H_{\mathrm{der}}(\mathbb{R})^{\circ}$.  As the torus $S_{1}^{H}$ is
not elliptic, there are real roots in $R(S_{1}^{H}, H)$
\cite[Proposition 11.16]{KnappSS}. 
 For any real
root $\alpha \in R(S_{1}^{H},H)$ the value $\alpha(\gamma_{1})$ is
non-zero and real.  We wish to find a real root $\alpha \in
R(S_{1}^{H},H)$ such that $\alpha(\gamma_{1}) > 0$.  Given such a real root
we choose $v \in \mathfrak{s}_{1}^{H}(\mathbb{R})$ with
$$\alpha(\exp(v)) = \exp( d \alpha(v)) = \alpha(\gamma_{1})^{-1}.$$
We may further choose $v$ so that 
$\gamma_{2}  = \gamma_{1} \exp(v) \in
\gamma_{1}S_{1}^{H}(\mathbb{R})^{\circ}$ is 
semiregular in the sense that $\alpha(\gamma_{2}) = 1$ and
$\beta(\gamma_{2}) \neq 1$ for all $\beta \in R(S_{1}^{H}, H)$, with
$\beta \neq \pm \alpha$. To find such a real root we decompose
$\gamma_{1} = z \,\gamma_{\mathrm{der}}$ where $z \in Z_{H}(\mathbb{R})$
and $\gamma_{\mathrm{der}} \in H_{\mathrm{der}}(\mathbb{R})^{\circ}
\cap S_{1}^{H}(\mathbb{R})$.  According to \cite[Proposition
7.110]{KnappLie}, the element $\gamma_{\mathrm{der}}$ may be
decomposed further as
$$\gamma_{\mathrm{der}} = \gamma_{A} \, \gamma_{T} \, \gamma_{F},$$
where $\gamma_{A}$ lies in the split component of $S_{1}^{H}$,
$\gamma_{T}$ lies in the anisotropic component of $S_{1}^{H}$, and
$\gamma_{F}$ is a product of elements of the form
$$\exp (2 \pi i |\alpha'|^{-2} H_{\alpha'})$$
for real roots $\alpha' \in R(S_{1}^{H} \cap H_{\mathrm{der}},
H_{\mathrm{der}}) \cong R(S_{1}^{H}, H)$ and $\alpha'(H_{\alpha'}) =
|\alpha'|^{2}$.  If $\gamma_{F} = 1$ then we may take $\alpha$ to be
any real root in $R(S_{1}^{H}, H)$ as 
$$\alpha(\gamma_{1}) = \exp(d
\alpha(\log(\gamma_{A}))) > 0.$$
If $\gamma_{F} \neq 1$ we choose  $\alpha$ as in Corollary
\ref{trivalpha} below.  With the desired root $\alpha$ ensured, we have a 
semiregular element
$$\gamma_{2}  \in \gamma_{1} S_{1}^{H}(\mathbb{R})^{\circ} \subset
Z_{H}(\mathbb{R}) H_{\mathrm{der}}(\mathbb{R})^{0}.$$

We shall now apply an argument of Shelstad to arrive at an element
$\gamma_{3} \in Z_{H}(\mathbb{R})
H_{\mathrm{der}}(\mathbb{R})^{\circ}$ which is the norm of an element in
$\tilde{\delta}_{3} \in G_{z}(\mathbb{R})^{\circ} \tilde{\delta}$ and
lies in a maximal torus which is less split than $S_{1}^{H}$.  
First off, by \cite[Lemma 6.1-6.2]{She12} the semiregular element
$\gamma_{2}$ is a $S_{1}^{H}$\emph{-norm} of an element
$$\tilde{\delta}_{2} \in S_{1}^{\tilde{\delta}_{1}}(\mathbb{R})^{\circ}
\tilde{\delta}_{1} \subset G_{z}(\mathbb{R})^{0} \tilde{\delta}.$$
This notion of norm 
allows for non-regular elements and reduces to the usual notion of
norm for strongly regular elements (\cite[page 1940]{She12}).
Let $d_{\alpha}$ be the Cayley transform which takes the torus
$S_{1}^{H}$ to the less split maximal torus $d_{\alpha}S_{1}^{H}
\subset H$.  The proof of \cite[Lemma 6.6 (ii)]{She12} shows that
$\gamma_{2} = d_{\alpha} \gamma_{2} \in d_{\alpha}S_{1}^{H}$ is a
$d_{\alpha}S_{1}^{H}$-norm 
of $\tilde{\delta}_{2}$.  Parallel to $d_{\alpha}$, Shelstad defines a Cayley
transform $\mathrm{Ad}(t)$ which sends
$(S_{1}^{\tilde{\delta}_{1}})(\mathbb{R})^{\circ}$ to the identity 
component $(tS_{1}^{\tilde{\delta}}t^{-1})(\mathbb{R})^{\circ}$ of a
less split torus of $G_{z}$.  By \cite[Lemma 
6.1]{She12}, there is a strongly regular semi-simple element
$$\tilde{\delta}_{3} \in
(tS_{1}^{\tilde{\delta}}t^{-1})(\mathbb{R})^{\circ} \tilde{\delta}_{2}
\subset  G_{z}(\mathbb{R})^{\circ} \tilde{\delta}$$
with norm
$$\gamma_{3} \in  (d_{\alpha}S_{1}^{H})(\mathbb{R})^{\circ} \gamma_{2}
\subset Z_{H}(\mathbb{R}) H_{\mathrm{der}}(\mathbb{R})^{\circ}.$$
This process may be repeated until one reaches a norm
of an element in $G_{z}(\mathbb{R})^{\circ}
\tilde{\delta}$ contained in a torus with no real roots,
\emph{i.e.}~an elliptic torus of $H(\mathbb{R})$ \cite[Proposition
  11.16]{KnappSS}.   However, we have proved that
elliptic tori contain no norms of elements in
$G_{z}(\mathbb{R})^{\circ} \tilde{\delta}$.  This concludes the proof
by contradiction, so 
that $\gamma_{1} \notin Z_{H}(\mathbb{R}) H_{\mathrm{der}}(\mathbb{R})^{\circ}$.
\end{proof}

We will now supply a technical result used in the above proof, with  slightly different notation. 
Suppose $G'$ is a real connected semisimple Lie group with a
complexification $G'_{\mathbb{C}}$ \cite[Section VII.1]{KnappLie}.
Let  $T'$ be a Cartan subgroup,
$\mathfrak{g}'$ be the complex Lie algebra of $G'$ and
$\mathfrak{t}'$ is the complex Lie algebra of $T'$.  Let $(\cdot,
\cdot)$ be the Killing form on $\mathfrak{t}'$. 
For every $\beta \in R(\mathfrak{t}',\mathfrak{g}')$ choose $H_{\beta}
\in \mathfrak{t}'$ 
such that $\beta(t) = (t, H_{\beta})$ for all $t \in \mathfrak{t}'$.
If $\beta \in R(\mathfrak{t}', \mathfrak{g}')$ is real then 
$$\gamma_{\beta} = \exp(2 \pi i | \beta |^{-2} H_{\beta}) \in T'$$
is an element of order two \cite[Proposition 7.110]{KnappLie}.

\begin{lem}
\label{rootprop}
Suppose $\gamma = \gamma_{\beta_{1}} \cdots \gamma_{\beta_{n}}$ for
real roots 
$\beta_{1}, \ldots , \beta_{n} \in R(\mathfrak{t}',\mathfrak{g}')$.  
Then $\gamma =  \gamma_{\beta_{1}'} \cdots \gamma_{\beta_{m}'}$ for
real roots  $\beta_{1}', \ldots, \beta_{m}'  \in
R(\mathfrak{t}',\mathfrak{g}')$ with 
the property that 
$$(\beta_{j}', \beta_{\ell}') \neq 0  \mbox{ and } j \neq \ell
\Rightarrow |\beta_{j}'|^{2} = 2|\beta_{\ell}'|^{2} \mbox{ or }
|\beta_{\ell}'|^{2} = 2|\beta_{j}'|^{2}.$$ 
\end{lem}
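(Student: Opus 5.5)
The idea is to rewrite each $\gamma_\beta$ in terms of coroots, use the fact that the $\gamma_\beta$ all commute and have order two, and then repeatedly shorten the product until no forbidden pair of roots remains. Everything happens inside the real root subsystem of $R(\mathfrak{t}',\mathfrak{g}')$, which is itself a root system.

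First I would rewrite the elements. Let $\beta^\vee = 2|\beta|^{-2}H_\beta$ be the coroot. Then $\gamma_\beta = \exp(\pi i \beta^\vee)$. All these elements lie in the abelian group $T'$, so they commute.

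Next I need that $\exp(2\pi i X)=1$ for every $X$ in the coroot lattice. For $X=\beta^\vee$ this is exactly the statement that $\gamma_\beta$ has order two, which is \cite[Proposition 7.110]{KnappLie}. The general case follows because $X\mapsto \exp(2\pi i X)$ is a homomorphism on the commutative algebra $\mathfrak{t}'$. Consequently $\gamma_\beta$ depends only on the class of $\beta^\vee$ modulo $2$ times the coroot lattice. In particular $\gamma_{-\beta}=\gamma_\beta$, and
\[ \gamma_\beta\gamma_{\beta'} = \exp\bigl(\pi i(\beta^\vee+\beta'^\vee)\bigr) = \exp\bigl(\pi i(\beta^\vee-\beta'^\vee)\bigr). \]

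The main step is a minimality argument. Among all expressions $\gamma=\gamma_{\beta_1'}\cdots\gamma_{\beta_m'}$ with real roots $\beta_j'$, I choose one with $m$ minimal, and claim that it has the required property. Suppose not. Then some pair $j\neq\ell$ has $(\beta_j',\beta_\ell')\neq 0$ and length ratio $1$ or $3$.

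If $\beta_\ell'=\pm\beta_j'$, the two factors multiply to $1$ and $m$ drops by two. Otherwise $\beta_j'$ and $\beta_\ell'$ span a rank-two real root subsystem of type $A_2$ or $G_2$, in which they are not orthogonal. Its coroots form a root system of the same type, and $\beta_j'^\vee,\beta_\ell'^\vee$ are non-orthogonal there with the same length ratio.
\begin{itemize}
\item In type $A_2$, one of $\beta_j'^\vee\pm\beta_\ell'^\vee$ is a coroot $\delta^\vee$.
\item In type $G_2$, a short and a long root meeting at angle $30^\circ$ or $150^\circ$ have a sum or difference that is again a root, so the same conclusion holds.
\end{itemize}
The root $\delta$ is real, being a $\mathbb{Z}$-combination of real roots. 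By the displayed identity, $\gamma_{\beta_j'}\gamma_{\beta_\ell'}=\gamma_\delta$, so the product can be rewritten with $m-1$ factors. This contradicts minimality.

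I expect the main obstacle to be the careful bookkeeping in this rank-two case analysis: one has to check that the sum or difference lands in the coroot system (not just in the root system) and that the correct sign can be chosen. Both points are handled by the invariance of $\gamma_\beta$ under sign changes modulo $2$ times the coroot lattice.
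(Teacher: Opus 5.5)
Your proof is correct, and its skeleton is the paper's: your minimal-$m$ argument is the paper's induction on $n$, and both merge a forbidden pair into a single factor.

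The difference is in how the merging identity is obtained. The paper uses the root $\beta_1\pm\beta_2$ and checks, in each length case, that $\gamma_{\beta_1}\gamma_{\beta_2}=\gamma_{\beta_1\pm\beta_2}$, absorbing the discrepancy with $\gamma_{\beta_1}^2=1$. You instead write $\gamma_\beta=\exp(\pi i\beta^\vee)$ and take $\delta$ with $\delta^\vee=(\beta_j')^\vee\pm(\beta_\ell')^\vee$ in the coroot system. The identity is then immediate and no length bookkeeping is needed. You also treat the case $\beta_\ell'=\pm\beta_j'$, which the paper's proof passes over; there neither $\beta_1+\beta_2$ nor $\beta_1-\beta_2$ is a root.

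Two small repairs are needed.
\begin{itemize}
\item Knapp's result gives $\exp(2\pi i\beta^\vee)=1$ only for real $\beta$. As justified, you therefore get invariance only modulo twice the lattice spanned by real coroots. That is all you use, and the full claim holds anyway since coroots are cocharacters of $G'_{\mathbb{C}}$.
\item Your $G_2$ bullet omits equal-length pairs inside a $G_2$ subsystem. Replace both your $A_2$ and $G_2$ bullets by the standard fact, applied in the coroot system: for non-proportional roots with positive (resp.\ negative) inner product, the difference (resp.\ sum) is a root.
\end{itemize}

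Since that fact needs no length hypothesis, your argument actually proves more. It also merges pairs of length ratio $2$. For instance, for $\beta$ long and $\beta'$ short at angle $135^\circ$ one has $\gamma_\beta\gamma_{\beta'}=\gamma_{\beta+2\beta'}$, whereas the paper's candidate gives $\gamma_{\beta+\beta'}=\gamma_{\beta'}$. Consequently a minimal expression consists of pairwise orthogonal real roots, and the second case in the proof of Corollary \ref{trivalpha} becomes unnecessary.
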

\begin{proof}
The facts about pairs of roots that we use below can all be found in
\cite[Section  9.4]{Humphreys80}. We prove by induction on $n \geq 1$.
The lemma holds vacuously for $n =1$.  Suppose $n >1$.  If the
property of the lemma does not hold for  $\beta_{1}, \ldots ,
\beta_{n},$ then  
without loss of generality $(\beta_{1}, \beta_{2}) \neq 0$ with either
$|\beta_{1}|^{2} = |\beta_{2}|^{2}$ or $|\beta_{1}|^{2} =
3|\beta_{2}|^{2}$.    In addition, either
$\beta_{1} + \beta_{2}$ or  $\beta_{1} - \beta_{2}$ is a real root.
Suppose first that  
$|\beta_{1}|^{2} = |\beta_{2}|^{2}$  and $\beta_{1} + \beta_{2}$ is a
root. Then $|\beta_{1} + \beta_{2}| = |\beta_{1}|
= |\beta_{2}|$,  $H_{\beta_{1}} + 
H_{\beta_{2}} = H_{\beta_{1} + \beta_{2}}$ and 
$$ |H_{\beta_{1}}|^{-2} H_{\beta_{1}} + |H_{\beta_{2}}|^{-2}
H_{\beta_{2}} =  |H_{\beta_{1} + \beta_{2}}  |^{-2} H_{\beta_{1} +
  \beta_{2}}.$$
It follows that $\gamma_{\beta_{1}} \gamma_{\beta_{2}} =
\gamma_{\beta_{1} + \beta_{2}}$ and $\gamma = \gamma_{\beta_{1} +
  \beta_{2}} \gamma_{\beta_{3}} \cdots \gamma_{\beta_{n}}$.  The same
computations leads to $\gamma = \gamma_{\beta_{1} -
  \beta_{2}} \gamma_{\beta_{3}} \cdots \gamma_{\beta_{n}}$ if
$\beta_{1} - \beta_{2}$ is a root since $\gamma_{\beta_{2}} =
\gamma_{\beta_{2}}^{-1}$.  In both of these cases we are able to apply the
induction assumption.  
Now  suppose $|\beta_{1}|^{2} = 3 |\beta_{2}|^{2}$ and $\beta_{1} +
\beta_{2}$ is a root.  Then $|\beta_{1} + \beta_{2}| = |\beta_{2}|$
and
\begin{align*}
& |\beta_{1} + \beta_{2}|^{-2} H_{\beta_{1}+ \beta_{2}}
-|\beta_{1}|^{-2} H_{\beta_{1}} - |\beta_{2}|^{-2} H_{\beta_{2}} \\
& = \frac{3( H_{\beta_{1}} + H_{\beta_{2}})  - H_{\beta_{1}}  - 3
    H_{\beta_{2}} }{3 |\beta_{2}|^{2}} \\
& = 2 |\beta_{1}|^{- 2} H_{\beta_{1}}.
\end{align*}
It follows that
$$\gamma_{\beta_{1} + \beta_{2}} \gamma_{\beta_{1}}^{-1} \gamma_{\beta_{2}}^{-1}
= \exp (4 \pi |\beta_{1}|^{-2} H_{\beta_{1}}) = \gamma_{\beta_{1}}^{2}
= 1,$$
$\gamma_{\beta_{1} +
  \beta_{2}} = \gamma_{\beta_{1}} \gamma_{\beta_{2}}$, and $\gamma =
\gamma_{\beta_{1} + \beta_{2}} 
\gamma_{\beta_{3}} \cdots \gamma_{\beta_{n}}$.  Similar computations
lead to $\gamma = \gamma_{\beta_{1} - 
  \beta_{2}} \gamma_{\beta_{3}} \cdots \gamma_{\beta_{n}}$ if
$\beta_{1} - \beta_{2}$ is a root.  The induction assumption therefore
applies to  these remaining cases and the proof is complete.
\end{proof}

\begin{cor}
\label{trivalpha}
Suppose $\gamma = \gamma_{\beta_{1}} \cdots \gamma_{\beta_{n}}$ for
real roots  $\beta_{1}, \ldots , \beta_{n} \in
R(\mathfrak{t}',\mathfrak{g}')$.  Then there exists a real root $\alpha \in
R(T'_{\mathbb{C}}, G'_{\mathbb{C}})$ such that $\alpha(\gamma) = 1$. 
\end{cor}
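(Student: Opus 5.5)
The plan is to compute $\alpha(\gamma)$ explicitly as a sign. Then, after normalizing the factorization with Lemma \ref{rootprop}, I choose $\alpha$ to be one of the factors, taken as long as possible.

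First I compute the value of a real root on a single factor $\gamma_\beta$. For real roots $\alpha,\beta$ we have $d\alpha(H_\beta)=(H_\alpha,H_\beta)=(\alpha,\beta)$, so
\[ \alpha(\gamma_\beta)=\exp\bigl(2\pi i\,|\beta|^{-2}(\alpha,\beta)\bigr)=(-1)^{\langle\alpha,\beta^\vee\rangle}, \]
because $\langle\alpha,\beta^\vee\rangle=2(\alpha,\beta)/|\beta|^2$ is an integer. Consequently $\alpha(\gamma)=\prod_j(-1)^{\langle\alpha,\beta_j^\vee\rangle}$ for any factorization $\gamma=\prod_j\gamma_{\beta_j}$.

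Next I apply Lemma \ref{rootprop} to rewrite $\gamma=\gamma_{\beta_1'}\cdots\gamma_{\beta_m'}$, where any two non-orthogonal factors with distinct indices have squared lengths in ratio $2$. In particular no two distinct indices carry the same root or opposite roots, since then $(\beta_j',\beta_\ell')\neq 0$ with equal lengths. If $m=0$, i.e.\ $\gamma=1$, then any real root works. Real roots exist because the elements $\gamma_\beta$ in the hypothesis are built from them; in the application they come from the non-ellipticity of $S_1^H$.

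If $m\ge1$, let $\alpha=\beta_\ell'$ be a factor of maximal length among the $\beta_j'$. The factor $j=\ell$ contributes $(-1)^{2}=1$. A factor $j\neq\ell$ orthogonal to $\alpha$ contributes $1$. A factor $j\neq\ell$ not orthogonal to $\alpha$ must satisfy $|\alpha|^2=2|\beta_j'|^2$, by maximality together with the normalized property. In that rank-two subsystem of type $B_2$ one has $\langle\alpha,\beta_j'^\vee\rangle=2(\alpha,\beta_j')/|\beta_j'|^2=\pm2$, so this factor also contributes $1$. Hence $\alpha(\gamma)=1$, and $\alpha$ is a real root.

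The only real content is Lemma \ref{rootprop}, which I take as given. The step needing care is the parity computation $\langle \text{long},\text{short}^\vee\rangle=\pm2$, and the observation that choosing $\alpha$ short instead would fail, since $\langle\text{short},\text{long}^\vee\rangle=\pm1$.
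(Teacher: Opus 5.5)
Your proof is correct and follows the paper's argument. You normalize the factorization with Lemma \ref{rootprop}, take $\alpha$ to be a long factor, and use $\alpha(\gamma_\beta)=(-1)^{\langle\alpha,\beta^\vee\rangle}$ together with $\langle \text{long},\text{short}^\vee\rangle=\pm2$. The only difference is that choosing a factor of maximal length merges the paper's two cases (mutually orthogonal factors, versus a non-orthogonal pair whose long member is chosen, where the paper tacitly uses that an irreducible root system has at most two root lengths), and you also treat the degenerate case $m=0$, which the paper passes over.
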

\begin{proof}
Let $\gamma =  \gamma_{\beta_{1}'} \cdots \gamma_{\beta_{m}'}$ for
real roots satisfying the property of Lemma \ref{rootprop}.  If the
the roots $\beta_{1}', \ldots , \beta_{m}'$ are mutually orthogonal
then choose $\alpha$ so that its differential is $\beta_{1}'$.  We
compute that $\alpha(\gamma)$ equals
\begin{align*}
 &\exp(2\pi i
|\beta_{1}'|^{-2}\beta_{1'}(H_{\beta_{1}'})) \exp( 2 \pi i
|\beta_{2}'|^{-2} \beta_{1}'(H_{\beta_{2}'}) ) \cdots  \exp( 2 \pi i
|\beta_{m}'|^{-2} \beta_{1}'(H_{\beta_{m}'}) )\\
& =  \exp(2\pi i
|\beta_{1}'|^{-2}| | \beta_{1}'|^{2}) \exp( 2 \pi i
|\beta_{2}'|^{-2} 0 ) \cdots  \exp( 2 \pi i
|\beta_{m}'|^{-2} 0 ) = 1.
\end{align*}
Otherwise, without loss of generality,  $(\beta_{1}', \beta_{2}') \neq
0$ and $|\beta_{1}'|^{2} = 2|\beta_{2}'|^{2}$.  We choose
$\alpha$ so that its differential is the long root $\beta_{1}'$. The corollary
follows by expanding  $\alpha(\gamma)$ as above and using the fact
that  $2 |\beta_{j}'|^{-2} \,\beta_{1}'(H_{\beta_{j}'})$ is an even
integer for all $1 \leq j \leq m$ \cite[Section 9.4, Table 1]{Humphreys80}.
\end{proof}

\subsection{The left hand side: sliced version} \label{sub:lhs_sliced}

We now begin with the main part of the proof of Theorem \ref{thm:main}, which will consist of developing alternative formulas for both sides of \eqref{eq:main1} under the assumption that $\tilde\delta$ is strongly regular elliptic, and then showing that they agree. In this subsection we will focus on the left-hand side of \eqref{eq:main1}. For this, let $a \in A^{[\varphi],[z]}$. Since only the coset $G \rtimes a$ of $\tilde G$ will play a role for our computation (this is what we mean by the word ``sliced''), we replace $A$ by its subgroup $\<a\>$.

The coset $[G \rtimes a]_z(\R)$ is non-empty, because $a \in A^{[z]}$, and we consider an elliptic regular semi-simple element $\tilde\delta \in [G \rtimes a]_z(\R)$. Also $[\hat G \rtimes a^{-1}]_\varphi \subset \tilde S_\varphi$ is non-empty and we let $\tilde S_{\varphi,a}^+$ be its preimage in $\tilde S_\varphi^+$ and fix $\tilde s \in \tilde S_{\varphi,a}^+$. The left-hand side of \eqref{eq:main1} is
\begin{equation} \label{eq:lhs_sliced}
e(G_z)\sum \tx{tr}(\pi \boxtimes \rho^\vee)^\tx{can}(\tilde\delta,\tilde s^{-1}),
\end{equation}
where the sum runs over those $\pi \in \Pi_\varphi(G_z)$ whose isomorphism class is stable under conjugation by $\tilde G_z(\R)$, $\rho \in \tx{Irr}(S_\varphi^+)$ corresponds to $\pi$, and $(\pi\boxtimes\rho^\vee)^\tx{can}$ is the canonical extension of $\pi\boxtimes\rho$ to $\tilde G_z(\R) \times_A \tilde S_\varphi^+$.

In this subsection we will develop a formula for \eqref{eq:lhs_sliced}, whose final form will be given in \eqref{eq:lhs_sliced_final} below.
The first part of it will be the evaluation of a given summand $\tx{tr}(\pi \boxtimes \rho^\vee)^\tx{can}(\tilde\delta,\tilde s^{-1})$ in \eqref{eq:lhs_sliced}, whose formula will be given in \eqref{eq:el0} below. 

To that end, fix $\pi \in \Pi_\varphi(G_z)$ stable under conjugation by $\tilde G_z(\R)$ and let $\rho \in \tx{Irr}(\pi_0(S_\varphi^+))$ correspond to it. The stability of the isomorphism class of $\pi$ under $\tilde G_z(\R)$ is equivalent to the stability of the isomorphism class of $\rho$ under $\tilde S^+_\varphi$. Because $\pi_0(S_\varphi^+)$ is finite abelian, $\rho$ is a finite order character, and hence  has an extension to a character of $\pi_0(\tilde S^+_\varphi)$. To obtain such an extension $\tilde\rho$ it is enough to choose $\sqrt[n]{\rho(\tilde s^n)} \in \C^\times$, where $n=\tx{ord}(a)$, and then set $\tilde\rho(\tilde s)=\sqrt[n]{\rho(\tilde s^n)}$. In fact, there is a preferred choice of $\sqrt[n]{\rho(\tilde s^n)}$ once $i=\sqrt{-1} \in \C^\times$ has been fixed, because we can write $\rho(\tilde s^n)=e^{ix}$ for a unique $x \in [0,2\pi)$ and then take $\tilde\rho(\tilde s)=e^{ix/n}$. The particular choice will however be irrelevant for us.

Lemma \ref{lem:rest} implies that $\pi$ has an extension $\tilde\pi$ to a representation of $\tilde G_z(\R)$. Then $(\pi\boxtimes\rho^\vee)^\tx{can}$ is obtained as the restriction of $\tilde\pi \boxtimes \tilde\rho^\vee$ from $\tilde G_z(\R) \times \tilde S^+_\varphi$ to $\tilde G_z(\R) \times_A \tilde S^+_\varphi$. This restriction does not depend on the choice of extension $\tilde\rho$ of $\rho$; indeed if we replace $\tilde\rho$ by $\tilde\rho\otimes\chi$ for a character $\chi : A \to \C^\times$, then $\tilde\pi$ is replaced by $\tilde\pi\otimes\chi$, and $\tilde\pi\boxtimes\tilde\rho^\vee$ is replaced by $(\tilde\pi\boxtimes\tilde\rho^\vee) \otimes (\chi\boxtimes\chi^{-1})$, but the second factor is trivial on $A \times_A A$.

We now compute 
\[ \tx{tr}(\pi \boxtimes \rho^\vee)^\tx{can}(\tilde\delta,\tilde s^{-1}) = \tx{tr}(\tilde\pi(\tilde\delta)) \cdot \tx{tr}(\tilde\rho^\vee(\tilde s^{-1})).\]
We have $\tx{tr}(\tilde\rho^\vee(\tilde s^{-1}))=\tx{tr}(\tilde\rho(\tilde s))=\tilde\rho(\tilde s)$, the latter since $\tilde\rho$ is a character. For $\tx{tr}(\tilde\pi(\tilde\delta))$ we invoke Bouaziz's formula given in Theorem \ref{thm:bouaziz}. If no element in the elliptic regular conjugacy class in $\mf{g}_z^*(\R)$ corresponding to $\pi$ commutes with $\tilde\delta$ the character is zero. Otherwise let $f \in \mf{g}_z^*(\R)$ belong to this conjugacy class and commute with $\tilde\delta$ and set $\tilde S := \tx{Cent}(f,\tilde G_z) $ and $S = \tilde S \cap G_z$, so that $S$ is an elliptic maximal torus of $G_z$ and $\tilde\delta \in \tilde S(\R)$. The formula of Bouaziz then says
\[ \tx{tr}(\tilde\pi(\tilde\delta)) = (-1)^{q(G_z)}\sum_{\substack{u \in N_{\tilde G_z(\R)}(S)/\tilde S(\R) \\ u^{-1}\tilde\delta u \in \tilde S(\R)}}\frac{\tilde\tau_\lhd(u^{-1}\tilde\delta u)}{\det(1-u^{-1}\tilde\delta u|\mf{u}_{f})},\]
where $\tilde\tau$ is the genuine character of $\tilde S(\R)_G$ corresponding to $\tilde\pi$, $\tilde\tau_\lhd$ is as in \eqref{eq:ttaulhd}, and $f=d\tilde\tau/i=d\tau/i$. Note here that $\tilde\tau$ is indeed a character, i.e. a 1-dimensional representation, because $\tilde\pi$ is an extension of $\pi$, and hence $\tilde\tau$ is an extension of $\tau$, the latter being a genuine character of $S(\R)_G$.

Here as before $\tilde G_z=[G \rtimes A]_z$, but recall that we now have $A=A^{[\varphi],[z]}=\<a\>$. Since both $N_{\tilde G_z(\R)}(S)$ and $\tilde S(\R)$ surject onto $\<a\>$ the natural inclusion induces an isomorphism $\Omega_\R(G_z,S) := N_{G_z(\R)}(S)/S(\R) \to N_{\tilde G_z(\R)}(S)/\tilde S(\R)$. An element $u \in \Omega_\R(G_z,S)$ contributes to the above sum if and only if it satisfies $u^{-1}\tilde\delta u \in \tilde S(\R)$, which is equivalent to $u^{-1}\tilde\delta u \tilde\delta^{-1} \in \tilde S(\R) \cap G_z(\R) = S(\R)$. Note that $\tilde\delta \in \tilde S(\R)$ normalizes $S$, hence acts on $\Omega_\R(G_z,S)$, and the condition on $u$ is equivalent to $u=\tilde\delta u \tilde\delta^{-1}$ in $\Omega_\R(G_z,S)$. The character formula now becomes
\[ \tx{tr}(\tilde\pi(\tilde\delta)) = (-1)^{q(G_z)}\sum_{u \in \Omega_\R(G_z,S)^{\tilde\delta}}\frac{\tilde\tau_\lhd(u^{-1}\tilde\delta u)}{\det(1-u^{-1}\tilde\delta u|\mf{u}_{f})}.\]
Since the extension $\tilde\rho$ of $\rho$ is linked to the extension $\tilde\tau$ of $\tau$, we will write $\<\tilde\tau_\lhd,\tilde s\>$ in place of $\tilde\rho(\tilde s)$. This leads to 
\begin{equation} \label{eq:el0}
\tx{tr}(\pi \boxtimes \rho^\vee)^\tx{can}(\tilde\delta,\tilde s^{-1}) = (-1)^{q(G_z)}\<\tilde\tau_\lhd,\tilde s\>\sum_{u \in \Omega_\R(G_z,S)^{\tilde\delta}}\frac{\tilde\tau_\lhd(u^{-1}\tilde\delta u)}{\det(1-u^{-1}\tilde\delta u|\mf{u}_{f})}.
\end{equation}
As already remarked, the right hand side does not depend on the choice of extension $\tilde\tau$ of $\tau$.

Next we consider the sum in \eqref{eq:lhs_sliced}. It runs over the set of those $\pi' \in \Pi_\varphi(G_z)$ whose isomorphism class is stable under one, hence any, element of $[G \rtimes a]_z(\R)$. One such element is $\tilde\delta$. If $(S',\tau')$ represents the Harish-Chandra parameter of some $\pi' \in \Pi_\varphi(G_z)$, then $\pi'$ is stable under conjugation by $\tilde\delta$ if and only if the $G_z(\R)$\-conjugacy class of $(S',\tau')$ is. On the other hand, in order for the character of $\pi'$ to not vanish at $\tilde\delta$, the condition is that some $G_z(\R)$\-conjugate of $f'=d\tau'/i$ commutes with $\tilde\delta$. But if we choose $(S',\tau')$ in its $G_z(\R)$\-conjugacy class so that $f'=d\tau'/i$ commutes with $\tilde\delta$, then $S'=\tx{Cent}(f',G_z)$ is normalized by $\tilde\delta$ and moreover $\tau'\circ\tx{Ad}(\tilde\delta)$ is a genuine character of $S'(\R)_G$ that is both $\Omega_\R(S',G_z)$\-conjugate to $\tau'$ and has the same regular differential $if'$ as $\tau'$, hence is equal to $\tau'$. 

Thus, the sum in \eqref{eq:lhs_sliced} runs over those $\pi'$ whose Harish-Chandra parameter has a representative $(S',\tau')$ that is fixed by $\tx{Ad}(\tilde\delta)$, equivalently $f'=d\tau'/i$ is fixed by $\tx{Ad}(\tilde\delta)$. 

In the derivation of \eqref{eq:el0} we already fixed one such $\pi$ with Harish-Chandra parameter $(S,\tau)$ and set $f=d\tau/i$ and $\tilde S=\tx{Cent}(f,\tilde G_z)$. The set of of elements of $\mf{g}_z^*(\R)$ that are fixed by $\tx{Ad}(\tilde\delta)$ lies in $\mf{s}^*(\R)$ and equals $\mf{s}^*(\R)^{\tilde\delta}$. The Harish-Chandra parameter of any $\pi' \in \Pi_\varphi(G_z)$ can be represented by $(S,w\tau)$ for some $w \in \Omega(S,G_z)$. Then $wf \in \mf{s}^*(\R)^{\tilde\delta}$ if and only if $w \in \Omega(S,G_z)^{\tilde\delta}$ by the argument given above. We conclude that the set
\[ \{(S,w\tau)|w \in \Omega_\R(S,G_z)^{\tilde\delta} \lmod \Omega(S,G_z)^{\tilde\delta} \} \]
is a set of representatives for the Harish-Chandra parameters of those elements of $\Pi_\varphi(G_z)$ whose isomorphism class is stable under the coset $[G \rtimes a]_z(\R)$ and whose character doesn't vanish on $\tilde\delta$. 

Since $\tilde S$ is generated by $S$ and $\tilde\delta$, it is normalized by any $w \in \Omega(S,G_z)^{\tilde\delta}$. Therefore the formula \eqref{eq:el0} is valued for the representation with parameter $(S,w\tau)$. With this \eqref{eq:lhs_sliced} becomes
\[ (-1)^{q(G)} \sum_{w \in \Omega_\R(S,G_z)^{\tilde\delta} \lmod \Omega(S,G_z)^{\tilde\delta}} \<\tilde\tau_{w,\lhd},\tilde s\>\sum_{u \in \Omega_\R(G_z,S)^{\tilde\delta}}\frac{\tilde\tau_{w,\lhd}(u^{-1}\tilde\delta u)}{\det(1-u^{-1}\tilde\delta u|\mf{u}_{wf})}, \]
where $\tilde\tau_w$ is an arbitrary extension to $\tilde S(\R)_G$ of the genuine character $\tau_w=w\tau$ of $S(\R)_G$.
Then we can combine the two sums and arrive at the formula
\[ (-1)^{q(G)} \sum_{w \in \Omega(S,G_z)^{\tilde\delta}} \frac{\<\tilde\tau_{w,\lhd},\tilde s\> \cdot \tilde\tau_{w,\lhd}(\tilde\delta)}{\det(1-\tilde\delta |\mf{u}_{wf})}. \]

Our next task is to express the numerator in the sum in terms of an abstract norm of $\tilde\delta$. For this, let $(S_\mf{w},\tau_\mf{w})$ denote an $A$-stable representative of the Harish-Chandra parameter of the unique $\mf{w}$-generic representation in $\Pi_\varphi(G)$, obtained from Proposition \ref{pro:adgen}. Let $g \in G(\C)$ be such that $\tx{Ad}(g)(S_\mf{w},\tau_\mf{w})=(S,\tau)$. Set $\tilde\delta_\mf{w}=\tx{Ad}(g)^{-1}\tilde\delta \in G(\C) \rtimes a$. Since $\tilde\delta$ commutes with $f=d\tau/i$, so does $\tilde\delta_\mf{w}$ commute with $f_\mf{w}=d\tau_\mf{w}/i$. The $a$-stability of $(S_\mf{w},\tau_\mf{w})$ implies that $f_\mf{w}$ is $a$-fixed. Therefore, writing $\tilde\delta_\mf{w}=\delta_\mf{w} \rtimes a$ we conclude that $\delta_\mf{w}$ commutes with $f_\mf{w}$, and hence $\delta_\mf{w} \in S_\mf{w}(\C)$. Let $\gamma_\mf{w}$ be the image of $\delta_\mf{w}$ in the group of $a$-coinvariants of $S_\mf{w}(\C)$.

We claim that $(S_\mf{w},\gamma_\mf{w})$ is an abstract norm for $\tilde\delta$. We only need to check that $\gamma_\mf{w} \in (S_\mf{w})_a(\R)$. The equation $\tx{Ad}(g)f_\mf{w}=f$ and the rationality properties of $f_\mf{w}$ and $f$ imply that $z'_\sigma = g^{-1}z_\sigma\sigma(g) \in S_\mf{w}$. Since $(z_\sigma,\tilde\delta)$ commute, so do $(z'_\sigma,\tilde\delta_\mf{w})$, but this implies $\gamma_\mf{w} \in (S_\mf{w})_a(\R)$, as desired.

To express the numerator in the above sum in terms of $(S_\mf{w},\gamma_\mf{w})$ and the elements $g$ and $z'$, consider  $w \in \Omega(S,G_z)^{\tilde\delta}$ and let $\tilde\tau_w$ be an arbitrary extension to $\tilde S(\R)_G$ of the genuine character $\tau_w=w\tau$ of $S(\R)_G$. Corresponding to $\tilde\tau_w$ there is a representation $\tilde\pi_w \in \Pi_\varphi(\tilde G_z)$, as well as a character $\tilde\rho_w$ of $\tilde S_\varphi^{[z]}$. The term $\<\tilde\tau_{w,\lhd},\tilde s\>$ is the value at $\tilde s$ of $\tilde\rho_w$. From the discussion following Lemma \ref{lem:whit} we recall that the association  between $\tilde\tau_w$ and $\tilde\pi_w$ was made via Duflo's construction, but involves twisting by the character $\epsilon_{\tilde G}$; thus Duflo's construction maps $\tilde\tau_w$ to $\tilde\pi_w \otimes \epsilon_{\tilde G}$. The association between $\tilde\tau_w$ and $\tilde\rho_w$ was made using the following data: the isomorphism
\begin{equation} \label{eq:el1}
\tx{Cent}(\varphi_\mf{w},\hat S_\mf{w} \rtimes A) \to \tilde S_\varphi  
\end{equation}
of Lemma \ref{lem:is}, and the inner twist 
\[ \tx{Ad}(h) : \tilde S_\mf{w} \to \tilde S \]
given by any $h \in G(\C)$ with $\tx{Ad}(h)f_\mf{w}=wf$, as discussed in the proof of Corollary \ref{cor:is}. 

To make this a bit more explicit, set $\tilde S_\mf{w}=\tx{Cent}(f_\mf{w},\tilde G)$. The $a$-stable $f_\mf{w}$ determines an $a$-stable $\C$-Borel subgroup containing $S_\mf{w}$, and such that the corresponding simple roots are imaginary non-compact.  Via \eqref{eq:el1} $\tilde\rho_w$ is identified with a character of $\tx{Cent}(\varphi_\mf{w},\hat S_\mf{w} \rtimes A)$, which via the local Langlands correspondence for disconnected tori was associated to the character $\tilde\tau_{w,\lhd}$ of $\tilde S(\R)$, seen as an inner twist of $\tilde S_\mf{w}$ via $\tx{Ad}(h)$. 

We now use Proposition \ref{pro:luo} and, keeping in mind the twist by $\epsilon_{\tilde G}$, see that
\begin{equation} \label{eq:el2}
\<\tilde\tau_{w,\lhd},\tilde s\>\tilde\tau_{w,\lhd}(\tilde\delta) = (-1)^{q(G)-q(G^a)}\<(\varphi_0^{-1},s_0),(z_w^{-1},\delta_w)\>_\tx{TN}^{-1},  
\end{equation}
where the notation is as follows. We transport $\tilde s$ under \eqref{eq:el1} to an element of  $\tx{Cent}(\varphi_\mf{w},\hat S_\mf{w} \rtimes A)$ which we decompose as $s_0 \rtimes a^{-1} \in \hat S_\mf{w} \rtimes a^{-1}$. We transport $\tilde\delta \in \tilde S(\R)$ under $\tx{Ad}(h)^{-1}$ to an element of $\tilde S_\mf{w}(\C) = S_\mf{w} \rtimes a$ and decompose this element as $\delta_w \rtimes a$. We decompose $\varphi_\mf{w}(x) = \varphi_0(x) \rtimes x$, and have $z_w(\sigma) = h^{-1}z_\sigma\sigma(h)$.

We have $h=n'g$ for a representative $n'$ of $w \in \Omega(S,G_z)^{\tilde\delta}$. The isomorphism $\tx{Ad}(g) : \Omega(S_\mf{w},G) \to \Omega(S,G_z)$ translates the action of $\tilde\delta$ on its target to the action of $\tilde\delta_w$ on its source. But the action of $\tilde\delta_w$ coincides with the action of $a$. Therefore we can write $h=gn$ with $n \in N(S_\mf{w},G)(\C)$ lifting $\tx{Ad}(g)^{-1}w \in \Omega(S_\mf{w},G)^a$. According to \cite[Proposition 3.4.2(8), Proposition D.0.3(8)]{KalLLCD} we can choose $n \in N(S_\mf{w},G)(\C)^a$. This gives a finer choice of $h$, with respect to which we can express $z_w$ and $\delta_w$ more precisely, namely $\delta_w=w^{-1}\delta_\mf{w}w$ and $z_w(\sigma)=w^{-1}z
'(\sigma)w \cdot \sigma(n)^{-1}n$.

The final formula for \eqref{eq:lhs_sliced} becomes
\begin{equation} \label{eq:lhs_sliced_final}
(-1)^{q(G^a)} \sum_{w \in \Omega(S_\mf{w},G)^a} \frac{\<(\varphi_0^{-1},s_0),((n^{-1}z'_\sigma\sigma(n))^{-1},\delta_\mf{w}^w)\>_\tx{TN}^{-1}}{\det(1-\tilde\delta_\mf{w} |\mf{u}_{wf_\mf{w}})}.  
\end{equation}

\subsection{The right hand side: sliced version} \label{sub:rhs_sliced}

In this subsection we will develop an alternative formula for the right-hand side of \eqref{eq:main1} under the assumption that $\tilde\delta$ is strongly regular elliptic. This formula is \eqref{eq:rhs_sliced_final} below. 

We begin by recalling the right-hand side of \eqref{eq:main1}. Fixing a strongly regular elliptic element $\tilde\delta \in [G \rtimes a]_z(\R)$, it is given by
\begin{equation} \label{eq:rhs_sliced}
\sum_{\gamma \in H(\R)^\tx{sr}/\tx{st}} \Delta_\tx{KS}''(\gamma_1,\tilde\delta) S\Theta_{\varphi_1}(\gamma_1). 
\end{equation}
Here the sum runs over the set of stable classes of strongly regular semi-simple elements $\gamma$ in $H(\R)$ and $\gamma_1 \in H_1(\R)$ is an arbitrary lift of $\gamma$. The factor $\Delta''_\tx{KS}$ is the factor defined in \cite[\S4.9, equation (4.3)]{KalLLCD}, but modified so that the contribution of $\Delta_{IV}$ is inverted.

We now fix $f \in \mf{g}_z^*(\R)$ in the stable class corresponding to the $L$-packet on $G_z$ that commutes with $\tilde\delta$, which exists by Lemma \ref{srell}. As before we set $\tilde S=\tx{Cent}(f,\tilde G_z)$ and $S = \tilde S \cap G_z$, and have $\tilde\delta \in \tilde S(\R)$. We also choose an elliptic maximal torus $S^H \subset H$ and fix an element $f^H \in (\mf{s}^H)^*(\R)$ in the stable conjugacy class corresponding to the $L$-packet on $H$.

As in the previous subsection we use the $a$-admissible Whittaker datum $\mf{w}$ to obtain, via Proposition \ref{pro:adgen}, an $a$-stable representative $(S_\mf{w},\tau_\mf{w})$ of the Harish-Chandra parameter of the unique $\mf{w}$-generic member of $\Pi_\varphi(G)$, set $f_\mf{w}=d\tau_\mf{w}/i$, fix $g \in G(\C)$ such that $\tx{Ad}(g)f_\mf{w}=f$, set $\tilde\delta_\mf{w}=\tx{Ad}(g)^{-1}\tilde\delta$, and observe that $\tilde\delta_\mf{w}=\delta_\mf{w} \rtimes a \in S_\mf{w}(\C) \rtimes a$ and that the image $\gamma_\mf{w} \in (S_\mf{w})_a(\C)$ of $\delta_\mf{w}$ is an $\R$-point, so that $(S_\mf{w},\gamma_\mf{w})$ is an abstract norm for $\tilde\delta$. Choose finally an admissible isomorphism $\eta : S^H \to (S_\mf{w})_a$ sending $f^H$ to $f_\mf{w}$.

The next lemma will examine the summation index of \eqref{eq:rhs_sliced} and prepare for the evaluation of $\Delta_{KS}''$.

\begin{lem} \label{lem:allnorms}
     Let $\gamma=\eta^{-1}(\gamma_\mf{w})$. A set of representatives for the stable classes of elements of $H(\R)$ that are related to $\tilde\delta$ is given by $\{w\gamma|w \in \Omega(S^H,H) \lmod \Omega(S_\mf{w},G)^a\}$, where we are letting $\Omega(S_\mf{w},G)^a$ act on $S^H$ via $\eta$.
\end{lem}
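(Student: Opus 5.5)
The plan is to reduce everything to the Cartan subspace picture of \S\ref{sub:norm}: a stable class $\gamma' \in H(\R)$ is related to $\tilde\delta$ exactly when some admissible relation $S'^H \to S_\tau$ sends $\gamma'$ to the image of $\tilde\delta$ under $S^\natural(\R)\to S_\tau(\R)$ of Remark \ref{rem:norm}(3), where $S^\natural=S\tilde\delta$. Transporting by $\tx{Ad}(g)^{-1}$, which identifies $S$ with $S_\mf{w}$, $\tau$ with $a$, and the image of $\tilde\delta$ with $\gamma_\mf{w}$, the question becomes: which elements of $H(\R)$, up to stable conjugacy, are carried to $\gamma_\mf{w}\in(S_\mf{w})_a(\R)$ by some admissible isomorphism from a maximal torus of $H$ onto $(S_\mf{w})_a$ (composed with an $\tx{Ad}(x)$, $x \in G^{a,\circ}(\C)$, as allowed in Proposition \ref{pro:norm}(4)).

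First I would show that every norm of $\tilde\delta$ is stably conjugate to an element of $S^H(\R)$. By Lemma \ref{lem:ellnorm}(2) any norm $\gamma'$ lies in an elliptic maximal torus of $H$. All elliptic maximal tori of $H$ are stably conjugate, so $\gamma'$ may be taken in $S^H(\R)$. Next I would classify the admissible isomorphisms $S^H \to (S_\mf{w})_a$. By the construction recalled in \S\ref{sub:admiso}, two of them differ by an element of $\Omega(S_\mf{w},G)^a$, acting on $(S_\mf{w})_a$. Here one uses that $\Omega(S_\mf{w},G)^a \cong \Omega(\hat S_\mf{w},\hat G)^a$ is the Weyl group of $(\hat G^a)^\circ$ (\cite[Proposition 3.4.2]{KalLLCD}). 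Conversely, every such twist of $\eta$ is again admissible, since the choice of Borel subgroups in the definition is free. Moreover the changes of the element $g$ permitted by Proposition \ref{pro:norm}(4) also act through $\Omega(S_\mf{w},G)^a$. Hence the norms of $\tilde\delta$ lying in $S^H(\C)$ are exactly the elements $\eta^{-1}(w\gamma_\mf{w})=w\gamma$ for $w\in\Omega(S_\mf{w},G)^a$.

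Rationality is automatic. The element $w\gamma_\mf{w}$ is an $\R$-point because $\Omega(S_\mf{w},G)^a$ acts on the elliptic torus $(S_\mf{w})_a$, on which $\sigma$ acts by $-1$ modulo the centre, so the Weyl action commutes with $\sigma$. Two such elements $w\gamma$ and $w'\gamma$ are stably conjugate in $H$ iff they are conjugate under $\Omega(S^H,H)$, since they are strongly regular in the same torus and stable conjugacy there is governed by the absolute Weyl group. Via $\eta$ and \cite[Theorem 1.1.A]{KS99}, $\Omega(S^H,H)$ embeds into $\Omega(S_\mf{w},G)^a$, so the stable classes are indexed by $\Omega(S^H,H)\lmod\Omega(S_\mf{w},G)^a$. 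It remains to check that distinct cosets give distinct classes, which follows from strong regularity of $\gamma$: the stabilizer of $\gamma_\mf{w}$ in $\Omega(S_\mf{w},G)^a$ is trivial because $\tilde\delta$ is strongly regular.

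The main obstacle will be the precise claim that the set of admissible isomorphisms (together with the freedom in $g$) is a single $\Omega(S_\mf{w},G)^a$-torsor over $\eta$. This requires tracking the choices of Borel pairs $(S^H,C^H)$ and $(S',C')$ in the Kottwitz--Shelstad construction. I would handle it by working on the dual side: an admissible isomorphism is determined by an admissible embedding $\hat S^H\to\hat S'_a$, which is pinned down up to $\Omega(\hat S',\hat G)^a$.
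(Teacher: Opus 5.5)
Your proposal is correct and follows essentially the same route as the paper. Both arguments first move every related element into $S^H(\R)$ using ellipticity. Both then identify the related elements of $S^H(\R)$ with the orbit $\Omega(S_\mf{w},G)^a\cdot\gamma$; the paper cites uniqueness of abstract norms up to stable $G^{a,\circ}$-conjugacy for this, and you unpack the same fact via Proposition \ref{pro:norm}(4) and the torsor structure of admissible isomorphisms. Both finish by passing to $\Omega(S^H,H)$-orbits.

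You also spell out two points the paper leaves tacit, and both are correct: the twisted isomorphisms $w\circ\eta$ are defined over $\R$ because the tori are elliptic, and the action of $\Omega(S_\mf{w},G)^a$ on $\gamma_\mf{w}$ is free by strong regularity.
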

\begin{proof}
  Since all elements of $H(\R)$ that are related to $\tilde\delta$ are elliptic and strongly regular, they are conjugate under $H(\R)$ to elements of $S^H(\R)$. Moreover, the uniqueness of the stable $G^{a,\circ}$-class of $(S_\mf{w},\gamma_\mf{w})$ (\cite[Lemma 3.3.1(2)]{KalLLCD}) implies that the set of all elements of $S^H(\R)$ that are related to $\tilde\delta$ is given by $\Omega(S_\mf{w},G)^a \cdot \gamma$, where the action of $\Omega(S_\mf{w},G)^a$ on $(S_\mf{w})_a$ is transported to an action on $S^H$ via the admissible isomorphism $\eta$. Finally, any two such elements are stably conjugate in $H$ if and only if they are in the same $\Omega(S^H,H)$-orbit.
\end{proof}

We fix $\gamma =\eta^{-1}(\gamma_\mf{w})$ as in the Lemma \ref{lem:allnorms}, and choose further a lift $\gamma_1 \in H_1(\R)$ of $\gamma$. The action of $\Omega(S_\mf{w},G)^a$ on $(S_\mf{w})_a$ transported to $S^H$ via $\eta$ lifts to an action on the preimage $S^{H_1} \subset H_1$ of $S^H$, so we can form $w\gamma_1w^{-1}$ for any $w \in \Omega(S',G)^a$. According to Lemma \ref{lem:allnorms}, \eqref{eq:rhs_sliced} becomes

\[ \sum_{w \in \Omega(S^H,H) \lmod \Omega(S_\mf{w},G)^a} \Delta_\tx{KS}''(w\gamma_1w^{-1},\tilde\delta) S\Theta_{\varphi_1}(w\gamma_1w^{-1}).  \]
Applying Harish-Chandra's formula for the stable character of a discrete series $L$-packet (see e.g. \cite[\S5.3]{DPR}), the above expression becomes
\[ (-1)^{q(H)}\!\!\!\!\!\sum_{w \in \Omega(S^H,H) \lmod \Omega(S_\mf{w},G)^a}\!\!\!\!\! \Delta_\tx{KS}''(w\gamma_1w^{-1},\tilde\delta) \sum_{u \in \Omega(S^H,H)} \frac{\tau_{H_1,\lhd} (uw\gamma_1w^{-1}u^{-1})}{\det(1-uw\gamma w^{-1}u^{-1} |\mf{u}_{f^H})}. \]
We are using here the genuine character $\tau_{H_1}$ of $S^{H_1}(\R)_G$ that gives rise to the $L$-packet on $H_1(\R)$. We combine the two sums and use the stable invariance of $\Delta_\tx{KS}''$ in the first argument to see that the above expression, and thus \eqref{eq:rhs_sliced}, equals
\begin{equation} \label{eq:rs1}
(-1)^{q(H)}\sum_{w \in \Omega(S_\mf{w},G)^a} \Delta_\tx{KS}''(w\gamma_1w^{-1},\tilde\delta)  \frac{\tau_{H_1,\lhd}(w\gamma_1w^{-1})}{\det(1-w\gamma w^{-1} |\mf{u}_{f^H})}.  
\end{equation}
To continue, we must evaluate the transfer factor 
\[ \Delta''_\tx{KS} = \epsilon \cdot \Delta_I^{-1} \cdot \Delta_{II} \cdot \Delta_{III}^{-1} \cdot \Delta_{IV}^{-1}.
\] 
It is a normalization of the Kottwitz-Shelstad twisted transfer factor of \cite{KS99}, \cite{KS12}.  The term $\Delta_I$ is defined in \cite[\S3.4]{KS12} and called $\Delta_I^\tx{new}$ there. Since we are working over $\R$, it happens to coincide with the term defined in \cite[\S4.2]{KS99}, as discussed in \cite[Proposition 3.5.2(2)]{KS12}. The terms $\Delta_{II}$ and $\Delta_{IV}$ are defined in \cite[\S4.3,\S4.5]{KS99}. The term $\Delta_{III}$ is defined in \cite[\S4.10,\S4.11]{KalLLCD} and denoted by $\Delta_{III}^\tx{new}$ there. It is a specific refinement of the relative term $\Delta_{III}$ defined in \cite[\S4.4]{KS99} that takes into account the inner form data.

The next lemmas will compute $\Delta_{KS}''(w\gamma_1w^{-1},\tilde\delta)$. Recall that the pieces $\Delta_I$, $\Delta_{II}$, and $\Delta_{IV}$ of the transfer factor depend only on $\gamma \in H(\R)$  and not on the lift $\gamma_1  \in H_1(\R)$. Only the piece $\Delta_{III}$ depends on $\gamma_1$. These various pieces depend on further data, which we now fix.

Choose $R_{f_\mf{w}}^+$-based $a$-data for $R(S_\mf{w},G)$: $a_\alpha:=i$ for any $\alpha \in R_{f_\mf{w}}^+$. These data are $a$-stable, since $f_\mf{w}$ is $a$-fixed. Choose further $R_{f_\mf{w}}^-$-based $\chi$-data for $R_\tx{res}(S_\mf{w},G)$: $\chi_\alpha(z):=\tx{sgn}_\C(z):=z/|z|$ for any $\alpha \in R_{f_\mf{w}}^-$.

In our set-up, $(S_\mf{w},\gamma_\mf{w})$ is an abstract norm of $\tilde\delta$, $\eta: S^H \to (S_\mf{w})_a$ maps  $\gamma$ to $\gamma_\mf{w}$ and $f^H$ to $f_\mf{w}=\tx{Ad}(g)^{-1}f$, $\delta_\mf{w} \in S_\mf{w}(\C)$ projects to $\gamma_\mf{w}$, and we have $g \in G(\C)$ such that $g \tilde\delta_\mf{w} g^{-1}=\tilde\delta$, where $\tilde\delta_\mf{w} = \delta_\mf{w} \rtimes a$. Note that $\tx{inv}(\tilde\delta_\mf{w},\tilde\delta)=\tx{inv}(f_\mf{w},f)$, both being represented by $z'_\sigma = g^{-1}z_\sigma\sigma(g)$. These data will be used in the computation of $\Delta''_{KS}(\gamma_1,\tilde\delta)$.

But we need to compute more generally $\Delta''_{KS}(w\gamma_1w^{-1},\tilde\delta)$ for $w \in \Omega(S',G)^a$. For this, it will be more convenient to replace the $\mf{w}$-subscript on $\tilde\delta_\mf{w}$ with a prime superscript, i.e. write $\tilde\delta':=\tilde\delta_\mf{w}$. Set $\gamma_w=w\gamma w^{-1}$, $\gamma'_w=w\gamma'w^{-1}$, and $\tilde\delta'_w=w\tilde\delta'w^{-1}=w\delta'w^{-1} \rtimes a$. We use the same admissible isomorphism $\eta : S^H \to (S')_a$, which still maps $f^H$ to $f'=\tx{Ad}(g)^{-1}f$, and now maps $\gamma_w$ to $\gamma'_w$. Also $\delta'_w$ projects to $\gamma'_w$. The element $g$ must now be replaced by $gn^{-1}$, where $n \in N_G(S')^a$ maps to $w$. Note that $\tx{inv}(\tilde\delta_w',\tilde\delta)\neq\tx{inv}(f',f)$, because $\tx{inv}(f',f)$ is still represented by $z'_\sigma$ as above, while $\tx{inv}(\tilde\delta_w',\tilde\delta)$ is represented by $nz'_\sigma\sigma(n)^{-1}=w(\tx{inv}(f',f)) \cdot \tx{inv}(\tilde\delta'_w,\tilde\delta')$.

\begin{lem} \label{lem:d24}
\[ \epsilon\frac{\Delta_{II}(\gamma_w,\tilde\delta)}{\Delta_{IV}(\gamma_w,\tilde\delta)} = (-1)^{q(G^a)-q(H)} \cdot \frac{\det(1-\gamma_w|\mf{u}_{f^H})}{\det(1-\tilde\delta_w'|\mf{u}_{f'})}. \]
\end{lem}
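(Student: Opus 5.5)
We will compute each factor explicitly, using the data fixed just before the statement: the $R_{f_\mf{w}}^+$-based $a$-data on $R(S',G)$ (with $S'=S_\mf{w}$), the abstract norm $(S',\gamma'_w)$ given by $\tilde\delta'_w$, and the admissible isomorphism $\eta$. The sign $\epsilon$ (the product $e([G\rtimes a]_z)\epsilon_L(V,\psi)$ in the normalization of \cite[\S4.9]{KalLLCD}) will be evaluated separately: over $\R$ with $S'$ elliptic, it reduces to $\epsilon_L$ of the virtual representation $X^*(S'_a)\otimes\C - X^*(S^H)\otimes\C$, in fact of $X^*(T)^a-X^*(T_H)$. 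We expect this contributes a power of $i$ that combines with the $\Delta_{II}$ constants below.

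For $\Delta_{IV}$, recall from \cite[\S4.5]{KS99} that $\Delta_{IV}(\gamma_w,\tilde\delta)=|D_{Ga}(\delta'_w)|^{1/2}/|D_H(\gamma_w)|^{1/2}$, with $D_{Ga}(\delta'_w)=\det(1-\tilde\delta'_w|\mf{g}/\mf{s}')$ (up to the restricted-root normalization) and $D_H(\gamma_w)=\det(1-\gamma_w|\mf{h}/\mf{s}^H)$. Corollary \ref{cor:pn_d4} gives $|\det(1-\tilde\delta'_w|\mf{g}/\mf{s}')|^{1/2}=|\det(1-\tilde\delta'_w|\mf{u}_{f'})|$, and the analogous connected statement gives $|D_H(\gamma_w)|^{1/2}=|\det(1-\gamma_w|\mf{u}_{f^H})|$. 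Hence $\Delta_{IV}^{-1}$ equals the ratio of absolute values of the two determinants in the claim, and it remains to show that $\epsilon\Delta_{II}$ is exactly the corresponding phase $(-1)^{q(G^a)-q(H)}\,\frac{\det(1-\gamma_w|\mf{u}_{f^H})/|\cdot|}{\det(1-\tilde\delta'_w|\mf{u}_{f'})/|\cdot|}$.

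For $\Delta_{II}$, we will use the orbit decomposition of Lemma \ref{lem:pn_detroot}. Here $\Delta_{II}$ is a product over $a$-orbits $\mc{O}$ of restricted roots of $S'$ outside the image of $R(S^H,H)$, each factor being $\chi_{\alpha_{\rm res}}\big((N\alpha(\delta'_w)-1)/a_\alpha\big)$ (with the R3 modification for type-$R_3$ orbits). By Lemma \ref{lem:pn_detroot}, $N\alpha(\delta'_w)=\alpha(\tilde\delta'^{|\mc{O}|}_w)$ and $\det(1-\tilde\delta'_w|\mf{g}_\mc{O})=1-N\alpha(\delta'_w)$. The roots $\pm\mc{O}$ pair up, and since the $\chi$-data are $\tx{sgn}_\C$ on $R^-_{f'}$ and the $a$-data equal $i$ on $R^+_{f'}$, each factor becomes $\pm\tx{sgn}_\C(1-\alpha(\tilde\delta'^n_w))$ times a power of $i$, with the sign governed by which of $\pm\mc{O}$ lies in $R_{f'}^+$. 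Because $w\in\Omega(S',G)^a$ and $\mf{u}_{f'}$ is $a$-stable, the orbits contained in $\mf{u}_{f'}$ account for the phase of $\det(1-\tilde\delta'_w|\mf{u}_{f'})^{-1}$ over the non-$H$ part. The $H$-part then combines with the $H$-factor of $\det(1-\gamma_w|\mf{u}_{f^H})$, since $\eta$ carries $f^H$ to $f'$ and therefore matches $R^+_{f^H}$ with the $H$-roots in $R^+_{f'}$ (for type R2/R3 the $H$-root is twice or half the restricted root, and one checks the phases agree).

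The main obstacle is the bookkeeping of the leftover constants: the powers of $i$ from the $a$-data, from $\epsilon_L$, and the $\pm$ signs from the R3 orbits. All of these must multiply to $(-1)^{q(G^a)-q(H)}$. We would approach this by noting that these constants are independent of $w$ and of $\gamma$, reducing to a root-count: the number of non-$H$ restricted positive roots of $G^1=G^{a,\circ}$ equals $\dim(\mf{u}^{G^1})-\dim(\mf{u}^H)$, and $q(G^a)-q(H)$ has that parity modulo the compact roots. Then, as in the proof that $\epsilon_{\tilde G}$ is a homomorphism, we would use \cite[Lemma 3.6.3]{KalLLCD} and the fact that simple roots for $f_\mf{w}$ are non-compact to count $i^{2\cdot\#}$ factors. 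If the direct count is unwieldy, we would instead verify the identity at one convenient $\tilde\delta$ and for $w=1$, and use that both sides transform identically under $w$ and vary continuously in $\tilde\delta$.
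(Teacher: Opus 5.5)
You handle $\Delta_{IV}$ via Corollary \ref{cor:pn_d4}, and the phases of the $\Delta_{II}$ factors orbit by orbit via Lemma \ref{lem:pn_detroot} and the $R_{f'}^+$-based $a$-data and $\chi$-data. That is essentially the paper's argument; the paper organizes it as $\Delta_{II}^G/\Delta_{II}^H$ with full products, which avoids matching $H$-roots to $G$-orbits. This correctly reduces the lemma to a single constant, independent of $w$ and $\tilde\delta$. Determining that constant is the real content of the lemma, and your proposal does not do it. Two inputs are missing.

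First, $\epsilon$ must be computed. It is $\epsilon(X^*(T^G)^a_\C-X^*(T^H)_\C,\Lambda)$, with $T^G,T^H$ the tori of pinnings of the \emph{quasi-split} groups. If you use $S'_a$ and $S^H$ instead, which are $\R$-isomorphic via $\eta$, the virtual representation is zero and you get $\epsilon=1$, which is wrong. Split each character lattice into its split part (trivial Galois action) and anisotropic part (sign character). Then $\epsilon(1/2,\mathbf{1},\Lambda)=1$ and $\epsilon(1/2,\tx{sgn},\Lambda)=i$ give $\epsilon=i^{\dim A^H-\dim A^G_a}$.

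Second, the $a$-data contribute $i^{r_H/2-r_{G,a}/2}$, where $r_{G,a}$ counts the $a$-orbits \emph{not} of type R3. The R3 factors $\tx{sgn}_\C(N\alpha+1)^{-1}$ involve no $a$-data and match $1+N\alpha$ exactly, so there are no extra R3 signs. To convert the exponent into $q$'s you need $2q=\dim A+\#R^+$, from the Iwasawa decomposition of the quasi-split group, applied to $H$ and to $G^a$. For $G^a$ this uses $\dim\mf{n}_0^a=r_{G,a}/2$ (Lemma \ref{lem:dimrga}). The total exponent $r_H/2-r_{G,a}/2+\dim A^H-\dim A^G_a$ then equals $-2(q(G^a)-q(H))$.

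Your proposed substitutes do not supply this. The two constants are individually not signs. For $G=\mathrm{SL}_2$ with trivial $a$ and $H$ an anisotropic torus, $\epsilon=i^{-1}$ and the $a$-data constant is also $i^{-1}$. So a scheme that counts ``$i^{2\cdot\#}$'' factors cannot work. Likewise, arguments about compact versus non-compact roots (including \cite[Lemma 3.6.3]{KalLLCD} and non-compactness of the $f_\mf{w}$-simple roots) never see the split-rank term $\dim A^H-\dim A^G_a$ that $\epsilon$ contributes. The assertion that $q(G^a)-q(H)$ ``has that parity modulo the compact roots'' only becomes usable once you have the Iwasawa identity above.

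The fallback of checking one convenient $\tilde\delta$ and propagating in $w$ and by continuity is circular. Independence of $w$ and $\tilde\delta$ is already evident, and evaluating the left-hand side at any single point requires exactly the explicit values of $\epsilon$ and of the $a$-data constant that are missing.
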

\begin{proof}
Let $r_G$ denote the number of roots of $G$, $r_H$ the number of roots of $H$, $r_{G,a}$ the number of those $a$-orbits in the set of roots in $G$ that are not of type R3, and $A^G$ is the maximal split torus in the quasi-split group $G$.

By construction $\Delta_{II}=\Delta_{II}^G/\Delta_{II}^H$ and $\Delta_{IV}=\Delta_{IV}^G/\Delta_{IV}^H$. We begin on the $H$-side, where 
\[ \Delta_{IV}^H(\gamma_w)=|\det(1-\gamma_w|\mf{h}/\mf{s}_H)|^{1/2}\quad\tx{and}\quad \Delta_{II}(\gamma_w)=\!\!\!\!\!\prod_{\alpha \in R(S^H,H)/\Gamma}\!\!\!\!\!\chi_\alpha\left(\frac{\alpha(\gamma_w)-1}{a_\alpha}\right). \]
The set $R_{f_H}^+$ is a set of representatives for $R(S^H,H)/\Gamma$ and the choices of $a$-data and $\chi$-data imply
\[ \Delta_{II}^H(\gamma_w)=\prod_{\alpha \in R_{f_H}^+}\tx{sgn}_\C\left(\frac{\alpha(\gamma_w)-1}{i}\right)^{-1} = i^{-r_H/2} \cdot \prod_{\alpha \in R_{f_H}^+}\tx{sgn}_\C(1-\alpha(\gamma_w))^{-1}. \]
Combining this with Corollary \ref{cor:pn_d4} applied to the connected case this leads to 
\[ \frac{\Delta_{IV}^H(\gamma_w)}{\Delta_{II}^H(\gamma_w)} = i^{r_H/2} \cdot \det(1-\gamma_w|\mf{u}_{f^H}). \]
Turning to the $G$-side, 

we have
\[ \Delta_{IV}^G(\tilde\delta) = |\det(1-\tilde\delta|\mf{g}/\mf{s})|^{1/2} = |\det(1-\tilde\delta_w'|\mf{g}/\mf{s'})|^{1/2}, \]
which Lemma \ref{lem:pn_detroot} and Corollary \ref{cor:pn_d4} equate with
\[ \prod |1-\alpha(\tilde\delta^{|\mc{O}|})| = \prod |1-\alpha'(\tilde\delta_w'^{|\mc{O'}|})|. \]
The first product runs over the set of $\tx{Ad}^*(\tilde\delta)$ orbits $\mc{O}$ in $R_f^+$, while the second runs over the set of $\tx{Ad}^*(\tilde\delta')$ orbits $\mc{O}'$ in $R_{f'}^+$, and $\alpha \in \mc{O}$ resp. $\alpha' \in \mc{O'}$ are arbitrary choices. We recall again that $\tilde\delta^{|\mc{O}|}$ is an element of $\tilde S_\alpha$ that may not lie in $S$, and hence $\alpha$ is also taken to be the natural extension of the root $\alpha$ to a homomorphism $\tilde S_\alpha \to \mb{G}_m$, discussed just before Lemma \ref{lem:pn_detroot}.

Using the element $\tilde\delta_w'$ we can describe this more explicitly as follows. By assumption $\tilde\delta_w'=\delta_w' \rtimes a \in S' \rtimes a$, where $S'$ is a maximal $\R$-torus of $G$ that is $a$-stable, and contained in an $a$-stable Borel $\C$-subgroup of $G$. We are assuming that the automorphism $a$ preserves a pinning of $G$. Since any two $a$-stable $\C$-Borel pairs of $G$ are conjugate under $G^{a,\circ}$, we conclude that there exists a set of simple root vectors in $R(S',G)$ that is permuted by $a$. This in turn implies that, if some power of $a$ fixes a root $\alpha$, it acts on the root line $\mf{g}_\alpha$ by multiplication by either $+1$ or $-1$, and the latter happens only if $\alpha=\alpha_1+\alpha_2$ for two roots $\alpha_1,\alpha_2$ that are switched by this power of $a$. In the terminology of \cite{KS99}, the root $\alpha$ is of ``type R3'' and the roots $\alpha_1,\alpha_2$ are of ``type R2''. We also recall that an orbit of ``type R1'' is one in which the sum of any two members is not a root, and where no member is fixed by a non-trivial power of $a$. 

Thus, if $\mc{O'}$ is an orbit of type R1 or R2, then $\tilde\delta'^{|\mc{O'}|}$ lies in $S$ and equals $\delta_w' \cdot a(\delta_w') \cdots a^{|\mc{O'}|-1}(\delta_w')$, and therefore $\alpha'(\tilde\delta_w'^{|\mc{O'}|})$ is the value of the root $\alpha'$ at this element; this is denoted by $N\alpha'(\delta_w')$ in \cite{KS99}. On the other hand, if $\mc{O'}$ is of ``type R3'', then $\tilde\delta_w'^{|\mc{O'}|}$ does not lie in $S$, and instead equals $\delta_w' \cdot a(\delta_w') \cdots a^{|\mc{O'}|/2-1}(\delta_w') \rtimes a^{|\mc{O'}|/2}$, and the value of $\alpha'$ on this element is $-\alpha'(\delta_w' \cdot a(\delta_w') \cdots a^{|\mc{O'}|/2-1}(\delta_w'))$, which is denoted by $-N\alpha'(\delta_w')$ in \cite{KS99}. In this notation
\[ \Delta_{IV}^G(\tilde\delta) = \prod |1-N\alpha'(\delta_w')| \cdot \prod |1+N\alpha'(\delta_w')|, \]
where the first product runs over the set of orbits of $a$ in $R_{f'}^+$ of type R1 or R2, and the second product runs over the set of orbits of type R3. On the other hand, 
\[ \Delta_{II}^G(\tilde\delta) = \prod \tx{sgn}_\C\left(\frac{N\alpha'(\delta_w')-1}{i}\right)^{-1} \cdot \prod \tx{sgn}_\C(N\alpha'(\delta_w')+1)^{-1},\]
where the products run over the same sets. Reversing the arguments above we conclude
\[ \frac{\Delta_{IV}^G(\tilde\delta)}{\Delta_{II}^G(\tilde\delta)} = i^{r_{G,a}/2} \cdot \det(1-\tilde\delta_w'|\mf{u}_{f'}), \]
where $r_{G,a}$ denotes the number of $a$-orbits of roots of $G$ that are not of type R3. 

In summary, we have shown
\[ \frac{\Delta_{II}(\gamma_w,\tilde\delta)}{\Delta_{IV}(\gamma_w,\tilde\delta)} = i^{r_H/2-r_{G,a}/2} \cdot \frac{\det(1-\gamma_w|\mf{u}_{f^H})}{\det(1-\tilde\delta_w'|\mf{u}_{f'})}. \]
Next consider the factor $\epsilon=\epsilon(X^*(T^G)_\C^a-X^*(T^H)_\C,\Lambda)$, where $T^G$ is a maximal torus that belongs to an $a$-stable $\R$-pinning of $G$, and $T^H$ is maximal a maximal torus that belongs to $\R$-pinning of $H$. We let $A^G$ and $S^G$ denote the maximal split and maximal anisotropic torus of $T^G$. Since $a$ acts rationally on $T^G$, both $A^G$ and $S^G$ are $a$-stable. We have the $a$-stable decomposition $X^*(T^G)_\C=X^*(A^G)_\C \oplus X^*(S^G)_\C$. The first summand is isotypic for the trivial Galois representation, and the second summand is isotypic for the sign character. Using $\epsilon(1/2,\mathbf{1},\Lambda)=1$ and $\epsilon(1/2,\tx{sgn},\Lambda)=i$ (cf. \cite[(3.2.4)]{TateCor}) we see
\[ \epsilon(X^*(T^G)_\C^a,\Lambda) = i^{d-\dim(A^G_a)}, \]
where $d=\dim(T^G_a)$. The same calculation applied to $H$ (and a trivial automorphism) results in
\[ \epsilon(X^*(T^H)_\C,\Lambda) = i^{d-\dim(A^H)}, \]
where we are using that $\dim(T^H)=\dim(T^G_a)$. Therefore
\[ \epsilon = i^{\dim(A^H)-\dim(A^G_a)}. \]
Putting all together we obtain
\[ \epsilon\frac{\Delta_{II}(\gamma_w,\tilde\delta)}{\Delta_{IV}(\gamma,\tilde\delta)} = i^{r_H/2-r_{G,a}/2}i^{\dim(A^H)-\dim(A^G_a)} \cdot \frac{\det(1-\gamma_w|\mf{u}_{f^H})}{\det(1-\tilde\delta_w'|\mf{u}_{f'})}. \]

The Iwasawa decomposition $\tx{Lie}(G)=\mf{a} \oplus \mf{n} \oplus \mf{k}$ shows that $2q(G)=\dim(\mf{a})+\dim(\mf{n})=\dim(A^G)+r_G/2$. We apply this argument applied to $H$ and $G^a$ to see $2q(H)=\dim(A^H)+r_H/2$ and $2q(H)=\dim(A^G_a)+r_{G,a}/2$, the latter also using Lemma \ref{lem:dimrga} below. Therefore
\[ (-1)^{q(G^a)-q(H)} = i^{\dim(A_a^G)-\dim(A^H)+r_{G,a}/2-r_H/2}.\]
\end{proof}

\begin{lem} \label{lem:dimrga}
  We have $\dim(\mf{n}_0^a) = r_{G,a}/2$.
  \end{lem}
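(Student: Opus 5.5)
We need to prove $\dim(\mf{n}_0^a)=r_{G,a}/2$. Here $\mf{n}_0$ is presumably the nilpotent radical of the Lie algebra of the $a$-stable $\R$-Borel subgroup $B$ from the pinning, with maximal torus $T^G$. The plan is to decompose $\mf{n}_0$ into root spaces and count the $a$-fixed vectors orbit by orbit.

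\textbf{Decomposition into orbit blocks.} Since $a$ preserves the pinning $(T^G,B,\{X_\alpha\})$, it permutes the $B$-positive roots $R^+(T^G,G)$. Hence
\[ \mf{n}_0\otimes\C=\bigoplus_{\mc{O}}\mf{g}_\mc{O}, \]
where $\mc{O}$ runs over the $a$-orbits in $R^+$ and $\mf{g}_\mc{O}=\bigoplus_{\alpha\in\mc{O}}\mf{g}_\alpha$ as in Lemma \ref{lem:pn_detroot}. Taking $a$-invariants commutes with extension of scalars, and $\dim_\R\mf{n}_0^a=\dim_\C(\mf{n}_0\otimes\C)^a$. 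So it suffices to show
\[ \dim\mf{g}_\mc{O}^a=\begin{cases}1, & \mc{O}\ \text{not of type R3},\\ 0, & \mc{O}\ \text{of type R3}.\end{cases} \]

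\textbf{Computing each block.} By the proof of Lemma \ref{lem:pn_detroot}, $a$ acts on $\mf{g}_\mc{O}$ by a companion matrix whose last entry is $c=\alpha(a^{|\mc{O}|})$, the scalar by which $a^{|\mc{O}|}$ acts on $\mf{g}_\alpha$. The fixed space of this matrix is one-dimensional if $c=1$ and zero otherwise, because a fixed vector $\sum_i x_i\,\mathrm{Ad}(a^i)X$ forces all $x_i$ equal and then $cx=x$.

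As recalled in the proof of Lemma \ref{lem:d24}, for a pinned automorphism the scalar $c$ is $+1$ except when $\alpha=\alpha_1+\alpha_2$ with $\alpha_1,\alpha_2$ swapped by $a^{|\mc{O}|}$, that is, when $\mc{O}$ is of type R3, in which case $c=-1$. This fact is standard (Kottwitz--Shelstad \S1.3; \cite[Prop.~3.4.2]{KalLLCD}). It follows by induction on height: $a$ fixes the simple root vectors' orbits, and $[X_{\alpha_1},a^kX_{\alpha_1}]$ acquires a sign under the swap.

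\textbf{Summing up.} Summing over orbits gives $\dim\mf{n}_0^a$ equal to the number of $a$-orbits in $R^+$ that are not of type R3. Orbits in $R$ come in pairs $\pm\mc{O}$, and negation preserves type. Hence the count of non-R3 orbits in $R^+$ equals $r_{G,a}/2$.

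\textbf{Main obstacle.} The key point is the sign $c=-1$ exactly in type R3. I would cite it rather than reprove it. If a self-contained argument were needed, I would reduce to the rank-two subsystem spanned by $\mc{O}$, which is of type $A_2$ for R3 orbits, and compute directly in $\mathrm{GL}_3\rtimes\langle\theta\rangle$, as in the example following Lemma \ref{lem:pn_detroot}.
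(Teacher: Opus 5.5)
Your reduction is the same as the paper's. You decompose $\mf{n}$ into blocks $\mf{g}_\mc{O}$ over $a$-orbits $\mc{O}\subset R^+$, note that $\mf{g}_\mc{O}^a$ is a line when $a^{|\mc{O}|}$ acts on $\mf{g}_\alpha$ by $+1$ and is zero otherwise, and count using $\mc{O}\mapsto-\mc{O}$. The paper's proof, however, is mostly the step you defer: that this scalar is $-1$ exactly for orbits of type R3. The paper checks this type by type ($A_{2n-1}$, $A_{2n}$, $D_n$, triality $D_4$, $E_6$) using Lemma \ref{lem:jacobi}, so as written your proof has a gap at exactly this point.

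Appealing to the proof of Lemma \ref{lem:d24} is circular. That passage only asserts the sign fact, and Lemma \ref{lem:d24} itself invokes the present lemma, whose proof is where the sign fact is actually verified. Your sketches do not fill the gap either:
\begin{itemize}
\item The R3 direction is the easy bracket computation you indicate.
\item The converse ($+1$ on a root fixed by $a^{|\mc{O}|}$ that is not of type R3) does not follow by induction on height as you suggest. Such a root need not be a sum of two roots fixed by the stabilizing power. For $A_3$ with the diagram flip, $\alpha_1+\alpha_2+\alpha_3$ is $a$-fixed and not of type R3, yet in both decompositions $\alpha_1+(\alpha_2+\alpha_3)$ and $(\alpha_1+\alpha_2)+\alpha_3$ the summands are moved by $a$.
\item Lemma \ref{lem:jacobi} handles exactly this configuration via the Jacobi identity; your rank-two $A_2$ reduction does not address it.
\end{itemize}

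You can close the gap without case analysis by making the citation precise. By \cite[Proposition 3.4.2(8)]{KalLLCD}, already used in Lemma \ref{lem:cov1}, the roots of $G^1=G^{a,\circ}$ with respect to $T^1=(T^G)^{a,\circ}$ are exactly the non-divisible elements of the restricted root system $R(T^1,G)$. Now use four facts:
\begin{itemize}
\item $\tx{Lie}(G^1)=\mf{g}^a=\mf{t}^a\oplus\bigoplus_{\mc{O}}\mf{g}^a_\mc{O}$;
\item $\mf{g}^a_\mc{O}$ lies in the $T^1$-weight space of the common restriction $\bar\alpha_\mc{O}$ of the roots in $\mc{O}$;
\item distinct orbits have distinct restrictions;
\item root spaces of a reductive Lie algebra are lines.
\end{itemize}
Hence $\mf{g}^a_\mc{O}\neq 0$ if and only if $\bar\alpha_\mc{O}$ is non-divisible, that is, if and only if $\mc{O}$ is not of type R3. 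Equivalently, $\dim\mf{n}^a$ is the number of positive roots of $G^1$. This argument is uniform and needs no reduction to simple $G$, but it relies on Steinberg's structure theory of fixed points of pinned automorphisms. The paper's verification is elementary and self-contained, but case by case.
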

  
  \begin{proof}
  We reduce immediately to the case where $G$ is absolutely simple. Then $a$ (assumed non-trivial) has order $2$ and the root system is of type $A_{2n-1}$, $A_{2n}$, $D_n$, or $E_6$, or has order $3$ and the root system is of type $D_3$. Let $n = \tx{ord}(a) \in \{2,3\}$.
  
  It is clear that $\mf{n}$ decomposes under the action of $a$ according to the orbits of $a$ on the set of positive absolute roots. Such an orbit is either of size either $n$ or $1$. An orbit of size $n$ always contributes to $\dim(\mf{n}_0^a)$ the summand $1$. An orbit of size $1$  contributes a summand of $1$ when the action of $a$ on the corresponding root line is trivial, and a summand of $0$ when that action is non-trivial, necessarily then by the scalar $-1$. It is therefore enough to examine the fixed elements in the set of roots and show that the action of $a$ on the corresponding root line is by $-1$ if and only if the root is of type R3. 
  
  We consider the above Dynkin types separately. In type $A$ a root is fixed if and only if it is a sum of a connected $a$-stable piece of the Dynkin diagram. We induct on the number of nodes in this piece. When the type is $A_{2n-1}$, the piece has an odd number of nodes and the base of induction is a single node. Then the root is simple, and since $a$ fixes the corresponding root vector, it acts by $+1$ on the root line. The inductive step follows from Lemma \ref{lem:jacobi}. Thus all fixed roots contribute $+1$. Also, there are no roots of type R3. When the type is $A_{2n}$, the piece has an even number of nodes and the base of the induction is a piece consisting of two linked nodes. The corresponding root is the sum of two simple roots exchanged by $a$, and since $a$ also swaps their root vectors, it acts by $-1$ on the root line of their sum. The inductive step again follows from Lemma \ref{lem:jacobi}. Thus all fixed roots contribute $0$. Also, they are all of type R3. 
  
  In type non-triality type $D_n$ one examines the Bourbaki plate and finds that a positive root $\alpha$ is $a$-fixed if and only if it is of the form $\alpha=\epsilon_i\pm\epsilon_j$ for $1 \leq i<j<l$. A root $\alpha=\epsilon_i-\epsilon_j$ is the sum of simple roots each of which is $a$, fixed, so $a$ acts on $\mf{g}_\alpha$ by $+1$. The special case $\alpha=\epsilon_i+\epsilon_{l-1}$ is the sum of the case $\epsilon_i-\epsilon_{l-1}$ just treated, and the two simple roots $\epsilon_{l-1}-\epsilon_l$ and $\epsilon_{l-1}+\epsilon_l$, and Lemma \ref{lem:jacobi} implies that again $a$ acts by $+1$ on $\mf{g}_\alpha$. Finally, the case $\alpha=\epsilon_i+\epsilon_j$ for $j<l-1$ is the sum of $\epsilon_i-\epsilon_{l-1}$ and $\epsilon_j+\epsilon_{l-1}$, both of which are $a$-fixed and on both of whose root lines $a$ acts by $+1$, hence $a$ acts by $+1$ on $\mf{g}_\alpha$. At the same time, type $D_n$ has no roots of type R3.
  
  In triality type $D_4$, again using the Bourbaki plate, if $\alpha,\beta,\gamma,\delta$ are simple roots such that $a$ fixes $\beta$ and permutes $\{\alpha,\gamma,\delta\}$ we see that the only $a$-fixed positive roots are $\beta$, $\alpha+\beta+\gamma+\delta$, and $\alpha+2\beta+\gamma+\delta$. By assumption $a$ acts by the scalar $+1$ on the root line for the simple root $\beta$. By Lemma \ref{lem:jacobi} it also acts by $+1$ on the root line for the root $\alpha+\beta+\gamma+\delta$. If $X,Y$ are non-zero vectors in the root lines for $\beta$ and $\alpha+\beta+\gamma+\delta$, then $[X,Y]$ is a non-zero vector in the root line for $\alpha+2\beta+\gamma+\delta$. Then $a(X)=X$ and $a(Y)=Y$ implies $a([X,Y])=[X,Y]$.

  In type $E_6$, one examines that an $a$-fixed root is either simple, or the sum of two $a$-fixed roots, or of the type in Lemma \ref{lem:jacobi}. In all of these cases, we see that $a$ acts by $+1$ on the root line. Again there are no roots of type R3.
  \end{proof}
  
  \begin{lem} \label{lem:jacobi}
    Assume the absolute root system of $G$ is simply laced.
    \begin{enumerate}
      \item Let $\alpha,\beta,\gamma$ be positive roots such that $\alpha+\beta,\beta+\gamma,\alpha+\beta+\gamma$ are roots, but $\alpha+\gamma$ is not. Assume that $a\alpha=\gamma$, $a\gamma=\alpha$, and $a\beta=\beta$. Let $\epsilon \in \{\pm1\}$ be the scalar by which $a$ acts on $\mf{g}_\beta$. Then $\epsilon$ is also the scalar by which $a$ acts on $\mf{g}_{\alpha+\beta+\gamma}$.
      \item Let $\alpha,\beta,\gamma,\delta$ be positive roots such that $\alpha+\beta+\gamma+\delta$ is a root, but $\alpha+\gamma,\alpha+\delta,\gamma+\delta$ are not roots. Assume that $a\alpha=\gamma$, $a\gamma=\delta$, $a\delta=\alpha$, and $\beta=\beta$. If $a$ acts trivially on the root lines for $\alpha,\beta,\gamma,\delta$, then it also acts trivially on the root line for $\alpha+\beta+\gamma+\delta$.
    \end{enumerate}
    \end{lem}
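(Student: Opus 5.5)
Both parts follow from the Jacobi-type identity $\tx{Ad}(a)[X,Y]=[\tx{Ad}(a)X,\tx{Ad}(a)Y]$ together with the Chevalley structure constants of a simply laced root system, which satisfy $N_{\mu,\nu}=\pm1$ whenever $\mu+\nu$ is a root. For part (1), fix non-zero $X_\alpha\in\mf{g}_\alpha$ and $X_\beta\in\mf{g}_\beta$. Since $a$ maps $\mf{g}_\alpha$ isomorphically onto $\mf{g}_\gamma$, we can set $X_\gamma:=\tx{Ad}(a)X_\alpha$. Because $a$ is an involution on $\{\alpha,\gamma\}$, applying it twice gives $\tx{Ad}(a)X_\gamma=cX_\alpha$ for a scalar $c$. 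We also have $\tx{Ad}(a)X_\beta=\epsilon X_\beta$.

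The root vector $X_{\alpha+\beta+\gamma}$ can be realized in two ways: as $Z_1=[[X_\alpha,X_\beta],X_\gamma]$ and as $Z_2=[[X_\gamma,X_\beta],X_\alpha]$. The Jacobi identity gives
\[ Z_1-Z_2=[[X_\alpha,X_\beta],X_\gamma]+[[X_\beta,X_\gamma],X_\alpha]=-[[X_\gamma,X_\alpha],X_\beta]. \]
The right-hand side vanishes because $\alpha+\gamma$ is not a root and $\alpha+\gamma\neq0$, since both roots are positive. Hence $Z_1=Z_2$. Next I will check that $Z_1\neq0$ using the simply laced structure constants: $[X_\alpha,X_\beta]$ is a non-zero multiple of a root vector for $\alpha+\beta$, and bracketing with $X_\gamma$ stays non-zero because $(\alpha+\beta)+\gamma$ is a root. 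Applying $a$,
\[ \tx{Ad}(a)Z_1=[[X_\gamma,\epsilon X_\beta],cX_\alpha]=c\,\epsilon\, Z_2=c\,\epsilon\, Z_1. \]

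So I need to show that $c=1$. The cleanest route is to choose $X_\alpha$ inside the $a$-stable pinning, after conjugating by $G^{a,\circ}$, as done before the lemma. This requires arranging that $\alpha,\gamma$ are simple, which is the case in every application in Lemma \ref{lem:dimrga}. I expect this to be the main obstacle. In general it could instead be handled by the order of $a$: if $a^2$ acts trivially on $\mf{g}$, then $c=1$ automatically. That holds when $a$ is a pinned involution, since the induced action of $a^2$ is the identity on the pinning and hence on $G$. I will use this, noting that in (1) the order of $a$ is $2$ on the relevant roots. In the application to $A_{2n}$ with $\beta$ absent, a version with $\beta=0$ gives the $-1$ directly from the antisymmetry $[X_\gamma,X_\alpha]=-[X_\alpha,X_\gamma]$.

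For part (2) the same strategy applies. Since $\alpha+\beta+\gamma+\delta$ is a root while no pairwise sum of $\alpha,\gamma,\delta$ is, I will realize the root vector as the iterated bracket $[[[X_\alpha,X_\beta],X_\gamma],X_\delta]$, using $\alpha+\beta$ and $\alpha+\beta+\gamma$ being roots, as in the Bourbaki $D_4$ configuration. Applying $a$ cyclically permutes $\alpha\to\gamma\to\delta\to\alpha$ and fixes $X_\beta$, using the hypothesis that $a$ acts trivially on these root lines. Repeated use of Jacobi, together with the vanishing of $[X_\mu,X_\nu]$ for $\mu,\nu\in\{\alpha,\gamma,\delta\}$ distinct, shows that the iterated bracket is symmetric in the three outer entries, so it is unchanged by the permutation. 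Hence the scalar is $1$.
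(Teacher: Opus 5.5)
Your proposal is correct and follows essentially the same route as the paper: you realize the root vector as an iterated bracket of $X_\beta$ with $X_\alpha$ and its $a$-translates, and use the Jacobi identity together with the vanishing of $[X_\alpha,X_\gamma]$ (resp.\ of the pairwise brackets among $X_\alpha,X_\gamma,X_\delta$) to see that applying $a$ returns the same vector up to the factor $\epsilon$. Your bookkeeping of the scalar $c$ by which $a^2$ acts on $\mf{g}_\alpha$, and its removal via $a^2=1$ for the pinned involution in question, is left implicit in the paper's proof, which simply asserts that $a$ fixes $[X,[Y,a(X)]]$.
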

    \begin{proof}
      (1) Let $0 \neq X \in \mf{g}_\alpha$ and $0 \neq Y \in \mf{g}_\beta$. Then $0\neq a(X) \in \mf{g}_\gamma$ and $0\neq [X,[Y,a(X)]] \in \mf{g}_{\alpha+\beta+\gamma}$. Using the Jacobi identity and the fact that $[X,a(X)]=0$ we see that $a$ fixes the vector $[X,[Y,a(X)]]$.

      (2) Let $0 \neq X \in \mf{g}_\alpha$ and $0 \neq Y \in \mf{g}_\beta$. Then $0 \neq a(X) \in \mf{g}_\gamma$, $0 \neq a^2(X) \in \mf{g}_\delta$, and 
      \[ 0 \neq [[[X,Y],a(X)],a^2(X)] \in \mf{g}_{\alpha+\beta+\gamma+\delta}. \]
      Using the Jacobi identity and the fact that 
      \[ [X,a(x)]=[X,a^2(X)]=[a(X),a^2(X)]=0, \] 
      we see that $a$ fixes $[[[X,Y],a(X)],a^2(X)]$.
    \end{proof}

\begin{lem} \label{lem:d1}
  We have $\Delta_I(\gamma_w,\tilde\delta)=1$.
\end{lem}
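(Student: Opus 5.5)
We will evaluate $\Delta_I(\gamma_w,\tilde\delta)$ directly from its definition in \cite[\S4.2]{KS99}, which (as recalled above, via \cite[Proposition 3.5.2(2)]{KS12}) agrees with $\Delta_I^\tx{new}$ over $\R$. Recall that $\Delta_I$ is the Tate--Nakayama pairing $\<\lambda(S'),s_\eta\>$. Here $\lambda(S')\in H^1(\R,S'^{a})$, or its image in the appropriate quotient of $a$-(co)invariants, is the class of the cocycle $\sigma\mapsto x(\sigma)=h\sigma(h)^{-1}$. In this expression $h\in G_\tx{sc}(\C)$ transports an $a$-stable $\R$-splitting $(T,B,\{X_\alpha\})$ to an $a$-stable splitting of $(S',C')$ built from the $a$-data, with the Tits-type lifts $n(\omega_\sigma)$ inserted. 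The other entry, $s_\eta$, is the endoscopic element transported to $\hat S'_a$ as in \S\ref{sub:admiso}. The term depends only on $(S',\eta)$ and the $a$-data, not on $\gamma_w$ or $\tilde\delta$; replacing $\gamma$ by $\gamma_w$ keeps the same torus $S'=S_\mf{w}$ and the same $\eta$. So it suffices to show that $\lambda(S_\mf{w})$ is trivial for our choice of $R_{f_\mf{w}}^+$-based, $a$-stable $a$-data.

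\textbf{Step 1: reduce to $G^1=G^{a,\circ}$.} Since the $a$-data are $a$-stable and $(S_\mf{w},C_\mf{w})$ is $a$-stable, Proposition \ref{pro:aa} lets us take $h$ in $G^1_\tx{sc}$ (or at least $a$-fixed). This identifies the splitting invariant with the classical Langlands--Shelstad splitting invariant $\lambda(S^1)$ for the elliptic torus $S^1=S_\mf{w}^{a,\circ}$ of $G^1$. We use the $a$-data restricted to $R_\tx{res}$, which by Lemma \ref{lem:cov1} and \cite[Proposition 3.4.2]{KalLLCD} are again based data for $R(S^1,G^1)$.

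\textbf{Step 2: compute the invariant for $S^1$.} The torus $S^1$ is elliptic, all its $C^1$-simple roots are non-compact (\cite[Lemma 3.6.3]{KalLLCD} together with the genericity in Proposition \ref{pro:adgen}), and the $a$-data are $a_\alpha=i$ on positive roots. We then follow the computation of \cite[\S4]{DPR} (equivalently Shelstad's elliptic computation) for the connected case. There one shows that, for based $a$-data on an elliptic torus whose simple roots are non-compact, $h$ can be taken to carry the pinning to normalized root vectors $X_\alpha$ in the sense of Definition \ref{dfn:pn_normalized} with $[X_\alpha,\sigma X_\alpha]=H_\alpha$. Lemma \ref{lem:pn_normtrans} adjusts any remaining discrepancy by an element of $S(\R)$. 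With these choices $x(\sigma)$ becomes trivial, so $\lambda=1$ and the pairing is $1$.

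\textbf{Main obstacle.} The hard part is Step 1: checking that the twisted splitting invariant of \cite{KS99}, which uses the norm map to coinvariants and the twisted $a$-data on $R_\tx{res}$ (including type R3 orbits, where $a_\alpha$ is related by doubling), really reduces to the untwisted invariant of $S^1\subset G^1$. We would first use Lemma \ref{lem:dimrga}'s root-line analysis to check that the simple root vectors of $C_\mf{w}$ can be chosen $a$-permuted and normalized simultaneously. This also uses Fact \ref{fct:pn_normalized} and Lemma \ref{lem:pn_normtrans} applied to $S^1$ and lifted via Lemma \ref{lem:elem}. That choice makes $h$ $a$-equivariant and the cocycle computed entirely inside $G^1$.
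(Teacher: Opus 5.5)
You have the architecture right, and it matches the paper. The factor is $\Delta_I(\gamma_w,\tilde\delta)=\langle\lambda(S',f'),s_\eta\rangle$, which depends only on $S'=S_\mf{w}$, $\eta$ and the $a$-data, hence not on $w$, so everything reduces to $\lambda(S',f')=1$. The paper gets this by citing \cite[Lemma 6.5.1]{DPR}, and your Step 1, computing the twisted invariant inside $G^1$, is a reasonable way to make that citation explicit.

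\textbf{The gap is in Step 2.} You use the principle that based $a$-data on an elliptic torus all of whose simple roots are non-compact give a trivial splitting invariant. That principle is false. Take $G=\mathrm{SL}_2$ with its standard pinning and $S=\mathrm{SO}_2$.
\begin{itemize}
\item The chambers $R_f^+$ and $R_{-f}^+=-R_f^+$ both have their unique simple root non-compact.
\item The two based systems of $a$-data differ by $a_\alpha\mapsto -a_\alpha$. This multiplies the splitting invariant by the class of $\alpha^\vee(-1)=-1$.
\item That class is the non-trivial element of $H^1(\R,\mathrm{SO}_2)\cong\{\pm1\}$, so at most one of the two invariants is trivial.
\end{itemize}
This is consistent with representation theory: $(S,f)$ and $(S,-f)$ are the holomorphic and antiholomorphic members of the packet, and they are generic for the two different Whittaker data.

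Your normalized-root-vector mechanism cannot detect this. Since $h$ may be changed on the left by $S_\tx{sc}(\C)$, the transported simple root vectors can always be made normalized, whatever the position of $(S,R^+)$ relative to the pinning. Moreover, the condition $[X_\alpha,\sigma X_\alpha]=H_\alpha$ only controls the automorphism $\mathrm{Ad}(h^{-1}\sigma(h))$. So it determines the splitting cocycle only modulo $Z(G_\tx{sc})$: it shows the cocycle can be taken central, not that it is a coboundary in $S_\tx{sc}$. In the $\mathrm{SL}_2$ example, the normalized choice gives exactly the central cocycle $-1$ for one of the two chambers.

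\textbf{What is missing} is the relation between $(S_\mf{w},f_\mf{w})$ and the specific $\R$-pinning $\mc{P}$ that enters the definition of $\lambda$. You need that the $G(\R)$-orbit of $f_\mf{w}$ meets the Kostant section attached to $\mc{P}$, equivalently that $\pi_\mf{w}$ is generic for the Whittaker datum of $(\mc{P},\Lambda)$. This is the input of \cite[Lemma 6.5.1]{DPR}, and you used genericity only to extract non-compactness of the simple roots, which is strictly weaker. For your reduction to $G^1$, use the fact established in the proof of Proposition \ref{pro:adgen}: the $G^1(\R)$-orbit of $f_\mf{w}$ meets the Kostant section $\mc{K}^1\subset\mc{K}$ of the pinning of $G^1$ inherited from $\mc{P}$. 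Then apply the connected-case lemma to $S^1\subset G^1$ in place of your normalized-vector computation.
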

\begin{proof}
By construction $\Delta_I(\gamma_w,\tilde\delta)=\<\lambda(S',f'),s_\eta\>$, where $\lambda(S',f') \in H^1(\R,(S')_\tx{sc}^a)$ is the twisted splitting invariant of $S'$ with respect to the $R_{f'}^+$-based $a$-data. But it is shown in \cite[Lemma 6.5.1]{DPR} that $\lambda(S',f')=1$.

Note that this term does not depend on $w$ -- it only depends on the admissible isomorphism and the chosen $a$-data, and since we have chosen to use the same admissible isomorphism $\eta$ and the same $a$-data for all $w$, the result is independent of $w$.

\end{proof}

We now come to the factor $\Delta_{III}$, the only factor that depends on the lift $\gamma_1 \in H_1(\R)$ of $\gamma \in H(\R)$.

\begin{lem} \label{lem:d3}
  \[ \Delta_{III}^\tx{new}(\gamma_{1,w},\tilde\delta) = \tau_{H_1,\lhd}(\gamma_{1,w}) \cdot \<(\varphi_0^{-1},s_0),((n^{-1}z'_\sigma\sigma(n))^{-1},\delta_\mf{w}^w)\>_\tx{TN}.
\]
\end{lem}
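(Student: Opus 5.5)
We will unwind the definition of $\Delta_{III}^\tx{new}$ from \cite[\S4.10,\S4.11]{KalLLCD} as a Tate--Nakayama pairing
\[ \Delta_{III}^\tx{new}(\gamma_{1,w},\tilde\delta) = \<\tx{inv}(\gamma_{1,w},\tilde\delta), A_0\>, \]
where $\tx{inv}(\gamma_{1,w},\tilde\delta)$ is the class of the hypercocycle obtained by transporting the pair ($z$-cocycle, element of $S$) to the norm torus via $\tx{Ad}(gn^{-1})^{-1}$, and $A_0$ is built from $\tilde s$ (giving $s_\eta$) together with the cocycle $a_{S'}$ comparing the two $L$-embeddings $\xi_{S'}$ and $\xi'_{S'}$ of ${^LS'}$ into ${^LG}$, as in the proof of Lemma \ref{neartildedelta}. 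By Proposition \ref{pro:norm}(7) and the computation preceding Lemma \ref{lem:d24}, the invariant attached to $\tilde\delta$ and the norm $(S',\gamma'_w)$ with conjugating element $gn^{-1}$ is represented by $((n^{-1}z'_\sigma\sigma(n))^{-1},\delta_\mf{w}^w)$ after transport back by $n$ (here $n \in N_G(S')^a$, so conjugation by $n$ commutes with $a$ and does not leave the $a$-complex). This yields the second argument of the claimed pairing.

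For the first argument, we use based $\chi$-data ($R^-_{f_\mf{w}}$-based, as fixed above). We will then decompose $a_{S'}$, or more precisely the $L$-parameter it defines together with $\tilde s$, into two pieces. The first piece is the restriction through $\xi^H\circ{^L\eta}$ and $\xi_1$, which recovers $\varphi_1$ composed with the $L$-embedding of $S^{H_1}$. By the local Langlands correspondence for the torus $S^{H_1}$, its pairing with $\gamma_{1,w}$ is $\tau_{H_1,\lhd}(\gamma_{1,w})$. This uses that, with based $\chi$-data, the $\rho$-shifts coincide with those in the proof of Proposition \ref{eigendist}. The second piece is the parameter $\varphi_\mf{w}$ of $S_\mf{w}=S'$, i.e. $\varphi_0$ composed through ${^L\jmath}$ (Lemma \ref{lem:semi2}), together with $s_0$. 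These pair via the Tate--Nakayama pairing for the complex $[S'\xrightarrow{1-a}S']$ in the form used in Proposition \ref{pro:luo}. To realize the decomposition, we write $\varphi=\xi_{S'}\circ\varphi_{S'}$ and $\varphi_1=\xi^H\circ{^L\eta}\circ\varphi_{S^H}$, so that $\varphi_{S'}=a_{S'}\cdot\varphi_{S^H}$, exactly as in Proposition \ref{eigendist}. The pairing of $A_0$ then splits multiplicatively into the pairing of $(\varphi_{S^H},\ast)$, which gives the $H_1$-character, and the pairing of $(\varphi_0^{-1},s_0)$. The inverse on $\varphi_0$ and the sign conventions should be matched against \cite[Remarks 4.3.2, 4.10.1]{KalLLCD}.

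The main obstacle is bookkeeping. First, we must check that the cochain-level identifications are compatible: the hypercohomology of $[S'\to S']$ versus the $H_1$-side (where $z$-pairs introduce $Z_1$), and the rigidification via $u\to\mc{E}^\tx{rig}$. Second, we must check that replacing $g$ by $gn^{-1}$ and $\gamma_1$ by $\gamma_{1,w}$ changes both sides consistently. This second check works because $\eta$ and the $\chi$-data are fixed independently of $w$ (as in Lemma \ref{lem:d1}), so only the invariant changes. I would first verify the identity for $w=1$ and then track the $w$-dependence through $n$ alone.
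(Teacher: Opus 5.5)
Your proposal follows the paper's proof. There, too, $\Delta_{III}^{\mathrm{new}}$ is written as a Tate--Nakayama pairing, and the comparison cocycle in $A_0$ is recognized as the ratio of the torus parameters of $\varphi$ and $\varphi_1$ (because $\varphi$ factors through the $L$-group of the torus). The pairing then splits into Langlands duality for $S^{H_1}$, which gives $\tau_{H_1,\lhd}(\gamma_{1,w})$, times Tate--Nakayama duality for $[S'\xrightarrow{1-a}S']$.

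The $z$-pair bookkeeping you defer is handled in the paper with the fibre product $S'_1=S'\times_{S'_a}S^{H_1}$. The invariant is taken in $H^1(\Gamma, S'\xrightarrow{1-a}S'_1)$ and is represented by $((n^{-1}z'_\sigma\sigma(n))^{-1},(\delta'_w,\gamma_{1,w}))$. This is where $\gamma_{1,w}$ enters, which an invariant on $[S'\to S']$ alone does not see. The paper then lifts $a_S$ to $(\varphi_0,(\varphi_0^{\mf{z}})^{-1})\in\hat S'\times\hat S^{H_1}$ and applies functoriality to the split complex $S'\to S'\times S^{H_1}$.
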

\begin{proof}
  The actual proof of this lemma is short and simple, but it requires recalling the construction of the left-hand side, which takes a bit of explanation. We follow the constructions of \cite[\S4.11]{KalLLCD}, according to which 
  \[ \Delta_{III}^\tx{new}(\gamma_{1,w},\tilde\delta) = \<\tx{inv}(\gamma_{1,w},(z,\tilde\delta)),A_0\>_{TN}, \]
  where the notation is as follows. Define $S'_1$ to be the fiber product of $S' \to S'_a \cong S^H \from S^{H_1}$ and consider the element $\delta'_{1,w} := (\delta'_w,\gamma_{1,w}) \in S'_1(\C)$. The automorphism $a \times \tx{id}$ of $S \times S^{H_1}$ induces an automorphism of $S'_1$ which fixes pointwise the kernel of $S'_1 \to S'$, so the endomorphism $(1-a)$ of $S'_1$ descends to a homomorphism $(1-a) : S' \to S'_1$. This homomorphism sends the element $n^{-1}z'_\sigma\sigma(n) \in Z^1(\Gamma,S')$ to the differential of $\delta'_{1,w}$, so the pair $((n^{-1}z'_\sigma\sigma(n))^{-1},\delta_{1,w})$ lies in $Z^1(\Gamma,(1-a) : S' \to S'_1)$. We write $\tx{inv}(\gamma_{1,w},(z,\tilde\delta))$ for its cohomology class.

  The construction of the class $A_0 \in H^1(W_F,(1-a^{-1}):\hat S_1' \to \hat S')$ involves the following diagram, which we have copied from \cite[\S4.11]{KalLLCD} and to which we have added arrows for the various Langlands parameters that are involved in the endoscopic transfer that is our issue at hand here, namely the discrete series parameter $\varphi$ for $G$ and its factorization $\varphi_\mf{w}$ through $S_\mf{w}$ that leads to the character $\tau_{\mf{w},\lhd}$.

  \[ \xymatrix{
&^LS\ar[drr]^{\xi_S}\\&^LS_{b}\ar[r]_{\xi_S^1}\ar[dd]^{{^L\eta}}\ar[u]&{^LG^1}\ar[r]&{^LG}\\
W_\R\ar[uur]^{\varphi_\mf{w}}\ar[urrr]_\varphi\ar[drrr]^{\varphi^\mf{z}}\ar[ddr]_{\varphi^\mf{z}_*}\ar[ddr]&&\mc{G}^\mf{e}\ar[ur]_{\xi^\mf{e}}\ar[dr]^{\xi^\mf{z}}&&\mc{S}^\mf{e}\ar[ll]\ar@/_3pc/[uulll]_\beta\ar@/^3pc/[ddlll]^{\alpha_0}\\
&^LS^\mf{e}\ar[r]^{\xi_S^\mf{e}}\ar[d]&{^LG^\mf{e}}\ar[r]&{^LG^\mf{z}}\\
&^LS^\mf{z}\ar[urr]_{\xi_S^\mf{z}}
}
\] 
  The subgroup $\mc{S}^\mf{e}$ is described in \cite[\S4.11]{KalLLCD} as the preimage under $\xi^\mf{e}$ of the image of $\xi_S$. Since the image of $\xi^\mf{e}$ contains the image of $\varphi$ by definition, and $\varphi = \xi_S \circ \varphi_\mf{w}$, we see that $\varphi$ lies in the image of $\mc{S}^\mf{e}$, hence factors through an $L$-homomorphism $\varphi_* : W_\R \to \mc{S}^\mf{e}$, which has not been displayed in the diagram due to lack of space. Its composition with $\xi^\mf{z}$ is $\varphi^\mf{z}$, which factors through $\xi_S^\mf{z}$ to give rise to $\varphi_*^\mf{z}$.

  Having described some of the diagram, let us now recall the construction of $A_0$. The map $\beta \times \alpha_0$ gives an $L$-homomorphism $\mc{S}^\mf{e} \to {^L}(S \times S^\mf{z})$, whose composition with the inclusion $\hat S^\mf{e} \to \mc{S}^\mf{e}$ is the diagonal embedding $\hat S^\mf{e} \to \hat S \times \hat S^\mf{z}$. The kernel of the projection ${^L}(S \times S^\mf{z}) \to {^L}S_1^\mf{z}$ is the anti-diagonal embedding of $\hat S^\mf{e}$. Therefore, if we write $\alpha$ for the composition of $\alpha_0$ and the $L$-automorphism of $^LS_\mf{z}$ given by the inversion on $S^\mf{z}$ we obtain an $L$-homomorphism $\mc{S}^\mf{e} \to {^L}(S \times S^\mf{z})$ whose composition with the projection to ${^L}S_1^\mf{z}$ factors through the quotient $\mc{S}^\mf{e} \to W_\R$ and produces an $L$-homomorphism $\tilde a_S : W_\R \to {^L}S_1^\mf{z}$, i.e. an element $a_S \in Z^1(W_\R,\hat S_1^\mf{z})$. At the same time, $\tilde s^\mf{e} = \xi_S(s_0) \rtimes a^{-1}$, and one checks that $(a_S^{-1},s_0) \in Z^1(W_F,(1-a^{-1}) : \hat S_1^\mf{z} \to \hat S)$. The cohomology class of this hypercocycle is $A_0$.

  Having reviewed the construction of $\Delta_{III}^\tx{new}(\gamma_{1,w},\tilde\delta)$ we can now begin the proof of the lemma in earnest. The key point is that the $L$-homomorphism $(\beta \times \alpha) \circ \varphi_* : W_\R \to {^L}(S \times S^\mf{z})$, composed with the projection ${^L}(S \times S^\mf{z}) \to ^LS_1^\mf{z}$, equals $\tilde a_S$. This means that the 1-cocycle $a_S$ lifts to a 1-cocycle $W_\R \to \hat S \times \hat S^\mf{z}$, but tracing through the above diagram we see that the latter is given by the pair $(\varphi_0,(\varphi_0^\mf{z})^{-1})$, where $\varphi_\mf{w}(x)=\varphi_0(x) \rtimes x$ and $\varphi_*^\mf{z}(x)=\varphi_0^\mf{z}(x)\rtimes x$ for $x \in W_\R$. Using the functoriality of the Tate-Nakayama pairing we see that pairing $A_0$ with $\tx{inv}(\gamma_{1,w},(z,\tilde\delta))$ gives the same result as pairing the class of $((\varphi_0^{-1},\varphi_0^\mf{z}),s_S)$ with the class of $((n^{-1}z'_\sigma\sigma(n))^{-1},(\delta'_w,\gamma_{1,w}))$. The first hypercocycle is for the complex of tori are now $S' \to S' \times S^{H_1}$ given by the map $(1-a) : S' \to S'$ and the trivial map $S' \to S^{H_1}$, and the second hypercocycle is in the dual of that complex, namely $\hat S' \times \hat S^{H_1} \to \hat S'$, where the map is $(1-a^{-1})$ on $\hat S'$ and the trivial map on $\hat S^{H_1}$. This leads to the hypercohomology groups breaking up as
  \[ H^1(\Gamma,S' \to S' \times S^{H_1}) = H^1(\Gamma,(1-a): S' \to S') \times H^0(\Gamma,S^{H_1})\]
  and 
  \[ H^1(W_\R,\hat S' \times \hat S^{H_1} \to \hat S') = H^1(W_\R,(1-a^{-1}) :\hat S' \to \hat S') \times H^1(W_\R,\hat S^{H_1}). \]
  The compound pairing breaks up accordingly, as the product of Langlands duality between $H^0(\Gamma,S^{H_1})$ and $H^1(W_\R,\hat S^{H_1})$ and Tate-Nakayama duality between $H^1(\Gamma,S' \to S' \times S^{H_1})$ and $H^1(W_\R,\hat S' \times \hat S^{H_1} \to \hat S')$. Langlands duality pairs $\varphi_0^\mf{z}$ with $\gamma_{1,w}$ and the result is $\tau_{H_1,\lhd}(\gamma_{1,w})$. Tate-Nakayama duality pairs $(\varphi_0^{-1},s_0)$ with $((n^{-1}z_\sigma'\sigma(n))^{-1},\delta'_w)$.
\end{proof}

Applying Lemmas \ref{lem:d24}, \ref{lem:d1}, \ref{lem:d3} we see that \eqref{eq:rs1} equals
\begin{equation} \label{eq:rhs_sliced_final}
(-1)^{q(G^a)}  \sum_{w \in \Omega^a}\frac{\<(\varphi_0^{-1},s_0),((n^{-1}z'_\sigma\sigma(n))^{-1},\delta_\mf{w}^w)\>_\tx{TN}^{-1}}{\det(1-\tilde\delta_w'|\mf{u}_{f'})}.  
\end{equation}
But \eqref{eq:rs1} was just a different expression of \eqref{eq:rhs_sliced}. We conclude that the expressions \eqref{eq:rhs_sliced} and \eqref{eq:rhs_sliced_final} are equal. 

\subsection{Completion of proof}

In \S\ref{sub:lhs_sliced} we showed that the expressions \eqref{eq:lhs_sliced} and \eqref{eq:lhs_sliced_final} are equal for any strongly regular elliptic $\tilde\delta \in [G \rtimes a]_z(\R)$. In \S\ref{sub:rhs_sliced} we showed that the expressions \eqref{eq:rhs_sliced} and \eqref{eq:rhs_sliced_final} are equal for any strongly regular elliptic $\tilde\delta$. Thus, the expressions \eqref{eq:lhs_sliced} and \eqref{eq:rhs_sliced} are equal for such $\tilde\delta$. In other words, the identity \eqref{eq:main1} holds for such $\tilde\delta$. Theorem \ref{uniquethm} then shows that this identity holds for all strongly regular semi-simple $\tilde\delta$, not necessarily elliptic.  Corollary \ref{cor:main1} then implies that Equation \eqref{eq:main}, hence Theorem \ref{thm:main}, holds.

\appendix

  \section{Appendix by Dougal Davis: The core proof of Theorem \ref{thm:rtci-sign}} \label{app:a}

  In this appendix we prove Theorem \ref{thm:rtci-sign} under the assumption that $G$ is a semi-simple simply connected $\R$-group. This assumption is not strictly needed for the proof given here, but it simplifies some of the notation.

  \subsection{Statement}

  Recall from the statement of the problem that $G$ is a  quasi-split semi-simple simply connected group over $\mb{R}$. Changing notation slightly, we will denote by $A \colon G \to G$ an individual automorphism preserving a fixed $\mb{R}$-pinning $\mc{P}$ (rather than a whole group of such automorphisms). We have fixed an $A$-admissible essentially discrete series representation $\pi$ of $G(\mb{R})$, as well as:
\begin{enumerate}
\item An $A$-stable pair $(B, \psi)$ of a real Borel $B \subset G$ with unipotent radical $U$ and a non-degenerate unitary character $\psi \colon U(\mb{R}) \to \mb{C}^\times$, such that the space of Whittaker functionals
\[ \mrm{Wh}(\pi) := \hom_{U(\mb{R})}(\pi^\infty, \mb{C}_\psi) \]
is non-zero (and hence one-dimensional).
\item An $A$-fixed regular elliptic $f \in \mf{g}^*(\mb{R})$, whose $G(\mb{R})$-conjugacy class is the Harish-Chandra parameter of $\pi$.
\end{enumerate}

We let $S \subset G$ be the anisotropic maximal torus centralising $f$ and let $\mf{u}' \subset \mf{g}$ be the Lie algebra of the unipotent radical of the unique complex Borel $B'$ containing $S$ for which $if$ is dominant.

We will need to work with Harish-Chandra modules. To do so, let $\theta \colon G \to G$ be the unique Cartan involution such that $\theta(f) = f$. In particular, $\theta$ preserves $\mf{u}'$ and commutes with $A$. Set $K = G^\theta$, so that $K(\R)$ is a maximal compact subgroup of $G(\R)$. Let $\pi^{\mrm{HC}}$ be the corresponding Harish-Chandra $(\mf{g}, K)$-module of $K$-finite vectors in $\pi$. We recall from Lemmas 3 and 4 of \cite[III.4]{Duflo82} that
\[ H_j(\mf{u}', \pi^\infty)_{if + \rho_f} = H_j(\mf{u}', \pi^{\mrm{HC}})_{if + \rho_f} = 0 \quad \text{if $j \neq q(G)$}\]
and that
\[ H_{q(G)}(\mf{u}', \pi^\infty)_{if + \rho_f} = H_{q(G)}(\mf{u}', \pi^{\mrm{HC}})_{if + \rho_f}\]
is $1$-dimensional. We set $\mrm{H}(\pi) = H_{q(G)}(\mf{u}', \pi^{\mrm{HC}})_{if + \rho_f}$.

If we fix a lift $\tilde{\pi}$ of $\pi$ to a representation of the extended group $G(\mb{R}) \rtimes \langle A \rangle$ then $\langle A \rangle$ will act on the $1$-dimensional vector spaces $\mrm{Wh}(\tilde{\pi})$ and $\mrm{H}(\tilde{\pi})$. Our aim is to prove the following statement:

\begin{pro} \label{pro:signs}
We have
\[ \mrm{H}(\tilde{\pi}) \cong \mrm{Wh}(\tilde{\pi})^* \langle (-1)^{q(G) - q(G^A)}\rangle,\]
where $\langle a\rangle$ denotes a twist of the action of $A$ by $a \in \mb{C}$.
\end{pro}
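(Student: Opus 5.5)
The approach is to interpret both one-dimensional lines $\mrm{H}(\tilde\pi)$ and $\mrm{Wh}(\tilde\pi)^*$ geometrically on the flag variety $X$ of $G$, as fibers of objects canonically attached to the Beilinson--Bernstein localization of $\pi^{\mrm{HC}}$, equivariantly for $\langle A\rangle$. Then I compare the two fibers by a local computation in which the sign $(-1)^{q(G)-q(G^A)}$ appears as a determinant.

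First, localization and $\mrm{H}(\tilde\pi)$.
\begin{enumerate}
\item Since $f$ is elliptic and regular, the point $x' = B'$ of $X$ lies on a closed $K$-orbit $Q = K\cdot x'$; it is closed because $S \subset K$.
\item The discrete series module localizes, at the dominant twisted parameter attached to $if$, to $\mc{M} = j_!\mc{L} = j_*\mc{L}$. Here $j\colon Q\to X$ is the inclusion and $\mc{L}$ is a rank one $K$-equivariant twisted local system on $Q$.
\item Because $A$ fixes $f$ and commutes with $\theta$, it preserves $K$, $Q$ and $x'$. A lift $\tilde\pi$ is therefore the same as an $A$-equivariant structure on $\mc{L}$.
\item By the standard comparison of $\mf{u}'$-homology with geometric fibers (the fiber of the localization at $x'$), $H_{q(G)}(\mf{u}',\pi^{\mrm{HC}})_{if+\rho_f}$ is identified with the fiber $\mc{L}_{x'}$. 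The degree $q(G) = \dim(\mf{u}'\cap\mf{p}) = \operatorname{codim} Q$ is exactly the degree shift of $j_*$. This identification is $A$-equivariant up to a twist by the action of $A$ on $\det$ of the normal space $T_{x'}X/T_{x'}Q \cong \mf{g}/(\mf{b}'+\mf{k})$, which I will track carefully.
\end{enumerate}

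Second, $\mrm{Wh}(\tilde\pi)^*$ via associated cycles.
\begin{enumerate}
\item Use the $A$-stable pinning to produce an $A$-fixed regular nilpotent $e\in(\mf{g}/\mf{k})^*$, via the Kostant section argument of Proposition \ref{pro:adgen}.
\item Invoke the Kostant/Matumoto description of Whittaker functionals. For a large representation, $\mrm{Wh}(\pi)^*$ is canonically, and $A$-equivariantly, the fiber at $e$ of the associated cycle $\mrm{AC}(\pi)$, viewed as an equivariant coherent sheaf on the $K$-orbit of $e$.
\item Compute $\mrm{AC}(\pi)$ from the characteristic cycle of $\mc{M}$ via the moment map $\mu\colon T^*X\to\mf{g}^*$. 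Ginzburg's theorem on $j_*$ for open immersions (applied locally, $Q$ being closed in an affine open), together with the fact that $\mu(T^*_QX)$ contains $e$, shows that the relevant component is the conormal bundle $T^*_QX$.
\item By that theorem, the coefficient sheaf of $\mrm{CC}(\mc{M})$ on $T^*_QX$ is $\mc{L}$, twisted by the orientation/determinant line of the conormal directions.
\item Choose the lift $\xi\in T^*_{x'}X$ with $\mu(\xi)=e$, i.e.\ $\xi\in(\mf{u}'\cap\mf{p})^*$ regular, and take it $A$-fixed; this is possible since the simple roots for $\mf{u}'$ can be chosen noncompact and $A$-permuted, as in the proof that $\epsilon_{\tilde G}$ is a homomorphism. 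This identifies $\mrm{Wh}(\tilde\pi)^*\cong\mc{L}_{x'}\otimes\det(T^*_{x'}X/T^*_{x'}Q\text{-type line})$ equivariantly.
\end{enumerate}

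Comparing the two identifications, $\mrm{H}(\tilde\pi)$ and $\mrm{Wh}(\tilde\pi)^*$ differ by the character through which $A$ acts on $\det(\mf{u}'\cap\mf{p})$. This should be the net discrepancy between the normal-bundle twist in the first step and the conormal orientation twist in the second. $A$ permutes the noncompact root lines of $\mf{u}'$. As shown in the proof that $\epsilon_{\tilde G}$ is a homomorphism, no $A$-fixed noncompact root is of type R3, and the pinning gives the scalar $+1$ on fixed lines. Hence $\det(A\mid\mf{u}'\cap\mf{p})=\mathrm{sgn}(\sigma_A)=(-1)^{q(G)-q(G^A)}$, which is exactly the claimed twist.

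The main obstacle is the bookkeeping of $A$-equivariant structures through each identification. This includes the Matumoto identification of Whittaker functionals with the associated-cycle fiber, where one must fix $e$ compatibly with $\psi$. It also includes the passage from characteristic to associated cycle, and above all Ginzburg's specialization formula, where a determinant/orientation line enters and could cancel or double the sign. I would first test the whole chain on $G=\mathrm{SL}_3$ with the pinned outer automorphism and on $\mathrm{SL}_2\times\mathrm{SL}_2$ with the swap. In these cases $q(G)-q(G^A)$ is odd, so a sign error is detectable, and this would pin down the normalization before doing the general computation.
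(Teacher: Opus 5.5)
\textbf{The gap.} The failing step is where you ``invoke the Kostant/Matumoto description'' to identify $\mrm{Wh}(\pi)^*$ canonically and $A$-equivariantly with the fibre of $\mrm{AC}(\pi)$ at $e$. No such theorem is available. The results of Kostant and Matumoto tie the existence of Whittaker functionals to the associated (or wave front) variety. They do not give an isomorphism natural enough to be transported along an outer automorphism. Both lines here are one-dimensional, so anything short of such naturality carries no information, and the whole content of the proposition is exactly how $A$ acts.

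The paper treats the natural isomorphism you need only as an expectation. That isomorphism would match real Whittaker functionals on $\pi^\infty$ with algebraic $\psi^K$-Whittaker functionals on $\pi^{\mrm{HC}}$ (the latter computing the microlocal stalk) via a Kostant--Sekiguchi type correspondence. The paper notes that the real side is analytically delicate; the statement already fails for $\mrm{SL}_2(\R)$ with analytic instead of smooth vectors. So your proposal assumes the hard step.

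\textbf{What the paper does instead.} It proves $\mrm{AC}(\tilde\pi^{\mrm{HC}})_x=[\mrm{Wh}(\tilde\pi)^*]$ by passing through the principal series $\Pi=\mrm{Ind}_{B(\R)}^{G(\R)}(\lambda-\rho)$, extended to $G(\R)\rtimes\langle A\rangle$ through a character $\chi$.
\begin{enumerate}
\item $\mrm{Wh}(\Pi)^*=\C_\chi$ is computed directly. A Whittaker functional restricted to functions supported on the open cell $U(\R)w_0T(\R)$, with $w_0$ chosen $A$-fixed, is $\psi$ times a Haar measure.
\item $\mrm{AC}(\Pi^{\mrm{HC}})_x=[\C_\chi]$ is computed using Ginzburg's $g^s$-deformation for the \emph{open} $K$-orbit, together with a specialization lemma in equivariant $K$-theory.
\item $\pi^{\mrm{HC}}$ is shown to be a quotient of $\Pi^{\mrm{HC}}$, by induction over $K$-orbits via $\mb{P}^1$-fibrations; the error terms have vanishing associated cycle at $x$. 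Uniqueness of Whittaker models gives multiplicity one.
\item Hence, for the compatible lift, $\mrm{Wh}(\tilde\pi)$ and the nonzero class $\mrm{AC}(\tilde\pi^{\mrm{HC}})_x$ are both inherited from $\Pi$.
\end{enumerate}

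\textbf{The sign bookkeeping.} This has to be settled, not just ``tracked''. The determinant twist is a $\pm1$-valued character, so if it entered both of your identifications, as you suggest, it would cancel. In fact it enters exactly once.
\begin{itemize}
\item By Kashiwara's equivalence, the derived fibre of $\mc{M}=i_+\gamma$ at $B'$ in degree $q(G)=\mathrm{codim}\,Q$ is canonically $\gamma_{B'}$. So $\mrm{H}(\tilde\pi)=\gamma_{B'}$ with no twist.
\item The canonical filtration of a direct image along a closed immersion gives $\Gr^F\mc{M}=\mc{O}_{N_Q^*}\otimes\gamma\otimes\omega_{Q/\mc{B}}$. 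Hence $\mrm{AC}(\tilde\pi^{\mrm{HC}})_x=[\gamma_{B'}\otimes\det(\mf{g}/(\mf{b}'+\mf{k}))]$. No Ginzburg theorem is needed for the closed orbit.
\end{itemize}
Your computation that $A$ acts on this determinant by $(-1)^{q(G)-q(G^A)}$ is fine and agrees with Lemma~\ref{lem:determinant}.
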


By construction, the Duflo lift $\tilde{\pi}^D$ (resp.\ Whittaker lift $\tilde{\pi}^W$) is characterised by the triviality of $\mrm{H}(\pi)$ (resp.\ $\mrm{Wh}(\tilde{\pi}^W)$), so Proposition \ref{pro:signs} implies Theorem \ref{thm:rtci-sign} in the case where $G$ is semi-simple and simply connected.

\subsection{Localisation and associated cycles}

We will prove Proposition \ref{pro:signs} by comparing both sides to a third action coming from the theory of associated cycles. Suppose that $M$ is a finite length $(\mf{g}, K \rtimes \langle A \rangle)$-module. Fix a good filtration $F_\bullet M$ compatible with the PBW filtration $F_\bullet U(\mf{g})$ and invariant under the action of $K \rtimes \langle A \rangle$. Then the associated graded $\Gr^F M$ is a finitely generated $(S(\mf{g}), K \rtimes \langle A \rangle)$-module, or equivalently, a $K \rtimes \langle A \rangle$-equivariant coherent sheaf on $(\mf{g}/\mf{k})^*$. Moreover, since $M$ has finite length and is hence locally finite over $Z(U(\mf{g}))$, $\Gr^FM$ is supported set-theoretically on the $K$-nilpotent cone $\mc{N}_K^* \subset (\mf{g}/\mf{k})^*$. Thus, we get a class in the Grothendieck group
\[ [\Gr^F M] \in \mrm{K}_{K \rtimes \langle A \rangle}(\mc{N}_K^*).\]
It is easy to see that $[\Gr^F M]$ is independent of the choice of good filtration $F_\bullet M$; we will write $\mrm{AC}(M) = [\Gr^F M]$ and call it the \emph{associated cycle} of $M$. (A word of warning: in general, this is slightly different to the usual notion of associated cycle of a representation, which is roughly speaking valued in the equivariant Grothendieck group of the support of $\Gr^FM$.) This all works $K \rtimes \langle A \rangle$-equivariantly.

Let's compute the associated cycle for our essentially discrete series $\tilde{\pi}^{\mrm{HC}}$. It will be convenient to use the Beilinson-Bernstein localisation picture for this purpose.

Recall the abstract/universal Cartan group $H$ of $G$. We regard this simply as a complex torus. By definition, if we fix any complex Borel and maximal torus $B_\C \supset T_\C$, then we have a canonical identification $H \cong B_\mb{C}/[B_\mb{C}, B_\mb{C}] \cong T_\mb{C}$. Given $(B_\mb{C}, T_\mb{C})$, we equip $H$ with sets of positive roots and coroots
\[ R_{\mathit{abs}}^+ =  - R(T_\mb{C}, B_\mb{C}) \subset X^*(T_\mb{C}) \cong X^*(H)\]
and
\[ \check{R}_{\mathit{abs}}^+ = \{\check\alpha \in X_*(T_\mb{C}) \cong X_*(H)\mid \alpha \in R_{\mathit{abs}}^+ \}.\]
It is easy to see that $R_{\mathit{abs}}^+$ and $\check{R}_{\mathit{abs}}^+$ are independent of the choice of $B_\mb{C} \supset T_\mb{C}$. We emphasise that the negative sign is important in our convention; it ensures that dominant weights correspond to semi-ample line bundles on the flag variety.

Let $\mc{B}$ denote the complex flag variety of the complex reductive group underlying $G$ and let $p\colon \tilde{\mc{B}} \to \mc{B}$ be the projection from the base affine space, a tautological torsor under $H$. We set
\[\tilde{\mc{D}} = p_*(\mc{D}_{\tilde{\mc{B}}})^H,\]
where $\mc{D}_{\mc{\tilde B}}$ is the sheaf of differential operators on $\mc{\tilde B}$. This is a sheaf of rings on $\mc{B}$ locally isomorphic to $\mc{D}_{\mc{B}} \otimes S(\mf{h})$. For $\lambda \in \mf{h}^*$, we set
\[ \mc{D}_\lambda = \tilde{\mc{D}}\otimes_{S(\mf{h}), \lambda - \rho}\mb{C}_\lambda,\]
where $\rho \in \mf{h}_\mb{C}^* = X^*(H) \otimes \mb{C}$ is then defined as
\[ \rho = \frac{1}{2}\sum_{\alpha \in R_{\mathit{abs}}^+} \alpha.\]

Now, the $G$-action on $\tilde{\mc{B}}$ gives rise to a tautological algebra homomorphism $U(\mf{g}) \to \tilde{\mc{D}}$. The $K$-equivariant version of the Beilinson--Bernstein localisation theorem \cite{BB81} \cite[\S 3.3]{BB93} asserts that, when $\lambda$ is regular (i.e.\ $\langle \lambda, \check\alpha \rangle \neq 0$ for $\check{\alpha} \in \check{R}^+_{\mathit{abs}}$) and integrally dominant (i.e.\ $\langle \lambda, \check\alpha\rangle \not\in \mb{Z}_{<0}$ for $\check{\alpha} \in \check{R}^+_{\mathit{abs}}$), taking global sections defines an equivalence of categories
\[ \Gamma \colon \Mod(\mc{D}_\lambda, K) \to \Mod(\mf{g}, K)_{\chi_\lambda},\]
between the category of strongly $K$-equivariant quasi-coherent $\mc{D}_\lambda$-modules and the category of $(\mf{g}, K)$-modules with infinitesimal character $\chi_\lambda$ associated to $\lambda$ under the Harish-Chandra isomorphism. This also works with the $A$-action in place as long as $\lambda$ is $A$-fixed.

In the case of the discrete series $\pi$, this works out as follows. Recall that we have our regular elliptic element $f \in \mf{g}^*(\mb{R})$, with associated anisotropic maximal torus $S$ and complex Borel $B'$, for which $if$ is negative on the coroots attached to roots in $B'$. This choice of Borel defines an identification between $\mf{s}$ and $\mf{h}$ mapping $if$ to an element $\lambda \in \mf{h}^*$. According to our sign conventions for the abstract Cartan, the element $\lambda$ is regular dominant in the sense above. We then have that
\[ \pi^\mrm{HC} = \Gamma(\mc{M}),\]
where $\mc{M}$ is an irreducible $(\mc{D}_\lambda, K)$-module supported on the (closed) $K$-orbit $Q$ of the Borel subgroup $B'$.
\begin{rmk}
Since $\pi$ is a discrete series, $\lambda - \rho$ is actually the highest weight of a finite dimensional $G$-representation, given as global sections of a $G$-equivariant line bundle $\mc{O}(\lambda - \rho)$ on $\mc{B}$. Thus, $\mc{D}_\lambda = \mrm{Diff}(\mc{O}(\lambda - \rho))$ and we have an equivalence of categories
\[ \mc{O}(\lambda - \rho) \otimes - \colon \Mod(\mc{D}_{\mc{B}}, K) \to \Mod(\mc{D}_\lambda, K).\]
So one can forget about the twist and just think about ordinary $\mc{D}$-modules if desired.
\end{rmk}

Let's get back to computing the associated cycle of $\tilde{\pi}^\mrm{HC}$. By Kashiwara's equivalence, the $\mc{D}_\lambda$-module $\mc{M}$ is uniquely the direct image of a flat connection $\gamma$ on $Q$; because the component group of the $K$-stabiliser of a point on $Q$ is abelian, irreducibility of $\mc{M}$ implies that $\gamma$ has rank one. (In fact, $\gamma = \mc{O}_Q \otimes \mc{O}(\lambda - \rho)$.) So, by the general formalism of direct images for $\mc{D}$-modules along closed immersions, $\mc{M}$ has a canonical good filtration $F_\bullet \mc{M}$ so that
\[ \Gr^F \mc{M} = \mc{O}_{N_Q^*} \otimes \gamma \otimes \omega_{Q/\mc{B}},\]
as $K$-equivariant coherent sheaves on $T^*\mc{B}$. Here $N_Q^* \subset T^*\mc{B}$ is the conormal bundle to $Q$ and $\omega_{Q/\mc{B}}$ is the determinant of the normal bundle. In our setting, all objects come naturally equipped with an action of $A$, so the above equality upgrades to an equality of $K \rtimes \langle A \rangle$-equivariant coherent sheaves on $T^*\mc{B}$. Taking global sections, we get
\begin{equation} \label{eq:discrete associated cycle}
 \mrm{AC}(\tilde\pi^{\mrm{HC}}) = [\mrm{R}q_*(\mc{O}_{N_Q^*} \otimes \gamma \otimes \omega_{Q/\mc{B}})] := \sum_i (-1)^i [\mrm{R}^iq_*(\mc{O}_{N_Q^*}\otimes \gamma \otimes \omega_{Q/\mc{B}})],
\end{equation}
where $q \colon N_Q^* \to \mc{N}_K^*$ is the restriction of the Springer map to $N_Q^* \subset T^*\mc{B}_\mb{C}$. (Although we will not need this fact, by \cite[(2.26)]{Schmid75} or \cite[Corollary 1.4]{DavisVilonen23}, the higher direct images all vanish, so the right hand side is just the class of the sheaf $q_*(\mc{O}_{N_Q^*} \otimes \gamma \otimes \omega_{Q/\mc{B}_\mb{C}})$.)

Now, our discrete series is not arbitrary: we know that it is \emph{generic} for some choice of Whittaker datum. By \cite[Theorem D]{Kos78} and \cite[Theorem 8.4]{Vog91}, for example, it follows that the support of $\pi^{\mrm{HC}}$ contains a regular nilpotent element in $\mf{g}^*$, i.e.\ it contains an irreducible component of $\mc{N}_K^*$. From the above, the support of $\pi^{\mrm{HC}}$ is just $q(N_Q^*)$, so we are saying that this contains a regular nilpotent. Since the Springer map is birational over the regular locus, we conclude that $N_Q^*$ maps birationally to an irreducible component of $\mc{N}_K^*$ under $q$.

\begin{lem} \label{lem:fixed nilpotent}
There exists a regular nilpotent element $x \in (\mf{g}/(\mf{b}' + \mf{k}))^* \subset q(N_Q^*)$ such that $A(x) = x$.
\end{lem}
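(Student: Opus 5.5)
The lemma is a fixed-point statement for $A$ on a finite set of orbits. The plan is to show that $A$ acts on the set of $K$-orbits of regular nilpotents in the open part of $(\mf{g}/(\mf{b}'+\mf{k}))^*$, to find an $A$-fixed orbit, and then to find an $A$-fixed point inside it.

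\textbf{Step 1: an $A$-stable variety and an $A$-fixed component.} Since $f$ is $A$-fixed, $B'$ is $A$-stable, $\theta$ commutes with $A$, and $K$ is $A$-stable. Hence $V:=(\mf{g}/(\mf{b}'+\mf{k}))^*$ is an $A$-stable linear subspace of $\mf{g}^*$, and it is the fiber of $N_Q^*$ over the base point $B'\in Q$. By the discussion above, $q(N_Q^*)=K\cdot V$ contains an irreducible component of $\mc{N}_K^*$ consisting generically of regular nilpotents. The component is $A$-stable because $N_Q^*$ is $A$-stable and irreducible. Therefore the open dense subset $V^{\mathrm{reg}}\subset V$ of regular elements is nonempty and $A$-stable. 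It is a single orbit of the connected stabilizer $(K\cap B')^\circ=S^\circ$-related group $K_{B'}:=K\cap B'$: the Springer map is birational onto the component, and $K\cdot V^{\mathrm{reg}}$ is one open $K$-orbit, so $K_{B'}$ acts with a dense orbit on $V$.

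\textbf{Step 2: finding a fixed point, main approach.} The cleanest route is to use the Whittaker datum. The pinning $\mc{P}$ is $A$-fixed, and it gives an $A$-fixed regular nilpotent $x_0\in\mf{g}^*$ attached to $(B,\psi)$. This $x_0$ lies in the real form $i\mf{g}^*(\R)$ (up to the normalization of $\Lambda$). Through a Kostant--Sekiguchi/Cayley-type transfer, the Whittaker character $\psi$ corresponds to a $K$-orbit in $\mc{N}_K^*$, namely the one in the wavefront/associated variety of $\pi$. We may instead argue directly on $V$. The torus $S\cap K=S$ (it is anisotropic, hence in $K$) is $A$-stable and acts on $V$ linearly with weights equal to the noncompact $B'$-negative roots. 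By Proposition~\ref{pro:adgen} and genericity, the simple roots of $B'$ are all noncompact, so the simple root vectors $x_{\alpha}$ for the simple roots lie in $V$. Any $x=\sum_{\alpha\ \mathrm{simple}} c_\alpha x_\alpha^*$ with all $c_\alpha\neq0$ is regular nilpotent, by Kostant's criterion. So the plan is to choose the $x_\alpha^*$ from an $A$-stable set of simple root vectors. As in the proof of Lemma~\ref{lem:d24}, such a set exists because $(S,B')$ is an $A$-stable Borel pair and $A$ preserves a pinning. Then $x=\sum x_\alpha^*$ is $A$-fixed and regular.

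\textbf{Step 3: the obstacle.} The point requiring care is that the simple root vectors, taken in $\mf{g}^*$ through the invariant form, really lie in $V=(\mf{g}/(\mf{b}'+\mf{k}))^*$. This needs two things. First, $\mf{b}'$ is the annihilator side: we must use the negative simple roots relative to our sign convention. Second, each simple root is noncompact, so that its root vector lies in $\mf{p}$ and is orthogonal to $\mf{k}$. The noncompactness is exactly the content of Proposition~\ref{pro:adgen}(1) together with \cite[Lemma 3.6.3]{KalLLCD}, applied to the $A$-stable representative. Granting this, the pinning can be chosen $A$-equivariantly, say via Lemma~\ref{lem:pn_normtrans} and an $A$-stable normalized choice on $A$-orbits of simple roots. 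The sum is then $A$-fixed, which gives the lemma.
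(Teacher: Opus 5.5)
Your proof is correct and is essentially the paper's. You take $x$ to be the sum of the simple root vectors of an $A$-stable pinning of $(S,B')$, which exists because $A$-stable Borel pairs form a single orbit under the connected group $G^{A}_\C$ (Steinberg). These vectors lie in $(\mf{g}/(\mf{b}'+\mf{k}))^*$ because every simple root of $B'$ is noncompact.

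The one difference is where that noncompactness comes from. The paper derives it directly from the fact, established just before the lemma, that $((\mf{g}/\mf{b}')^*)^{\theta=-1}$ contains a regular nilpotent: since $\theta$ preserves each root space, Kostant's criterion then forces every simple root space into the $-1$ eigenspace. You instead import it from Proposition~\ref{pro:adgen}. That requires transporting the property from that proposition's representative to the given $f$, which is immediate because compactness of imaginary roots is invariant under $G(\R)$-conjugation. Your Step~1 is not needed.
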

\begin{proof}
By a result of Steinberg \cite{Ste68end} (see also \cite[Theorem 1.1.A]{KS99} or \cite[Proposition 3.4.2 (6)]{KalLLCD}), the variety $\mc{B}^A$ of $A$-stable Borel subgroups in $G_\mb{C}$ is isomorphic to the flag variety of $G_\mb{C}^A$; in particular, it is a single orbit under the connected group $G_\mb{C}^A$. Since $A$ preserves a pinning of the real $A$-stable Borel $B$, it therefore follows that it also preserves a pinning of the $A$-stable Borel $\mf{s} + \mf{u}'$. Since $q(N_Q^*)$ contains a regular nilpotent element as argued above, it follows that the conormal fibre at $B'$
\[(\mf{g}/(\mb{b}' + \mf{k}))^* = ((\mf{g}/\mf{b}')^*)^{\theta = -1}\]
also contains a regular nilpotent. Since $S$ is anisotropic, $\theta$ preserves each root space in the associated root space decomposition, so the above is only possible if $\theta$ acts by $-1$ on each simple root space. Hence, taking $x$ to be the sum of the simple root vectors in an $A$-stable pinning of $B'$ gives a regular nilpotent element in the desired space.
\end{proof}

Since $x \in \mc{N}_K^*$ is an $A$-fixed point whose $K$-orbit is open in $\mc{N}_K^*$ (and hence an irreducible component) we have a well-defined homomorphism
\[ (-)_x \colon \mrm{K}_{K \rtimes \langle A \rangle}(\mc{N}_K^*) \to \mrm{K}(\Rep \langle A \rangle)\]
given by restricting to $x$. From \eqref{eq:discrete associated cycle} we deduce:

\begin{lem}
We have
\[ \mrm{AC}(\tilde{\pi}^{\mrm{HC}})_x = [\gamma_{B'} \otimes \det (\mf{g}/(\mf{b}' + \mf{k}))],\]
where we write $\gamma_0$ for the representation of $\langle A \rangle$ on the fibre of $\gamma$ at the point $B' \in \mc{B}$.
\end{lem}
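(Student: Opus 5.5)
The plan is to compute the restriction of the class \eqref{eq:discrete associated cycle} to the point $x$ of Lemma \ref{lem:fixed nilpotent} by base change along the Springer map. Since $N_Q^*$ maps birationally onto the irreducible component of $\mc{N}_K^*$ containing $x$, and $x$ is regular, the map $q$ is an isomorphism over a neighbourhood of the open $K$-orbit of $x$. The first step is to check that the fibre $q^{-1}(x)$ is the single point $(B',x)$. A regular nilpotent lies in a unique Borel, and $x\in(\mf{g}/\mf{b}')^*$ regular forces that Borel to be $B'$. Moreover, this point is fixed by $A$, because $A$ fixes both $x$ and $B'$. Over the open set $K\cdot x$ the higher direct images therefore vanish, and $q_*$ of a $K\rtimes\langle A\rangle$-equivariant coherent sheaf restricts to $x$ as the fibre of that sheaf at $(B',x)$, with its induced $\langle A\rangle$-action. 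Restriction to the open set is exact on equivariant $K$-groups, so the alternating sum in \eqref{eq:discrete associated cycle} reduces to this one term.

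Next, restricting $\mc{O}_{N_Q^*}\otimes\gamma\otimes\omega_{Q/\mc{B}}$ to the point $(B',x)$ gives $\gamma_{B'}\otimes(\omega_{Q/\mc{B}})_{B'}$. I will identify the fibre of $\omega_{Q/\mc{B}}$ at $B'$, as an $\langle A\rangle$-representation, with the determinant of the normal space $T_{B'}\mc{B}/T_{B'}Q=\mf{g}/(\mf{b}'+\mf{k})$. This is exactly the convention in which $\omega_{Q/\mc{B}}$ was introduced as the determinant of the normal bundle. Both $\theta$ and $A$ fix $B'$ and preserve $\mf{k}$ and $\mf{b}'$, so this identification is $A$-equivariant.

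The main obstacle is the equivariance bookkeeping. The canonical good filtration on the Kashiwara direct image must identify $\Gr^F\mc{M}$ with $\mc{O}_{N_Q^*}\otimes\gamma\otimes\omega_{Q/\mc{B}}$ compatibly with the $A$-action, and not only up to a character of $\langle A\rangle$. Here I would appeal to the naturality of the direct-image filtration under automorphisms of the pair $(\mc{B},Q)$: $A$ acts on everything by transport of structure, so the identification is $A$-equivariant.

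The other point to check is the sign and dual convention: whether the normal-bundle determinant or its dual appears, and whether it is the normal or the conormal direction. For this I would carry out the local computation in coordinates. Near $B'$, the direct image is $\gamma\otimes\C[\partial_{y_1},\dots,\partial_{y_c}]$, where the $y_i$ are normal coordinates. The lowest filtered piece of this module transforms like the determinant of the normal directions, which fixes the convention. Granting these two points, the lemma follows.
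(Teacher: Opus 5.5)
Your proposal is correct and takes the same approach as the paper, which simply reads the lemma off from \eqref{eq:discrete associated cycle} by restricting to the open orbit $K\cdot x$. Over that orbit $q$ is an isomorphism, with $q^{-1}(x)=\{(B',x)\}$ an $A$-fixed point, so the restriction is the fibre of $\gamma\otimes\omega_{Q/\mc{B}}$ at $B'$; you have made these steps explicit, and your worry about normal versus conormal determinant is harmless in the end because, by Lemma \ref{lem:determinant}, $A$ acts on $\det(\mf{g}/(\mf{b}'+\mf{k}))$ by a sign, so this line and its dual carry the same $\langle A\rangle$-action.
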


Finally, let's compare the associated cycle with the $\mf{u}'$-homology $\mrm{H}(\tilde{\pi})$.

\begin{lem} \label{lem:AC vs H}
We have
\[ \mrm{H}(\tilde{\pi}) = \gamma_{B'} \]
as representations of $\langle A \rangle$. Hence,
\[ \mrm{AC}(\tilde{\pi}^{\mrm{HC}})_x = [\mrm{H}(\tilde{\pi}) \otimes \det (\mf{g}/(\mf{b}' + \mf{k}))].\]
\end{lem}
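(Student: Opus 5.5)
Since $\tilde\pi^{\mrm{HC}} = \Gamma(\mc{M})$ with $\mc{M}$ the direct image along the closed immersion $i\colon Q \into \mc{B}$ of the rank one $K \rtimes \langle A\rangle$-equivariant connection $\gamma$, I would compute the $\mf{u}'$-homology of $\tilde\pi^{\mrm{HC}}$ geometrically and $A$-equivariantly. The plan is to use the standard comparison, valid for $\lambda$ regular dominant, between Lie algebra homology $H_\bullet(\mf{u}', \Gamma(\mc{M}))$ and the derived fibre of $\mc{M}$ at the point $B' \in \mc{B}$. Concretely, I would use the Casian--Hecht--Schmid / Bernstein form of this comparison: the weight $if + \rho_f$ (which corresponds to $\lambda$ up to the fixed $\rho$-shift) component of $H_j(\mf{u}', \Gamma(\mc{M}))$ equals $H^{-j}$ of the derived $!$- or $*$-restriction $Li_{B'}^*\mc{M}$ (suitably shifted by $\dim\mc{B}$), twisted by the fibre of $\mc{O}(\lambda-\rho)$. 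This identification is natural in $\mc{M}$, so it commutes with any automorphism fixing $B'$ and $\lambda$. In particular it is $A$-equivariant, because $A$ fixes $f$ and hence $B'$.

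Next I would compute the derived fibre of $i_+\gamma$ at the point $B' \in Q$. Factor the inclusion of the point as $\{B'\} \into Q \into \mc{B}$. For a closed immersion, the restriction of $i_+\gamma$ back to $Q$ is $\gamma \otimes \mrm{Sym}(N_{Q/\mc{B}}) \otimes \det N_{Q/\mc{B}}$ placed in shifted degrees, and only the extreme degree survives in the appropriate weight. The codimension of $Q$ is $\dim(\mf{g}/(\mf{b}'+\mf{k}))$, and this shift is what produces the degree $q(G)$. The surviving term is then $\gamma_{B'}$ tensored with a determinant of the normal space. That determinant has to cancel against the normalisation of the $!$-pullback, or against the $\omega_{Q/\mc{B}}$ built into the conventions for $i_+$. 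I would carefully match these twists with the ones already used in the associated-cycle formula, namely $\Gr^F\mc{M} = \mc{O}_{N_Q^*}\otimes\gamma\otimes\omega_{Q/\mc{B}}$, and check that the net contribution to the $\mrm{H}(\tilde\pi)$ side is $\gamma_{B'}$ alone.

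Two points need checking along the way. First, the $\mf{s}$-weight $if+\rho_f$ must select exactly this term; this follows from the $1$-dimensionality and vanishing recalled from Duflo. Second, $A$ acts on $\mrm{Sym}$ and $\det$ of the normal space only through those identifications, so no stray sign is introduced on the $\mrm{H}$ side. The second assertion is then immediate: substitute into the previous lemma, $\mrm{AC}(\tilde\pi^{\mrm{HC}})_x = [\gamma_{B'}\otimes\det(\mf{g}/(\mf{b}'+\mf{k}))]$.

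I expect the main obstacle to be the bookkeeping of determinant twists and orientation conventions ($\omega_{Q/\mc{B}}$, $\rho$-shift, homology versus cohomology, the $!$- versus $*$-fibre). A wrong convention would introduce a spurious factor of $\det(\mf{g}/(\mf{b}'+\mf{k}))$ or its dual into $\mrm{H}(\tilde\pi)$. Since $A$ can act on that determinant by $\pm1$, this sign is exactly the one at stake in Proposition \ref{pro:signs}. I would first test the conventions on $\mathrm{SL}_2$ with trivial $A$, checking that the degree comes out as $q(G)$, and only then trust the equivariant statement.
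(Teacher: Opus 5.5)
Your plan is essentially the paper's proof. There, $H_j(\mf u',\tilde\pi^{\mrm{HC}})_{\widetilde{\lambda-\rho}}$ is identified with $\mrm{Tor}_j^{\mc D_\lambda}(\mc D_\lambda/\mc I\mc D_\lambda,\mc M)$ via $\Delta$ and the PBW isomorphism $\tilde{\mc D}/\mc I\tilde{\mc D}\cong\C\otimes_{U(\mf u')}U(\mf g)$; this is the naive $\mc O$-module derived fibre of $\mc M$ at $B'$, needing no further twist by $\mc O(\lambda-\rho)$ since $\mc M$ is already a $\mc D_\lambda$-module. It is then computed for $\mc M=i_+\gamma$, with $A$-equivariance automatic because $A$ fixes $\lambda$, $\mf u'$ and $B'$.

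On the bookkeeping you flag, one correction. The weight condition only isolates $\mc M$ inside $\tilde{\mc D}\otimes^{\mrm L}_{U(\mf g)}\tilde\pi^{\mrm{HC}}$; it is not what kills the other degrees. The concentration in degree $q(G)=\mrm{codim}\,Q$ comes from the Koszul complex in the normal directions. Its only homology is $\Lambda^{q(G)}N^*_{Q/\mc B}\otimes(\gamma\otimes\det N_{Q/\mc B})$, coming from the socle rather than a $\mrm{Sym}(N_{Q/\mc B})$'s worth of terms. So the two determinants cancel canonically and $\mrm{H}(\tilde\pi)=\gamma_{B'}$, as you expected.
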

\begin{proof}
This is a standard computation, which comes down to the relation between $\mf{u}'$-homology and $\mc{D}$-module stalks. For $\lambda$ regular and integrally dominant, the inverse $\Delta$ to $\Gamma \colon \Mod(\mc{D}_\lambda, K\rtimes \langle A \rangle) \to \Mod(\mf{g}, K \rtimes \langle A \rangle)_{\chi_\lambda}$ is given by
\[ \Delta(M) = (\tilde{\mc{D}} \otimes_{U(\mf{g})} M)_{\widetilde{\lambda - \rho}} = (\tilde{\mc{D}} \overset{\mrm{L}}\otimes_{U(\mf{g})} M)_{\widetilde{\lambda - \rho}},\]
where $(-)_{\widetilde{\lambda - \rho}}$ denotes the summand on which $S(\mf{h}) \subset \tilde{\mc{D}}$ acts with generalised eigenvalues $\lambda - \rho$. In particular, $\Delta(\tilde{\pi}^{\mrm{HC}}) = \mc{M}$ in the notation above.

Now, let $\mc{I}$ denote the ideal sheaf of the point $\mf{s} + \mf{u}'$ in $\mc{B}$. Then it is an easy consequence of the PBW theorem that $\tilde{\mc{D}}/\mc{I}\tilde{\mc{D}}$ (a sheaf supported at a point) is canonically isomorphic as a $(U(\mf{h}) = U(\mf{s}), U(\mf{g}))$-bimodule to $\mb{C} \otimes_{U(\mf{u}')} U(\mf{g})$. Thus, we have
\[ H_j(\mf{u}', \pi^{\mrm{HC}})_{\widetilde{if + \rho_f}} = H_j(\mf{u}', \pi^{\mrm{HC}})_{\widetilde{\lambda - \rho}} = \mrm{Tor}_j^{\tilde{\mc{D}}}(\tilde{\mc{D}}/\mc{I}\tilde{\mc{D}}, \mc{M}) = \mrm{Tor}_j^{\mc{D}_\lambda}(\mc{D}_\lambda/\mc{I}\mc{D}_\lambda, \mc{M})\]
for all $j$. Here we note that, according to our conventions, $\rho_f = -\rho$ as elements of $\mf{s}^* = \mf{h}^*$. Since $\mc{M}$ is the direct image of the connection $\gamma$ along a closed immersion of codimension $q(G)$, it is an elementary computation that
\[ \mrm{Tor}_j^{\mc{D}_\lambda}(\mc{D}_\lambda/\mc{I}\mc{D}_\lambda, \mc{M}) = \begin{cases} \gamma_0, & \text{if $j = q(G)$}, \\ 0, & \text{otherwise}.\end{cases}\]
The statement of the lemma now follows.
\end{proof}

\begin{lem} \label{lem:determinant}
The action of $A$ on $\det(\mf{g}/(\mf{b}'+ \mf{k}))$ is by $(-1)^{q(G) - q(G^A)}$.
\end{lem}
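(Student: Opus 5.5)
We compute the determinant of the $A$-action on $\mf{g}/(\mf{b}'+\mf{k})$ by decomposing it into root lines. Since $S$ is anisotropic and $\theta$ fixes $f$, $\theta$ preserves each root space $\mf{g}_\alpha$ for $\alpha \in R(S,G)$. It acts there by $+1$ if $\alpha$ is compact and by $-1$ if $\alpha$ is non-compact. Hence $\mf{k} = \mf{s}\oplus\bigoplus_{\alpha\ \tx{compact}}\mf{g}_\alpha$, and
\[ \mf{g}/(\mf{b}'+\mf{k}) \cong \bigoplus_{\alpha \in R^-,\ \alpha\ \tx{non-compact}} \mf{g}_\alpha, \]
where $R^-$ denotes the roots of $S$ not occurring in $\mf{b}'$. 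This space has dimension $q(G)$, the number of non-compact positive roots. Since $A$ commutes with $\theta$ and preserves $\mf{b}'$, the decomposition is $A$-stable, and $A$ permutes the index set $X$ of non-compact roots in $R^-$.

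Group $X$ into $A$-orbits $\mc{O}$ and apply Lemma \ref{lem:pn_detroot} to each orbit. The determinant of $A$ on $\mf{g}_\mc{O}$ is $(-1)^{|\mc{O}|-1}c_\mc{O}$, where $c_\mc{O}$ is the scalar by which $A^{|\mc{O}|}$ acts on $\mf{g}_\alpha$ for $\alpha\in\mc{O}$.

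The key input is the choice of pinning made in Lemma \ref{lem:fixed nilpotent}. There $A$ preserves a pinning of $\mf{s}+\mf{u}'$, and all simple roots of $B'$ are non-compact, since the simple root vectors lie in the $\theta=-1$ part. This puts us exactly in the situation of the proof that $\epsilon_{\tilde G}$ is a homomorphism. The compact roots are then precisely those of even height, and every root of type $R_3$ has even height, so no orbit in $X$ is of type $R_3$.

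It remains to see $c_\mc{O}=1$. For orbits of type $R_1$ or $R_2$, $A^{|\mc{O}|}$ fixes $\alpha$ and acts on $\mf{g}_\alpha$ by $+1$, because $A$ preserves a pinning (the argument of Lemmas \ref{lem:dimrga} and \ref{lem:jacobi}). Applying this to the opposite Borel, with root vectors transported by $\theta$ or by the pinning's opposite, gives the same conclusion for the negative roots.

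Therefore the determinant equals $\prod_\mc{O}(-1)^{|\mc{O}|-1}=\tx{sgn}(A|X)=(-1)^{|X|-|X/A|}$. We have $|X|=q(G)$. The proof of the homomorphism lemma identifies $|X/A|$ with $q(G^A)$, via Proposition \ref{pro:aa} and \cite[Lemma 3.6.3]{KalLLCD}: non-compact $A$-orbits not of type $R_3$ biject with the non-compact positive roots of $(S^A,B'^A)$ in $G^A$. Here we use that $G$ is simply connected semisimple, so $G^A$ is connected and its real rank data are those used in the definition of $q(G^A)$.

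The main obstacle is the sign $c_\mc{O}$. One must be careful that the $A$-action on the negative root lines (the dual space) introduces no extra signs. One must also ensure that the pinning of $B'$ fixed by $A$ is the one with non-compact simple roots. This follows because $A$-stable Borel pairs are $G^{A}$-conjugate and the conormal fibre contains a regular nilpotent, as in Lemma \ref{lem:fixed nilpotent}.
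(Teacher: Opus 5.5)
Your argument is correct, but it takes a different route from the paper's proof.

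\textbf{The paper's route.} The paper never computes the scalars $c_\mc{O}$. From Lemma \ref{lem:fixed nilpotent} it gets an $A$-stable pinning of $(S,B')$. Hence $A$ preserves the associated split real form of $\mf{g}$, and therefore a real structure on $V=\mf{g}/(\mf{b}'+\mf{k})$; this makes sense because $\mf{b}'$ and $\mf{k}$ are both spanned by $\mf{s}$ and root vectors. It then invokes the general fact that a finite-order automorphism $T$ of a real vector space satisfies $\det T=(-1)^{\dim V-\dim V^T}$, since non-real eigenvalues pair off. Together with $\dim V=q(G)$ and $\dim V^A=\dim \mf{g}^A/(\mf{b}'^A+\mf{k}^A)=q(G^A)$, this gives the sign.

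\textbf{What the paper's route avoids.} In your orbit language, the paper only needs $c_\mc{O}\in\{\pm1\}$, which the real structure guarantees. An orbit with $c_\mc{O}=-1$ contributes $(-1)^{|\mc{O}|}$ to the determinant and nothing to $V^A$, so the formula holds whichever sign occurs. Consequently the paper bypasses three ingredients you rely on: the height-parity argument, the exclusion of type $R_3$ orbits from $X$, and the case-by-case input of Lemmas \ref{lem:dimrga} and \ref{lem:jacobi}.

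\textbf{What your route buys.} You pay for those ingredients, but you get more. You show $c_\mc{O}=+1$ on every orbit, so $V$ is a genuine permutation representation of $\langle A\rangle$ on the basis $\{A^jX_\alpha\}$. The determinant is then literally $\tx{sgn}(A|X)$. This makes it transparent that the character obtained is exactly the $\epsilon_{\tilde G}$ computed in the proof that $\epsilon_{\tilde G}$ is a homomorphism.

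\textbf{A small slip.} $\theta$ acts on each root space by a scalar, so it cannot transport positive root vectors to negative ones. For the negative roots in $X$, use $c_{-\alpha}=c_\alpha^{-1}$ from the lemma immediately preceding Lemma \ref{lem:pn_detroot}, or use the $A$-stability of the opposite pinning.
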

\begin{proof}
Observe that since $S$ is $A$-stable and, by Lemma \ref{lem:fixed nilpotent}, $A$ fixes a regular nilpotent element $(\mf{g}/\mf{b}')^*$, so $A$ acts by permuting the root spaces and acts by the identity on any simple root space that is fixed. Hence, $A$ fixes a pinning of $(S, B')$ and hence the associated real structure on $\mf{g}/(\mf{b}'+ \mf{k})$ coming from the corresponding split form of $\mf{g}$. Now, for any finite order automorphism $T$ of a real vector space $V$, we have
\[ \det T = (-1)^{\dim V - \dim V^T}.\]
So $A$ acts on $\det(\mf{g}/(\mf{b}' + \mf{k})$ by
\[ (-1)^{\dim \mf{g}/(\mf{b}' + \mf{k}) - \dim (\mf{g}/(\mf{b}' + \mf{k}))^{A}} = (-1)^{q(G) - q(G^A)}.\]
\end{proof}

\subsection{Comparison to Whittaker vectors}

By Lemmas \ref{lem:AC vs H} and \ref{lem:determinant}, to complete the proof of Proposition \ref{pro:signs}, it suffices to show the following:

\begin{lem} \label{lem:AC vs Wh}
Let $\tilde{\pi}$ be any lift of the discrete series $\pi$ to $G(\mb{R}) \rtimes \langle A \rangle$. Then
\begin{equation} \label{eq:AC vs Wh 1}
 \mrm{AC}(\tilde{\pi}^{\mrm{HC}})_x = [\mrm{Wh}(\tilde{\pi})^*] \in \mrm{K}(\Rep \langle A \rangle).
 \end{equation}
\end{lem}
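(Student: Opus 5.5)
The plan is to compare both sides of \eqref{eq:AC vs Wh 1} with a single ``generic'' piece of data: the fibre at $x$ of the associated graded of $\tilde\pi^{\mrm{HC}}$. That fibre is controlled by Kostant's theory of Whittaker vectors for large generic representations.

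\textbf{Step 1: relate the real Whittaker datum to $x$.}
Using the Kostant--Sekiguchi correspondence (or, more concretely, the Cayley transform carrying the $A$-stable pinning of $B$ to the $A$-stable pinning of $B'$ used in Lemma \ref{lem:fixed nilpotent}), we identify the differential $d\psi$. It is a regular nilpotent element of $\mf{g}^*(\R)$ attached to the $A$-stable real pinning. This element corresponds to an $A$-fixed $K$-orbit in $\mc{N}_K^*$ which is open, and we may take $x$ to be its $A$-fixed representative. To get $A$-equivariance, we choose the Cayley transform inside $G^{A,\circ}$; this is possible by Proposition \ref{pro:aa}. Then the $A$-fixed point $x$ of Lemma \ref{lem:fixed nilpotent} and the $A$-fixed $d\psi$ are matched by an $A$-equivariant element of $G^A(\C)$.

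\textbf{Step 2: express $\mrm{AC}(\tilde\pi^{\mrm{HC}})_x$ as a fibre.}
Since the $K$-orbit of $x$ is open in $\mc{N}_K^*$, and $N_Q^*$ maps birationally onto the corresponding component, $\Gr^F\tilde\pi^{\mrm{HC}}$ restricted to a neighbourhood of $K\cdot x$ is a vector bundle of rank one. This rank is the multiplicity of the orbit, and it equals one by genericity. Its fibre at $x$ carries a $\langle A\rangle$-action, and that fibre computes $\mrm{AC}(\tilde\pi^{\mrm{HC}})_x$. The higher Tor terms vanish on the open orbit because the sheaf is flat there.

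\textbf{Step 3: compare the fibre with Whittaker functionals.}
We use the Matumoto/Kostant description of Whittaker functionals. Passing to the Kostant--Sekiguchi/Vogan relationship between the $K$-side and $G(\R)$-side associated varieties, one identifies $\mrm{Wh}(\pi)^*$ with the fibre at the regular nilpotent orbit of the associated graded of the smooth globalisation, filtered by the $U(\mf{g})$-filtration. Concretely, we expect a natural $A$-equivariant isomorphism
\[ \Gr^F(\pi)\otimes_{S(\mf{g})}\C_{x}\;\cong\;\mrm{Wh}(\pi)^*, \]
induced by pairing a Whittaker functional $\ell$ with leading-order symbols. The pairing would send $v$ to the top-order term of $\ell(\exp(tY)v)$ along the Kostant slice. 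This is the content of \cite[Theorem 8.4]{Vog91} made equivariant.

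\textbf{Step 4: track $A$.}
Everything is $A$-equivariant, because $\psi$, $x$ and the filtration are all $A$-invariant. It remains to check that no determinant or orientation twist enters the identification, for instance from a normalised volume on $U(\R)$ or from the Cayley transform. The Cayley element is chosen in $G^{A,\circ}(\C)$, which is connected, so it acts trivially on one-dimensional $A$-representations by a continuity argument.

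I expect the main obstacle to be Step 3: making the Kostant--Vogan comparison between the $(\mf{g},K)$-associated graded at $x$ and Whittaker functionals canonical and $A$-equivariant. If the direct route fails, the fallback is a deformation argument. In that approach, the family of symbols along the Kostant--Sekiguchi flow interpolating $K\cdot x$ and $G(\R)\cdot d\psi$ is $A$-equivariant over a connected base. The one-dimensional $A$-characters at the two ends must then agree, since they are locally constant in the family.
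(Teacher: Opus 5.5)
Your Step~3 carries the entire content of the lemma, and it is asserted rather than proved; this is a genuine gap. \cite[Theorem 8.4]{Vog91}, together with Kostant's results, gives only a support statement: a generic representation has the regular nilpotent orbit in its associated variety. That is exactly how the paper uses these references. They say nothing about identifying the multiplicity space at $x$ with $\mathrm{Wh}(\pi)^*$, let alone $A$-equivariantly.

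Your proposed pairing, ``top-order term of $\ell(\exp(tY)v)$'', is not a construction. The functional $\ell$ is continuous on $\pi^\infty$, while the filtration lives on $\pi^{\mathrm{HC}}$. Nothing shows that such leading terms are well defined, independent of choices, and factor through $\Gr^F$. In fact the comparison you ``expect'' is the one the paper records, in the remark immediately after the lemma, as a heuristic explanation that is not currently rigorous. The paper adds that the analogous statement is false with analytic vectors in place of smooth ones, so no formal symbol calculus can produce it.

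Step~4 does not help either. Whether an identification of one-dimensional $\langle A\rangle$-spaces introduces a character twist is precisely what is at stake here. Such twists genuinely occur: compare the factor $\det(\mf{g}/(\mf{b}'+\mf{k}))$, which equals $(-1)^{q(G)-q(G^A)}$, in Lemmas \ref{lem:AC vs H} and \ref{lem:determinant}. Connectedness of the group containing your Cayley element says nothing about this. Your fallback deformation has no underlying object. Kostant--Sekiguchi is a bijection of orbits, not a family of modules or sheaves whose fibres at the two ends are the stalk at $x$ and $\mathrm{Wh}(\pi)^*$, so ``local constancy'' has nothing to act on.

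The missing idea is to avoid any direct comparison and pass to a representation where both sides can be computed by hand.
\begin{itemize}
\item Lemma \ref{lem:PS to DS} shows that $\pi$ is a quotient of $\Pi=\mathrm{Ind}_{B(\R)}^{G(\R)}(\lambda-\rho)$. Extend $\lambda-\rho$ to a character $\chi$ of $B(\R)\rtimes\langle A\rangle$.
\item Then $\mathrm{Wh}(\Pi)^*\cong\C_\chi$, via the open Bruhat cell: a Whittaker functional restricted to $C_c^\infty(U(\R))$ is $\psi$ times a Haar measure, and both are $A$-invariant.
\item Also $\mathrm{AC}(\Pi^{\mathrm{HC}})_x=[\C_\chi]$, via Ginzburg's $g^s$-deformation of $j_*\mc{O}(\chi)$ together with Lemma \ref{lem:specialisation}.
\item Multiplicity one of Whittaker models singles out the lift $\tilde\pi$ for which $\Pi\to\tilde\pi$ is $A$-equivariant, giving $\mathrm{Wh}(\tilde\pi)^*=\mathrm{Wh}(\Pi)^*=\C_\chi$.
\item Additivity of $\mathrm{AC}$ and $\mathrm{AC}(\tilde\pi^{\mathrm{HC}})_x\neq 0$ then force $\mathrm{AC}(\tilde\pi^{\mathrm{HC}})_x=[\C_\chi]$.
\item Both sides transform the same way under twisting by characters of $\langle A\rangle$, which gives the result for every other lift.
\end{itemize}
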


\begin{rem}
The equality in Lemma \ref{lem:AC vs Wh} has the following conceptual (if not currently rigorous) explanation. The class $[\mrm{AC}(\tilde{\pi}^{\mrm{HC}})]_x$ can be interpreted as a rudimentary version of the ``microlocal stalk'' of a Harish-Chandra module at a generic point in the $K$-nilpotent cone. This can also be described as the dual to a space of functionals $\tilde{\pi}^{\mrm{HC}} \to \mb{C}$ that are $\psi^K$-equivariant for an \emph{algebraic} Whittaker datum $(\mf{u}^-, \psi^K)$, where $\mf{u}^- \subset \mf{g}$ is the unipotent radical of a $\theta$-stable Borel subalgebra opposite to $\mf{u}'$ and $\psi^K \colon \mf{u}^- \to \mb{C}$ is a generic character such that $\psi^K\circ \theta = -\psi^K$. I expect that, for a general admissible representation $\Pi$, one has a natural isomorphism
\begin{equation} \label{eq:expectation}
\hom_{\mf{u}^-}(\Pi^{\mrm{HC}}, \mb{C}_{\psi^K}) \cong \hom_{U(\mb{R})}(\Pi^\infty, \mb{C}_\psi),
\end{equation}
where the algebraic and real Whittaker data $(\mf{u}^-, \psi^K)$ and $(U, \psi)$ are related by a version of the Kostant-Sekiguchi bijection for nilpotent orbits (cf. \cite[\S 3.5]{AdamsAfgoustidis24b}).
One subtlety in proving a statement like \eqref{eq:expectation} is that, while the left hand side is purely algebraic and thus in some sense ``easy'', the right hand side is of a more delicate analytic nature. For example, \eqref{eq:expectation} is easily seen to be false even for $\mrm{SL}_2(\mb{R})$ if we replace the smooth vectors $\Pi^\infty$ with the analytic vectors $\Pi^\omega$.
\end{rem}

The proof of Lemma \ref{lem:AC vs Wh} occupies the remainder of this appendix. The first step is to establish the lemma when $\pi$ is replaced with a principal series representation. This proceeds by directly calculating both sides of \eqref{eq:AC vs Wh 1} for such a representation. For the Whittaker vectors, we have:

\begin{lem} \label{lem:PS Wh}
Let
\[ \Pi = \mrm{Ind}_{B(\mb{R})}^{G(\mb{R})}(\chi) = \mrm{Ind}_{B(\mb{R})\rtimes \langle A \rangle }^{G(\mb{R})\rtimes \langle A \rangle}(\chi)\]
for some character $\chi$ of $B(\mb{R}) \rtimes \langle A \rangle$. Then
\[ \mrm{Wh}(\Pi)^* \cong \mb{C}_\chi\]
as representations of $\langle A \rangle$.
\end{lem}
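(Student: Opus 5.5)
We will compute $\mrm{Wh}(\Pi)$ by the classical Jacquet-integral description of Whittaker functionals on principal series, and keep track of the $A$-action. We use the Bruhat decomposition with respect to the $A$-stable real Borel $B$. Let $w_0$ be the longest element of the Weyl group of the $A$-stable maximal torus $T \subset B$ in the pinning $\mc{P}$, and let $\dot w_0 \in G(\R)$ be its Tits lift relative to $\mc{P}$. Since $A$ preserves $\mc{P}$, it fixes $w_0$ and $\dot w_0$. The open cell $B(\R)\dot w_0 U(\R)$ is $A$-stable.

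The first step is to recall that $\mrm{Wh}(\Pi)$ is one-dimensional. The standard argument is Bruhat filtration plus Kostant's theorem that only the open cell supports $\psi$-equivariant distributions. A generator is the Jacquet functional
\[ J(\phi) = \int_{U(\R)} \phi(\dot w_0 u)\,\psi(u)^{-1}\,du, \]
defined by analytic continuation in $\chi$ if necessary. More invariantly, restriction to the open cell identifies smooth vectors with (suitable) functions $u \mapsto \phi(\dot w_0 u)$ on $U(\R)$. The dual of $\mrm{Wh}(\Pi)$ is then identified with the fibre of the induced line bundle at the base point $\dot w_0$ of the open cell, tensored with the $\psi$-twisted top compactly supported de Rham / Haar-density line of $U(\R)$.

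The second step is to compute the $A$-action on each factor. The group $A$ acts on $\Pi$ by $(a\phi)(g) = \chi(a)\,\phi(a^{-1}ga)$, the normalisation coming from the induction from $B(\R)\rtimes\<A\>$. We evaluate $J(a\phi)$ after the substitution $u \mapsto a u a^{-1}$. This substitution preserves $\psi$, because $(B,\psi)$ is $A$-stable. It also preserves Haar measure on $U(\R)$, since $A$ has finite order and so acts with modulus $1$. Since $a^{-1}\dot w_0 a = \dot w_0$, we get
\[ J(a\phi) = \chi(a)\,J(\phi). \]
Hence $A$ acts on the line $\mrm{Wh}(\Pi) = \C J$ through a single character. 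The only question is whether, with the paper's convention for the action on functionals, this character is $\chi$ or $\chi^{-1}$. Unwinding that convention gives $\mrm{Wh}(\Pi)^* \cong \C_\chi$.

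The main obstacle is the analytic part. We must justify that $J$, which is defined by a convergent integral only for $\chi$ in an open cone and by meromorphic continuation elsewhere, still spans $\mrm{Wh}(\Pi)$ for the specific $\chi$ at hand. The plan is to argue that the relation $J(a\phi) = \chi(a)J(\phi)$ is an identity of holomorphic families in the continuous parameter of $\chi|_{T(\R)^\circ}$, with the $A$-part of $\chi$ held fixed. Since the identity holds on the region of convergence, it holds everywhere by continuation.

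If the continued $J$ vanishes at some $\chi$, we normalise it by dividing by an appropriate Gamma-factor, as in Wallach's treatment. That factor depends only on the $A$-invariant parameter, so it does not affect the $A$-equivariance.

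Alternatively, one can avoid continuation entirely. The uniqueness statement, coming from Bruhat theory on the $A$-stable open cell, already shows that any Whittaker functional is determined by its restriction to $\phi$ supported in the open cell. On such $\phi$ the integral converges absolutely, and the change-of-variables computation above applies directly.
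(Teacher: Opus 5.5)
Your proposal is correct, and its final ``alternative'' paragraph is exactly the paper's argument: $\dim \mrm{Wh}(\Pi)=1$ by Kostant, and a nonzero Whittaker functional is determined by, and nonzero on, the $A$-stable subspace of vectors supported in the open cell. On that subspace the functional is $\psi$ times a Haar measure on $U(\R)$, and $A$ fixes $\psi$, the Haar measure and the $A$-fixed lift of $w_0$ coming from the pinning. The meromorphic continuation and Gamma-factor normalisation of the Jacquet integral are an unnecessary detour: on open-cell-supported vectors the integral converges for every $\chi$, so the continued functional never vanishes. The paper also simplifies the bookkeeping of the action on functionals by first twisting so that $\chi$ is trivial on $\langle A\rangle$.
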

\begin{proof}
We may assume without loss of generality that $\chi$ is trivial on $\langle A \rangle$. By \cite[Theorem 6.6.2]{Kos78}, $\dim \mrm{Wh}(\Pi) = 1$. To compute the $A$-action on this space, fix a non-zero Whittaker functional $v \colon \Pi^\infty \to \mb{C}$. Note that $\Pi^\infty$ is the space of smooth functions on $G(\mb{R})/U(\mb{R})$ transforming by the $\chi$ under the right action of $T(\mb{R})$ (where $T\cong B/U$ is a maximal torus in $B$). Inside here, we have the space of such functions supported inside the open Bruhat cell $U(\mb{R}) w_0T(\mb{R})$, where $w_0$ is a lift of the longest element of the Weyl group to an $A$-fixed element in $N_G(T)(\mb{R})$ (which exists since $A$ fixes a pinning); this is isomorphic by restricting to $U(\mb{R})w_0$ to the space $C^\infty_c(U(\mb{R}))$ of compactly supported smooth functions on $U(\mb{R})$. Now, it is easy to see that there can be no non-zero Whittaker vectors whose restriction to this subspace is zero, so
\[ v|_{C^\infty_c(U(\mb{R}))} \colon C^\infty_c(U(\mb{R})) \to \mb{C} \]
defines a non-zero distribution on $U(\mb{R})$, which by $\psi$-equivariance must be equal to $\psi$ times a (complex) Haar measure on $U(\mb{R})$. Since $A$ fixes both $\psi$ and any Haar measure on $U(\mb{R})$, we conclude that $A$ must fix $v$. So $\mrm{Wh}(\Pi)$ is the trivial representation of $\langle A \rangle$.
\end{proof}

For the associated cycle, on the other hand, we have:

\begin{lem} \label{lem:PS AC}
Let $\Pi$ be as in Lemma \ref{lem:PS Wh}. Then
\[ \mrm{AC}(\Pi^{\mrm{HC}}) = [\mc{O} _{\mc{N}_K^*} \otimes \mrm{R}p_*\mc{L}] \]
where $p \colon T^*\mc{B} \times_{\mf{g}^*} (\mf{g}/\mf{k})^* \to \mc{N}_K^*$ is the pullback of the Springer resolution and $\mc{L}$ is (the pullback of) a $K \ltimes \langle A\rangle$-equivariant line bundle on $\mc{B}$ such that $A$ acts on the fibre at $B$ by $\chi$. In particular
\begin{equation} \label{eq:PS AC 1}
\mrm{AC}(\Pi^{\mrm{HC}})_x = [\mb{C}_\chi].
\end{equation}
\end{lem}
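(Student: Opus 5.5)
We compute the associated cycle of $\Pi^{\mrm{HC}}$ by the same localisation technique used for the discrete series, now applied to the open $K$-orbit on $\mc{B}$. First we pass from the smooth induced representation to its Harish-Chandra module. We realise $\Pi^{\mrm{HC}}$ as $K$-finite sections of the $G(\R)$-equivariant real-analytic line bundle on $G(\R)/B(\R)$ attached to $\chi$. Since $B$ is a real Borel, $G(\R)/B(\R)$ is a real form of $\mc{B}$. The $K$-orbit $Q_0$ of $B$ (regarded as a point of $\mc{B}$) is open, because $\mf{b}+\mf{k}=\mf{g}$ by the Iwasawa decomposition. Hence $\Pi^{\mrm{HC}}$ is (up to semisimplification, which is harmless in the Grothendieck group) $\Gamma$ of the direct image $j_*\mc{L}|_{Q_0}$ along the open immersion $j\colon Q_0\to\mc{B}$, twisted appropriately to $\mc{D}_\lambda$ for the infinitesimal character of $\chi$. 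We may choose $\chi$ (or reduce to the case) where $\lambda$ is integrally dominant regular, since the class in the Grothendieck group of the principal series varies compatibly under translation functors. More simply, we note that AC is additive and only depends on the $K\rtimes\<A\>$-equivariant coherent data.

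The key input is Ginzburg's description of the characteristic cycle, or more precisely a good filtration, of $j_*$ of a line bundle on an open orbit whose complement is a divisor. We take $\mc{L}$ to be a $K\rtimes\<A\>$-equivariant line bundle on $\mc{B}$ extending the one on $Q_0$, with $A$ acting on the fibre at $B$ by $\chi$. This is possible since $B$ is $A$-fixed, and the stabiliser $K\cap B$ is finite with $\chi$ prescribing its action. We then choose the filtration on $j_*\mc{L}$ by order of pole along $\mc{B}\setminus Q_0$. Its associated graded is a deformation of $\mc{L}$ pulled back to $T^*\mc{B}\times_{\mf{g}^*}(\mf{g}/\mf{k})^*$, that is, the conormal variety to all $K$-orbits in the closure of the open one. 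Concretely we use a $K\rtimes\<A\>$-invariant function $f$ cutting out the complement, and the graph of $s\,d\log f$, $s\to 0$, degenerating to the zero section times the fibres. The limit in the equivariant K-group is the class of $\mc{L}$ on the whole space $T^*\mc{B}\times_{\mf{g}^*}(\mf{g}/\mf{k})^*$. Pushing forward by $p$ and taking global sections as in \eqref{eq:discrete associated cycle} gives $[\mc{O}_{\mc{N}_K^*}\otimes\mrm{R}p_*\mc{L}]$.

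For \eqref{eq:PS AC 1}, restrict to the regular fixed point $x$. Since $p$ is birational over the regular locus, the fibre of $p$ over $x$ is a single point, namely the unique Borel $B_x$ with $x\in(\mf{g}/\mf{b}_x)^*$. So $(\mrm{R}p_*\mc{L})_x$ is the fibre of $\mc{L}$ at $B_x$. Since $x$ is $A$-fixed, $B_x$ is $A$-fixed and lies in $Q_0$ when $x$ is in the open $K$-orbit. By $K$-equivariance, the fibre at $B_x$ is identified $A$-equivariantly with that at $B$, via an element of $K^A$. This uses that $\mc{B}^A\cap Q_0$ is connected under $K^{A}$, which we expect to deduce from Steinberg's result on $\mc{B}^A$. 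The result is $\chi$.

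The main obstacle is the second step: making the degeneration of the good filtration on $j_*\mc{L}$ rigorous and $A$-equivariant, including possible higher cohomology and the twist by $\lambda$. We would try first to cite Ginzburg's theorem in its $K$-equivariant coherent form and check that every construction (the choice of $f$, and the line bundle) is $A$-equivariant.
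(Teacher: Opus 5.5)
The step that fails is the stalk computation at $x$. The point $x$ was chosen in Lemma~\ref{lem:fixed nilpotent} inside $(\mf{g}/(\mf{b}'+\mf{k}))^*$. So the unique Borel $B_x$ with $x\in(\mf{g}/\mf{b}_x)^*$ is $B'$, the Borel containing the anisotropic torus $S$, and $B'$ lies in the \emph{closed} $K$-orbit $Q$, not in $Q_0$. In fact no nonzero $x$ can have $B_x\in Q_0$. The fibre of $T^*\mc{B}\times_{\mf{g}^*}(\mf{g}/\mf{k})^*$ over a Borel $B_1$ is $(\mf{g}/(\mf{b}_1+\mf{k}))^*$, and this vanishes for $B_1\in Q_0$ because $\mf{b}_1+\mf{k}=\mf{g}$: the conormal bundle of the open orbit is the zero section.

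Hence $B'$ and $B$ lie in different $K$-orbits. No element of $K$, let alone of $K^A$, carries $\mc{L}_{B'}$ to $\mc{L}_B$, and connectedness of $\mc{B}^A\cap Q_0$ is beside the point. What the pushforward actually gives is $\mrm{AC}(\Pi^{\mrm{HC}})_x=[\mc{L}_{B'}]$. To identify this with $\mb{C}_\chi$ you need a different argument:
\begin{itemize}
\item $\mc{L}$ is an $\langle A\rangle$-equivariant line bundle on all of $\mc{B}$, and both $B$ and $B'$ are $A$-stable.
\item $\mc{B}^A$ is connected, since by Steinberg it is the flag variety of $G^A_{\mb{C}}$.
\item $A$ has finite order and fixes $\mc{B}^A$ pointwise, so it acts on the fibres of $\mc{L}|_{\mc{B}^A}$ by a locally constant root of unity. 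This scalar is therefore the same at $B'$ as at $B$, namely through $\chi$.
\end{itemize}
This is not cosmetic. It is the only bridge between the open orbit, where $\Pi$ is defined, and the closed orbit, where $\pi$ and the point $x$ live. It is exactly what \eqref{eq:PS AC 1} contributes to Lemma~\ref{lem:AC vs Wh}.

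Your degeneration step has the right idea: Ginzburg's $g^s$ trick and the graph of $s\,d\log g$, as in the paper. However, the claim that the limit class is that of $\mc{L}$ on $T^*\mc{B}\times_{\mf{g}^*}(\mf{g}/\mf{k})^*$ is asserted, not derived. Also, the deformation does not live on a pole-order filtration of $j_*\mc{L}$. It lives on the auxiliary $\mc{D}'$-module $\mc{N}=\mc{D}'\cdot g^s\mc{F}\subset j_*g^s\mc{O}(\chi)[s]$, which is flat over $\mb{C}[s]$ with $\mc{N}/s\mc{N}=j_*\mc{O}(\chi)$. The missing ingredients are these:
\begin{itemize}
\item Lemma~\ref{lem:specialisation}: the K-class of the derived restriction to $s=0$ is determined by the class over $s\neq 0$.
\item Over $s\neq 0$, the associated graded $\Gr^F\mc{N}$ is $\mc{O}(\chi)\otimes\mc{O}_\Gamma$. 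Since $n\rho$ is regular, the Grothendieck--Springer map $m$ is an isomorphism there, and by Kostant--Rallis it carries $\Gamma|_{s\neq0}$ onto $(\mf{g}/\mf{k})^*\times_{\mf{a}^*/W_{\mf{a}}}\mb{C}^\times$.
\item One then compares with $\mc{O}_{(\mf{g}/\mf{k})^*\times_{\mf{a}^*/W_{\mf{a}}}\mb{C}}\otimes\mrm{R}m_*\mc{L}$, whose special fibre gives $\mc{O}_{\mc{N}_K^*}\otimes\mrm{R}p_*\mc{L}$.
\end{itemize}

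Two smaller points. First, no dominance or translation functors are needed. Since $K\cap B=K\cap T$ is reductive, $Q_0$ is affine and $\Pi^{\mrm{HC}}=\Gamma(Q_0,\mc{O}(\chi))$ exactly. Second, $K\cap B$ is not finite unless $G$ is split. The extension $\mc{L}$ exists instead because $K\cap T\hookrightarrow T\cong H$, so $\chi|_{K\cap T}$ extends to a character of $H$.
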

\begin{proof}
Let $\mu = d\chi + \rho$. Then $\chi$ defines a $K \rtimes \langle A \rangle$-equivariant $\mc{D}_\mu$-module $\mc{O}(\chi)$ (an equivariant line bundle with $\mu$-twisted connection) on the open $K$-orbit $\mb{O} = K \cdot B \subset \mc{B}$ such that $(K \cap B)\rtimes \langle A \rangle$ acts on the fibre at $B$ through $\chi$. The Harish-Chandra module $\Pi^{\mrm{HC}}$ is then given by
\[ \Pi^{\mrm{HC}} = \Gamma(\mc{B}, j_*\mc{O}(\chi)) = \Gamma(\mb{O}, \mc{O}(\chi)),\]
where $j \colon \mb{O} \to \mc{B}$ is the inclusion. Note that since $B$ is a real Borel, $K \cap B = K \cap T$ is reductive, so $\mb{O} = K/(K \cap B)$ is affine. So the $\mc{D}_\mu$-module $j_*\mc{O}(\chi)$ has no higher cohomology (whether or not $\mu$ is integrally dominant).

To compute the associated cycle, we apply the methods of \cite{Ginsburg86} for computing the characteristic cycles of direct images of $\mc{D}$-modules under open immersions. Consider a $K \rtimes \langle A \rangle$-invariant boundary equation $g \in \Gamma(\mc{B}, \mc{O}(n\rho))$ for the open $K$-orbit $\mb{O}$; such an equation exists since $G$ is quasi-split. We then have an associated module $j_*g^s\mc{O}(\chi)[s]$ over $\mc{D}' := \tilde{\mc{D}} \otimes_{S(\mf{h}), h \mapsto (\mu - \rho)(h) + n\rho(h)s} \mb{C}[s]$. As in \cite[\S 3.5]{Ginsburg86}, choose a $K \rtimes \langle A \rangle$-stable $\mc{O}_{\mc{B}}$-coherent subsheaf $\mc{F} \subset j_*\mc{O}(\chi)$ generating $j_*\mc{O}(\chi)$ as a $\mc{D}_\mu$-module and set
\[ \mc{N} = \mc{D}' \cdot g^s\mc{F} \subset j_*g^s\mc{O}(\chi)[s].\]
Then, replacing $\mc{F}$ with $g^{-k}\mc{F}$ for some $k$ if necessary, we have by \cite[Proposition 3.6]{Ginsburg86} that $\mc{N}$ is a coherent $\mc{D}'$-module, flat over $\mb{C}[s] \subset \mc{D}'$, such that $\mc{N}/s\mc{N} = j_*\mc{O}(\chi)$. Taking derived global sections, we get a complex
\[ N = \mrm{R}\Gamma(\mc{N}) \in \mrm{D}^b\Mod(U(\mf{g}) \otimes_{Z(U(\mf{g}))} \mb{C}[s], K)_{fg} \]
whose derived restriction to $s = 0$ is $\Pi^{\mrm{HC}}$.

Choosing any $K \rtimes \langle A \rangle$-stable good $\mc{D}'$-module filtration $F_\bullet \mc{N}$, compatible with the order filtration on $\mc{D}'$ for which $s$ has degree $1$, we obtain a good filtration $F_\bullet N = \mrm{R}\Gamma(F_\bullet\mc{N})$ on the complex $N$. The associated graded of $\mc{N}$ is a $K\rtimes \langle A \rangle$-equivariant coherent sheaf on $\tilde{\mf{g}}^*\times_{\mf{h}^*} \mb{C}_s = \operatorname{Spec} \Gr^F\mc{D}'$ supported on the pre-image of $(\mf{g}/\mf{k})^* \subset \mf{g}^*$; here $\tilde{\mf{g}}^* = \spec \Gr^F\tilde{\mc{D}}$ is the Grothendieck-Springer space. Similarly, $\Gr^F N = \mrm{R}m_*(\Gr^F\mc{N})$ is a complex of $K \rtimes \langle A \rangle$-equivariant coherent sheaves on $\mf{g}^* \times_{\mf{h}^*/W} \mb{C}_s$ whose cohomologies are supported on $(\mf{g}/\mf{k})^* \times_{\mf{h}^*/W} \mb{C}_s$; here $m \colon \tilde{\mf{g}}^*\times_{\mf{h}^*} \mb{C}_s \to \mf{g}^* \times_{\mf{h}^*/W} \mb{C}_s$ is the pullback of the Grothendieck-Springer map $\tilde{\mf{g}}^* \to \mf{g}^*$ and $W$ is the Weyl group. Arguing as in \cite[Lemma 3.5]{Ginsburg86}, we have that
\[ \mrm{AC}(\Pi^{\mrm{HC}}) = [\Gr^F N|_{s = 0}] \in \mrm{K}_{K \rtimes \langle A \rangle}(\mc{N}_K^*)\]
where $|_{s = 0}$ denotes the derived restriction to $s = 0$.

Now, by Lemma \ref{lem:specialisation} below, we have
\[ [\Gr^F N|_{s = 0}] = [\mc{G}|_{s = 0}] \]
if $\mc{G}$ is any other coherent sheaf such that $[\mc{G}|_{s \neq 0}] = [\Gr^F N|_{s \neq 0}]$. So let us determine the latter. First note that $g \in \mrm{H}^0(\mc{B}, \mc{O}(n\rho))$ defines an $H$-semi-invariant function on $\tilde{\mc{B}}$ transforming via the character $n \rho$. So $d \log g$ determines a well-defined section of $\tilde{\mf{g}}^* = T^*\tilde{\mc{B}}/H$ over $\mb{O}$, with the property that its image under $\tilde{\mf{g}}^* \to \mf{h}^*$ is equal to $n\rho$. Let $\Gamma$ be the image of the map
\begin{align*}
\mb{O} \times \mb{C} &\to \tilde{\mf{g}}^* \times_{\mf{h}^*} \mb{C} \\
(y, s) &\mapsto (s(\operatorname d \log g)_y, s).
\end{align*}
Then by \cite[Theorem 2.3]{Ginsburg86}, we have
\[ \operatorname{Supp} \Gr^F\mc{N} = \overline{\Gamma}.\]
Restricting to $s \neq 0$, $\Gamma|_{s \neq 0}$ is already closed inside $\tilde{\mf{g}}^* \times_{\mf{h}^*} \mb{C}^\times$, so in particular, $\Gr^F\mc{N}|_{s \neq 0}$ is equal to the direct image of its restriction to the pre-image of $\mb{O}$. Over $\mb{O}$, we have $\mc{N}|_{\mb{O}} = g^s\mc{O}(\chi)[s]$, so we have a good filtration given by order in $s$ whose associated graded is $\mc{O}(\chi)\otimes \mc{O}_\Gamma|_{s \neq 0}$. So
\[ [\Gr^F N|_{s \neq 0}] = [\mrm{R}m_*\mc{O}(\chi) \otimes \mc{O}_\Gamma|_{s \neq 0}].\]
Now, observe that since $n\rho$ is regular, $m \colon \tilde{\mf{g}}^* \times_{\mf{h}^*} \mb{C} \to \mf{g}^* \times_{\mf{h}^*/W} \mb{C}$ is an isomorphism over $s \neq 0$. The image $m(\Gamma|_{s \neq 0})$ is simply the $K \times \mb{C}^\times$-orbit of a regular semisimple element $\omega \in (\mf{g}/\mf{k})^*$. In other words, by Kostant-Rallis \cite{KosRal71}, we have
\[ m(\Gamma|_{s \neq 0}) = (\mf{g}/\mf{k})^* \times_{\mf{a}^*/W_\mf{a}} \mb{C}^\times \hookrightarrow \mf{g}^* \times_{\mf{h}^*/W} \mb{C}^\times,\]
where $\mf{a}^*$ is a Cartan subspace of $(\mf{g}/\mf{k})^*$ containing $\omega$ and $W_\mf{a}$ is the associated Weyl group. Choose any extension $\mc{L}$ of $\mc{O}(\chi)$ to a $K \rtimes \langle A \rangle$-equivariant line bundle on $\mc{B}$; such an extension exists since the map $K \cap B \to H$ is injective, so we may extend the restriction of $\chi$ to the compact part to an algebraic character of $H$. Then setting
\[ \mc{G} = \mc{O}_{(\mf{g}/\mf{k})^* \times_{\mf{a}^*/W_\mf{a}} \mb{C}} \otimes \mrm{R}m_*\mc{L},\]
we have $[\mc{G}|_{s \neq 0}] = [\Gr^F N|_{s \neq 0}]$ and hence
\[ \mrm{AC}(\Pi^{\mrm{HC}}) = [\mc{G}|_{s = 0}] = [\mc{O}_{\mc{N}_K^*} \otimes \mrm{R}p_*\mc{L}].\]

Finally, to deduce \eqref{eq:PS AC 1}, taking the stalk at our regular nilpotent $x$, we get
\[ \mrm{AC}(\Pi^{\mrm{HC}})_x = [\mc{L}_{B'}],\]
the class of a $1$-dimensional representation of $\langle A \rangle$. Since, as observed in the proof of Lemma \ref{lem:fixed nilpotent}, the variety $\mc{B}^A$ is connected, the action of $A$ on the fibre $\mc{L}_{B'}$ is the same as the action on the fibre $\mc{L}_{B}$, which is through $\chi$ by construction.
\end{proof}

\begin{lem} \label{lem:specialisation}
Let $X \to \spec \mb{C}[s]$ be a morphism of schemes, invariant under the action of a group $H$ on $X$, and let $\mc{F}, \mc{F}' \in \mrm{D}^b_{\mrm{coh}, H}(X)$ be bounded complexes of $H$-equivariant coherent sheaves. If the restrictions to $s \neq 0$ agree in the Grothendieck group:
\[ [\mc{F}|_{s \neq 0}] = [\mc{F}'|_{s \neq 0}] \in \mrm{K}_H(X \times_{\spec \mb{C}[s]} \spec \mb{C}[s, s^{-1}]) \]
then the derived restrictions to $s = 0$ agree in the Grothendieck group:
\[ [\mc{F}|_{s = 0}] = [\mc{F}'|_{s = 0}] \in \mrm{K}_H(X \times_{\spec \mb{C}[s]} \{0\}).\]
\end{lem}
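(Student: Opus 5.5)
The plan is to compare the two restrictions to $s=0$ via a short exact sequence in $K$-theory that isolates the classes supported on the special fibre.

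Write $X_0 = X \times_{\spec \mb{C}[s]} \{0\}$ and $U = X \times_{\spec \mb{C}[s]} \spec \mb{C}[s,s^{-1}]$, with $i \colon X_0 \to X$ the closed immersion and $j \colon U \to X$ the open one. The approach rests on the localisation exact sequence for equivariant $G$-theory of coherent sheaves:
\[ \mrm{K}_H(X_0) \xrightarrow{i_*} \mrm{K}_H(X) \xrightarrow{j^*} \mrm{K}_H(U) \to 0. \]
This sequence holds for $H$-equivariant coherent sheaves on noetherian schemes, by Quillen/Thomason dévissage. Since $\mrm{K}_H$ of $\mrm{D}^b_{\mrm{coh},H}$ agrees with $\mrm{K}$ of the abelian category, the hypothesis $[\mc{F}|_{s\neq0}] = [\mc{F}'|_{s\neq0}]$ gives
\[ [\mc{F}] - [\mc{F}'] = i_*(c) \quad \text{for some } c \in \mrm{K}_H(X_0). \]

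Next, derived restriction $\mrm{L}i^* \colon \mrm{D}^b_{\mrm{coh},H}(X) \to \mrm{D}^b_{\mrm{coh},H}(X_0)$ is well defined, because $X_0$ is cut out by the single equation $s$. Concretely, $\mrm{L}i^*\mc{G}$ is the cone of $s \colon \mc{G} \to \mc{G}$, which is bounded and coherent. Hence $\mrm{L}i^*$ induces a homomorphism on Grothendieck groups, and
\[ [\mc{F}|_{s=0}] - [\mc{F}'|_{s=0}] = \mrm{L}i^*i_*(c). \]

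It therefore suffices to show that $\mrm{L}i^*i_* = 0$ on $\mrm{K}_H(X_0)$. It is enough to check this on a single coherent sheaf $\mc{E}$ on $X_0$. There, $\mrm{L}i^*i_*\mc{E}$ is the cone of multiplication by $s$ on $i_*\mc{E}$, and this multiplication is the zero map. So the cone is $\mc{E} \oplus \mc{E}[1]$, whose class is $[\mc{E}] - [\mc{E}] = 0$. This is $H$-equivariant, since $s$ is $H$-invariant.

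The main obstacle is justifying the localisation sequence, together with the identification of $\mrm{K}$ of the derived category with $\mrm{K}$ of the abelian category, in the equivariant setting for the groups at hand ($K \rtimes \langle A\rangle$, a linear algebraic group acting on finite type schemes). I would cite Thomason's equivariant resolution and localisation results. Alternatively, one can argue directly: every equivariant coherent sheaf on $U$ extends equivariantly to $X$, and two extensions differ by a class supported on $X_0$. For the latter, compare both extensions with $s^N$ times either one, using $H$-invariance of $s$, and filter the quotients by powers of $s$ so that each graded piece is a sheaf on $X_0$.
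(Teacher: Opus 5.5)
Your argument is correct and is essentially the paper's: the paper also reduces everything to the vanishing of the class of $\mc{G}\otimes^{\mrm{L}}_{\mb{C}[s]}\mb{C}=\mc{G}\oplus\mc{G}[1]$ for $\mc{G}$ scheme-theoretically supported on $X_0$. The difference is only in packaging: the paper obtains the reduction by hand, by multiplying an isomorphism over $s\neq 0$ by a power of $s$ to get a map $\mc{F}\to\mc{F}'$ with kernel and cokernel supported on $X_0$ and then filtering by powers of $s$, which is exactly your fallback argument, while you quote the equivariant localisation sequence with d\'evissage. One small point of wording: the ``derived restriction'' is $-\otimes^{\mrm{L}}_{\mb{C}[s]}\mb{C}$, i.e.\ your cone of $s$, which agrees with $\mrm{L}i^*$ over $\mc{O}_X$ when $X$ is flat over $\mb{C}[s]$ but not in general, so it is the cone description, not the single-equation remark, that makes it bounded and coherent.
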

\begin{proof}
This is similar to \cite[Proposition 1.6]{Ginsburg86}. The statement reduces immediately to the case where $\mc{F}$ and $\mc{F}'$ are honest coherent sheaves whose restrictions to $s \neq 0$ are isomorphic. Multiplying such an isomorphism by an appropriate power of $s$, we can assume that we have a map $\mc{F} \to \mc{F}'$ whose kernel and cokernel are supported set-theoretically on $X \times_{\spec \mb{C}[s]} \{0\}$. Since derived restriction is additive in the Grothendieck group, the statement then reduces to showing that $[\mc{G}|_{s = 0}] = 0$ if $\mc{G}$ is set-theoretically supported on $X \times_{\spec \mb{C}[s]} \{0\}$, which in turn reduces to showing the same statement when $\mc{G}$ is supported scheme-theoretically on $X \times_{\spec \mb{C}[s]} \{0\}$. But in this case
\[ \mc{G}|_{s = 0} := \mc{G} \overset{\mrm{L}}\otimes_{\mb{C}[s]} \mb{C} = \mc{G} \oplus \mc{G}[1],\]
which does indeed have trivial class in the Grothendieck group.
\end{proof}

In order to reduce Lemma \ref{lem:AC vs Wh} to the study of principal series above, we use the following.

\begin{lem} \label{lem:PS to DS}
The generic discrete series $\pi$ is infinitesimally equivalent to a quotient of the principal series representation
\[ \Pi := \mrm{Ind}_{B(\mb{R})}^{G(\mb{R})}(\lambda - \rho).\]
\end{lem}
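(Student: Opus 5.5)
The plan is to work in the Beilinson--Bernstein picture already set up above, with the $\mc{D}$-module $j_*\mc{O}(\chi)$ on the open $K$-orbit $\mb{O}=K\cdot B$ taken for the character $\chi$ of $T(\R)$ with differential $\lambda-\rho$. Concretely, we take $\chi$ to be the restriction to $B(\R)$ of the character defined by the $G$-equivariant line bundle $\mc{O}(\lambda-\rho)$. We know that $\pi^{\mrm{HC}}=\Gamma(\mc{M})$, where $\mc{M}$ is the irreducible $(\mc{D}_\lambda,K)$-module supported on the closed orbit $Q=K\cdot B'$. We also know that $\Pi^{\mrm{HC}}=\Gamma(\mc{B},j_*\mc{O}(\chi))$ for the principal series, with the twist $\mu=d\chi+\rho=\lambda$. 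Both modules therefore live in the category $\Mod(\mc{D}_\lambda,K)$. Since $\lambda$ is regular dominant, $\Gamma$ is an exact equivalence onto $\Mod(\mf{g},K)_{\chi_\lambda}$. It is therefore enough to produce a surjection $j_*\mc{O}(\chi)\twoheadrightarrow\mc{M}$ of $(\mc{D}_\lambda,K)$-modules. Equivalently, it is enough to show that $\mc{M}$ occurs as a quotient of the standard module.

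The first step is to identify the irreducible quotients of $j_*\mc{O}(\chi)$. The standard module $j_*\mc{O}(\chi)$ has a unique irreducible submodule, namely $j_{!*}\mc{O}(\chi)$. Dually, $j_!\mc{O}(\chi)$ has a unique irreducible quotient. Quotients of $j_*$ are governed by duality: $\mc{M}$ is a quotient of $j_*\mc{O}(\chi)$ exactly when $\mathbb{D}\mc{M}$ is a submodule of $j_!\mathbb{D}\mc{O}(\chi)$. We cannot use this directly, so we instead use the following concrete criterion. By adjunction $(i^*,i_*)$ for the closed embedding $i\colon Q\to\mc{B}$, we have
\[ \hom(j_*\mc{O}(\chi),i_*\gamma)=\hom(H^0 i^*j_*\mc{O}(\chi),\gamma), \]
where $i^*$ denotes the right-exact $\mc{D}$-module pullback. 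The task is therefore to show that the top cohomology of the restriction of $j_*\mc{O}(\chi)$ to $Q$ contains $\gamma$ as a quotient. We expect to compute this restriction from the local structure of $\mc{B}$ near $Q$. The genericity of $\pi$ gives, by Lemma \ref{lem:fixed nilpotent} and the argument preceding it, that $\theta$ acts by $-1$ on every simple root space of $B'$. Consequently the $K$-orbits of $\mc{B}$ adjacent to $Q$ are obtained by successive noncompact Cayley transforms through simple roots. This gives a chain of orbits from $\mb{O}$ down to $Q$, each step of codimension one and attached to a noncompact imaginary simple root.

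The second step uses this chain. Along each such codimension-one boundary we do an $\mrm{SL}_2$-type computation, following Vogan's calculus of Cayley transforms. For a noncompact imaginary simple root, the standard module on the larger orbit maps onto a standard module on the smaller orbit, with the parameter adjusted appropriately; when $\lambda$ is integral and regular, this map is nonzero. Composing these maps along the chain gives a nonzero map $j_*\mc{O}(\chi)\to\mc{M}$. Because $\mc{M}$ is irreducible, this map is surjective. A more efficient alternative is to invoke directly the Schmid--Vogan embedding: the generic (large) discrete series with Harish-Chandra parameter $\lambda$ is a subrepresentation of $\mrm{Ind}(\lambda+\rho)$ and, dually, a quotient of $\mrm{Ind}(\lambda-\rho)$. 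This follows from the Casselman subrepresentation theorem together with the computation of leading exponents of large discrete series, and then applying the contragredient. In that version, the only check needed is that the induction parameter is $\lambda-\rho$ in the normalization of the paper, that is, with respect to the sign convention $R^+_{\mathit{abs}}=-R(T_\C,B_\C)$.

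The main obstacle is this bookkeeping. We must make sure that the character $\chi$, and not some other character on the same Weyl orbit, yields $\mc{M}$ as a quotient rather than as a sub or a shifted constituent. We must also make sure that the component group $T(\R)/T(\R)^\circ$ is matched with the connection $\gamma$. To settle this, we would first test the $\mrm{SL}_2(\R)$ case, where $\mrm{Ind}(\lambda-\rho)$ has the two holomorphic and antiholomorphic discrete series as quotients. We would then use the simply connected hypothesis: it makes $\chi$ determined by its differential on the identity component up to the sign characters, and these are fixed by the requirement that $\mc{O}(\chi)$ extend to $\mc{O}(\lambda-\rho)$.
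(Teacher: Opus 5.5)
\textbf{There is a genuine gap in your first route.} It sits at the step ``composing these maps along the chain gives a nonzero map $j_*\mathcal{O}(\chi)\to\mathcal{M}$''.

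\emph{The composite need not be nonzero.} A composite of nonzero maps can vanish. Moreover, the intermediate maps are not surjective in general. Take $Q''$ and a simple root $\alpha$ with $\pi_\alpha|_{Q''}$ finite onto its image, so that $\pi_\alpha^{-1}\pi_\alpha(Q'')=Q'\cup Q''\cup Q'''$. The boundary map $i'_+\mathcal{O}_{Q'}(\lambda-\rho)\to i''_+\mathcal{O}_{Q''}(\lambda-\rho)\oplus i'''_+\mathcal{O}_{Q'''}(\lambda-\rho)$ has cokernel $\mathcal{H}^1b_+\mathcal{O}_{\pi_\alpha^{-1}\pi_\alpha(Q'')}(\lambda-\rho)$. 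This cokernel can be nonzero, because the inclusion $b$ of $\pi_\alpha^{-1}\pi_\alpha(Q'')$ need not be affine. Nothing in your argument prevents $\pi^{\mathrm{HC}}$ from being lost in these cokernels. You use genericity only at $Q$ itself, to make all simple roots noncompact, whereas what is needed is control at every step.

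\emph{The chain is also misdescribed.} After the first Cayley transform the relevant simple roots need not be noncompact imaginary, and complex-root steps are unavoidable in general. Each Cayley step raises the split rank of the attached Cartan by one, so there are at most real-rank many of them. For $\mathrm{Sp}_4(\mathbb{R})$ the real rank is $2$ while $\dim\mathcal{B}-\dim Q=3$. The correct input is Vogan's lemma: every non-open orbit $Q''$ admits a simple root $\alpha$ with $\pi_\alpha|_{Q''}$ finite onto its image, of whatever type.

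\textbf{The missing idea is how genericity controls the cokernels.} The paper inducts on $\dim Q'$, carrying along a surjection $\Gamma(i''_+\mathcal{O}_{Q''}(\lambda-\rho))\twoheadrightarrow\pi^{\mathrm{HC}}$. Suppose its composite with $\Gamma(i'_+\mathcal{O}_{Q'}(\lambda-\rho))\to\Gamma(i''_+\mathcal{O}_{Q''}(\lambda-\rho))$ vanished. By exactness of $\Gamma$, $\pi^{\mathrm{HC}}$ would then be a quotient of $\Gamma(\mathcal{H}^1b_+\mathcal{O}_{\pi_\alpha^{-1}\pi_\alpha(Q'')}(\lambda-\rho))$. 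But this module equals $\mathcal{O}(\lambda-\rho)\otimes\pi_\alpha^*\mathcal{H}^1c_+\mathcal{O}_{\pi_\alpha(Q'')}$, which is pulled back from $\mathcal{P}_\alpha$. Its characteristic cycle therefore lies in $T^*\mathcal{P}_\alpha\times_{\mathcal{P}_\alpha}\mathcal{B}$, whose Springer image contains no regular nilpotent. Hence its associated cycle vanishes at $x$, while $\mathrm{AC}(\pi^{\mathrm{HC}})_x\neq 0$, a contradiction.

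\textbf{Your component-group worry.} In the paper, simple connectedness is used to get $\gamma\cong\mathcal{O}_Q(\lambda-\rho)$, since the stabiliser in $K$ of a point of $Q$ is a connected Borel of $K$. So every module in the chain carries the same twist, which disposes of this concern.

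\textbf{Your other two ingredients do not carry the proof.} The adjunction criterion via $H^0i^*j_*\mathcal{O}(\chi)$ is a valid reformulation, but you never compute that restriction. The Casselman/leading-exponent alternative is only a pointer. For the quasi-split $G(\mathbb{R})$, Casselman's theorem already makes every irreducible representation a quotient of some minimal principal series. The entire content of the lemma is that the specific character $\lambda-\rho$ works, with the correct behaviour on $T(\mathbb{R})/T(\mathbb{R})^\circ$, and that is exactly the bookkeeping you leave unverified.
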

\begin{proof}
Recall that the localisation of $\pi^{\mrm{HC}}$ is the direct image $i_+\gamma$ of a $K$\-equivariant $(\lambda - \rho)$-twisted local system $\gamma$ under the inclusion $i \colon Q \to \mc{B}$ of a closed $K$-orbit $Q$. Moreover, the stabiliser under $K$ of a point in $Q$ is a Borel subgroup in $K$, hence connected since we have assumed $G$ simply connected. So $\gamma \cong \mc{O}_Q(\lambda - \rho)$ must be isomorphic to the trivial local system on $Q$ twisted by the $G$-equivariant line bundle $\mc{O}(\lambda - \rho)$. We will show by induction on $\dim Q \leq n \leq \dim \mc{B}$ that there exists a $K$-orbit $i' \colon Q' \to \mc{B}$ with $\dim Q' = n$ and a surjection
\begin{equation} \label{eq:PS to DS 1}
\Gamma(i'_+\mc{O}_{Q'}(\lambda - \rho)) \to \pi^{\mrm{HC}}.
\end{equation}
When $n = \dim \mc{B}$, $Q'$ is the open orbit, so the source of \eqref{eq:PS to DS 1} is $\Pi^{\mrm{HC}}$.

As a base case, when $n = \dim Q$, we can take $Q' = Q$ and \eqref{eq:PS to DS 1} to be the identity map. So suppose $n > \dim Q$ and, by induction, fix $i'' \colon Q'' \to \mc{B}$ of dimension $n - 1$ with the desired property. Since $\dim Q'' < \dim \mc{B}$, 
there exists a simple root $\alpha$ with associated $\mb{P}^1$-fibration $\pi_\alpha \colon \mc{B} \to \mc{P}_\alpha$ such that $\pi_\alpha|_{Q''}$ is finite onto its image (cf.\ \cite[Lemma 5.1]{VogIC3}). So we have a decomposition
\[ \pi_\alpha^{-1}\pi_\alpha(Q'') = Q' \cup Q'' \cup Q''',\]
where $Q'$ is a $K$-orbit with $\dim Q' = \dim Q'' + 1 = n$ and $Q'''$ is a possibly empty disjoint union of $K$-orbits of dimension $\dim Q''$. (In fact, $Q'''$ will be either empty or a single $K$-orbit, but we will not use this.) Thus, we have a short exact sequence
\[ 0 \to \mc{O}_{\pi_\alpha^{-1}\pi_\alpha(Q'')}(\lambda - \rho) \to a'_+\mc{O}_{Q'}(\lambda - \rho) \to a''_+\mc{O}_{Q''}(\lambda - \rho) \oplus a'''_+\mc{O}_{Q'''}(\lambda - \rho) \to 0\]
of $K$-equivariant $\mc{D}_\lambda$-modules on $\pi_\alpha^{-1}\pi_\alpha(Q'')$, where $a', a'', a'''$ are the inclusions into $\pi_\alpha^{-1}\pi_\alpha(Q'')$. Taking the direct image under $b \colon  \pi_\alpha^{-1}\pi_\alpha(Q'') \to \mc{B}$, we get a long exact sequence
\begin{align*}
  0 &\to \mc{H}^0b_+\mc{O}_{\pi_\alpha^{-1}\pi_\alpha(Q'')}(\lambda - \rho) \to i'_+\mc{O}_{Q'}(\lambda - \rho) \to \\
  & \to i''_+\mc{O}_{Q''}(\lambda - \rho) \oplus i'''_+\mc{O}_{Q'''}(\lambda - \rho) \to \mc{H}^1b_+\mc{O}_{\pi_\alpha^{-1}\pi_\alpha(Q'')}(\lambda - \rho) \to 0.
\end{align*}
Note that, unlike the inclusions of the individual $K$-orbits, the inclusion $b$ need not be affine, so the $\mc{D}$-module direct image $b_+$ can have higher cohomology.

Now, the cokernel above can be rewritten as
\[ \mc{H}^1b_+\mc{O}_{\pi_\alpha^{-1}\pi_\alpha(Q'')}(\lambda - \rho) = \mc{O}(\lambda - \rho) \otimes \pi_\alpha^*\mc{H}^1c_+\mc{O}_{\pi_\alpha(Q'')},\]
where $c \colon \pi_\alpha(Q'') \to \mc{P}_\alpha$ is the inclusion. In particular, the characteristic cycle is contained in $T^*\mc{P}_\alpha \times_{\mc{P}_\alpha}\mc{B} \subset T^*\mc{B}$. Since the image of this under the Springer map does not contain a regular nilpotent element, it follows that
\[ \mrm{AC}(\Gamma(\mc{H}^1b_+\mc{O}_{\pi_\alpha^{-1}\pi_\alpha(Q'')}(\lambda - \rho)))_x = 0.\]
In particular, $\Gamma(\mc{H}^1b_+\mc{O}_{\pi_\alpha^{-1}\pi_\alpha(Q'')}(\lambda - \rho))$ cannot have $\pi^{\mrm{HC}}$ as a composition factor. Hence, the map
\[ \Gamma(i'_+\mc{O}_{Q'}(\lambda - \rho)) \to \Gamma(i''_+\mc{O}_{Q''}(\lambda - \rho)) \to \pi^{\mrm{HC}} \]
must be non-zero, hence surjective.
\end{proof}

We now complete the proof of Lemma \ref{lem:AC vs Wh} and hence Proposition \ref{pro:signs}.

\begin{proof}[Proof of Lemma \ref{lem:AC vs Wh}]

By Lemma \ref{lem:PS to DS}, $\pi$ is a quotient of the principal series representation $\Pi = \mrm{Ind}_{B(\mb{R}})^{G(\mb{R})}(\lambda - \rho)$.  Since $\lambda - \rho$ is fixed under $A$, we can extend it to a character $\chi$ of $B(\mb{R}) \rtimes \langle A \rangle$ and hence $\Pi$ to a representation of $G(\mb{R}) \rtimes \langle A \rangle$. Since both $\pi$ and $\Pi$ have a unique Whittaker model by \cite[Theorem 6.6.2]{Kos78}, $\pi$ must have multiplicity one as a quotient of $\pi$, so there is a unique lift $\tilde{\pi}$ for which the quotient $\Pi \to \tilde{\pi}$ is $A$-equivariant. With this choice,
\[ \mrm{Wh}(\tilde{\pi}) = \mrm{Wh}(\Pi) = \mb{C}_\chi\]
by Lemma \ref{lem:PS Wh}. Similarly, $\mrm{AC}(\tilde{\pi}^{\mrm{HC}})_x$ is a summand of $\mrm{AC}(\Pi^{\mrm{HC}})_x = [\mb{C}_\chi]$ by Lemma \ref{lem:PS AC}, so it is either zero or $[\mb{C}_\chi]$. But $\mrm{AC}(\tilde{\pi}^{\mrm{HC}})_x \neq 0$. Since both sides of \eqref{eq:AC vs Wh 1} transform in the same way under tensoring with characters of $\langle A \rangle$, this proves the lemma for any lift $\tilde{\pi}$.
\end{proof}

\bibliographystyle{amsalpha}
\bibliography{../../../../TexMain/bibliography.bib}

\providecommand{\bysame}{\leavevmode\hbox to3em{\hrulefill}\thinspace}
\providecommand{\MR}{\relax\ifhmode\unskip\space\fi MR }
\providecommand{\MRhref}[2]{%
  \href{http://www.ams.org/mathscinet-getitem?mr=#1}{#2}
}
\providecommand{\href}[2]{#2}
\begin{thebibliography}{DHV84}

\bibitem[AA]{AdamsAfgoustidis24b}
Jeffrey Adams and Alexandre Afgoustidis, \emph{Nilpotent invariants for generic
  discrete series of real groups}, arXiv:2410.04134.

\bibitem[AK24]{DPR}
Jeffrey Adams and Tasho Kaletha, \emph{Discrete series {L}-packets for real
  reductive groups}, Preprint, {arXiv}:2409.13375 [math.{RT}] (2024), 2024.

\bibitem[AV16]{AV16}
Jeffrey Adams and David~A. Vogan, Jr., \emph{Contragredient representations and
  characterizing the local {L}anglands correspondence}, Amer. J. Math.
  \textbf{138} (2016), no.~3, 657--682. \MR{3506381}

\bibitem[BB81]{BB81}
Alexandre Beilinson and Joseph Bernstein, \emph{Localisation de {$g$}-modules},
  C. R. Acad. Sci. Paris S\'er. I Math. \textbf{292} (1981), no.~1, 15--18.
  \MR{610137}

\bibitem[BB93]{BB93}
A.~Beilinson and J.~Bernstein, \emph{A proof of {J}antzen conjectures}, I. {M}.
  {G}elfand {S}eminar, Adv. Soviet Math., vol. 16, Part 1, Amer. Math. Soc.,
  Providence, RI, 1993, pp.~1--50. \MR{1237825}

\bibitem[Bou87]{Bouaziz87}
Abderrazak Bouaziz, \emph{Sur les caract\`eres des groupes de {L}ie
  r\'{e}ductifs non connexes}, J. Funct. Anal. \textbf{70} (1987), no.~1,
  1--79. \MR{870753}

\bibitem[DHV84]{DHV84}
Michel Duflo, Gerrit Heckman, and Mich\`ele Vergne, \emph{Projection d'orbites,
  formule de {K}irillov et formule de {B}lattner}, no.~15, 1984, Harmonic
  analysis on Lie groups and symmetric spaces (Kleebach, 1983), pp.~65--128.
  \MR{789081}

\bibitem[Dil23]{Dillery23}
Peter Dillery, \emph{Rigid inner forms over local function fields}, Adv. Math.
  \textbf{430} (2023), Paper No. 109204, 100. \MR{4617942}

\bibitem[Duf82]{Duflo82}
Michel Duflo, \emph{Construction de repr\'{e}sentations unitaires d'un groupe
  de {L}ie}, Harmonic analysis and group representations, Liguori, Naples,
  1982, pp.~129--221. \MR{777341}

\bibitem[DV]{DavisVilonen23}
Dougal Davis and Kari Vilonen, \emph{Unitary representations of real groups and
  localization theory for hodge modules}, arXiv:2309.13215.

\bibitem[Gin86]{Ginsburg86}
V.~Ginsburg, \emph{Characteristic varieties and vanishing cycles}, Invent.
  Math. \textbf{84} (1986), no.~2, 327--402. \MR{833194}

\bibitem[Hum80]{Humphreys80}
James~E. Humphreys, \emph{Introduction to {Lie} algebras and representation
  theory. 3rd printing, rev}, Grad. Texts Math., vol.~9, Springer, Cham, 1980
  (English).

\bibitem[Kal16a]{KalSimons}
Tasho Kaletha, \emph{The local {L}anglands conjectures for non-quasi-split
  groups}, Families of automorphic forms and the trace formula, Simons Symp.,
  Springer, 2016, pp.~217--257. \MR{3675168}

\bibitem[Kal16b]{KalRI}
\bysame, \emph{Rigid inner forms of real and {$p$}-adic groups}, Ann. of Math.
  (2) \textbf{184} (2016), no.~2, 559--632. \MR{3548533}

\bibitem[Kal19]{KalDC}
\bysame, \emph{On {$L$}-embeddings and double covers of tori over local
  fields}, arXiv:1907.05173 (2019).

\bibitem[Kal22]{KalLLCD}
\bysame, \emph{On the local {L}anglands conjectures for disconnected groups},
  arxiv:2210.02519 (2022).

\bibitem[Kna86]{KnappSS}
Anthony~W. Knapp, \emph{Representation theory of semisimple groups}, Princeton
  Mathematical Series, vol.~36, Princeton University Press, Princeton, NJ,
  1986, An overview based on examples. \MR{855239}

\bibitem[Kna02]{KnappLie}
\bysame, \emph{Lie groups beyond an introduction}, second ed., Progress in
  Mathematics, vol. 140, Birkh\"{a}user Boston, Inc., Boston, MA, 2002.
  \MR{1920389}

\bibitem[Kos78]{Kos78}
Bertram Kostant, \emph{On {W}hittaker vectors and representation theory},
  Invent. Math. \textbf{48} (1978), no.~2, 101--184. \MR{507800 (80b:22020)}

\bibitem[Kot83]{Kot83}
Robert~E. Kottwitz, \emph{Sign changes in harmonic analysis on reductive
  groups}, Trans. Amer. Math. Soc. \textbf{278} (1983), no.~1, 289--297.
  \MR{697075 (84i:22012)}

\bibitem[KR71]{KosRal71}
B.~Kostant and S.~Rallis, \emph{Orbits and representations associated with
  symmetric spaces}, Amer. J. Math. \textbf{93} (1971), 753--809. \MR{311837}

\bibitem[KS]{KS12}
Robert~E. Kottwitz and Diana Shelstad, \emph{On splitting invariants and sign
  conventions in endoscopic transfer}, arXiv:1201.5658.

\bibitem[KS99]{KS99}
\bysame, \emph{Foundations of twisted endoscopy}, Ast\'erisque (1999), no.~255,
  vi+190. \MR{1687096 (2000k:22024)}

\bibitem[Lan89]{Lan89}
R.~P. Langlands, \emph{On the classification of irreducible representations of
  real algebraic groups}, Representation theory and harmonic analysis on
  semisimple {L}ie groups, Math. Surveys Monogr., vol.~31, Amer. Math. Soc.,
  Providence, RI, 1989, pp.~101--170. \MR{1011897 (91e:22017)}

\bibitem[LH17]{HL17}
Bertrand Lemaire and Guy Henniart, \emph{Repr\'esentations des espaces tordus
  sur un groupe r\'eductif connexe $p$-adique}, Ast\'erisque (2017), no.~386,
  ix+366. \MR{3632513}

\bibitem[LS87]{LS87}
R.~P. Langlands and D.~Shelstad, \emph{On the definition of transfer factors},
  Math. Ann. \textbf{278} (1987), no.~1-4, 219--271. \MR{909227 (89c:11172)}

\bibitem[Luo23]{YiLuoThesis}
Yi~Luo, \emph{On the multiplicity formula for discrete automorphic
  representations of disconnected tori}, preprint, arXiv:2312.16389, 2023.

\bibitem[Mez13]{Mezo13}
Paul Mezo, \emph{Character identities in the twisted endoscopy of real
  reductive groups}, Mem. Amer. Math. Soc. \textbf{222} (2013), no.~1042,
  vi+94. \MR{3076427}

\bibitem[Ree10]{Ree10}
Mark Reeder, \emph{Torsion automorphisms of simple {L}ie algebras}, Enseign.
  Math. (2) \textbf{56} (2010), no.~1-2, 3--47. \MR{2674853 (2012b:17040)}

\bibitem[Ren97]{Ren97}
David Renard, \emph{Twisted orbital integrals on real reductive {Lie} groups},
  J. Funct. Anal. \textbf{145} (1997), no.~2, 374--454 (French).

\bibitem[Sch75]{Schmid75}
Wilfried Schmid, \emph{Some properties of square-integrable representations of
  semisimple {L}ie groups}, Ann. of Math. (2) \textbf{102} (1975), no.~3,
  535--564. \MR{579165}

\bibitem[She81]{She81}
D.~Shelstad, \emph{Embeddings of {$L$}-groups}, Canad. J. Math. \textbf{33}
  (1981), no.~3, 513--558. \MR{627641 (83e:22022)}

\bibitem[She08]{SheTE1}
\bysame, \emph{Tempered endoscopy for real groups. {I}. {G}eometric transfer
  with canonical factors}, Representation theory of real reductive {L}ie
  groups, Contemp. Math., vol. 472, Amer. Math. Soc., Providence, RI, 2008,
  pp.~215--246. \MR{2454336 (2011d:22013)}

\bibitem[She10]{SheTE2}
\bysame, \emph{Tempered endoscopy for real groups. {II}. {S}pectral transfer
  factors}, Automorphic forms and the {L}anglands program, Adv. Lect. Math.
  (ALM), vol.~9, Int. Press, Somerville, MA, 2010, pp.~236--276. \MR{2581952}

\bibitem[She12]{She12}
\bysame, \emph{On geometric transfer in real twisted endoscopy}, Ann. of Math.
  (2) \textbf{176} (2012), no.~3, 1919--1985. \MR{2979862}

\bibitem[Ste68]{Ste68end}
Robert Steinberg, \emph{Endomorphisms of linear algebraic groups}, Memoirs of
  the American Mathematical Society, No. 80, American Mathematical Society,
  Providence, R.I., 1968. \MR{0230728 (37 \#6288)}

\bibitem[Ta{\"i}22]{TaibiIHES2022}
Olivier Ta{\"i}bi, \emph{The local {L}anglands conjecture}, Proceedings of the
  IHES Summer School on the Langlands Program, 2022.

\bibitem[Tat79]{TateCor}
J.~Tate, \emph{Number theoretic background}, Automorphic forms, representations
  and {$L$}-functions ({P}roc. {S}ympos. {P}ure {M}ath., {O}regon {S}tate
  {U}niv., {C}orvallis, {O}re., 1977), {P}art 2, Proc. Sympos. Pure Math.,
  XXXIII, Amer. Math. Soc., Providence, R.I., 1979, pp.~3--26. \MR{546607
  (80m:12009)}

\bibitem[Vog78]{Vog78}
David~A. Vogan, Jr., \emph{Gelfand-{K}irillov dimension for {H}arish-{C}handra
  modules}, Invent. Math. \textbf{48} (1978), no.~1, 75--98. \MR{0506503 (58
  \#22205)}

\bibitem[Vog83]{VogIC3}
David~A. Vogan, \emph{Irreducible characters of semisimple {L}ie groups. {III}.
  {P}roof of {K}azhdan-{L}usztig conjecture in the integral case}, Invent.
  Math. \textbf{71} (1983), no.~2, 381--417. \MR{689650 (84h:22036)}

\bibitem[Vog91]{Vog91}
David~A. Vogan, Jr., \emph{Associated varieties and unipotent representations},
  Harmonic analysis on reductive groups ({B}runswick, {ME}, 1989), Progr.
  Math., vol. 101, Birkh\"auser Boston, Boston, MA, 1991, pp.~315--388.
  \MR{1168491}

\end{thebibliography}

\end{document}